\newtheorem{theorem}{Theorem}
\theoremstyle{plain}
\newtheorem{conclusion}[theorem]{Conclusion}
\newtheorem{corollary}[theorem]{Corollary}
\newtheorem{definition}[theorem]{Definition}
\newtheorem{lemma}[theorem]{Lemma}
\newtheorem{notation}[theorem]{Notation}
\newtheorem{problem}[theorem]{Problem}
\newtheorem{proposition}[theorem]{Proposition}
\newtheorem{remark}[theorem]{Remark}
\numberwithin{equation}{section}
\begin{document}
\title[The Hyt\"{o}nen-Vuorinen $L^{p}$ conjecture with an extended energy
side condition]{The Hyt\"{o}nen-Vuorinen $L^{p}$ conjecture for the Hilbert
transform, with an extended energy side condition, when $\frac{4}{3}<p<4$
and the measures share no point\ masses}
\author[E. T. Sawyer]{Eric T. Sawyer$^\dagger$}
\address{Eric T. Sawyer, Department of Mathematics and Statistics\\
McMaster University\\
1280 Main Street West\\
Hamilton, Ontario L8S 4K1 Canada}
\thanks{$\dagger $ Research supported in part by a grant from the National
Science and Engineering Research Council of Canada.}
\email{sawyer@mcmaster.ca}
\author[B. D. Wick]{Brett D. Wick$^\ddagger$}
\address{Brett D. Wick, Department of Mathematics \& Statistics, Washington
University -- St. Louis, One Brookings Drive, St. Louis, MO USA 63130-4899.}
\email{wick@math.wustl.edu}
\thanks{$\ddagger $ B. D. Wick's research is supported in part by National
Science Foundation Grants DMS \# 1800057, \# 2054863, and \# 2000510 and
Australian Research Council -- DP 220100285.}
\date{\today }

\begin{abstract}
In the case $\frac{4}{3}<p<4$, and assuming $\sigma $ and $\omega $ are
locally finite positive Borel measures on $\mathbb{R}$ without common point
masses, we prove variants of two conjectures of T. Hyt\"{o}nen and E.
Vuorinen from 2018 on \emph{two weight} testing theorems for the Hilbert
transform on weighted\emph{\ L}$^{p}$ spaces, but only in the presence of
extended energy conditions. Namely, if we assume the extended energy
condition 
\begin{equation*}
\int_{\mathbb{R}}\left( \sum_{W\in \mathbb{W}}\left( \frac{\mathrm{P}\left(
W,\Phi _{F}f\sigma \right) }{\ell \left( W\right) }\right) ^{2}\left\vert 
\mathsf{P}_{\left[ W\right] }^{\omega }\right\vert ^{2}Z\right) ^{\frac{p}{2}%
}d\omega \lesssim \mathcal{E}_{p}^{\ell ^{2},\limfunc{ext}}\left( \sigma
,\omega \right) ^{p}\left\Vert f\right\Vert _{L^{p}\left( \sigma \right)
}^{p},
\end{equation*}%
for $f\geq 0$, along with the dual extended energy condition, then the two
weight norm inequality%
\begin{equation*}
\left\Vert H_{\sigma }f\right\Vert _{L^{p}\left( \omega \right) }\lesssim
\left\Vert f\right\Vert _{L^{p}\left( \sigma \right) },\ \ \ \ \ \frac{4}{3}%
<p<4,
\end{equation*}%
holds\newline
(1) \emph{if and only if} the global quadratic interval testing conditions
hold, \newline
(2) \emph{if and only if} the local quadratic interval testing, the
quadratic Muckenhoupt, the quadratic weak boundedness conditions all hold. 
\newline

We also give a slight improvement of the second characterization by
replacing the quadratic Muckenhoupt conditions with two smaller conditions.
\end{abstract}

\maketitle
\tableofcontents

\section{Introduction}

Almost a century ago in 1927, M. Riesz \cite{Rie} proved that the conjugate
function is bounded on $L^{p}\left( \mathbb{T}\right) $ of the circle $%
\mathbb{T}$ for all $1<p<\infty $. Forty-six years later in 1973, R. Hunt,
B. Muckenhoupt and R.L. Wheeden \cite{HuMuWh} showed that the conjugate
function on $\mathbb{T}$, equivalently the Hilbert transform on the real
line $\mathbb{R}$, extends to a bounded operator on a weighted
space\thinspace $L^{p}\left( w\right) $, i.e.%
\begin{equation}
\int_{\mathbb{R}}\left\vert Hf\left( x\right) \right\vert ^{p}w\left(
x\right) dx\lesssim \int_{\mathbb{R}}\left\vert f\left( x\right) \right\vert
^{p}w\left( x\right) dx,\ \ \ \ \ \text{for all }f\in L^{p}\left( w\right) ,
\label{WNI}
\end{equation}%
if and only if the weight $w$ satisfies the remarkable $A_{p}$ condition of
Muckenhoupt\footnote{%
A function theoretic characterization is available in the case $p=2$. For
example, in 1960, H. Helson and G. Szeg\"{o} \cite{HeSz} showed that (\ref%
{WNI}) holds for $p=2$ if and only if $w=e^{u+Hv}$ for some bounded
functions $u,v$ with $\left\Vert u\right\Vert _{\infty }<\frac{\pi }{2}$.},%
\begin{equation*}
\left( \frac{1}{\left\vert I\right\vert }\int_{I}w\left( x\right) dx\right)
\left( \frac{1}{\left\vert I\right\vert }\int_{I}\left( \frac{1}{w\left(
x\right) }\right) ^{\frac{1}{p-1}}dx\right) ^{p-1}\leq A_{p}\left( w\right)
,\ \ \ \ \ \text{for all intervals }I\subset \mathbb{R}.
\end{equation*}

However, an extension of the $A_{p}\left( w\right) $ characterization, even
in the case $p=2$, to the setting of two weights has proven to be much more
difficult, especially in view of the exceptional beauty and utility of the $%
A_{p}\left( w\right) $ condition, which has greatly spoiled expectations%
\footnote{%
The Helson-Szeg\"{o} result was extended in 1983 to the two weight setting
for all $1<p<\infty $ by M. Cotlar and C. Sadosky \cite{CoSa}.}. Many
sufficient conditions involving `bumped-up' variants of $A_{p}\left(
w\right) $ have been obtained by numerous authors over the ensuing years,
but while such conditions are in a sense `checkable', they cannot
characterize the two weight inequality for the Hilbert transform due to its
instability, at least in the case $p=2$, see \cite{AlLuSaUr 2}.

Instead, using the `testing condition' approach originating with E. Sawyer 
\cite{Saw} and G. David and J.-L. Journ\'{e} \cite{DaJo}, groundbreaking
strides were made in 2004 toward a characterization in the case $p=2$ by F.
Nazarov, S. Treil and A. Volberg \cite{NTV4}. Finally, in 2014, the
Nazarov-Treil-Volberg $T1$ conjecture \cite{Vol} on the boundedness of the
Hilbert transform from one weighted space $L^{2}\left( \sigma \right) $ to
another $L^{2}\left( \omega \right) $, was settled affirmatively, when the
measures $\sigma ,\omega $ share no common point masses, in the two part
paper \cite{LaSaShUr3};\cite{Lac} of M. Lacey, C.-Y. Shen, E. Sawyer, I.
Uriarte-Tuero; M. Lacey. Subsequently, T. Hyt\"{o}nen \cite{Hyt} removed the
assumption of no common point masses. A number of modifications of the NTV
approach were required in this solution, including the use in \cite%
{LaSaShUr3} of functional energy and the two weight inequalities for Poisson
integrals from \cite{Saw3}, as well as M. Lacey's use of a size condition,
an upside-down corona construction, and a recursion to bound the obstinate
stopping form in \cite{Lac}, and the use in \cite{Hyt} of two weight
inequalities for Poisson integrals with holes.

The testing approach in the case $p\neq 2$ has proven even more challenging,
going back at least to the 2015 primer of M. Lacey \cite[page 18]{Lac2} - a
major source of difficulty being the resistance of known arguments to using
energy conditions when $p\neq 2$. Moreover, it was recently shown in M.
Alexis, J.-L. Luna-Garcia, E. Sawyer and I. Uriarte-Tuero \cite{AlLuSaUr 2},
that the usual scalar testing and Muckenhoupt conditions do not suffice for
boundedness of the Hilbert transform for any $p\neq 2$.

On the other hand, T. Hyt\"{o}nen and E. Vuorinen \cite{HyVu} have made two
challenging conjectures regarding extensions to weighted $L^{p}$ norms, $%
1<p<\infty $, that involve \emph{quadratic} testing, Muckenhoupt, and weak
boundedness conditions, described in a moment.

The purpose of this paper is to prove two weaker variants of the conjectures
of Hyt\"{o}nen and Vuorinen in the special case that $\frac{4}{3}<p<4$ and
the measures $\sigma $ and $\omega $ share no point masses, and to give a
slight improvement of the second variant in this setting, thus completing a
first step toward a two-weight $L^{p}$ theory for the Hilbert transform. We
remark that there is only one place in the proof where the restriction $p<4$
arises, namely in Lemma \ref{final level} that plays a supporting role in
bounding the \emph{stopping form} at the end of the paper. When possible we
will state and prove our supporting results for $1<p<\infty $ and general
measures, but for simplicity we assume in our theorems that the $\sigma $
and $\omega $ share no common point masses. Here are two open problems
weaker than the full conjectures of Hyt\"{o}nen and Vuorinen. Of course we
can ask for inclusion of the necessary extended energy conditions in these
two problems as well.

\begin{problem}
Do the conjectures of Hyt\"{o}nen and Vuorinen hold for measures \emph{with}
common point masses when $\frac{4}{3}<p<4$?
\end{problem}

\begin{problem}
Do the conjectures of Hyt\"{o}nen and Vuorinen hold for measures \emph{%
without} common point masses when $1<p<\infty $?
\end{problem}

In order to state our theorems, we need a number of definitions. Let $\mu $
be a positive locally finite Borel measure on $\mathbb{R}$, let $\mathcal{D}$
be a dyadic grid on $\mathbb{R}$, and let $\left\{ \bigtriangleup _{Q}^{\mu
}\right\} _{Q\in \mathcal{D}}$ be the associated set of weighted Haar
projections on $L^{2}\left( \mu \right) $, see e.g. \cite{NTV4}. In
particular $\bigtriangleup _{Q}^{\mu }f\left( x\right) =\left\langle
f,h_{Q}^{\mu }\right\rangle _{\mu }h_{Q}^{\mu }\left( x\right) $ where $%
\left\{ h_{Q}^{\mu }\right\} _{Q\in \mathcal{D}}$ is the associated
orthonormal Haar basis (that may include averages over infinite intervals of
finite $\mu $-measure). Denote the Hilbert transform $H$ of a signed measure 
$\nu $ defined by%
\begin{equation*}
H\nu \left( x\right) \equiv \func{pv}\int_{\mathbb{R}}\frac{1}{y-x}d\nu
\left( y\right) ,
\end{equation*}%
and for $1<p<\infty $, consider the two weight norm inequality,%
\begin{equation}
\left\Vert H_{\sigma }f\right\Vert _{L^{p}\left( \omega \right) }=\left(
\int_{\mathbb{R}}\left\vert H_{\sigma }f\right\vert ^{p}d\omega \right) ^{%
\frac{1}{p}}\leq \mathfrak{N}_{H,p}\left( \sigma ,\omega \right) \left(
\int_{\mathbb{R}}\left\vert f\right\vert ^{p}d\sigma \right) ^{\frac{1}{p}}=%
\mathfrak{N}_{H,p}\left( \sigma ,\omega \right) \left\Vert f\right\Vert
_{L^{p}\left( \sigma \right) },  \label{Hilbert'}
\end{equation}%
where $\mathfrak{N}_{H,p}\left( \sigma ,\omega \right) $ is the best
constant taken over all admissible truncations of the Hilbert transform, and
where $H_{\sigma }f\equiv H\left( f\sigma \right) $, see e.g. \cite[%
subsubsection 1.2.1 on page 130]{SaShUr10} for more detail on this
interpretation of the norm inequality that avoids consideration of principal
values (as pioneered by X. Tolsa). The following definitions are for the
most part from Hyt\"{o}nen and Vuorinen \cite{HyVu}, except for the extended
energy conditions.

\subsection{Quadratic testing conditions}

The \emph{local scalar} (forward) interval\emph{\ }testing characteristic $%
\mathfrak{T}_{H,p}^{\func{loc}}\left( \sigma ,\omega \right) $ is defined as
the best constant in%
\begin{equation}
\left\Vert \mathbf{1}_{I}H_{\sigma }\mathbf{1}_{I}\right\Vert _{L^{p}\left(
\omega \right) }\leq \mathfrak{T}_{H,p}^{\func{loc}}\left( \sigma ,\omega
\right) \left\vert I\right\vert _{\sigma }^{\frac{1}{p}},
\label{scalar cube testing}
\end{equation}%
and the \emph{local quadratic} (forward) interval testing characteristic $%
\mathfrak{T}_{H,p}^{\ell ^{2},\func{loc}}\left( \sigma ,\omega \right) $, is
defined as the best constant in%
\begin{equation}
\left\Vert \left( \sum_{i=1}^{\infty }\left( a_{i}\mathbf{1}%
_{I_{i}}H_{\sigma }\mathbf{1}_{I_{i}}\right) ^{2}\right) ^{\frac{1}{2}%
}\right\Vert _{L^{p}\left( \omega \right) }\leq \mathfrak{T}_{H,p}^{\ell
^{2},\func{loc}}\left( \sigma ,\omega \right) \left\Vert \left(
\sum_{i=1}^{\infty }\left( a_{i}\mathbf{1}_{I_{i}}\right) ^{2}\right) ^{%
\frac{1}{2}}\right\Vert _{L^{p}\left( \sigma \right) },
\label{quad cube testing}
\end{equation}%
taken over all sequences of intervals $\left\{ I_{i}\right\} _{i=1}^{\infty
} $, and all sequences of positive numbers $\left\{ a_{i}\right\}
_{i=1}^{\infty }$. The dual scalar and quadratic interval testing
characteristics are obtained by interchanging $\sigma $ and $\omega $, and
replacing $p$ with $p^{\prime }$.

The \emph{global scalar} interval\emph{\ }testing characteristic $\mathfrak{T%
}_{H,p}^{\limfunc{glob}}\left( \sigma ,\omega \right) $ is defined as the
best constant in%
\begin{equation}
\left\Vert H_{\sigma }\mathbf{1}_{I}\right\Vert _{L^{p}\left( \omega \right)
}\leq \mathfrak{T}_{H,p}^{\limfunc{glob}}\left( \sigma ,\omega \right)
\left\vert I\right\vert _{\sigma }^{\frac{1}{p}},  \label{global scalar}
\end{equation}%
and the \emph{global quadratic} testing characteristic $\mathfrak{T}%
_{H,p}^{\ell ^{2},\limfunc{glob}}\left( \sigma ,\omega \right) $ is defined
as the best constant in%
\begin{equation}
\left\Vert \left( \sum_{i=1}^{\infty }\left( a_{i}H_{\sigma }\mathbf{1}%
_{I_{i}}\right) ^{2}\right) ^{\frac{1}{2}}\right\Vert _{L^{p}\left( \omega
\right) }\leq \mathfrak{T}_{H,p}^{\ell ^{2},\limfunc{glob}}\left( \sigma
,\omega \right) \left\Vert \left( \sum_{i=1}^{\infty }\left( a_{i}\mathbf{1}%
_{I_{i}}\right) ^{2}\right) ^{\frac{1}{2}}\right\Vert _{L^{p}\left( \sigma
\right) },  \label{glob}
\end{equation}%
taken over all sequences of intervals $\left\{ I_{i}\right\} _{i=1}^{\infty
} $, and all sequences of positive numbers $\left\{ a_{i}\right\}
_{i=1}^{\infty }$, and similarly for the dual scalar and global
characteristics $\mathfrak{T}_{H,p^{\prime }}^{\limfunc{glob}}\left( \sigma
,\omega \right) $ and $\mathfrak{T}_{H,p^{\prime }}^{\ell ^{2},\limfunc{glob}%
}\left( \omega ,\sigma \right) $. Note that by Khintchine's inequality, the
global quadratic condition (\ref{glob}) can be interpreted as `average
testing' over finite linear spans of indicators of intervals,%
\begin{equation*}
\mathbb{E}_{\pm }\left\Vert H_{\sigma }\left( \sum_{i=1}^{M}\pm a_{i}\mathbf{%
1}_{I_{i}}\right) \right\Vert _{L^{p}\left( \omega \right) }\lesssim \mathbb{%
E}_{\pm }\left\Vert \sum_{i=1}^{M}\pm a_{i}\mathbf{1}_{I_{i}}\right\Vert
_{L^{p}\left( \sigma \right) },
\end{equation*}%
which in the case $p=2$ reduces to the global \emph{scalar} testing
condition (\ref{global scalar}).

\subsection{Extended energy conditions}

Given any interval $F\in \mathcal{D}$, we define the $\left( r,\varepsilon
\right) $\emph{-Whitney} collection $\mathcal{M}_{\left( r,\varepsilon
\right) -\limfunc{deep}}\left( F\right) $ of $F$ to be the set of dyadic
subintervals $W\subset F$ that are maximal with respect to the property that 
$W\subset _{r,\varepsilon }F$. Clearly the intervals in $\mathcal{M}_{\left(
r,\varepsilon \right) -\limfunc{deep}}\left( F\right) $ form a pairwise
disjoint decomposition of $F$.where the extended energy characteristic $%
\mathcal{E}_{p}^{\ell ^{2},\limfunc{ext}}\left( \sigma ,\omega \right) $ is
the best constant in the inequality,

The extended energy characteristic $\mathcal{E}_{p}^{\ell ^{2},\limfunc{ext}%
}\left( \sigma ,\omega \right) $ is defined to be the smallest constant in
the inequality 
\begin{equation}
\int_{\mathbb{R}}\left( \sum_{F\in \mathcal{F}}\sum_{W\in \mathcal{M}%
_{\left( r,\varepsilon \right) -\limfunc{deep}}\left( F\right) \cap \mathcal{%
C}_{\mathcal{F}}\left( F\right) }\left( \frac{\mathrm{P}\left( W,\Phi
_{F}f\sigma \right) }{\ell \left( W\right) }\right) ^{2}\left\vert \mathsf{P}%
_{\mathcal{C}_{\mathcal{F}}\left( F\right) \cap \mathcal{D}\left[ W\right]
}^{\omega }\right\vert ^{2}Z\right) ^{\frac{p}{2}}d\omega \lesssim \mathcal{E%
}_{p}^{\ell ^{2},\limfunc{ext}}\left( \sigma ,\omega \right) ^{p}\left\Vert
f\right\Vert _{L^{p}\left( \sigma \right) }^{p},\ \ \ \ \ \text{for }f\geq 0,
\label{ext ener}
\end{equation}%
where $\mathcal{F}$ is the collection of stopping times associated with $f$,
and%
\begin{equation*}
\Phi _{F}f\equiv \mathbb{E}_{F}^{\sigma }f+f_{F}=\mathbb{E}_{F}^{\sigma
}f+\sum_{I:\ I\supsetneqq F}\mathbf{1}_{I_{F}}\bigtriangleup _{I}^{\sigma }f.
\end{equation*}

\begin{remark}
Note that (\ref{ext ener}) is a \emph{positive} inequality, and like the
quadratic Muckenhoupt characteristics defined in the next subsection,
depends only on the measures $\sigma $ and $\omega $, and not on any
singular operator such as the Hilbert transform. In the special case $p=2$,
the characteristic $\mathcal{E}_{p}^{\ell ^{2},\limfunc{ext}}\left( \sigma
,\omega \right) $ is controlled by testing and Muckenhoupt characteristics,
even the logically larger functional energy characteristic, see e.g. \cite%
{LaSaShUr3} and \cite{SaShUr7}, but we are unable to extend all of the
Poisson inequality arguments used there when $p\neq 2$.
\end{remark}

There is a similar dual quadratic extended energy characteristic $\mathcal{E}%
_{p^{\prime }}^{\ell ^{2},\limfunc{ext},\ast }\left( \omega ,\sigma \right) $
in which $\sigma $ and $\omega $ are interchanged, the index $p$ is replaced
by its conjugate index $p^{\prime }$,%
\begin{equation*}
\int_{\mathbb{R}}\left( \sum_{G\in \mathcal{G}}\sum_{W\in \mathcal{M}%
_{\left( r,\varepsilon \right) -\limfunc{deep}}\left( G\right) \cap \mathcal{%
C}_{\mathcal{G}}\left( G\right) }\left( \frac{\mathrm{P}\left( W,\Phi
_{G}g\omega \right) }{\ell \left( W\right) }\right) ^{2}\left\vert \mathsf{P}%
_{\mathcal{C}_{\mathcal{G}}\left( G\right) \cap \mathcal{D}\left[ W\right]
}^{\sigma }\right\vert ^{2}Z\right) ^{\frac{p^{\prime }}{2}}d\sigma \lesssim 
\mathcal{E}_{p^{\prime }}^{\ell ^{2},\limfunc{ext},\ast }\left( \omega
,\sigma \right) ^{p^{\prime }}\left\Vert g\right\Vert _{L^{p^{\prime
}}\left( \omega \right) }^{p^{\prime }},\ \ \ \ \ \text{for }g\geq 0,
\end{equation*}%
where $\mathcal{G}$ is the collection of stopping times associated with $g$.

\begin{remark}
\label{ref}Note that the Whitney intervals $W$, in the above definition of
the quadratic extended energy characteristic $\mathcal{E}_{p}^{\ell ^{2},%
\limfunc{ext}}\left( \sigma ,\omega \right) $, are restricted to lie in the
corona $\mathcal{C}_{\mathcal{F}}\left( F\right) $. This departure from the
analogous definitions in \cite{LaSaShUr3}, \cite{SaShUr7} and elsewhere in
the literature, constitutes an important simplification of the argument in
the case $p=2$, and is largely responsible for our successful control of the
far form when $p\neq 2$. The key point here is that any $W\in \mathcal{D}$
lies in $\mathcal{M}_{\left( r,\varepsilon \right) -\limfunc{deep}}\left(
F\right) \cap \mathcal{C}_{\mathcal{F}}\left( F\right) $ for at most one $%
F\in \mathcal{F}$.
\end{remark}

\subsection{Main theorems}

Now we can state our first main result, which in the case of no common point
masses, extends the results in \cite{LaSaShUr3},\cite{Lac} and \cite{Hyt} to 
$\frac{4}{3}<p<4$ by replacing $2$ with $p$ at the expense of introducing
quadratic testing characteristics, and answers in the affirmative the first
conjecture in \cite{HyVu} in this range, provided we include the necessary
extended energy conditions in the characterizing conditions. The case of
doubling measures was obtained in \cite{SaWi}, even for vector Riesz
transforms in higher dimensions and $1<p<\infty $, and the case $p=2$ was
done earlier in \cite{AlSaUr}. The case of \emph{dyadic shifts} was resolved
much earlier for $1<p<\infty $ and general measures in \cite{Vuo}.

\begin{theorem}[first conjecture of Hyt\"{o}nen and Vuorinen]
\label{main glob}Suppose $\frac{4}{3}<p<4$, and that $\sigma $ and $\omega $
are locally finite positive Borel measures on $\mathbb{R}$ without common
point masses. Suppose also that the extended energy characteristics $%
\mathcal{E}_{p}^{\ell ^{2},\limfunc{ext}}\left( \sigma ,\omega \right) $ and 
$\mathcal{E}_{p^{\prime }}^{\ell ^{2},\limfunc{ext},\ast }\left( \omega
,\sigma \right) $ are finite. Then under these side conditions, the two
weight norm inequality (\ref{Hilbert'}) holds \emph{if and only if} the
global quadratic interval testing conditions (\ref{glob}) hold. Moreover, we
have%
\begin{equation*}
\mathfrak{N}_{H,p}\left( \sigma ,\omega \right) \lesssim \mathfrak{T}%
_{H,p}^{\ell ^{2},\limfunc{glob}}\left( \sigma ,\omega \right) +\mathfrak{T}%
_{H,p^{\prime }}^{\ell ^{2},\limfunc{glob}}\left( \omega ,\sigma \right) +%
\mathcal{E}_{p}^{\ell ^{2},\limfunc{ext}}\left( \sigma ,\omega \right) +%
\mathcal{E}_{p^{\prime }}^{\ell ^{2},\limfunc{ext},\ast }\left( \omega
,\sigma \right) .
\end{equation*}
\end{theorem}

In order to replace global quadratic testing with the smaller local
quadratic testing, we must include additional quadratic Muckenhoupt
characteristics and a quadratic weak boundedness characteristic, which we
now define.

\subsubsection{Quadratic and scalar tailed Muckenhoupt conditions}

The \emph{global} quadratic Muckenhoupt characteristic $\mathcal{A}%
_{p}^{\ell ^{2},\limfunc{glob}}\left( \sigma ,\omega \right) $ of Hyt\"{o}%
nen and Vuorinen is defined as the best constant in%
\begin{equation}
\left\Vert \left( \sum_{i=1}^{\infty }\left( \mathbf{1}_{I_{i}}\int_{\mathbb{%
R}\setminus I_{i}}\frac{f_{i}\left( x\right) }{\left\vert x-c_{i}\right\vert 
}d\sigma \left( x\right) \right) ^{2}\right) ^{\frac{1}{2}}\right\Vert
_{L^{p}\left( \omega \right) }\leq \mathcal{A}_{p}^{\ell ^{2},\limfunc{glob}%
}\left( \sigma ,\omega \right) \left\Vert \left( \sum_{i=1}^{\infty
}f_{i}^{2}\right) ^{\frac{1}{2}}\right\Vert _{L^{p}\left( \sigma \right) },
\label{quad A2 cond}
\end{equation}%
taken over all sequences of intervals $\left\{ I_{i}\right\} _{i=1}^{\infty
} $ with centers $\left\{ c_{i}\right\} _{i=1}^{\infty }$, and all sequences
of functions $\left\{ f_{i}\right\} _{i=1}^{\infty }$. There is also the
usual dual characteristic defined by interchanging $\sigma $ and $\omega $,
and replacing $p$ by $p^{\prime }$.

We now introduce three smaller Muckenhoupt characteristics whose use we will
track throughout the proof - only the disjoint form requires the triple
Muckenhoupt characteristic, and only the outer form bound requires the
kernel Muckenhoupt characteristic, while the offset Muckenhoupt
characteristic suffices elsewhere.

The smaller \emph{offset }quadratic Muckenhoupt characteristic $A_{p}^{\ell
^{2},\limfunc{offset}}\left( \sigma ,\omega \right) $ is defined as the best
constant in%
\begin{equation}
\left\Vert \left( \sum_{i=1}^{\infty }\left\vert a_{i}\frac{\left\vert
I_{i}^{\ast }\right\vert _{\sigma }}{\left\vert I_{i}^{\ast }\right\vert }%
\right\vert ^{2}\mathbf{1}_{I_{i}}\right) ^{\frac{1}{2}}\right\Vert
_{L^{p}\left( \omega \right) }\leq A_{p}^{\ell ^{2},\limfunc{offset}}\left(
\sigma ,\omega \right) \left\Vert \left( \sum_{i=1}^{\infty }\left\vert
a_{i}\right\vert ^{2}\mathbf{1}_{I_{i}^{\ast }}\right) ^{\frac{1}{2}%
}\right\Vert _{L^{p}\left( \sigma \right) },  \label{quad A2 tailless}
\end{equation}%
where $I_{i}^{\ast }$ is taken over the finitely many dyadic intervals $%
I_{i}^{\ast }$ disjoint from $I_{i}$ and such that $\ell \left( I_{i}^{\ast
}\right) =\ell \left( I_{i}\right) $ and $\limfunc{dist}\left( I_{i}^{\ast
},I_{i}\right) \leq r\ell \left( I_{i}\right) $, and all sequences numbers $%
a_{i}$, where $r$ is the goodness constant from \cite{NTV4} and \cite%
{LaSaShUr3} - see also the section on preliminaries. There is again the
usual dual characteristic defined by interchanging $\sigma $ and $\omega $,
and replacing $p$ by $p^{\prime }$.

There is also an intermediate \emph{triple} quadratic Muckenhoupt
characteristic $A_{p}^{\ell ^{2},\limfunc{trip}}\left( \sigma ,\omega
\right) $ defined as the best constant in%
\begin{equation}
\left\Vert \left( \sum_{i=1}^{\infty }\left( \mathbf{1}_{I_{i}}\int_{3I_{i}%
\setminus I_{i}}\frac{f_{i}\left( x\right) }{\left\vert x-c_{i}\right\vert }%
d\sigma \left( x\right) \right) ^{2}\right) ^{\frac{1}{2}}\right\Vert
_{L^{p}\left( \omega \right) }\leq A_{p}^{\ell ^{2},\limfunc{trip}}\left(
\sigma ,\omega \right) \left\Vert \left( \sum_{i=1}^{\infty
}f_{i}^{2}\right) ^{\frac{1}{2}}\right\Vert _{L^{p}\left( \sigma \right) },
\label{triple Muck}
\end{equation}%
taken over all sequences of intervals $\left\{ I_{i}\right\} _{i=1}^{\infty
} $ with centers $\left\{ c_{i}\right\} _{i=1}^{\infty }$, and all sequences
of functions $\left\{ f_{i}\right\} _{i=1}^{\infty }$ with $\limfunc{supp}%
f_{i}\subset 3I_{i}\setminus I_{i}$, as well as the dual such characteristic 
$A_{p^{\prime }}^{\ell ^{2},\limfunc{trip}}\left( \omega ,\sigma \right) $.

Next, there is the \emph{scalar tailed} Muckenhoupt characteristic defined
by,%
\begin{equation}
\mathcal{A}_{p}\left( \sigma ,\omega \right) \approx \sup_{I\text{ an
interval}}\left( \frac{1}{\left\vert I\right\vert }\int \left( \frac{\ell
\left( I\right) }{\ell \left( I\right) +\limfunc{dist}\left( x,I\right) }%
\right) ^{p}d\omega \left( x\right) \right) ^{\frac{1}{p}}\ \left( \frac{%
\left\vert I\right\vert _{\sigma }}{\left\vert I\right\vert }\right) ^{\frac{%
1}{p^{\prime }}}.  \label{dual scalar tail}
\end{equation}%
We have 
\begin{eqnarray*}
A_{p}^{\ell ^{2},\limfunc{offset}}\left( \sigma ,\omega \right) &\lesssim
&A_{p}^{\ell ^{2},\limfunc{trip}}\left( \sigma ,\omega \right) \lesssim 
\mathcal{A}_{p}^{\ell ^{2},\limfunc{glob}}\left( \sigma ,\omega \right) , \\
\mathcal{A}_{p}\left( \sigma ,\omega \right) &\lesssim &\mathcal{A}%
_{p}^{\ell ^{2},\limfunc{glob}}\left( \sigma ,\omega \right) ,
\end{eqnarray*}%
but there is no obvious relationship we can see between $\mathcal{A}%
_{p}\left( \sigma ,\omega \right) $ and $A_{p}^{\ell ^{2},\limfunc{offset}%
}\left( \sigma ,\omega \right) $.

\subsubsection{Quadratic weak boundedness property}

The \emph{quadratic} weak boundedness characteristic $\mathcal{WBP}%
_{H,p}^{\ell ^{2}}\left( \sigma ,\omega \right) $ is defined as the best
constant in%
\begin{eqnarray}
&&\sum_{i=1}^{\infty }\left\vert \int_{\mathbb{R}}a_{i}H_{\sigma }\mathbf{1}%
_{I_{i}}\left( x\right) b_{i}\mathbf{1}_{J_{i}}\left( x\right) d\omega
\left( x\right) \right\vert  \label{WBP HV} \\
&\leq &\mathcal{WBP}_{H,p}^{\ell ^{2}}\left( \sigma ,\omega \right)
\left\Vert \left( \sum_{i=1}^{\infty }\left\vert a_{i}\mathbf{1}%
_{I_{i}}\right\vert ^{2}\right) ^{\frac{1}{2}}\right\Vert _{L^{p}\left(
\sigma \right) }\left\Vert \left( \sum_{i=1}^{\infty }\left\vert b_{i}%
\mathbf{1}_{J_{i}}\right\vert ^{2}\right) ^{\frac{1}{2}}\right\Vert
_{L^{p^{\prime }}\left( \omega \right) },  \notag
\end{eqnarray}%
taken over all sequences $\left\{ I_{i}\right\} _{i=1}^{\infty }$, $\left\{
J_{i}\right\} _{i=1}^{\infty }$, $\left\{ a_{i}\right\} _{i=1}^{\infty }$
and $\left\{ b_{i}\right\} _{i=1}^{\infty }$of intervals and numbers
respectively where $J_{i}$ denotes any interval \emph{adjacent} to $I_{i}$
with comparable side length up to a factor of $2^{r}$, where here \emph{%
adjacent} means the closures of $I_{i}$ and $J_{i}$ have nonempty
intersection while the interiors of $I_{i}$ and $J_{i}$ have empty
intersection.\ Clearly, this characteristic is symmetric, $\mathcal{WBP}%
_{H,p}^{\ell ^{2}}\left( \sigma ,\omega \right) =\mathcal{WBP}_{H,p}^{\ell
^{2}}\left( \omega ,\sigma \right) $, and is used only in bounding the
comparable form.

\begin{remark}
We observe that in the proofs of our main theorems, we use only a slightly
weaker form of these characteristics, namely where the sequences $\left\{
I_{i}\right\} _{i=1}^{\infty }$, etc., used above are restricted to lie in a
fixed dyadic grid $\mathcal{D}$, provided that we require uniform control
over all grids $\mathcal{D}$.
\end{remark}

Now we can state our second main theorem, which again, in the case of
measures without common point masses, extends the results in \cite{LaSaShUr3}%
,\cite{Lac} and \cite{Hyt} to $\frac{4}{3}<p<4$ by replacing $2$ with $p$,
but using only a local quadratic testing characteristic, at the expense of
introducing additional quadratic Muckenhoupt and weak boundedness
characteristics - of course the extended energy characteristics are included
in the characterizing conditions. The case of doubling measures was again
obtained in \cite{SaWi}, even for\ general Calder\'{o}n-Zygmund operators
and $1<p<\infty $, and with the \textbf{scalar} local testing condition (\ref%
{scalar cube testing}) in place of quadratic local testing, and the case $%
p=2 $ is in \cite{AlSaUr}. As already mentioned, it was shown in \cite%
{AlLuSaUr 2} that one cannot replace all quadratic conditions with their
scalar analogues for any $p\neq 2$. The case of \emph{dyadic shifts} was
again resolved for general measures and all $1<p<\infty $ in \cite{Vuo}.

The following theorem provides a slight improvement over our variant
(including extended energy conditions) of the second conjecture of Hyt\"{o}%
nen and Vuorinen in the range $\frac{4}{3}<p<4$ when the measures share no
point masses, in that the global Muckenhoupt characteristics are replaced by
the smaller triple Muckenhoupt characteristics and scalar tailed Muckenhoupt
characteristics.

\begin{theorem}[second conjecture of Hyt\"{o}nen and Vuorinen]
\label{main}Suppose $\frac{4}{3}<p<4$, and that $\sigma $ and $\omega $ are
locally finite positive Borel measures on $\mathbb{R}$ without common point
masses. Suppose also that the extended energy characteristics $\mathcal{E}%
_{p}^{\ell ^{2},\limfunc{ext}}\left( \sigma ,\omega \right) $ and $\mathcal{E%
}_{p^{\prime }}^{\ell ^{2},\limfunc{ext},\ast }\left( \omega ,\sigma \right) 
$ are finite. Then under these side conditions, the two weight norm
inequality (\ref{Hilbert'}) holds \emph{if and only if} the local quadratic
interval testing conditions (\ref{quad cube testing}) hold, the triple
quadratic Muckenhoupt conditions (\ref{triple Muck}) hold, the scalar tailed
Muckenhouopt conditions (\ref{dual scalar tail}) hold, and the quadratic
weak boundedness property (\ref{WBP HV}) holds. Moreover, we have%
\begin{eqnarray*}
\mathfrak{N}_{H,p}\left( \sigma ,\omega \right) &\lesssim &\mathfrak{T}%
_{H,p}^{\ell ^{2},\func{loc}}\left( \sigma ,\omega \right) +\mathfrak{T}%
_{H,p^{\prime }}^{\ell ^{2},\func{loc}}\left( \omega ,\sigma \right)
+A_{p}^{\ell ^{2},\limfunc{trip}}\left( \sigma ,\omega \right) +A_{p^{\prime
}}^{\ell ^{2},\limfunc{trip}}\left( \omega ,\sigma \right) \\
&&+\mathcal{A}_{p}\left( \sigma ,\omega \right) +\mathcal{A}_{p^{\prime
}}\left( \omega ,\sigma \right) +\mathcal{WBP}_{H,p}^{\ell ^{2}}\left(
\sigma ,\omega \right) \\
&&+\mathcal{E}_{p}^{\ell ^{2},\limfunc{ext}}\left( \sigma ,\omega \right) +%
\mathcal{E}_{p^{\prime }}^{\ell ^{2},\limfunc{ext},\ast }\left( \omega
,\sigma \right) .
\end{eqnarray*}
\end{theorem}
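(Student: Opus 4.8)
The plan is to deduce Theorem \ref{main} from Theorem \ref{main glob}. Since the $\ell ^{2}$-valued extension of (\ref{Hilbert'}) has norm comparable to $\mathfrak{N}_{H,p}\left( \sigma ,\omega \right) $ (Marcinkiewicz--Zygmund), each characteristic on the right-hand side of the asserted equivalence is $\lesssim \mathfrak{N}_{H,p}\left( \sigma ,\omega \right) $ by the routine necessity argument: $\mathfrak{T}_{H,p}^{\ell ^{2},\func{loc}}$ by restriction, the triple quadratic Muckenhoupt and scalar tailed Muckenhoupt characteristics by inserting the elementary kernel lower bounds (on $3I\setminus I$, and on dyadic annuli, respectively) together with a Fefferman--Stein maximal bound for the quadratic ones, and $\mathcal{WBP}_{H,p}^{\ell ^{2}}$ by Cauchy--Schwarz in $\omega $ followed by H\"{o}lder. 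In view of Theorem \ref{main glob}, it therefore remains to prove that $\mathfrak{N}_{H,p}\left( \sigma ,\omega \right) $ is dominated by the right-hand side of the equivalence, for which it suffices to bound $\mathfrak{T}_{H,p}^{\ell ^{2},\limfunc{glob}}\left( \sigma ,\omega \right) $ and, dually, $\mathfrak{T}_{H,p^{\prime }}^{\ell ^{2},\limfunc{glob}}\left( \omega ,\sigma \right) $ by it; by symmetry (every characteristic on the right is either symmetric or occurs in a dual pair) it is enough to estimate the first.

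Fix intervals $\left\{ I_{i}\right\} $ and coefficients $a_{i}\geq 0$ and split, for each $i$,
\[
H_{\sigma }\mathbf{1}_{I_{i}}=\mathbf{1}_{I_{i}}H_{\sigma }\mathbf{1}_{I_{i}}+\mathbf{1}_{2^{r}I_{i}\setminus I_{i}}H_{\sigma }\mathbf{1}_{I_{i}}+\mathbf{1}_{\mathbb{R}\setminus 2^{r}I_{i}}H_{\sigma }\mathbf{1}_{I_{i}},
\]
and estimate the three corresponding pieces of $\big\| \big( \sum_{i}\left( a_{i}H_{\sigma }\mathbf{1}_{I_{i}}\right) ^{2}\big) ^{1/2}\big\| _{L^{p}\left( \omega \right) }$ separately via Minkowski's inequality in $L^{p}\left( \omega \right) $. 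The first piece is $\leq \mathfrak{T}_{H,p}^{\ell ^{2},\func{loc}}\left( \sigma ,\omega \right) \big\| \big( \sum_{i}a_{i}^{2}\mathbf{1}_{I_{i}}\big) ^{1/2}\big\| _{L^{p}\left( \sigma \right) }$ straight from (\ref{quad cube testing}). For the second (collar) piece, partition $2^{r}I_{i}\setminus I_{i}$ into the boundedly many intervals adjacent to $I_{i}$ of comparable side length; dualizing the $L^{p}\left( \omega \right) $ norm against a sequence $\left\{ g_{i}\right\} $ with $\big\| \big( \sum_{i}g_{i}^{2}\big) ^{1/2}\big\| _{L^{p^{\prime }}\left( \omega \right) }\leq 1$ and decomposing each $g_{i}$ on the relevant collar piece into its $\omega $-average and a mean-zero remainder, the average part is controlled by the quadratic weak boundedness property (\ref{WBP HV}) and the remainder part, where the kernel singularity of $H_{\sigma }\mathbf{1}_{I_{i}}$ at the common boundary enters, by the triple quadratic Muckenhoupt characteristics (both orientations, since the collar interaction is two-sided). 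For the third (far) piece, use the pointwise bound $\left\vert H_{\sigma }\mathbf{1}_{I_{i}}\left( x\right) \right\vert \lesssim \left\vert I_{i}\right\vert _{\sigma }/\left( \ell \left( I_{i}\right) +\limfunc{dist}\left( x,I_{i}\right) \right) $ valid for $x\notin 2^{r}I_{i}$, decompose $\mathbb{R}\setminus 2^{r}I_{i}$ into the dyadic annuli $2^{k+1}I_{i}\setminus 2^{k}I_{i}$, and on each scale $k$ reduce to a quadratic average estimate handled by the scalar tailed Muckenhoupt characteristic $\mathcal{A}_{p}\left( \sigma ,\omega \right) $ on the $\omega $ side and a vector-valued Fefferman--Stein maximal inequality on the $\sigma $ side; the geometric factor $2^{-k}$ from the annulus absorbs the logarithmic loss of dilating $I_{i}$ to $2^{k+1}I_{i}$, so the series in $k$ converges.

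The main obstacle is this far piece. One cannot first pass from the quadratic sum over $i$ to a scalar sum via H\"{o}lder, because over the full range the exponents obstruct it: when $\frac{4}{3}<p<2$ the embedding $\ell ^{p}\hookrightarrow \ell ^{2}$ fails on the $\sigma $ side, while when $2<p<4$ the embedding $\ell ^{p^{\prime }}\hookrightarrow \ell ^{2}$ fails on the $\omega $ side. One is forced to carry the $\ell ^{2}$ structure through the whole estimate and feed it into the vector-valued maximal inequality, and it is precisely the coordination of the scalar tailed Muckenhoupt decay with the geometric decay over scales that has to be handled with care; this is the mechanism by which the two smaller (triple quadratic and scalar tailed) Muckenhoupt characteristics take over the role of the single global quadratic Muckenhoupt characteristic in the Hyt\"{o}nen--Vuorinen formulation. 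Should one wish to avoid Theorem \ref{main glob} and instead expand $\langle H_{\sigma }f,g\rangle _{\omega }$ in the weighted Haar bases and run the Nazarov--Treil--Volberg decomposition directly, the same far/disjoint interaction reappears as the far and disjoint forms, and the genuinely delicate steps become the control of refined quadratic functional energy by testing conditions --- the only use of the no-common-point-masses hypothesis --- and the bound for the stopping form, the only use of $p<4$, via Lemma \ref{final level}.
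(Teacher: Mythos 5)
Your plan --- deduce Theorem \ref{main} from Theorem \ref{main glob} by proving that the hypotheses of Theorem \ref{main} imply the global quadratic testing condition (\ref{glob}) --- has a genuine gap at exactly its central step: neither the collar nor the far piece of $\bigl(\sum_i(a_iH_\sigma\mathbf{1}_{I_i})^2\bigr)^{1/2}$ is controlled by the stated characteristics through the mechanisms you invoke. For the far piece you need the vector-valued tail bound
\begin{equation*}
\Bigl\|\Bigl(\sum_i a_i^2\Bigl(\frac{|I_i|_\sigma}{\ell(I_i)+\mathrm{dist}(\cdot,I_i)}\Bigr)^2\mathbf{1}_{\mathbb{R}\setminus 2^rI_i}\Bigr)^{1/2}\Bigr\|_{L^p(\omega)}\lesssim\Bigl\|\Bigl(\sum_i a_i^2\mathbf{1}_{I_i}\Bigr)^{1/2}\Bigr\|_{L^p(\sigma)},
\end{equation*}
a quadratic Muckenhoupt condition with the tail on the $\omega$ side. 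The only hypotheses in that orientation are the (dualized) triple condition (\ref{triple Muck}), which carries no tail --- applied to the dilates $3^kI_i$ it reproduces the size of $H_\sigma\mathbf{1}_{I_i}$ on each annulus with constant $\mathcal{A}_{p'}^{\ell^2,\mathrm{trip}}(\omega,\sigma)$ and no geometric gain, so the sum over $k$ diverges --- and the \emph{scalar} tailed condition (\ref{dual scalar tail}); upgrading a scalar Muckenhoupt condition to its quadratic counterpart is precisely what fails when $p\neq2$ (\cite{AlLuSaUr 2}). (Under the original Hyt\"onen--Vuorinen hypotheses the dualized \emph{global} quadratic Muckenhoupt condition would give the far piece, but the theorem as stated deliberately replaces it by the smaller triple plus scalar tailed conditions, so that route is unavailable.) The Fefferman--Stein inequality (\ref{FS vv}) does not repair this: $M_\sigma^{\mathrm{dy}}\mathbf{1}_{I_i}$ is built from $\sigma$-averages and need not dominate $\mathbf{1}_{2^kI_i}$ for non-doubling $\sigma$, so it cannot move the $\sigma$-side norm from dilated intervals back to the $I_i$. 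For the collar piece, the mean-zero part of $g_i$ on an interval $J_i$ adjacent to $I_i$ pairs with the genuinely singular behaviour of $H_\sigma\mathbf{1}_{I_i}$ at the shared endpoint; taking the supremum over such $g_i$ amounts to the testing bound $\|\mathbf{1}_{J_i}H_\sigma\mathbf{1}_{I_i}\|_{L^p(\omega)}\lesssim|I_i|_\sigma^{1/p}$ on adjacent intervals, which is not among your hypotheses, is not a positive-kernel quantity ($H_\sigma\mathbf{1}_{I_i}$ can be far larger than $|I_i|_\sigma/\ell(I_i)$ near the endpoint), and is not reached by (\ref{WBP HV}), which pairs only against constants. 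In short, ``local conditions $\Rightarrow$ global quadratic testing'' carries essentially the full strength of the sufficiency direction, and your sketch hides the hard part inside it.

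The paper does not argue this way: it proves Theorems \ref{main glob} and \ref{main} simultaneously, via the good/bad reduction and the decomposition (\ref{brief}) with the bounds (\ref{bounds}) --- comparable form by local quadratic testing, offset Muckenhoupt and weak boundedness; disjoint and neighbour forms by the triple and offset quadratic Muckenhoupt characteristics (with offset $\lesssim$ triple); far form via the Intertwining Proposition \ref{strongly adapted'} and the control of refined quadratic functional energy by testing and tailed Muckenhoupt characteristics, which is where the appendix Theorem \ref{analogous thm} and the no-common-point-masses hypothesis (Lemmas \ref{no common} and \ref{final control}) enter; paraproduct form by local quadratic testing and Theorem \ref{using Carleson}; and stopping form by scalar testing through the dual tree decomposition, the Corona Martingale Comparison Principle and Lemma \ref{final level}, where $p<4$ (and $p'<4$ for the dual) is used. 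The material you defer to a closing aside is therefore not an optional alternative; it is the proof, and your primary reduction is unproved at the two points where it would have to substitute for it.
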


\subsubsection{Dual half-forms}

In order to better understand the extended energy characteristics, write the
bilinear form $\left\langle H_{\sigma }f,g\right\rangle _{\omega }$ as a sum
of two `dual' half-forms,%
\begin{equation*}
\left\langle H_{\sigma }f,g\right\rangle _{\omega }=\left\{ \sum_{\substack{ %
I,J\in \mathcal{D}  \\ \ell \left( J\right) \leq \ell \left( I\right) }}%
+\sum _{\substack{ I,J\in \mathcal{D}  \\ \ell \left( J\right) >\ell \left(
I\right) }}\right\} \left\langle H_{\sigma }\bigtriangleup _{I}^{\sigma
}f,\bigtriangleup _{J}^{\omega }g\right\rangle _{\omega }\equiv \mathsf{B}%
^{\leq }\left( f,g\right) +\mathsf{B}^{>}\left( f,g\right) ,
\end{equation*}%
determined by the relative size of the side lengths $\ell \left( I\right)
,\ell \left( J\right) $ of $I$ and $J$. Denote the norms of these bilinear
half-forms by $\mathfrak{N}_{H,p}^{\leq }\left( \sigma ,\omega \right) $ and 
$\mathfrak{N}_{H,p}^{>}\left( \sigma ,\omega \right) $ respectively, i.e. $%
\mathfrak{N}_{H,p}^{\leq }\left( \sigma ,\omega \right) $ is the best
constant in the inequality,%
\begin{equation*}
\left\vert \mathsf{B}^{\leq }\left( f,g\right) \right\vert \leq \mathfrak{N}%
_{H,p}^{\leq }\left( \sigma ,\omega \right) \left\Vert f\right\Vert
_{L^{p}\left( \sigma \right) }\left\Vert g\right\Vert _{L^{p^{\prime
}}\left( \omega \right) }\ ,
\end{equation*}%
and similarly for $\mathfrak{N}_{H,p}^{>}\left( \sigma ,\omega \right) $.
Note that $\mathfrak{N}_{H,p}\left( \sigma ,\omega \right) \leq \mathfrak{N}%
_{H,p}^{\leq }\left( \sigma ,\omega \right) +\mathfrak{N}_{H,p}^{>}\left(
\sigma ,\omega \right) $. We can actually \emph{characterize} the sum of the
characteristics $\mathfrak{N}_{H,p}^{\leq }\left( \sigma ,\omega \right) +%
\mathfrak{N}_{H,p}^{>}\left( \sigma ,\omega \right) $.

\begin{theorem}[characterization of sum of dual half-form norms]
\label{half}Suppose $\frac{4}{3}<p<4$, and that $\sigma $ and $\omega $ are
locally finite positive Borel measures on $\mathbb{R}$ without common point
masses. There are the following characterizations of the sum of the norms of
the dual half forms in terms of testing, Muckenhoupt, weak boundedness and
extended energy characteristics, 
\begin{eqnarray*}
&&\mathfrak{N}_{H,p}^{\leq }\left( \sigma ,\omega \right) +\mathfrak{N}%
_{H,p}^{>}\left( \sigma ,\omega \right) \\
&&\ \ \ \ \ \approx \mathfrak{T}_{H,p}^{\ell ^{2},\limfunc{glob}}\left(
\sigma ,\omega \right) +\mathfrak{T}_{H,p^{\prime }}^{\ell ^{2},\limfunc{glob%
}}\left( \omega ,\sigma \right) +\mathcal{E}_{p}^{\ell ^{2},\limfunc{ext}%
}\left( \sigma ,\omega \right) +\mathcal{E}_{p^{\prime }}^{\ell ^{2},%
\limfunc{ext},\ast }\left( \omega ,\sigma \right) \\
&&\ \ \ \ \ \approx \mathfrak{T}_{H,p}^{\ell ^{2},\func{loc}}\left( \sigma
,\omega \right) +\mathfrak{T}_{H,p^{\prime }}^{\ell ^{2},\func{loc}}\left(
\omega ,\sigma \right) +A_{p}^{\ell ^{2},\limfunc{trip}}\left( \sigma
,\omega \right) +A_{p^{\prime }}^{\ell ^{2},\limfunc{trip}}\left( \omega
,\sigma \right) \\
&&\ \ \ \ \ \ \ \ \ \ \ \ \ \ \ +\mathcal{A}_{p}\left( \sigma ,\omega
\right) +\mathcal{A}_{p^{\prime }}\left( \omega ,\sigma \right) +\mathcal{WBP%
}_{H,p}^{\ell ^{2}}\left( \sigma ,\omega \right) \\
&&\ \ \ \ \ \ \ \ \ \ \ \ \ \ \ +\mathcal{E}_{p}^{\ell ^{2},\limfunc{ext}%
}\left( \sigma ,\omega \right) +\mathcal{E}_{p^{\prime }}^{\ell ^{2},%
\limfunc{ext},\ast }\left( \omega ,\sigma \right) .
\end{eqnarray*}
\end{theorem}

\subsection{Guide for the reader}

Here we emphasize the most basic ideas used to handle the case $p\neq 2$ of
the main theorems, given that the case $p=2$ was solved back in 2014 using
orthonormal weighted Haar bases in a Hilbert space, something not available
when $p\neq 2$. Neverthess, a large portion of the $p=2$ proof from 2014
finds its way into the arguments here, and the reader is encouraged to have
at least some of the papers \cite{LaSaShUr3}, \cite{Lac}, \cite{Hyt} and 
\cite{Saw7} at hand while reading this one. To get started, we use the idea
of Hyt\"{o}nen and Vuorinen to test, not the \emph{scalar} inequality for $H$
over indicators of intervals, but rather to test the $\ell ^{2}$\emph{-valued%
} extension, which has the same norm, over sequences of indicators of
intervals (times constants). In order to use these and other quadratic
testing conditions effectively, we follow \cite[Subsection 2.1]{Saw7} and
decompose our bilinear form 
\begin{equation*}
\left\langle H_{\sigma }f,g\right\rangle _{\omega }=\sum_{I,J\in \mathcal{D}%
}\left\langle H_{\sigma }\bigtriangleup _{I}^{\sigma }f,\bigtriangleup
_{J}^{\omega }g\right\rangle _{\omega }
\end{equation*}%
into subforms, of which a typical example can be written as%
\begin{equation*}
\mathsf{B}_{\mathcal{P}}\left( f,g\right) =\sum_{\left( I,J\right) \in 
\mathcal{P}}\left\langle H_{\sigma }\bigtriangleup _{I}^{\sigma
}f,\bigtriangleup _{J}^{\omega }g\right\rangle _{\omega }=\sum_{\left(
I,J\right) \in \mathcal{P}}\left\langle \bigtriangleup _{J}^{\omega
}H_{\sigma }\bigtriangleup _{I}^{\sigma }f,\bigtriangleup _{J}^{\omega
}g\right\rangle _{\omega }
\end{equation*}%
for some subset of pairs $\mathcal{P}$ of $\mathcal{D}\times \mathcal{D}$.
We then proceed with the inequalities of Cauchy-Schwarz in $\ell ^{2}$, and H%
\"{o}lder in $L^{p}\left( \omega \right) $, to obtain%
\begin{eqnarray*}
\left\vert \mathsf{B}_{\mathcal{P}}\left( f,g\right) \right\vert
&=&\left\vert \int_{\mathbb{R}}\left\{ \sum_{\left( I,J\right) \in \mathcal{P%
}}\bigtriangleup _{J}^{\omega }H_{\sigma }\bigtriangleup _{I}^{\sigma
}f\left( x\right) \ \bigtriangleup _{J}^{\omega }g\left( x\right) \right\} \
d\omega \left( x\right) \right\vert \\
&\leq &\int_{\mathbb{R}}\sqrt{\sum_{\left( I,J\right) \in \mathcal{P}%
}\left\vert \bigtriangleup _{J}^{\omega }H_{\sigma }\bigtriangleup
_{I}^{\sigma }f\left( x\right) \right\vert ^{2}}\sqrt{\sum_{\left(
I,J\right) \in \mathcal{P}}\left\vert \bigtriangleup _{J}^{\omega }g\left(
x\right) \right\vert ^{2}}\ d\omega \left( x\right) \\
&\leq &\left\Vert \sqrt{\sum_{\left( I,J\right) \in \mathcal{P}}\left\vert
\bigtriangleup _{J}^{\omega }H_{\sigma }\bigtriangleup _{I}^{\sigma }f\left(
x\right) \right\vert ^{2}}\right\Vert _{L^{p}\left( \omega \right)
}\left\Vert \sqrt{\sum_{\left( I,J\right) \in \mathcal{P}}\left\vert
\bigtriangleup _{J}^{\omega }g\left( x\right) \right\vert ^{2}}\right\Vert
_{L^{p^{\prime }}\left( \omega \right) } \\
&=&\left\Vert \left\vert \left\{ \bigtriangleup _{J}^{\omega }H_{\sigma
}\bigtriangleup _{I}^{\sigma }f\right\} _{\left( I,J\right) \in \mathcal{P}%
}\right\vert _{\ell ^{2}}\right\Vert _{L^{p}\left( \omega \right)
}\left\Vert \left\vert \left\{ \bigtriangleup _{J}^{\omega }g\right\}
_{\left( I,J\right) \in \mathcal{P}}\right\vert _{\ell ^{2}}\right\Vert
_{L^{p^{\prime }}\left( \omega \right) }.
\end{eqnarray*}%
At this point, Burkholder's theorem on martingale differences yields a
square function estimate that can be used to show that the second factor is
controlled by $\left\Vert g\right\Vert _{L^{p^{\prime }}\left( \omega
\right) }$ provided the pairs $\left( I,J\right) \in \mathcal{P}$ have only
a bounded number of $I^{\prime }s$ paired with a given $J$. In order to
handle the first factor we need to manipulate the sequence $\left\{
\bigtriangleup _{J}^{\omega }H_{\sigma }\bigtriangleup _{I}^{\sigma
}f\right\} _{\left( I,J\right) \in \mathcal{P}}$ so as to apply one of the
quadratic hypotheses. The entire difficulty with this approach lies in
appropriately decomposing the original bilinear form, and in finding
vector-valued manipulations so that the two goals can be simultaneously
achieved.

In order to proceed further, we need the fundamental insight of Nazarov,
Treil and Volberg that we may restrict our attention to functions with Haar
support consisting of $\func{good}$ intervals, which enjoy crucial geometric
decay properties. We also follow the blueprints of work in the case $p=2$ by
Hyt\"{o}nen, Lacey, Sawyer, Shen, Uriarte-Tuero and Wick, in particular that
of the work in \cite{Saw7} and \cite{Lac}, with a couple of exceptions.
There is no appeal to weighted Poisson inequalities. There is no explicit
use of the size condition in this proof. Instead, the `magical' property $%
\frac{d}{dx}\frac{1}{x}=-\frac{1}{x^{2}}$ for $x\neq 0$, of the convolution
kernel $\frac{1}{x}$ of the Hilbert transform is directly used in the proof
of the $L^{p}$-Stopping Child Lemma, and an elaborate stopping energy
characteristic finishes control of the stopping form. Thus our approach
provides a new proof in the case $p=2$ as well, in view of the fact that the
extended energy can be controlled by testing and Muckenhoupt in this case.

Then most of the decompositions into subsubforms go by the name of \emph{%
corona decompositions}, in which a collection of $\func{good}$ stopping
times (by stopping times we simply mean a collection of dyadic intervals,
whether or not they were chosen by some stopping criterion) is chosen so
that various features of the inner products are controlled in the coronas
lying `between' the stopping times. These features include the averages of $%
f $ over $\func{good}$ intervals in a corona and the total amount of `scalar 
$p $-energy' within a corona. Of course there is a price to pay for
arranging control of these special features, and we are able to pay it only
if there are Carleson type conditions that can be derived from the stopping
time criteria.

All of this information must then be encoded in the sequences $\left\{
\bigtriangleup _{J}^{\omega }H_{\sigma }\bigtriangleup _{I}^{\sigma
}f\right\} _{\left( I,J\right) \in \mathcal{P}}$ in such a way that it can
be exploited, and this requires different approaches in each separate
instance. Examples of this can be found in the ensuing sections where the
major forms are analyzed using a variety of tools. In particular we need the
following preliminary tools from Section 3 below.

\begin{enumerate}
\item \emph{Traditional two weight tools}: the $\func{good}$/$\func{bad}$
interval technology and Poisson inequalities in Lemma \ref{Poisson
inequality} of Nazarov, Treil and Volberg (from \cite[Subsetion 4.1]{NTV4}
and \cite{Vol}); the monotonicity equivalence in Lemma \ref{Energy Lemma}
(from \cite[Section 4]{LaSaShUr3}), including the simple new Lemma \ref%
{energy pointwise}; and standard properties of Carleson measures.

\item $L^{p}$\emph{\ specific two weight tools}: A new vector-valued Theorem %
\ref{using Carleson} for Carleson measures with $1<p<\infty $; the square
function Theorem \ref{square thm} for corona martingale differences that
uses Burkholder's theorem (from \cite{Bur1} and \cite{Bur2}); extension (\ref%
{FS vv}) of the vector-valued maximal inequalities of Fefferman and Stein 
\cite{FeSt} to the dyadic setting of a general measure (observed by J.-L.
Luna-Garcia), as well as the useful Lemma \ref{disjoint supp}; and finally a
Corona Martingale Comparison Principle in Proposition \ref{CMCP} that can be
viewed as a variant of the comparison principles for martingale differences
in J. Zinn \cite{Zin}.
\end{enumerate}

In particular we point to the challenges of the \emph{far} and \emph{stopping%
} forms in Sections 6, 8 and 9, whose analysis takes up much of this paper,
and moreover requires the restriction to $p<4$ for handling the stopping
form.

The main tool used to bound the \emph{far} form is the Intertwining
Proposition, which controls the \emph{far} form by a new \emph{extended}
energy, which is shown to be necessary for the norm inequality\footnote{%
It is interesting to note that even in the case $p=2$ there is no short
direct proof of the necessity of extended energy for the norm inequality.
The traditional approach is to go through the two weight Poisson inequality
and its equivalent Poisson testing conditions, which is a long and difficult
road. Here is this paper we derive necessity using most of the estimates for
the entire proof - again a long and difficult road.}. See Remark \ref{ref}
for more detail on extended energy.

There are three main tools used to bound the stopping form, namely a dual
tree decomposion generalizing Lacey's `upside down' corona construction, a
martingale difference comparison principle that delivers a form of
`orthogonality', and an $L^{p}$-Stopping Child Lemma. At least three major
obstacles appear, with the first arising from the fact that we no longer
have additivity of Hilbert space projections that played a prominent role in
the `upside down' corona construction of Lacey in \cite[page 8]{Lac}. The
second arises from the lack of a counterpart to the Quasi-Orthogonality
Argument in \cite[page 6]{Lac} when $p\neq 2$, which leads to an elaborate
extension of the stopping form, and as Lacey writes in his primer \cite[page
4]{Lac2}, "This argument\ (\textit{referring to the case }$p=2$\textit{\ in 
\cite[page 6]{Lac}}) relies heavily on the Hilbertian structure of the
question." Possibly the most significant obstacle is the failure of our
methods to obtain a suitable extension of the decay in Lemma \ref{final
level} in Subsection 9.6 to $p\geq 4$, thus limiting our control of the
stopping form to $p<4$.

In Section 2 we treat necessity of the quadratic conditions and defer the
necessity of extended energy to Section 9, then the preliminaries are
treated in Section 3, followed by the sufficiency proof of the two main
theorems in Sections 4 (comparable and disjoint forms), 5 (neighbour form),
6 (far form), 7 (paraproduct form), 8 (stopping form, which requires the
restriction $p<4$) and then necessity of extended energy in Section 9.
Concluding remarks are made in Section 10 on the restriction to $\frac{4}{3}%
<p<4$ in controlling the stopping form.

\section{Necessity of testing conditions}

To derive the necessity of the local quadratic testing, quadratic
Muckenhoupt and quadratic weak boundedness conditions, we will need a
special case of the classical Hilbert space valued extension of a bounded
operator from one $L^{p}$ space to another, see e.g. \cite[Theorem 4.5.1]%
{Gra}. Suppose $T$ is bounded from $L^{p}\left( \mathbb{R};\sigma \right) $
to $L^{p}\left( \mathbb{R};\omega \right) $, $0<p<\infty $, and for $\mathbf{%
f}=\left\{ f_{j}\right\} _{j=1}^{\infty }$, define%
\begin{equation*}
T\mathbf{f}\equiv \left\{ Tf_{j}\right\} _{j=1}^{\infty }.
\end{equation*}%
Then $T$ extends to an operator bounded from $L^{p}\left( \ell ^{2};\sigma
\right) $ to $L^{p}\left( \ell ^{2};\omega \right) $ with the same norm,%
\begin{equation*}
\int_{\mathbb{R}}\left\vert T\mathbf{f}\left( x\right) \right\vert _{\ell
^{2}}^{p}d\omega \left( x\right) \leq \left\Vert T\right\Vert _{L^{p}\left(
\sigma \right) \rightarrow L^{p}\left( \omega \right) }^{p}\int_{\mathbb{R}%
}\left\vert \mathbf{f}\left( x\right) \right\vert _{\ell ^{2}}^{p}d\sigma
\left( x\right) ,
\end{equation*}%
which written out in full becomes%
\begin{equation}
\left( \int_{\mathbb{R}}\left( \sqrt{\sum_{j=1}^{\infty }\left\vert
Tf_{j}\left( x\right) \right\vert ^{2}}\right) ^{p}d\omega \left( x\right)
\right) ^{\frac{1}{p}}\leq \left\Vert T\right\Vert _{L^{p}\left( \sigma
\right) \rightarrow L^{p}\left( \omega \right) }\left( \int_{\mathbb{R}%
}\left( \sqrt{\sum_{j=1}^{\infty }\left\vert f_{j}\left( x\right)
\right\vert ^{2}}\right) ^{p}d\sigma \left( x\right) \right) ^{\frac{1}{p}}.
\label{in full}
\end{equation}

\subsection{Necessity of quadratic testing and offset $A_{p}$, and WBP}

We can use the vector-valued inequality (\ref{in full}) with $T=H_{\sigma }$
to obtain the necessity of the global quadratic testing inequality (\ref%
{glob}) for the boundedness of $H$ from $L^{p}\left( \sigma \right) $ to $%
L^{p}\left( \omega \right) $. Indeed, we simply set $f_{j}\equiv
a_{j}H_{\sigma }\mathbf{1}_{I_{i}}$ in (\ref{in full}) to obtain the global
quadratic testing inequality (\ref{glob}). Then we simply note the pointwise
inequality%
\begin{equation*}
\sum_{i=1}^{\infty }\left( a_{i}\mathbf{1}_{I_{i}}H_{\sigma }\mathbf{1}%
_{I_{i}}\right) \left( x\right) ^{2}\leq \sum_{i=1}^{\infty }\left\vert
a_{i}\right\vert ^{2}\left\vert H_{\sigma }\mathbf{1}_{I_{i}}\left( x\right)
\right\vert ^{2}\mathbf{,}
\end{equation*}%
to obtain the local version (\ref{quad cube testing}). Altogether we have, 
\begin{equation*}
\mathfrak{T}_{H,p}^{\ell ^{2},\func{loc}}\left( \sigma ,\omega \right) \leq 
\mathfrak{T}_{H,p}^{\ell ^{2},\limfunc{glob}}\left( \sigma ,\omega \right)
\lesssim \mathfrak{N}_{H,p}\left( \sigma ,\omega \right) .
\end{equation*}

The quadratic offset $A_{p}^{\ell ^{2},\limfunc{offset}}\left( \sigma
,\omega \right) $ characteristic is controlled by the global quadratic
testing characteristic $\mathfrak{T}_{H,p}^{\ell ^{2},\func{global}}\left(
\sigma ,\omega \right) $ using the pointwise estimate $\left\vert H_{\sigma }%
\mathbf{1}_{I_{i}^{\ast }}\left( x\right) \right\vert \gtrsim \frac{%
\left\vert I_{i}^{\ast }\right\vert _{\sigma }}{\left\vert I_{i}\right\vert }
$ for $x\in I_{i}$, and the quadratic weak boundedness condition also
follows from global quadratic testing,%
\begin{eqnarray*}
&&\sum_{i=1}^{\infty }\left\vert \int_{\mathbb{R}}a_{i}H_{\sigma }\mathbf{1}%
_{I_{i}}\left( x\right) b_{i}\mathbf{1}_{J_{i}}\left( x\right) d\omega
\left( x\right) \right\vert \leq \left\Vert \left( \sum_{i=1}^{\infty
}\left( a_{i}H_{\sigma }\mathbf{1}_{I_{i}}\right) ^{2}\right) ^{\frac{1}{2}%
}\right\Vert _{L^{p}\left( \omega \right) }\left\Vert \left(
\sum_{i=1}^{\infty }\left( b_{i}\mathbf{1}_{J_{i}}\right) ^{2}\right) ^{%
\frac{1}{2}}\right\Vert _{L^{p^{\prime }}\left( \omega \right) } \\
&&\ \ \ \ \ \ \ \ \ \ \ \ \ \ \ \lesssim \mathfrak{T}_{H,p}^{\ell ^{2},%
\limfunc{glob}}\left( \sigma ,\omega \right) \left\Vert \left(
\sum_{i=1}^{\infty }\left( a_{i}\mathbf{1}_{I_{i}}\right) ^{2}\right) ^{%
\frac{1}{2}}\right\Vert _{L^{p}\left( \sigma \right) }\left\Vert \left(
\sum_{i=1}^{\infty }\left( b_{i}\mathbf{1}_{J_{i}}\right) ^{2}\right) ^{%
\frac{1}{2}}\right\Vert _{L^{p^{\prime }}\left( \omega \right) }.
\end{eqnarray*}%
Finally, it is claimed without proof in \cite{HyVu} that finiteness of the
global quadratic Muckenhoupt characteristic $\mathcal{A}_{p}^{\ell ^{2},%
\limfunc{glob}}\left( \sigma ,\omega \right) $ is necessary for the norm
inequality. However, the reader can now easily provide a proof modeled on
that for the offset condition above, after writing $f_{i}\mathbf{1}_{\mathbb{%
R}\setminus I_{i}}=f_{i}\mathbf{1}_{L_{i}}+f_{i}\mathbf{1}_{R_{i}}$, where $%
L_{i}$ and $R_{i}$ are the left and right hand components of $\mathbb{R}%
\setminus I_{i}$ respectively\footnote{%
We thank Ignacio Uriarte-Tuero for discussions on this matter.}. Of course,
the triple quadratic Muckenhoupt characteristic $A_{p}^{\ell ^{2},\limfunc{%
trip}}\left( \sigma ,\omega \right) $, and the scalar tailed characteristic $%
\mathcal{A}_{p}\left( \sigma ,\omega \right) $, are controlled by the global
quadratic Muckenhoupt characteristic $\mathcal{A}_{p}^{\ell ^{2},\limfunc{%
glob}}\left( \sigma ,\omega \right) $. Finally, it is well known in the case 
$p=2$ that the scalar tailed characteristic $\mathcal{A}_{p}\left( \sigma
,\omega \right) $ is controlled by the scalar global testing characteristic $%
\mathfrak{T}_{H,p}^{\limfunc{glob}}\left( \sigma ,\omega \right) $, and the
same proof works for $p\neq 2$.

\subsection{Necessity of extended energy for boundedness of the dual half
forms}

Recall the extended energy characteristic $\mathcal{E}_{p}^{\ell ^{2},%
\limfunc{ext}}\left( \sigma ,\omega \right) $ and its dual $\mathcal{E}%
_{p^{\prime }}^{\ell ^{2},\limfunc{ext}}\left( \omega ,\sigma \right) $
given in inequality (\ref{ext ener}).

\begin{theorem}
\label{necc func ener}Suppose $\sigma $ and $\omega $ are locally finite
positive Borel measures on the real line $\mathbb{R}$. For $1<p<\infty $ we
have%
\begin{equation*}
\mathcal{E}_{p}^{\ell ^{2},\limfunc{ext}}\left( \sigma ,\omega \right)
\lesssim \mathfrak{N}_{H,p}^{\leq }\left( \sigma ,\omega \right) \text{ and }%
\mathcal{E}_{p^{\prime }}^{\ell ^{2},\limfunc{ext}}\left( \sigma ,\omega
\right) \lesssim \mathfrak{N}_{H,p}^{>}\left( \sigma ,\omega \right) .
\end{equation*}
\end{theorem}

We defer the proof to Section \ref{ex en section} below. In the case $p=2$,
each of the extended energy characteristics are controlled by the global
quadratic testing characteristics \cite{LaSaShUr3}, but when $p\neq 2$, we
are unable to show even that $\mathcal{E}_{p}^{\ell ^{2},\limfunc{ext}%
}\left( \sigma ,\omega \right) +\mathcal{E}_{p^{\prime }}^{\ell ^{2},%
\limfunc{ext}}\left( \omega ,\sigma \right) \lesssim \mathfrak{N}%
_{H,p}\left( \sigma ,\omega \right) $. Thus when $p=2$, we have $\mathfrak{N}%
_{H,p}\left( \sigma ,\omega \right) \approx \mathfrak{N}_{H,p}^{\leq }\left(
\sigma ,\omega \right) +\mathfrak{N}_{H,p}^{>}\left( \sigma ,\omega \right) $%
, but it remains an open question whether or not this persists for any $%
p\neq 2$.

\section{Preliminaries}

We will need the $\func{good}/\func{bad}$ technology of Nazarov, Treil and
Volberg, a Monotonicity Lemma, a Poisson Decay Lemma, an estimate on sums of
Poisson kernels, a $p$-energy reversal inequality, several properties of
Carleson measures, bounds for square functions using Burkholder's martingale
transform theorem and Kintchine's expectation theorem, an extension of
Fefferman-Stein vector-valued inequalitites to the dyadic maximal function
with arbitrary measures, and finally a Corona Martingale Comparison
Principle in order to control the stopping form in the final section of the
paper - an important new feature of the proof.

Recall the formula%
\begin{equation}
f=\sum_{Q\in \mathcal{D}}\bigtriangleup _{I}^{\sigma }f,\ \ \ \ \ \text{%
where }\bigtriangleup _{Q}^{\sigma }f=\left\langle f,h_{Q}^{\sigma
}\right\rangle _{\sigma }h_{Q}^{\sigma }\text{ and }h_{Q}^{\sigma }=\frac{1}{%
\sqrt{\left\vert Q\right\vert _{\sigma }}}\left( \sqrt{\frac{\left\vert
Q_{-}\right\vert _{\sigma }}{\left\vert Q_{+}\right\vert _{\sigma }}}\mathbf{%
1}_{Q_{+}}-\sqrt{\frac{\left\vert Q_{+}\right\vert _{\sigma }}{\left\vert
Q_{-}\right\vert _{\sigma }}}\mathbf{1}_{Q_{-}}\right) ,  \label{def Haar}
\end{equation}%
and where $Q_{\pm }$ denote the right and left hand children of the interval 
$Q$.

\subsection{Good/bad intervals and functions}

For the purposes of this paper, an \emph{interval} $I=\left[ a,b\right) $
will be taken to be closed on the left and open on the right, unless
otherwise stated. We recall the definition of a $\func{good}$ dyadic
interval from \cite[Subsection 4.1]{NTV4}, see also \cite{LaSaUr2}. We say
that a dyadic interval $J$ is $\left( r,\varepsilon \right) $-\emph{deeply
embedded} in a dyadic interval $K$, or simply $r$\emph{-deeply embedded} in $%
K$, which we write as $J\subset _{r}K$, when $J\subset K$ and both 
\begin{eqnarray}
\ell \left( J\right) &\leq &2^{-r}\ell \left( K\right) ,
\label{def deep embed} \\
\limfunc{dist}\left( J,\partial K\right) &\geq &\frac{1}{2}\ell \left(
J\right) ^{\varepsilon }\ell \left( K\right) ^{1-\varepsilon }.  \notag
\end{eqnarray}

\begin{definition}
Let $r\in \mathbb{N}$ and $0<\varepsilon <1$.

\begin{enumerate}
\item A dyadic interval $J$ is $\left( r,\varepsilon \right) $\emph{-}$\func{%
good}$, or simply $\func{good}$, if for \emph{every} dyadic superinterval $I$%
, it is the case that \textbf{either} $J$ has side length at least $2^{-r}$
times that of $I$, \textbf{or} $J\subset _{r}I$ is $\left( r,\varepsilon
\right) $-deeply embedded in $I$.

\item A dyadic interval $J$ is $\limfunc{child}$\emph{-}$\func{good}$ if $J$
and its two\ dyadic children $J_{\pm }$ are $\func{good}$.

\item Denote by $\mathcal{D}_{\func{good}}$ and $\mathcal{D}_{\func{good}}^{%
\limfunc{child}}$ the set of $\func{good}$ and $\limfunc{child}$\emph{-}$%
\func{good}$ intervals respectively.
\end{enumerate}
\end{definition}

It is shown in \cite[Theorem 4.1 on page 15]{NTV4} that for parameters $%
r,\varepsilon $ sufficiently large and small respectively, the boundedness
of the Hilbert transform $H_{\sigma }:L^{p}\left( \sigma \right) \rightarrow
L^{p}\left( \omega \right) $ can be reduced to testing the bilinear
inequality%
\begin{equation*}
\left\vert \int_{\mathbb{R}}H_{\sigma }f\left( x\right) g\left( x\right)
d\omega \left( x\right) \right\vert \leq C\left\Vert f\right\Vert
_{L^{p}\left( \sigma \right) }\left\Vert g\right\Vert _{L^{p^{\prime
}}\left( \omega \right) }\ ,
\end{equation*}%
uniformly over all dyadic grids $\mathcal{D},$ and all functions $%
f=\sum_{I\in \mathcal{D}}\bigtriangleup _{I}^{\sigma }f\in L^{p}\left(
\sigma \right) \cap L^{2}\left( \sigma \right) $ and $g=\sum_{J\in \mathcal{D%
}}\bigtriangleup _{J}^{\omega }g\in L^{p}\left( \omega \right) \cap
L^{2}\left( \omega \right) $ whose Haar supports $\left\{ I\in \mathcal{D}%
:\bigtriangleup _{I}^{\sigma }f\neq 0\right\} $ are $\left\{ J\in \mathcal{D}%
:\bigtriangleup _{J}^{\omega }g\neq 0\right\} $ are contained in $\mathcal{D}%
_{\limfunc{good}}^{\limfunc{child}}$\footnote{%
Only the case $\mathcal{D}_{\func{good}}$ and $p=2$ is mentioned in \cite[%
Theorem 4.1]{NTV4}, but the proof extends readily to $\mathcal{D}_{\func{good%
}}^{\limfunc{child}}$ and $1<p<\infty $.}. The parameters $r,\varepsilon $
will be fixed sufficiently large and small respectively later in the proof.

\subsection{Poisson and Monotonicity Lemmas}

For any interval $J$ with center $c_{J}$, and any finite measure $\nu $,
define the Poisson integral,%
\begin{equation*}
\mathrm{P}\left( J,\nu \right) \equiv \int_{\mathbb{R}}\frac{\ell \left(
J\right) }{\left( \ell \left( J\right) +\left\vert y-c_{J}\right\vert
\right) ^{2}}d\nu \left( y\right) .
\end{equation*}

\begin{lemma}[{Monotonicity Lemma \protect\cite[Section 4]{LaSaShUr3}}]
\label{Energy Lemma}Fix a locally finite positive Borel measure $\omega $.
Let $J\ $be a cube in $\mathcal{D}$. Let $\nu $ be a positive measure
supported in $\mathbb{R}\setminus 2J$. Let $H$ be the Hilbert transform.
Then for any $\beta \in \mathbb{R}$, we have the monotonicity principle,%
\begin{equation*}
\left\vert \left\langle H\nu ,h_{J}^{\omega }\right\rangle _{\omega
}\right\vert \approx \frac{\mathrm{P}\left( J,\nu \right) }{\ell \left(
J\right) }\left\vert \int_{J}\left( x-\beta \right) h_{J}^{\omega }\left(
x\right) d\omega \left( x\right) \right\vert =\frac{\mathrm{P}\left( J,\nu
\right) }{\ell \left( J\right) }\left\vert \left\langle Z-\beta
,h_{J}^{\omega }\right\rangle _{\omega }\right\vert ,
\end{equation*}%
where $Z\left( x\right) =x$ is the identity function on the real line.
\end{lemma}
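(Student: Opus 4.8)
The plan is to prove the Monotonicity Lemma by expanding the kernel $\frac{1}{y-x}$ around a suitable reference point inside $J$ and tracking the leading term. The key observation is that since $\nu$ is supported in $\mathbb{R}\setminus 2J$ and $x \in J$, the ratio $\frac{|x - c_J|}{|y - c_J|}$ is bounded by roughly $\frac12$, so the geometric series expansion of the kernel converges and its size is governed by the first non-constant term. Since $h_J^\omega$ has $\omega$-mean zero, the constant term in $x$ drops out when we integrate against $h_J^\omega \, d\omega$, and the first surviving term is the linear one.

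First I would write, for $x \in J$ and $y \notin 2J$,
\begin{equation*}
\frac{1}{y-x} = \frac{1}{(y - c_J) - (x - c_J)} = \frac{1}{y-c_J}\sum_{k=0}^{\infty} \left(\frac{x-c_J}{y-c_J}\right)^k = \frac{1}{y-c_J} + \frac{x - c_J}{(y-c_J)^2} + O\!\left(\frac{\ell(J)^2}{|y-c_J|^3}\right),
\end{equation*}
where the error is controlled uniformly in $x \in J$. Integrating in $y$ against $d\nu(y)$ gives $H\nu(x) = c_0 + c_1 (x - c_J) + E(x)$ where $c_1 = \int \frac{d\nu(y)}{(y-c_J)^2} \approx \frac{\mathrm{P}(J,\nu)}{\ell(J)^2}$ (using that $\ell(J) + |y - c_J| \approx |y - c_J|$ for $y \notin 2J$), and the error $E(x)$ satisfies $|E(x)| \lesssim \frac{\ell(J)^2}{\ell(J)} \cdot \frac{\mathrm{P}(J,\nu)}{\ell(J)^2} \cdot \frac{\ell(J)}{?}$ — more precisely $|E(x)| \lesssim \ell(J)^2 \int \frac{d\nu(y)}{|y-c_J|^3}$ which I would need to compare against the main term. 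Pairing with $h_J^\omega$: the constant $c_0$ contributes nothing by mean-zero, so
\begin{equation*}
\langle H\nu, h_J^\omega\rangle_\omega = c_1 \int_J (x - c_J) h_J^\omega(x)\, d\omega(x) + \langle E, h_J^\omega\rangle_\omega,
\end{equation*}
and replacing $c_J$ by an arbitrary $\beta$ only changes the first term by a mean-zero constant, so $\int_J (x - c_J) h_J^\omega = \int_J (x - \beta) h_J^\omega$. This already gives the displayed identity up to the error term.

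The main obstacle — and the reason the statement is an $\approx$ rather than an equality — is showing the error term $\langle E, h_J^\omega\rangle_\omega$ is dominated by the main term $\frac{\mathrm{P}(J,\nu)}{\ell(J)}\left|\langle Z - \beta, h_J^\omega\rangle_\omega\right|$. The subtlety is that the Haar function $h_J^\omega$ can be very unbalanced (the measure $\omega$ may put wildly different mass on $J_+$ versus $J_-$), so one cannot simply bound $\left|\int_J (x-\beta) h_J^\omega\, d\omega\right|$ below by a crude quantity like $\ell(J) \sqrt{|J|_\omega}$. The right move is to choose $\beta$ optimally — take $\beta = \langle Z\rangle_J^\omega$ or the appropriate child-average — so that $\langle Z - \beta, h_J^\omega\rangle_\omega$ captures the full "first moment oscillation" of $\omega$ on $J$, and then observe that the higher-order error terms $E(x) = \sum_{k \geq 2} (x - c_J)^k \int \frac{d\nu(y)}{(y-c_J)^{k+1}}$, when paired with $h_J^\omega$, produce $k$-th moments $\int_J (x-c_J)^k h_J^\omega\, d\omega$ that are themselves controlled by $\ell(J)^{k-1}$ times the first moment $\left|\langle Z - \beta, h_J^\omega\rangle_\omega\right|$ — this last comparison of moments is where one uses that all $x \in J$ satisfy $|x - c_J| \leq \ell(J)$ together with the mean-zero property, reducing each higher moment to the first. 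Summing the resulting geometric series in $k$ (with ratio $\lesssim |x - c_J|/|y - c_J| \lesssim \frac12$) closes the estimate. I would also record the companion pointwise bound (this is presumably the "simple new Lemma \ref{energy pointwise}" alluded to earlier) that $|H\nu(x)| \lesssim \frac{\mathrm{P}(J,\nu)}{\ell(J)}$ for $x \in J$, which follows immediately from the same expansion without needing to pair against $h_J^\omega$.
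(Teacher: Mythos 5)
Your expansion gives the upper bound, but it does not prove the lower bound, and the lower bound is the whole point of the two-sided statement. After pairing with $h_J^\omega$, your main term is $c_1\left\vert \int_J (x-c_J)h_J^\omega\,d\omega\right\vert$ with $c_1=\int \frac{d\nu(y)}{(y-c_J)^2}$, and your proposed control of the tail is term-by-term: the $k$-th moment is at most $(\ell(J)/2)^{k-1}$ times the first moment and the $k$-th coefficient is at most $\ell(J)^{-(k-1)}c_1$, so the $k$-th error term is at most $2^{-(k-1)}c_1 M$ where $M=\int_J\left\vert x-c_J\right\vert\left\vert h_J^\omega\right\vert d\omega$. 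Summing over $k\geq 2$ gives an error bound of size $c_1 M\sum_{k\geq2}r^{k-1}=\frac{r}{1-r}c_1M$ with $r=\left\vert x-c_J\right\vert/\left\vert y-c_J\right\vert$ as large as $\tfrac12$, i.e.\ an error bound \emph{comparable to the main term}, with constant tending to $1$. Since the individual terms of the series do not share a common sign with the linear term (their signs depend on the parity of $k$ and on which side of $J$ the mass of $\nu$ sits), you cannot conclude that the error does not cancel the main term, so the direction $\left\vert\left\langle H\nu,h_J^\omega\right\rangle_\omega\right\vert\gtrsim \frac{\mathrm{P}(J,\nu)}{\ell(J)}\left\vert\left\langle Z-\beta,h_J^\omega\right\rangle_\omega\right\vert$ does not follow from your argument. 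The paper avoids the series altogether: subtracting the constant $\frac{1}{y-c_J}$ gives the \emph{exact} identity $\frac{1}{y-x}-\frac{1}{y-c_J}=\frac{x-c_J}{(y-x)(y-c_J)}$, and then one observes that the full double integrand is sign-definite, because $(x-c_J)h_J^\omega(x)\geq 0$ on $J$ (the Haar function is negative on the left child and positive on the right child) and $(y-x)(y-c_J)>0$ for $x\in J$, $y\notin 2J$. Hence the absolute value passes inside the double integral with equality, and the pointwise two-sided kernel bound $\frac{1}{\left\vert(y-x)(y-c_J)\right\vert}\approx\frac{1}{(\ell(J)+\left\vert y-c_J\right\vert)^2}$ yields both directions of $\approx$ at once; no optimization of $\beta$ is needed, since constants pair to zero against $h_J^\omega$ and $\left\vert\int_J(x-c_J)h_J^\omega\,d\omega\right\vert=\int_J\left\vert x-c_J\right\vert\left\vert h_J^\omega\right\vert\,d\omega$ automatically.

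A secondary point: your closing claim that $\left\vert H\nu(x)\right\vert\lesssim\frac{\mathrm{P}(J,\nu)}{\ell(J)}$ for $x\in J$ is false as stated -- the zeroth-order term $\int\frac{d\nu(y)}{y-c_J}$ is not controlled by the Poisson integral (take $\nu$ a unit mass at distance $d\gg\ell(J)$). What is true, and what the paper records in Corollary \ref{point mon} and Lemma \ref{energy pointwise}, is the bound after subtracting the value at the center (equivalently, for the Haar projection): $\left\vert H\nu(x)-H\nu(c_J)\right\vert\lesssim\mathrm{P}(J,\nu)$ on $J$, so $\left\vert\Delta_J^\omega H\nu(x)\right\vert\lesssim\mathrm{P}(J,\nu)\mathbf{1}_J(x)$; note also the correct scale is $\mathrm{P}(J,\nu)$, not $\mathrm{P}(J,\nu)/\ell(J)$.
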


Due to the importance of this result for the Hilbert transform, we repeat
the short proof here.

\begin{proof}
With $c_{J}$ equal to the center of$\ J$, and $\beta \in \mathbb{R}$, we
have that $\left( x-c_{J}\right) h_{J}^{\omega }\left( x\right) $ doesn't
change sign on $J$ by (\ref{def Haar}) and so, 
\begin{eqnarray*}
\left\vert \left\langle H\nu ,h_{J}^{\omega }\right\rangle _{\omega
}\right\vert &=&\left\vert \int_{J}\left( \int_{\mathbb{R}\setminus 2J}\frac{%
1}{y-x}d\nu \left( y\right) \right) h_{J}^{\omega }\left( x\right) d\omega
\left( x\right) \right\vert \\
&=&\left\vert \int_{J}\left( \int_{\mathbb{R}\setminus 2J}\frac{1}{y-x}-%
\frac{1}{y-c_{J}}d\nu \left( y\right) \right) h_{J}^{\omega }\left( x\right)
d\omega \left( x\right) \right\vert \\
&=&\left\vert \int_{J}\left( \int_{\mathbb{R}\setminus 2J}\frac{x-c_{J}}{%
\left( y-x\right) \left( y-c_{J}\right) }d\nu \left( y\right) \right)
h_{J}^{\omega }\left( x\right) d\omega \left( x\right) \right\vert \\
&=&\int_{J}\left( \int_{\mathbb{R}\setminus 2J}\frac{\ell \left( J\right) }{%
\left\vert \left( y-x\right) \left( y-c_{J}\right) \right\vert }d\nu \left(
y\right) \right) \left\vert \frac{x-c_{J}}{\ell \left( J\right) }%
h_{J}^{\omega }\left( x\right) \right\vert d\omega \left( x\right) \\
&\approx &\mathrm{P}\left( J,\nu \right) \int_{J}\left\vert \frac{x-c_{J}}{%
\ell \left( J\right) }h_{J}^{\omega }\left( x\right) \right\vert d\omega
\left( x\right) ,
\end{eqnarray*}%
and using%
\begin{eqnarray*}
\int_{J}\left\vert \frac{x-c_{J}}{\ell \left( J\right) }h_{J}^{\omega
}\left( x\right) \right\vert d\omega \left( x\right) &=&\frac{1}{\ell \left(
J\right) }\int_{J}\left( x-c_{J}\right) h_{J}^{\omega }\left( x\right)
d\omega \left( x\right) \\
&=&\frac{1}{\ell \left( J\right) }\int_{J}\left( x-\beta \right)
h_{J}^{\omega }\left( x\right) d\omega \left( x\right) ,
\end{eqnarray*}%
we obtain%
\begin{equation*}
\left\vert \left\langle H\nu ,h_{J}^{\omega }\right\rangle _{\omega
}\right\vert \approx \frac{\mathrm{P}\left( J,\nu \right) }{\ell \left(
J\right) }\left\vert \int_{J}\left( x-\beta \right) h_{J}^{\omega }\left(
x\right) d\omega \left( x\right) \right\vert =\frac{\mathrm{P}\left( J,\nu
\right) }{\ell \left( J\right) }\left\vert \left\langle Z,h_{J}^{\omega
}\right\rangle _{\omega }\right\vert .
\end{equation*}
\end{proof}

Here is a \emph{pointwise} corollary of the Monotonicity Lemma \ref{Energy
Lemma}, that estimates a Haar projection of $H\left( \mathbf{1}_{K}\sigma
\right) $.

\begin{corollary}
\label{point mon}For $J,K\in \mathcal{D}$ with $2J$ disjoint from $K$, we
have the pointwise estimate,%
\begin{equation*}
\left\vert \Delta _{J}^{\omega }H\left( \mathbf{1}_{K}\sigma \right) \left(
x\right) \right\vert \approx \frac{\mathrm{P}\left( J,\mathbf{1}_{K}\sigma
\right) }{\ell \left( J\right) }\left\vert \Delta _{J}^{\omega }Z\left(
x\right) \right\vert \leq \mathrm{P}\left( J,\mathbf{1}_{K}\sigma \right) 
\mathbf{1}_{J}\left( x\right) .
\end{equation*}
\end{corollary}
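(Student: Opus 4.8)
The plan is to read the first relation directly off the Monotonicity Lemma~\ref{Energy Lemma}, and then to establish the elementary pointwise bound $\left\vert \Delta_J^\omega Z(x)\right\vert \le \ell(J)\mathbf{1}_J(x)$ by hand, so that the stated chain of estimates follows at once.

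First I would set $\nu := \mathbf{1}_K\sigma$. Since $2J$ is disjoint from $K$, this is a positive (locally finite) measure supported in $\mathbb{R}\setminus 2J$, so Lemma~\ref{Energy Lemma} applies to it. Writing $\Delta_J^\omega H\nu(x) = \left\langle H\nu, h_J^\omega\right\rangle_\omega\, h_J^\omega(x)$ and $\Delta_J^\omega Z(x) = \left\langle Z, h_J^\omega\right\rangle_\omega\, h_J^\omega(x)$ (both pairings being over $J$ only, since $h_J^\omega$ is supported on $J$ and $\omega$ is locally finite), the monotonicity principle with, say, $\beta = c_J$ gives
\begin{equation*}
\left\vert \Delta_J^\omega H\nu(x)\right\vert = \left\vert \left\langle H\nu, h_J^\omega\right\rangle_\omega\right\vert \left\vert h_J^\omega(x)\right\vert \approx \frac{\mathrm{P}(J,\nu)}{\ell(J)}\left\vert \left\langle Z, h_J^\omega\right\rangle_\omega\right\vert \left\vert h_J^\omega(x)\right\vert = \frac{\mathrm{P}(J,\nu)}{\ell(J)}\left\vert \Delta_J^\omega Z(x)\right\vert,
\end{equation*}
which is precisely the claimed equivalence.

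For the remaining inequality, I would use that $h_J^\omega$ has $\omega$-mean zero, so $\Delta_J^\omega Z = \Delta_J^\omega(Z - c_J)$. In the one-dimensional dyadic setting the Haar projection is the martingale difference, equal on each child $J_\pm$ to $\mathbb{E}_{J_\pm}^\omega g - \mathbb{E}_J^\omega g$ (and supported on $J$). Taking $g = Z - c_J$, which satisfies $\left\vert g\right\vert \le \tfrac12\ell(J)$ on $J$, both conditional expectations are bounded by $\tfrac12\ell(J)$, whence $\left\vert \Delta_J^\omega Z(x)\right\vert \le \ell(J)\mathbf{1}_J(x)$. Substituting this into the equivalence above yields $\left\vert \Delta_J^\omega H(\mathbf{1}_K\sigma)(x)\right\vert \lesssim \frac{\mathrm{P}(J,\mathbf{1}_K\sigma)}{\ell(J)}\,\ell(J)\,\mathbf{1}_J(x) = \mathrm{P}(J,\mathbf{1}_K\sigma)\,\mathbf{1}_J(x)$, completing the proof. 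Every step here is routine, so I do not expect a genuine obstacle; the only point needing a little care is the bound on $\Delta_J^\omega Z$, where one must subtract the center $c_J$ before estimating and remember that the Haar projection is a difference of two averages (so the natural constant is $\ell(J)$, not $\tfrac12\ell(J)$).
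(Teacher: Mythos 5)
Your proposal is correct and follows essentially the same route as the paper: apply the Monotonicity Lemma to $\nu=\mathbf{1}_{K}\sigma$ (supported in $\mathbb{R}\setminus 2J$) and multiply by $\left\vert h_{J}^{\omega }\left( x\right) \right\vert $ to get the equivalence, then bound $\left\vert \Delta _{J}^{\omega }Z\right\vert $ by writing it as the martingale difference of $Z-c_{J}$ on the children of $J$. Your bookkeeping of the constant (using $\left\vert Z-c_{J}\right\vert \leq \tfrac{1}{2}\ell \left( J\right) $ on $J$ to get the bound $\ell \left( J\right) \mathbf{1}_{J}$) is in fact slightly sharper than the paper's crude bound $2\ell \left( J\right) \mathbf{1}_{J}$, but the argument is the same.
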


\begin{proof}
The Monotonicity Lemma \ref{Energy Lemma} yields%
\begin{equation*}
\left\vert \Delta _{J}^{\omega }H\left( \mathbf{1}_{K}\sigma \right) \left(
x\right) \right\vert =\left\vert \left\langle H\left( \mathbf{1}_{K}\sigma
\right) ,h_{J}^{\omega }\right\rangle _{\omega }\right\vert \left\vert
h_{J}^{\omega }\left( x\right) \right\vert \approx \frac{\mathrm{P}\left( J,%
\mathbf{1}_{K}\sigma \right) }{\ell \left( J\right) }\left\vert \left\langle
Z,h_{J}^{\omega }\right\rangle _{\omega }\right\vert \left\vert
h_{J}^{\omega }\left( x\right) \right\vert ,
\end{equation*}%
and then the following calculation completes the proof of the corollary,%
\begin{eqnarray*}
&&\left\vert \Delta _{J}^{\omega }Z\left( x\right) \right\vert =\left\vert
\Delta _{J}^{\omega }\left( Z-c_{J}\right) \left( x\right) \right\vert \\
&=&\left\vert E_{J_{-}}^{\omega }\left( Z-c_{J}\right) -E_{J}^{\omega
}\left( Z-c_{J}\right) \right\vert \mathbf{1}_{J_{-}}\left( x\right)
+\left\vert E_{J_{+}}^{\omega }\left( Z-c_{J}\right) -E_{J}^{\omega }\left(
Z-c_{J}\right) \right\vert \mathbf{1}_{J_{+}}\left( x\right) \\
&\leq &2\left[ \ell \left( J\right) \mathbf{1}_{J_{-}}\left( x\right) +\ell
\left( J\right) \mathbf{1}_{J_{+}}\left( x\right) \right] =2\ell \left(
J\right) \mathbf{1}_{J}\left( x\right) .
\end{eqnarray*}
\end{proof}

The corollary applies to separated intervals $J,K$ and we now show that the
upper bound in the corollary holds more generally for \emph{disjoint}
intervals $J$ and $K$, including in particular adjacent intervals.

\begin{lemma}
\label{energy pointwise}For $J\in \mathcal{D}$ that is disjoint from $K\in 
\mathcal{D}$, we have the pointwise estimate,%
\begin{equation*}
\left\vert \Delta _{J}^{\omega }H\left( \mathbf{1}_{K}\sigma \right) \left(
x\right) \right\vert \lesssim \mathrm{P}\left( J,\mathbf{1}_{K}\sigma
\right) \mathbf{1}_{J}\left( x\right) .
\end{equation*}
\end{lemma}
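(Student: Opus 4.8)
The plan is to split $K=K_{\mathrm{far}}\sqcup K_{\mathrm{near}}$ with $K_{\mathrm{far}}=K\setminus 2J$ and $K_{\mathrm{near}}=K\cap 2J$, to estimate $\Delta_J^\omega H(\mathbf{1}_{K_{\mathrm{far}}}\sigma)$ and $\Delta_J^\omega H(\mathbf{1}_{K_{\mathrm{near}}}\sigma)$ separately, and to add the two bounds at the end; this is legitimate because $\Delta_J^\omega$ is linear while $\mathrm{P}(J,\cdot)$ is additive and monotone in the measure, so $\mathrm{P}(J,\mathbf{1}_{K_{\mathrm{far}}}\sigma)+\mathrm{P}(J,\mathbf{1}_{K_{\mathrm{near}}}\sigma)=\mathrm{P}(J,\mathbf{1}_K\sigma)$. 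For the far piece there is nothing to do: $2J$ is disjoint from $K_{\mathrm{far}}$, so Corollary \ref{point mon} applies verbatim and gives $\left\vert \Delta_J^\omega H(\mathbf{1}_{K_{\mathrm{far}}}\sigma)(x)\right\vert\lesssim \mathrm{P}(J,\mathbf{1}_{K_{\mathrm{far}}}\sigma)\,\mathbf{1}_J(x)$. Hence the whole content of the lemma is the near piece, and here we use that, since $J$ and $K$ are disjoint, $K_{\mathrm{near}}\subset 2J\setminus J$ is an interval lying on a single side of $J$ — say the right — and we write $J_{\pm}$ for the two children of $J$, with $J_-$ the one farther from $K_{\mathrm{near}}$.

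For the near piece the starting point is the same bookkeeping as in the proof of Corollary \ref{point mon}: $\Delta_J^\omega f=\big(E_{J_+}^\omega f-E_J^\omega f\big)\mathbf{1}_{J_+}+\big(E_{J_-}^\omega f-E_J^\omega f\big)\mathbf{1}_{J_-}$, together with the fact that $E_J^\omega f=\tfrac{\left\vert J_+\right\vert_\omega}{\left\vert J\right\vert_\omega}E_{J_+}^\omega f+\tfrac{\left\vert J_-\right\vert_\omega}{\left\vert J\right\vert_\omega}E_{J_-}^\omega f$ is a convex combination of the two child averages, which yields for any $f$ the pointwise bound $\left\vert \Delta_J^\omega f(x)\right\vert\le\left\vert E_{J_+}^\omega f-E_{J_-}^\omega f\right\vert\mathbf{1}_J(x)$. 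Applying this to $f=H(\mathbf{1}_{K_{\mathrm{near}}}\sigma)$, writing $E_{J_+}^\omega f-E_{J_-}^\omega f=\tfrac{1}{\left\vert J_+\right\vert_\omega\left\vert J_-\right\vert_\omega}\int_{J_+}\!\int_{J_-}\big(f(x)-f(x')\big)\,d\omega(x')\,d\omega(x)$, and inserting the identity $f(x)-f(x')=\int_{K_{\mathrm{near}}}\tfrac{x-x'}{(y-x)(y-x')}\,d\sigma(y)$ (the same algebraic identity that drives the Monotonicity Lemma \ref{Energy Lemma}), the bounds $\left\vert x-x'\right\vert\le\ell(J)$ and $\left\vert y-x'\right\vert\ge\tfrac12\ell(J)$ — the latter because $x'$ runs over the far child $J_-$ while $y\in K_{\mathrm{near}}\subset 2J\setminus J$ — reduce the whole matter to the estimate
\begin{equation*}
\frac{1}{\left\vert J_+\right\vert_\omega}\int_{J_+}\left(\int_{K_{\mathrm{near}}}\frac{d\sigma(y)}{\left\vert y-x\right\vert}\right)d\omega(x)\ \lesssim\ \frac{\left\vert K_{\mathrm{near}}\right\vert_\sigma}{\ell(J)}\ \approx\ \mathrm{P}\big(J,\mathbf{1}_{K_{\mathrm{near}}}\sigma\big),
\end{equation*}
the last equivalence holding because $\ell(J)+\left\vert y-c_J\right\vert\approx\ell(J)$ for $y\in 2J$.

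The hard part — and the reason this is a genuine strengthening of Corollary \ref{point mon} rather than a corollary of it — is precisely the displayed inequality when $K_{\mathrm{near}}$ abuts $\partial J$: there the one-dimensional potential $\int_{K_{\mathrm{near}}}\tfrac{d\sigma(y)}{\left\vert y-x\right\vert}$ need not remain bounded as $x\to\partial J$, so it cannot be pulled out in $L^\infty$, and one must genuinely exploit its averaging against $d\omega/\left\vert J_+\right\vert_\omega$ over the whole near child. I expect essentially all of the real work to be here, and would attack it by decomposing $K_{\mathrm{near}}$ — which, $K$ being dyadic, is a union of whole dyadic subintervals — into the dyadic layers on which $\limfunc{dist}(\cdot,\partial J)\approx 2^{-j}\ell(J)$, on each of which $\tfrac1{\left\vert y-x\right\vert}$ is essentially constant in $y$ at scale $2^{-j}\ell(J)$, pairing the $j$-th layer of $K_{\mathrm{near}}$ with the part of $J_+$ at comparable distance from $\partial J$, and summing the resulting safely separated two-parameter estimates.
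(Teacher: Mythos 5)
Your preparatory reductions are all correct: the splitting $K=(K\setminus 2J)\cup (K\cap 2J)$ together with additivity of $\mathrm{P}(J,\cdot)$, the use of Corollary \ref{point mon} for the far piece (its proof, via Lemma \ref{Energy Lemma}, only needs the measure to be supported off $2J$, so applying it to $\mathbf{1}_{K\setminus 2J}\sigma$ is legitimate), the bound $\left\vert \Delta _{J}^{\omega }f\right\vert \leq \left\vert E_{J_{+}}^{\omega }f-E_{J_{-}}^{\omega }f\right\vert \mathbf{1}_{J}$, and the two elementary estimates $\left\vert x-x^{\prime }\right\vert \leq \ell \left( J\right) $, $\left\vert y-x^{\prime }\right\vert \geq \frac{1}{2}\ell \left( J\right) $. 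The gap is exactly at the step you flag as the hard part: the inequality
\begin{equation*}
\frac{1}{\left\vert J_{+}\right\vert _{\omega }}\int_{J_{+}}\left( \int_{K_{\mathrm{near}}}\frac{d\sigma \left( y\right) }{\left\vert y-x\right\vert }\right) d\omega \left( x\right) \lesssim \frac{\left\vert K_{\mathrm{near}}\right\vert _{\sigma }}{\ell \left( J\right) }
\end{equation*}
is simply false for general locally finite measures, so no dyadic layering can establish it. Take $J=[0,1)$, $K=[1,2)$, $\sigma =\delta _{1+\eta }$ and $\omega =\delta _{1-\eta }+\delta _{1/4}$ with $0<\eta <\frac{1}{4}$: the left side equals $\frac{1}{2\eta }$ while the right side is $1$. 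The $\omega $-average over $J_{+}$ buys nothing because $\omega $ may concentrate at the endpoint shared by $J$ and $K$ while $\sigma $ charges a point just across it; in your layer scheme the $j$-th layer on each side contributes about $2^{j}\ell \left( J\right) ^{-1}\sigma \left( \text{layer}\right) \omega \left( \text{layer}\right) /\left\vert J_{+}\right\vert _{\omega }$, and there is nothing left to sum against. Worse, the same pair of measures defeats the conclusion of Lemma \ref{energy pointwise} itself for adjacent intervals: $\Delta _{J}^{\omega }H\left( \mathbf{1}_{K}\sigma \right) \left( 1-\eta \right) \geq \frac{1}{4\eta }-\frac{2}{3}$ while $\mathrm{P}\left( J,\mathbf{1}_{K}\sigma \right) \leq \frac{4}{9}$, so no argument can close your near-piece estimate without an additional separation hypothesis.

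For comparison, the paper's proof takes a different route: it does not split $K$, but subtracts the constant $\gamma =H\left( \mathbf{1}_{K}\sigma \right) \left( c_{J}\right) $ inside the Haar coefficient and expands $h_{J}^{\omega }\left( y\right) h_{J}^{\omega }\left( x\right) $ over the four pairs of children. For $y$ in the child of $J$ away from $K$ this works cleanly, since then $\left\vert H\left( \mathbf{1}_{K}\sigma \right) \left( y\right) -\gamma \right\vert =\left\vert \int_{K}\frac{y-c_{J}}{\left( z-y\right) \left( z-c_{J}\right) }d\sigma \left( z\right) \right\vert \lesssim \mathrm{P}\left( J,\mathbf{1}_{K}\sigma \right) $ pointwise because $\left\vert z-y\right\vert ,\left\vert z-c_{J}\right\vert \geq \frac{1}{2}\ell \left( J\right) $; but for $y$ in the child adjacent to $K$ the terms involved are controlled only by the same kind of bound your display encodes, and the example above makes them large as well. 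So your diagnosis of where the difficulty lives is exactly right; what is missing is not a cleverer decomposition but a separation hypothesis, e.g. $K\cap 2J=\emptyset $ as in Corollary \ref{point mon}, or the goodness and deep embedding in force at the one place the lemma is invoked (the neighbour form), where $\limfunc{dist}\left( J,K\right) \gtrsim 2^{\left( 1-\varepsilon \right) r}\ell \left( J\right) $, so that your far case already gives the whole estimate.
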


\begin{proof}
We have%
\begin{eqnarray*}
&&\Delta _{J}^{\omega }H\left( \mathbf{1}_{K}\sigma \right) \left( x\right)
=\left\langle H\left( \mathbf{1}_{K}\sigma \right) ,h_{J}^{\omega
}\right\rangle _{\omega }h_{J}^{\omega }\left( x\right) =\int_{J}\left[
H\left( \mathbf{1}_{K}\sigma \right) \left( y\right) -\gamma \right]
h_{J}^{\omega }\left( y\right) h_{J}^{\omega }\left( x\right) d\omega \left(
y\right) \\
&=&\int_{J}\left[ H\left( \mathbf{1}_{K}\sigma \right) \left( y\right)
-\gamma \right] h_{J}^{\omega }\left( y\right) h_{J}^{\omega }\left(
x\right) \left\{ \mathbf{1}_{J_{-}}\left( y\right) \mathbf{1}_{J_{-}}\left(
x\right) +\mathbf{1}_{J_{+}}\left( y\right) \mathbf{1}_{J_{+}}\left(
x\right) \right\} d\omega \left( y\right) \\
&&+\int_{J}\left[ H\left( \mathbf{1}_{K}\sigma \right) \left( y\right)
-\gamma \right] h_{J}^{\omega }\left( y\right) h_{J}^{\omega }\left(
x\right) \left\{ \mathbf{1}_{J_{-}}\left( y\right) \mathbf{1}_{J_{+}}\left(
x\right) +\mathbf{1}_{J_{+}}\left( y\right) \mathbf{1}_{J_{-}}\left(
x\right) \right\} d\omega \left( y\right) \\
&\equiv &A+B+C+D.
\end{eqnarray*}%
Hence using the formula (\ref{def Haar}) for the Haar function above,%
\begin{eqnarray*}
h_{J}^{\omega }\left( y\right) h_{J}^{\omega }\left( x\right) \mathbf{1}%
_{J_{-}}\left( y\right) \mathbf{1}_{J_{-}}\left( x\right) &=&\frac{1}{%
\left\vert J\right\vert _{\omega }}\frac{\left\vert J_{+}\right\vert
_{\omega }}{\left\vert J_{-}\right\vert _{\omega }}\mathbf{1}_{J_{-}}\left(
y\right) \mathbf{1}_{J_{-}}\left( x\right) , \\
h_{J}^{\omega }\left( y\right) h_{J}^{\omega }\left( x\right) \mathbf{1}%
_{J_{+}}\left( y\right) \mathbf{1}_{J_{+}}\left( x\right) &=&\frac{1}{%
\left\vert J\right\vert _{\omega }}\frac{\left\vert J_{-}\right\vert
_{\omega }}{\left\vert J_{+}\right\vert _{\omega }}\mathbf{1}_{J_{+}}\left(
y\right) \mathbf{1}_{J_{+}}\left( x\right) , \\
h_{J}^{\omega }\left( y\right) h_{J}^{\omega }\left( x\right) \mathbf{1}%
_{J_{-}}\left( y\right) \mathbf{1}_{J_{+}}\left( x\right) &=&-\frac{1}{%
\left\vert J\right\vert _{\omega }}\mathbf{1}_{J_{-}}\left( y\right) \mathbf{%
1}_{J_{+}}\left( x\right) , \\
h_{J}^{\omega }\left( y\right) h_{J}^{\omega }\left( x\right) \mathbf{1}%
_{J_{+}}\left( y\right) \mathbf{1}_{J_{-}}\left( x\right) &=&-\frac{1}{%
\left\vert J\right\vert _{\omega }}\mathbf{1}_{J_{+}}\left( y\right) \mathbf{%
1}_{J_{-}}\left( x\right) .
\end{eqnarray*}%
Then we have with $\gamma =H\left( \mathbf{1}_{K}\sigma \right) \left(
c_{J}\right) $,%
\begin{eqnarray*}
\left\vert A\right\vert &=&\left\vert \int_{J}\left[ H\left( \mathbf{1}%
_{K}\sigma \right) \left( y\right) -\gamma \right] \frac{1}{\left\vert
J\right\vert _{\omega }}\frac{\left\vert J_{+}\right\vert _{\omega }}{%
\left\vert J_{-}\right\vert _{\omega }}\mathbf{1}_{J_{-}}\left( y\right)
d\omega \left( y\right) \right\vert \mathbf{1}_{J_{-}}\left( x\right)
\lesssim \mathrm{P}\left( J,\mathbf{1}_{K}\sigma \right) \mathbf{1}%
_{J_{-}}\left( x\right) , \\
\left\vert B\right\vert &=&\left\vert \int_{J}\left[ H\left( \mathbf{1}%
_{K}\sigma \right) \left( y\right) -\gamma \right] \frac{1}{\left\vert
J\right\vert _{\omega }}\frac{\left\vert J_{-}\right\vert _{\omega }}{%
\left\vert J_{+}\right\vert _{\omega }}\mathbf{1}_{J_{+}}\left( y\right)
d\omega \left( y\right) \right\vert \mathbf{1}_{J_{+}}\left( x\right)
\lesssim \mathrm{P}\left( J,\mathbf{1}_{K}\sigma \right) \mathbf{1}%
_{J_{+}}\left( x\right) , \\
\left\vert C\right\vert &=&\left\vert \int_{J}\left[ H\left( \mathbf{1}%
_{K}\sigma \right) \left( y\right) -\gamma \right] \frac{1}{\left\vert
J\right\vert _{\omega }}\mathbf{1}_{J_{-}}\left( y\right) d\omega \left(
y\right) \right\vert \mathbf{1}_{J_{+}}\left( x\right) \lesssim \mathrm{P}%
\left( J,\mathbf{1}_{K}\sigma \right) \mathbf{1}_{J_{+}}\left( x\right) , \\
\left\vert D\right\vert &=&\left\vert \int_{J}\left[ H\left( \mathbf{1}%
_{K}\sigma \right) \left( y\right) -\gamma \right] \frac{1}{\left\vert
J\right\vert _{\omega }}\mathbf{1}_{J_{+}}\left( y\right) d\omega \left(
y\right) \right\vert \mathbf{1}_{J_{-}}\left( x\right) \lesssim \mathrm{P}%
\left( J,\mathbf{1}_{K}\sigma \right) \mathbf{1}_{J_{-}}\left( x\right) .
\end{eqnarray*}
\end{proof}

We will need the following critical Poisson Decay Lemma of Nazarov, Treil
and Volberg from \cite{Vol}.

\begin{lemma}[Poisson Decay Lemma]
\label{Poisson inequality}Suppose $J\subset I\subset K$ are dyadic intervals
and that $d\left( J,\partial I\right) >2\ell \left( J\right) ^{\varepsilon
}\ell \left( I\right) ^{1-\varepsilon }$ for some $0<\varepsilon <\frac{1}{2}
$. Then for any locally finite positive Borel measure $\mu $ we have%
\begin{equation}
\mathrm{P}(J,\mu \mathbf{1}_{K\setminus I})\lesssim \left( \frac{\ell \left(
J\right) }{\ell \left( I\right) }\right) ^{1-2\varepsilon }\mathrm{P}(I,\mu 
\mathbf{1}_{K\setminus I}).  \label{e.Jsimeq}
\end{equation}
\end{lemma}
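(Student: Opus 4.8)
The plan is to establish \eqref{e.Jsimeq} \emph{pointwise} in the Poisson kernel and then integrate in $y$ against $\mu\mathbf{1}_{K\setminus I}$. Fix $y\in K\setminus I$, so $y\notin I\supseteq J$. First I would normalize both kernels. Since $y$ lies outside $I$ we have $\left\vert y-c_{I}\right\vert \geq \tfrac{1}{2}\ell\left( I\right) $, whence $\ell\left( I\right) +\left\vert y-c_{I}\right\vert \approx \left\vert y-c_{I}\right\vert $. For $J$ I would use the elementary one-dimensional fact that any segment joining $y\notin I$ to a point of $J\subset I$ must cross $\partial I$, so $\left\vert y-c_{J}\right\vert \geq d\left( J,\partial I\right) >2\ell\left( J\right) ^{\varepsilon }\ell\left( I\right) ^{1-\varepsilon }$; since $\ell\left( J\right) \leq \ell\left( I\right) $ forces $\ell\left( J\right) ^{\varepsilon }\ell\left( I\right) ^{1-\varepsilon }\geq \ell\left( J\right) $, this already gives $\left\vert y-c_{J}\right\vert >2\ell\left( J\right) $ and hence $\ell\left( J\right) +\left\vert y-c_{J}\right\vert \approx \left\vert y-c_{J}\right\vert $ as well. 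Thus it suffices to bound $\ell\left( J\right) /\left\vert y-c_{J}\right\vert ^{2}$ by a constant times $\left( \ell\left( J\right) /\ell\left( I\right) \right) ^{1-2\varepsilon }\ell\left( I\right) /\left\vert y-c_{I}\right\vert ^{2}$.

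The heart of the matter is the pointwise comparison $\left\vert y-c_{I}\right\vert \lesssim \left( \ell\left( I\right) /\ell\left( J\right) \right) ^{\varepsilon }\left\vert y-c_{J}\right\vert $. I would obtain it from the triangle inequality $\left\vert y-c_{I}\right\vert \leq \left\vert y-c_{J}\right\vert +\left\vert c_{J}-c_{I}\right\vert \leq \left\vert y-c_{J}\right\vert +\ell\left( I\right) $ together with the deep-embedding lower bound $\left\vert y-c_{J}\right\vert >2\ell\left( J\right) ^{\varepsilon }\ell\left( I\right) ^{1-\varepsilon }$ and the trivial inequality $\left( \ell\left( I\right) /\ell\left( J\right) \right) ^{\varepsilon }\geq 1$, which yields
\[
\frac{\left\vert y-c_{I}\right\vert }{\left\vert y-c_{J}\right\vert }\leq 1+\frac{\ell\left( I\right) }{2\ell\left( J\right) ^{\varepsilon }\ell\left( I\right) ^{1-\varepsilon }}=1+\frac{1}{2}\left( \frac{\ell\left( I\right) }{\ell\left( J\right) }\right) ^{\varepsilon }\leq \frac{3}{2}\left( \frac{\ell\left( I\right) }{\ell\left( J\right) }\right) ^{\varepsilon }.
\]
Squaring this and inserting a harmless factor $\ell\left( I\right) /\ell\left( I\right) $, for every $y\in K\setminus I$,
\[
\frac{\ell\left( J\right) }{\left( \ell\left( J\right) +\left\vert y-c_{J}\right\vert \right) ^{2}}\approx \frac{\ell\left( J\right) }{\left\vert y-c_{J}\right\vert ^{2}}=\frac{\ell\left( J\right) }{\ell\left( I\right) }\left( \frac{\left\vert y-c_{I}\right\vert }{\left\vert y-c_{J}\right\vert }\right) ^{2}\frac{\ell\left( I\right) }{\left\vert y-c_{I}\right\vert ^{2}}\lesssim \left( \frac{\ell\left( J\right) }{\ell\left( I\right) }\right) ^{1-2\varepsilon }\frac{\ell\left( I\right) }{\left( \ell\left( I\right) +\left\vert y-c_{I}\right\vert \right) ^{2}},
\]
where in the last step I used $\left\vert y-c_{I}\right\vert \approx \ell\left( I\right) +\left\vert y-c_{I}\right\vert $ once more. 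Integrating this inequality against $\mu\mathbf{1}_{K\setminus I}$ in $y$ produces precisely \eqref{e.Jsimeq}.

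The argument is entirely elementary and I do not anticipate a genuine obstacle; the one place demanding a little care is the geometry behind $\left\vert y-c_{J}\right\vert \geq d\left( J,\partial I\right) $ for $y\notin I$, together with the bookkeeping observation that the deep-embedding hypothesis forces this distance to dominate $\ell\left( J\right) $ — which is exactly what lets one drop the $\ell\left( J\right) $ summand from the denominator and reduce to comparing $\left\vert y-c_{I}\right\vert $ with $\left\vert y-c_{J}\right\vert $. I would also note that the specific threshold $\varepsilon <\tfrac{1}{2}$ plays no role in the pointwise estimate itself; it enters only to ensure $1-2\varepsilon >0$, so that the factor $\left( \ell\left( J\right) /\ell\left( I\right) \right) ^{1-2\varepsilon }$ genuinely decays as $J$ sinks deep inside $I$, which is how the lemma is used later.
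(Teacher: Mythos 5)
Your argument is correct. Note that the paper does not prove this lemma at all -- it is quoted from Nazarov--Treil--Volberg (\cite{Vol}) -- so there is no internal proof to compare with; your pointwise kernel comparison is exactly the standard argument from that literature. The key steps all check out: for $y\in K\setminus I$ one has $\left\vert y-c_{I}\right\vert \gtrsim \ell \left( I\right) $ and, via the boundary-crossing observation, $\left\vert y-c_{J}\right\vert \geq d\left( J,\partial I\right) >2\ell \left( J\right) ^{\varepsilon }\ell \left( I\right) ^{1-\varepsilon }\geq 2\ell \left( J\right) $, so both kernels reduce to $\ell /\left\vert y-c\right\vert ^{2}$, and your triangle-inequality bound $\left\vert y-c_{I}\right\vert \leq \frac{3}{2}\left( \ell \left( I\right) /\ell \left( J\right) \right) ^{\varepsilon }\left\vert y-c_{J}\right\vert $ converts the ratio of kernels into the factor $\left( \ell \left( J\right) /\ell \left( I\right) \right) ^{1-2\varepsilon }$; integrating in $y$ against $\mu \mathbf{1}_{K\setminus I}$ finishes it. Your closing remark is also accurate: the restriction $\varepsilon <\frac{1}{2}$ is irrelevant to the estimate and only guarantees that the exponent $1-2\varepsilon $ is positive, which is what the applications require.
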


There is an extension of the pointwise inequality (\ref{PE bound}) to a
variant involving `absolute' projections 
\begin{equation*}
\left\vert \mathsf{P}_{\Lambda }^{\omega }\right\vert g\left( x\right)
\equiv \sqrt{\sum_{J\in \Lambda }\left\vert \bigtriangleup _{J}^{\omega
}g\left( x\right) \right\vert ^{2}},
\end{equation*}%
where $\Lambda \subset \mathcal{D}_{\func{good}}\left[ I\right] $.

\begin{lemma}
\label{ener rev}Suppose $I,F\in \mathcal{D}_{\func{good}}$ with $I\subset F$%
, and let $\Lambda \subset \mathcal{D}_{\func{good}}\left[ I\right] $. Then
with $\mathcal{W}_{\func{good}}^{\limfunc{trip}}\left( I\right) $ denoting
the maximal good subintervals of $I$ whose triples are contained in $I$, and
with $\Lambda \left[ K\right] \equiv \Lambda \cap \mathcal{D}\left[ K\right] 
$, 
\begin{equation}
\sqrt{\sum_{K\in \mathcal{W}_{\func{good}}^{\limfunc{trip}}\left( I\right)
}\left( \frac{\mathrm{P}\left( K,\mathbf{1}_{F\setminus I}\sigma \right) }{%
\ell \left( K\right) }\right) ^{2}\left( \left\vert \mathsf{P}_{\Lambda %
\left[ K\right] }^{\omega }\right\vert Z\left( x\right) \right) ^{2}}\approx
\left\vert \mathsf{P}_{\Lambda }^{\omega }\right\vert H_{\sigma }\mathbf{1}%
_{F\setminus I}\left( x\right) .  \label{PE bound''}
\end{equation}
\end{lemma}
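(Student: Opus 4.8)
The plan is to prove the pointwise equivalence \eqref{PE bound''} by reducing, for each fixed $x$, the sum over $K\in\mathcal{W}_{\func{good}}^{\limfunc{trip}}(I)$ to the single Whitney interval $K=K(x)$ containing $x$, and then identifying the remaining quantity with $|\mathsf{P}_\Lambda^\omega|H_\sigma\mathbf 1_{F\setminus I}(x)$ via the Monotonicity Lemma \ref{Energy Lemma} together with its pointwise refinement in Corollary \ref{point mon} and Lemma \ref{energy pointwise}. First I would observe that the intervals in $\mathcal{W}_{\func{good}}^{\limfunc{trip}}(I)$ are pairwise disjoint and cover $I$ up to a set of $\omega$-measure zero, and that for $x\in K$ the projections $\Delta_J^\omega Z(x)$ for $J\in\Lambda[K']$ with $K'\neq K$ vanish (since such $J\subset K'$ is disjoint from $K\ni x$), so that $|\mathsf{P}_{\Lambda[K']}^\omega|Z(x)=0$; hence the square root on the left collapses to the single term $\frac{\mathrm P(K,\mathbf 1_{F\setminus I}\sigma)}{\ell(K)}\,|\mathsf{P}_{\Lambda[K]}^\omega|Z(x)$.

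Next I would fix $x\in K\in\mathcal{W}_{\func{good}}^{\limfunc{trip}}(I)$ and analyze the right side. Writing $|\mathsf{P}_\Lambda^\omega|H_\sigma\mathbf 1_{F\setminus I}(x)=\bigl(\sum_{J\in\Lambda}|\Delta_J^\omega H_\sigma\mathbf 1_{F\setminus I}(x)|^2\bigr)^{1/2}$, the good intervals $J\in\Lambda$ with $\Delta_J^\omega H_\sigma\mathbf 1_{F\setminus I}(x)\neq 0$ must contain $x$, hence (being good and of side length much smaller than $\ell(I)$) be deeply embedded in $I$, so in particular $J\in\Lambda[K]$ for the unique $K$ with $x\in K$; and since $K$ has its triple inside $I$, we have $2J$ disjoint from $F\setminus I$ once $J\subset_r K$ with $r$ large. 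Thus Corollary \ref{point mon} applies and gives $|\Delta_J^\omega H_\sigma\mathbf 1_{F\setminus I}(x)|\approx \frac{\mathrm P(J,\mathbf 1_{F\setminus I}\sigma)}{\ell(J)}|\Delta_J^\omega Z(x)|$, and then the Poisson Decay Lemma \ref{Poisson inequality} (applied with $J\subset K\subset F$, using that $K$'s triple lies in $I$ so $F\setminus I\subset F\setminus K$ sits outside $3K$) yields $\mathrm P(J,\mathbf 1_{F\setminus I}\sigma)\approx \bigl(\tfrac{\ell(J)}{\ell(K)}\bigr)\mathrm P(K,\mathbf 1_{F\setminus I}\sigma)$ up to the relevant power — more precisely one needs $\frac{\mathrm P(J,\mathbf 1_{F\setminus I}\sigma)}{\ell(J)}\approx\frac{\mathrm P(K,\mathbf 1_{F\setminus I}\sigma)}{\ell(K)}$, which follows because for $y\in F\setminus I$ one has $|y-c_J|\approx|y-c_K|\approx \ell(K)+\limfunc{dist}(y,K)$ since $\limfunc{dist}(K,\mathbb R\setminus I)\gtrsim\ell(K)$. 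Substituting this in, $|\Delta_J^\omega H_\sigma\mathbf 1_{F\setminus I}(x)|\approx \frac{\mathrm P(K,\mathbf 1_{F\setminus I}\sigma)}{\ell(K)}|\Delta_J^\omega Z(x)|$ uniformly over $J\in\Lambda[K]$, and squaring and summing over $J\in\Lambda$ (equivalently over $J\in\Lambda[K]$) produces exactly $\frac{\mathrm P(K,\mathbf 1_{F\setminus I}\sigma)}{\ell(K)}|\mathsf{P}_{\Lambda[K]}^\omega|Z(x)$, matching the collapsed left side.

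The main obstacle I anticipate is the uniformity of the comparison $\frac{\mathrm P(J,\mathbf 1_{F\setminus I}\sigma)}{\ell(J)}\approx\frac{\mathrm P(K,\mathbf 1_{F\setminus I}\sigma)}{\ell(K)}$ over all $J\in\Lambda[K]$, together with handling the boundary case $J=K$ (where the triple condition on $K$ rather than deep embedding must be invoked) and the degenerate case where $\Lambda[K]$ is empty or $K$ is not actually in $\mathcal{W}_{\func{good}}^{\limfunc{trip}}(I)$ — there both sides vanish, so this causes no trouble once noted. A secondary point requiring care is the passage from Corollary \ref{point mon}, which presumes $2J$ disjoint from the support, to the case of $J$ merely good and deeply embedded: for the finitely many $J$ of size within $2^r$ of $\ell(I)$ one instead uses Lemma \ref{energy pointwise}, absorbing the resulting bounded number of terms into the implied constants. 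Once these are in place the two-sided estimate is immediate, since every inequality used was an equivalence up to constants depending only on $r,\varepsilon$.
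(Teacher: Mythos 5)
Your argument is essentially the paper's: both rest on the pointwise monotonicity estimate of Corollary \ref{point mon} combined with the uniform comparison $\frac{\mathrm{P}\left( J,\mathbf{1}_{F\setminus I}\sigma \right) }{\ell \left( J\right) }\approx \frac{\mathrm{P}\left( K,\mathbf{1}_{F\setminus I}\sigma \right) }{\ell \left( K\right) }$ for $J\in \Lambda \left[ K\right] $ (valid since $3K\subset I$ gives $\ell \left( J\right) +\left\vert y-c_{J}\right\vert \approx \ell \left( K\right) +\left\vert y-c_{K}\right\vert $ for $y\in F\setminus I$), followed by squaring and summing over $J\in \Lambda \left[ K\right] $ and $K\in \mathcal{W}_{\func{good}}^{\limfunc{trip}}\left( I\right) $; your observation that for each fixed $x$ only the single $K$ containing $x$ contributes is correct but cosmetic. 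The only point where you depart from the paper is your closing caveat: an interval $J\in \Lambda $ with $3J\not\subset I$ (in particular one with $\ell \left( J\right) >2^{-r}\ell \left( I\right) $, e.g. $J=I$) lies in \emph{no} $K\in \mathcal{W}_{\func{good}}^{\limfunc{trip}}\left( I\right) $, so its contribution appears only on the right-hand side of (\ref{PE bound''}) and cannot be ``absorbed into the implied constants'' by the one-sided bound of Lemma \ref{energy pointwise}. The paper's proof simply writes $\Lambda =\bigcup_{K}\Lambda \left[ K\right] $, i.e. it tacitly assumes every $J\in \Lambda $ satisfies $3J\subset I$ --- automatic for good $J$ with $\ell \left( J\right) \leq 2^{-r}\ell \left( I\right) $, and true of the collections $\Lambda $ to which the lemma is later applied --- and under that reading your main argument is complete and no such absorption step is needed.
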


\begin{proof}
Recall from the pointwise montonicity principle in Corollary \ref{point mon}
above that,%
\begin{equation*}
\left\vert \bigtriangleup _{J}^{\omega }H_{\sigma }\mathbf{1}_{F\setminus
I}\left( x\right) \right\vert \approx \frac{\mathrm{P}\left( J,\mathbf{1}%
_{F\setminus I}\sigma \right) }{\ell \left( J\right) }\left\vert
\bigtriangleup _{J}^{\omega }Z\left( x\right) \right\vert ,\ \ \ \ \ \text{%
for }2J\cap I=\emptyset \ .
\end{equation*}%
Moreover, we also have $\frac{\mathrm{P}\left( J,\mathbf{1}_{F\setminus
I}\sigma \right) }{\ell \left( J\right) }\approx \frac{\mathrm{P}\left( K,%
\mathbf{1}_{F\setminus I}\sigma \right) }{\ell \left( K\right) }$ for $K\in 
\mathcal{W}_{\func{good}}^{\limfunc{trip}}\left( I\right) $, and so noting $%
J\subset K\subset I$,%
\begin{eqnarray*}
&&\left\vert \mathsf{P}_{\Lambda }^{\omega }\right\vert H_{\sigma }\mathbf{1}%
_{F\setminus I}\left( x\right) ^{2}=\sum_{K\in \mathcal{W}_{\func{good}}^{%
\limfunc{trip}}\left( I\right) }\sum_{J\in \Lambda \left[ K\right]
}\left\vert \bigtriangleup _{J}^{\omega }H_{\sigma }\mathbf{1}_{F\setminus
I}\left( x\right) \right\vert ^{2}\approx \sum_{K\in \mathcal{W}_{\func{good}%
}^{\limfunc{trip}}\left( I\right) }\sum_{J\in \Lambda \left[ K\right]
}\left( \frac{\mathrm{P}\left( J,\mathbf{1}_{F\setminus I}\sigma \right) }{%
\ell \left( J\right) }\right) ^{2}\left\vert \bigtriangleup _{J}^{\omega
}Z\left( x\right) \right\vert ^{2} \\
&\approx &\sum_{K\in \mathcal{W}_{\func{good}}^{\limfunc{trip}}\left(
I\right) }\left( \frac{\mathrm{P}\left( K,\mathbf{1}_{F\setminus I}\sigma
\right) }{\ell \left( K\right) }\right) ^{2}\sum_{J\in \Lambda \left[ K%
\right] }\left\vert \bigtriangleup _{J}^{\omega }Z\left( x\right)
\right\vert ^{2}=\sum_{K\in \mathcal{W}_{\func{good}}^{\limfunc{trip}}\left(
I\right) }\left( \frac{\mathrm{P}\left( K,\mathbf{1}_{F\setminus I}\sigma
\right) }{\ell \left( K\right) }\right) ^{2}\left\vert \mathsf{P}_{\Lambda %
\left[ K\right] }^{\omega }\right\vert Z\left( x\right) ^{2}.
\end{eqnarray*}
\end{proof}

\subsection{Reversal of the Mononotonicity Lemma\label{Sub Rev}}

Our proof will use a stopping energy inequality in order to bound the
stopping form. But first we note an easy pointwise precursor to this
inequality. Let $\left\{ I_{r}\right\} _{r=1}^{\infty }$ be a pairwise
disjoint decomposition of an interval $I$ into subintervals $I_{r}$. Then
for $x,y\in I_{r}\subset I$ with $y<x$:%
\begin{equation}
\frac{\mathrm{P}\left( I_{r},\mathbf{1}_{I\setminus I_{r}}\sigma \right) }{%
\ell \left( I_{r}\right) }\left[ x-y\right] \leq 2\left[ H_{\sigma }\mathbf{1%
}_{I\setminus I_{r}}\left( x\right) -H_{\sigma }\mathbf{1}_{I\setminus
I_{r}}\left( y\right) \right] .  \label{simple reversal}
\end{equation}%
Fix $r$ for the moment and set $I_{r}=\left[ a,b\right] $. We now fix $c\in
\left( a,b\right) $ such that\footnote{%
See \cite{LaSaUr2} for the easy modifications in the case when no such $c$
exists.} $\left\vert \left[ a,c\right] \right\vert _{\omega }=\left\vert %
\left[ c,b\right] \right\vert _{\omega }=\frac{1}{2}\left\vert \left[ a,b%
\right] \right\vert _{\omega }$, and set $I_{r,-}\equiv \left[ a,c\right] $
and $I_{r,+}\equiv \left[ c,b\right] $. Then for $x\in I_{r,+}$, average
both sides of the displayed inequality in the variable $y$ over $I_{r,-}$
with respect to $\omega $ to obtain%
\begin{equation}
\mathbf{1}_{I_{r,+}}\left( x\right) \frac{\mathrm{P}\left( I_{r},\mathbf{1}%
_{I\setminus I_{r}}\sigma \right) }{\ell \left( I_{r}\right) }\left[
x-E_{I_{r,-}}^{\omega }Z\right] \leq 2\mathbf{1}_{I_{r,+}}\left( x\right) %
\left[ H_{\sigma }\mathbf{1}_{I\setminus I_{r}}\left( x\right)
-E_{I_{r,-}}^{\omega }\left( H_{\sigma }\mathbf{1}_{I\setminus I_{r}}\right) %
\right] .  \label{PE bound}
\end{equation}

\begin{remark}
Using (\ref{PE bound}), one can control the $p$-Poisson-energy
characteristic $\mathcal{E}_{p}\left( \sigma ,\omega \right) $ by the scalar
testing characteristic $\mathfrak{T}_{H,p}^{\func{loc}}\left( \sigma ,\omega
\right) $ and Muckehhoupt characteristic $\mathcal{A}_{p}\left( \sigma
,\omega \right) $. See \cite[Proposition 2.11 ]{LaSaUr2} for the case $p=2$,
and below for the case $p\neq 2$. However, the Muckehhoupt characteristic $%
\mathcal{A}_{p}\left( \sigma ,\omega \right) $ can be dropped, see Lemma \ref%
{energy cond} below.
\end{remark}

\subsection{Carleson measures}

Here we recall some simple properties of Carleson measures and conditions
from \cite{LaSaShUr3} and \cite{SaShUr7}, where the case $p=2$ was
considered - the general case $1<p<\infty $ is similar. Let $\mathcal{F}%
\subset \mathcal{D}$ and let the corona $\mathcal{C}_{\mathcal{F}}\left(
F\right) $ consist of all intervals contained in $F$ that are not contained
in any smaller interval from $\mathcal{F}$. We say that the triple $\left(
C_{0},\mathcal{F},\alpha _{\mathcal{F}}\right) $ constitutes \emph{stopping
data} for a function $f\in L_{loc}^{1}\left( \mu \right) $ if,

(\textbf{1}): $\ \ E_{I}^{\mu }\left\vert f\right\vert \leq \alpha _{%
\mathcal{F}}\left( F\right) $ for all $I\in \mathcal{C}_{\mathcal{F}}\left(
F\right) $ and $F\in \mathcal{F}$,

(\textbf{2}): $\ \ \sum_{F^{\prime }\in \mathcal{F}:\ F^{\prime }\subset
F}\left\vert F^{\prime }\right\vert _{\mu }\leq C_{0}\left\vert F\right\vert
_{\mu }$ for all $F\in \mathcal{F}$,

(\textbf{3}): $\ \ \sum_{F\in \mathcal{F}}\alpha _{\mathcal{F}}\left(
F\right) ^{p}\left\vert F\right\vert _{\mu }\mathbf{\leq }%
C_{0}^{p}\left\Vert f\right\Vert _{L^{p}\left( \mu \right) }^{p}$,

(\textbf{4}): $\ \ \alpha _{\mathcal{F}}\left( F\right) \leq \alpha _{%
\mathcal{F}}\left( F^{\prime }\right) $ whenever $F^{\prime },F\in \mathcal{F%
}$ with $F^{\prime }\subset F$.

Moreover there is the following useful consequence of (2) and (3) that says
the sequence $\left\{ \alpha _{\mathcal{F}}\left( F\right) \mathbf{1}%
_{F}\right\} _{F\in \mathcal{F}}$ has a \emph{quasiorthogonal} property
relative to $f$ with a constant $C_{0}^{\prime }$ depending only on $C_{0}$:%
\begin{equation}
\left\Vert \sum_{F\in \mathcal{F}}\alpha _{\mathcal{F}}\left( F\right) 
\mathbf{1}_{F}\right\Vert _{L^{p}\left( \mu \right) }^{p}\leq C_{0}^{\prime
}\left\Vert f\right\Vert _{L^{p}\left( \mu \right) }^{p}.  \label{q orth}
\end{equation}%
Indeed, this follows easily from the fact that the Carleson condition (2)
implies a geometric decay in levels of the tree $\mathcal{F}$, namely that
there are positive constants $\delta $ and $C_{\delta }$, depending on $%
C_{0} $, such that if $\mathfrak{C}_{\mathcal{F}}^{\left( n\right) }\left(
F\right) $ denotes the set of $n^{th}$ generation children of $F$ in $%
\mathcal{F}$,%
\begin{equation}
\sum_{F^{\prime }\in \mathfrak{C}_{\mathcal{F}}^{\left( n\right) }\left(
F\right) }\left\vert F^{\prime }\right\vert _{\mu }\leq C_{\delta
}2^{-\delta n}\left\vert F\right\vert _{\mu },\ \ \ \ \ \text{for all }n\geq
0\text{ and }F\in \mathcal{F}.  \label{geom decay}
\end{equation}%
To see this well known result of Carleson, let $\beta _{k}\left( F\right)
\equiv \sum_{F^{\prime }\in \mathfrak{C}_{\mathcal{F}}^{\left( k\right)
}\left( F\right) }\left\vert F^{\prime }\right\vert _{\mu }$ and note that $%
\beta _{k+1}\left( F\right) \leq \beta _{k}\left( F\right) $ implies that
for any integer $N\geq C_{0}$, we have%
\begin{equation*}
\left( N+1\right) \beta _{N}\left( F\right) \leq \sum_{k=0}^{N}\beta
_{k}\left( F\right) \leq C_{0}\left\vert F\right\vert _{\mu }\ ,
\end{equation*}%
and hence 
\begin{equation*}
\beta _{N}\left( F\right) \leq \frac{C_{0}}{N+1}\left\vert F\right\vert
_{\mu }<\frac{1}{2}\left\vert F\right\vert _{\mu }\ ,\ \ \ \ \ \text{for }%
F\in \mathcal{F}\text{ and }N=\left[ 2C_{0}\right] _{0}.
\end{equation*}%
It follows by iteration that 
\begin{equation*}
\beta _{\ell N}\left( F\right) \leq \frac{1}{2}\beta _{\left( \ell -1\right)
N}\left( F\right) \leq ...\leq \frac{1}{2^{\ell }}\beta _{0}\left( F\right) =%
\frac{1}{2^{\ell }}\left\vert F\right\vert _{\mu },\ \ \ \ \ \ell =0,1,2,...
\end{equation*}%
and so given $n\in \mathbb{N}$, choose $\ell $ such that $\ell N\leq
n<\left( \ell +1\right) N$, and note that 
\begin{equation*}
\sum_{F^{\prime }\in \mathfrak{C}_{\mathcal{F}}^{\left( n\right) }\left(
F\right) }\left\vert F^{\prime }\right\vert _{\mu }=\beta _{n}\left(
F\right) \leq \beta _{\ell N}\left( F\right) \leq \frac{1}{2^{\frac{n}{N}}}%
\left\vert F\right\vert _{\mu }=2^{-\frac{n}{\left[ 2C_{0}\right] }%
}\left\vert F\right\vert _{\mu }=2^{-n\delta }\left\vert F\right\vert _{\mu
}\ ,
\end{equation*}%
which proves the geometric decay (\ref{geom decay}). Inequality (\ref{q orth}%
) will actually be proved in the course of proving the next theorem. With a
slight abuse of notation we will also refer to inequality (3) above as a
quasiorthogonality property.

The following inequality will be used in controlling both the far form and
the paraproduct form later on. Its proof uses a technique introduced in \cite%
[see the proof of the bound for the paraproduct form]{SaWi}, that goes back
decades in other situations, see e.g. \cite{BoBo}. Actually, the proof here
predates that in \cite{SaWi}, but this result was not needed in the setting
of doubling measures, and so was left out of \cite{SaWi}.

\begin{theorem}
\label{using Carleson}Suppose that the triple $\left( C_{0},\mathcal{F}%
,\alpha _{\mathcal{F}}\right) $ constitutes \emph{stopping data} for a
function $f\in L_{loc}^{1}\left( \mu \right) $, and for $\kappa \in \mathbb{Z%
}_{+}$, set 
\begin{equation*}
\alpha _{\mathcal{F}}^{\kappa }\left( x\right) \equiv \left\{ \alpha _{%
\mathcal{F}}\left( F\right) \mathbf{1}_{F^{\kappa }}\left( x\right) \right\}
_{F\in \mathcal{F}}\ \text{where }F^{\kappa }\equiv \bigcup_{G\in \mathfrak{C%
}_{\mathcal{F}}^{\left( \kappa \right) }\left( F\right) }G\text{ }.
\end{equation*}%
Then for $1<p<\infty $, 
\begin{equation}
\int_{\mathbb{R}}\left\vert \alpha _{\mathcal{F}}^{\kappa }\left( x\right)
\right\vert _{\ell ^{2}}^{p}d\mu \left( x\right) =\int_{\mathbb{R}}\left(
\sum_{F\in \mathcal{F}}\left\vert \alpha _{\mathcal{F}}\left( F\right)
\right\vert ^{2}\mathbf{1}_{F^{\kappa }}\left( x\right) \right) ^{\frac{p}{2}%
}d\mu \left( x\right) \leq C_{\delta }2^{-\delta \kappa }\sum_{F\in \mathcal{%
F}}\alpha _{\mathcal{F}}\left( F\right) ^{p}\left\vert F\right\vert _{\mu }\
,  \label{The claim}
\end{equation}%
where $\delta >0$ is the constant in (\ref{geom decay}). The inequality can
be reversed for $\kappa =0$ and $2\leq p<\infty $.
\end{theorem}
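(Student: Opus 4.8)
The plan is to handle the ranges $1<p\le 2$ and $2<p<\infty$ separately, isolating the decay factor $2^{-\delta\kappa}$ through the single remark that applying the geometric decay (\ref{geom decay}) with $n=\kappa$ gives
\[
|F^{\kappa}|_{\mu}=\sum_{G\in\mathfrak{C}_{\mathcal{F}}^{(\kappa)}(F)}|G|_{\mu}\le C_{\delta}2^{-\delta\kappa}|F|_{\mu},\qquad F\in\mathcal{F}.
\]
For $1<p\le 2$ the inequality is immediate: since $t\mapsto t^{p/2}$ is subadditive on $[0,\infty)$, pointwise $\bigl(\sum_{F}\alpha_{\mathcal{F}}(F)^{2}\mathbf{1}_{F^{\kappa}}\bigr)^{p/2}\le\sum_{F}\alpha_{\mathcal{F}}(F)^{p}\mathbf{1}_{F^{\kappa}}$, and integrating against $\mu$ and using the displayed bound yields $\int_{\mathbb{R}}|\alpha_{\mathcal{F}}^{\kappa}|_{\ell^{2}}^{p}\,d\mu\le\sum_{F}\alpha_{\mathcal{F}}(F)^{p}|F^{\kappa}|_{\mu}\le C_{\delta}2^{-\delta\kappa}\sum_{F}\alpha_{\mathcal{F}}(F)^{p}|F|_{\mu}$; only the Carleson hypothesis (\textbf{2}), which produced (\ref{geom decay}), is used here.

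For $2<p<\infty$ I would argue by duality in $L^{p/2}(\mu)$. With $q'=(p/2)'=\frac{p}{p-2}\in(1,\infty)$, and writing $E_{F}^{\kappa}h:=|F^{\kappa}|_{\mu}^{-1}\int_{F^{\kappa}}h\,d\mu$ for those $F$ with $F^{\kappa}\neq\emptyset$, testing against $h\ge 0$ with $\Vert h\Vert_{L^{q'}(\mu)}=1$ gives
\[
\Bigl\Vert \sum_{F}\alpha_{\mathcal{F}}(F)^{2}\mathbf{1}_{F^{\kappa}}\Bigr\Vert_{L^{p/2}(\mu)}=\sup_{h}\ \sum_{F\in\mathcal{F}}\alpha_{\mathcal{F}}(F)^{2}\,|F^{\kappa}|_{\mu}\,E_{F}^{\kappa}h .
\]
Apply Hölder in the index $F$, against the weights $|F^{\kappa}|_{\mu}$ and with conjugate exponents $\frac{p}{2}$ and $\frac{p}{p-2}$: the first factor is $\bigl(\sum_{F}\alpha_{\mathcal{F}}(F)^{p}|F^{\kappa}|_{\mu}\bigr)^{2/p}\le(C_{\delta}2^{-\delta\kappa})^{2/p}\bigl(\sum_{F}\alpha_{\mathcal{F}}(F)^{p}|F|_{\mu}\bigr)^{2/p}$ by the decay estimate, which is where the whole gain comes from. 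For the second factor, $E_{F}^{\kappa}h=\sum_{G\in\mathfrak{C}_{\mathcal{F}}^{(\kappa)}(F)}\frac{|G|_{\mu}}{|F^{\kappa}|_{\mu}}E_{G}^{\mu}h$ is a convex combination of dyadic averages, so Jensen's inequality ($q'\ge 1$) gives $(E_{F}^{\kappa}h)^{q'}|F^{\kappa}|_{\mu}\le\sum_{G\in\mathfrak{C}_{\mathcal{F}}^{(\kappa)}(F)}(E_{G}^{\mu}h)^{q'}|G|_{\mu}$; as each $G\in\mathcal{F}$ is a $\kappa$-th $\mathcal{F}$-descendant of at most one $F$, summing over $F$ leaves $\sum_{G\in\mathcal{F}}(E_{G}^{\mu}h)^{q'}|G|_{\mu}$, which the dyadic Carleson embedding theorem — applicable precisely because (\textbf{2}) says $\{|F|_{\mu}\}_{F\in\mathcal{F}}$ is a $C_{0}$-Carleson sequence — bounds by $\lesssim_{C_{0},p}\Vert h\Vert_{L^{q'}(\mu)}^{q'}=1$. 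Collecting the two factors and raising to the power $p/2$ gives (\ref{The claim}) with a constant depending only on $C_{0}$ and $p$, hence absorbable into $C_{\delta}$. The same computation with $\kappa=0$, with $\mathbf{1}_{F}$ replacing $\mathbf{1}_{F^{\kappa}}$, with $(p,p')$ replacing $(p/2,q')$, and without squaring, combined with the quasiorthogonality estimate (\textbf{3}), delivers (\ref{q orth}).

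Finally, the reversal for $2\le p<\infty$ and $\kappa=0$ is trivial: $\ell^{2}\hookrightarrow\ell^{p}$ contractively for $p\ge 2$, so pointwise $\sum_{F}\alpha_{\mathcal{F}}(F)^{p}\mathbf{1}_{F}\le\bigl(\sum_{F}\alpha_{\mathcal{F}}(F)^{2}\mathbf{1}_{F}\bigr)^{p/2}$, and integrating against $\mu$ gives $\sum_{F}\alpha_{\mathcal{F}}(F)^{p}|F|_{\mu}\le\int_{\mathbb{R}}|\alpha_{\mathcal{F}}^{0}|_{\ell^{2}}^{p}\,d\mu$. The only genuinely substantial point is the range $p>2$, where the subadditivity shortcut is unavailable: the work is in arranging the duality so that the decay factor $2^{-\delta\kappa}$ sits entirely in $\sum_{F}\alpha_{\mathcal{F}}(F)^{p}|F^{\kappa}|_{\mu}$ while the complementary $L^{q'}$ factor collapses — via Jensen and the one-to-one assignment of $G$ to its $\kappa$-th $\mathcal{F}$-ancestor — to a Carleson embedding with $\kappa$-independent constant; everything else is exponent bookkeeping.
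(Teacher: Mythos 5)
Your argument is correct, but for $p>2$ it follows a genuinely different route from the one in the paper. You prove the case $1<p\leq 2$ exactly as the paper does, by subadditivity of $t\mapsto t^{p/2}$ plus the bound $\left\vert F^{\kappa }\right\vert _{\mu }\leq C_{\delta }2^{-\delta \kappa }\left\vert F\right\vert _{\mu }$ from (\ref{geom decay}). For $p>2$, however, the paper first treats even integers $p=2m$ by expanding $\left( \sum_{F}\alpha _{\mathcal{F}}\left( F\right) ^{2}\mathbf{1}_{F^{\kappa }}\right) ^{m}$ over ordered $2m$-tuples of nested stopping intervals, running an iterated H\"{o}lder/geometric-decay induction, and then obtains general $p$ by Marcinkiewicz interpolation; you instead dualize $\left\Vert \sum_{F}\alpha _{\mathcal{F}}\left( F\right) ^{2}\mathbf{1}_{F^{\kappa }}\right\Vert _{L^{p/2}\left( \mu \right) }$ against $h\in L^{\left( p/2\right) ^{\prime }}\left( \mu \right) $, apply H\"{o}lder in $F$ with weights $\left\vert F^{\kappa }\right\vert _{\mu }$ so that the entire gain $2^{-\delta \kappa }$ sits in the factor $\sum_{F}\alpha _{\mathcal{F}}\left( F\right) ^{p}\left\vert F^{\kappa }\right\vert _{\mu }$, and collapse the dual factor via Jensen (writing $E_{F}^{\kappa }h$ as a convex combination of the averages $E_{G}^{\mu }h$, $G\in \mathfrak{C}_{\mathcal{F}}^{\left( \kappa \right) }\left( F\right) $, and using that each $G$ has at most one $\kappa $-th ancestor) and the dyadic Carleson embedding theorem. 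Your approach buys a single argument valid for all $p>2$ at once, with no combinatorial expansion and no interpolation step, and it yields (\ref{q orth}) as the same computation with $\kappa =0$ and exponents $\left( p,p^{\prime }\right) $; the paper's expansion, on the other hand, is self-contained at the level of elementary inequalities and produces (\ref{q orth}) along the way as the warm-up case. Two small points you should make explicit: the Carleson embedding theorem needs the packing condition over \emph{all} dyadic test intervals $R$, which follows from hypothesis (\textbf{2}) by summing over the maximal elements of $\mathcal{F}$ inside $R$ (one line, but it is not literally what (\textbf{2}) states); and the duality identity for the $L^{p/2}\left( \mu \right) $ norm of a nonnegative function should be justified either by $\sigma $-finiteness of $\mu $ or by first restricting to finite subfamilies of $\mathcal{F}$ and passing to the limit by monotone convergence. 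Neither is a gap, and the constant you obtain depends on $C_{0},p,\delta $, consistent with the paper's $C_{\delta }$ (its own proof also produces $C_{\delta ,p}$). The reverse inequality for $\kappa =0$, $p\geq 2$ is handled identically in both arguments via $\ell ^{2}\hookrightarrow \ell ^{p}$.
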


\begin{proof}
We begin with the observation that $F_{1}^{\kappa }\subset F_{2}^{\kappa }$
whenever $F_{1}\subset F_{2}$, which will be used repeatedly below without
further mention. We now claim that for $1<p<\infty $, 
\begin{equation}
\int_{\mathbb{R}}\left( \sum_{F\in \mathcal{F}}\left\vert \alpha _{\mathcal{F%
}}\left( F\right) \right\vert ^{2}\mathbf{1}_{F^{\kappa }}\left( x\right)
\right) ^{\frac{p}{2}}d\mu \left( x\right) \leq C_{\delta }2^{-\delta \kappa
}\sum_{F\in \mathcal{F}}\alpha _{\mathcal{F}}\left( F\right) ^{p}\left\vert
F\right\vert _{\mu }.  \label{first claim}
\end{equation}%
Indeed, for $1<p\leq 2$ (and even for $0<p\leq 2$), the inequality follows
from the trivial inequality $\left\Vert \cdot \right\Vert _{\ell ^{q}}\leq
\left\Vert \cdot \right\Vert _{\ell ^{1}}$ for $0<q\leq 1$,%
\begin{eqnarray*}
&&\int_{\mathbb{R}}\left( \sum_{F\in \mathcal{F}}\left\vert \alpha _{%
\mathcal{F}}\left( F\right) \right\vert ^{2}\mathbf{1}_{F^{\kappa }}\left(
x\right) \right) ^{\frac{p}{2}}d\mu \left( x\right) \leq \int_{\mathbb{R}%
}\sum_{F\in \mathcal{F}}\left\vert \alpha _{\mathcal{F}}\left( F\right)
\right\vert ^{p}\mathbf{1}_{F^{\kappa }}\left( x\right) d\mu \left( x\right)
\\
&&\ \ \ \ \ \ \ \ \ \ \ \ \ \ \ =\sum_{F\in \mathcal{F}}\alpha _{\mathcal{F}%
}\left( F\right) ^{p}\left\vert F^{\kappa }\right\vert _{\mu }\leq C_{\delta
}2^{-\delta \kappa }\sum_{F\in \mathcal{F}}\alpha _{\mathcal{F}}\left(
F\right) ^{p}\left\vert F\right\vert _{\mu },
\end{eqnarray*}%
where $\delta >0$ is the geometric decay in generations exponent in (\ref%
{geom decay}).

Now we turn to the case $p\geq 2$. When $p=2m$ is an even positive integer,
we will set%
\begin{equation*}
\mathcal{F}_{\ast }^{2m}\equiv \left\{ \left( F_{1},...,F_{2m}\right) \in 
\mathcal{F}\times ...\times \mathcal{F}:F_{i}\subset F_{j}\text{ for }1\leq
i\leq j\leq 2m\text{, and }F_{i}=F_{i+1}\text{ for all odd }i\right\} ,
\end{equation*}%
and then by symmetry we can arrange the intervals below in nondecreasing
order to obtain%
\begin{eqnarray*}
&&\int_{\mathbb{R}}\left( \sum_{F\in \mathcal{F}}\left\vert \alpha _{%
\mathcal{F}}\left( F\right) \mathbf{1}_{F^{\kappa }}\left( x\right)
\right\vert ^{2}\right) ^{\frac{p}{2}}d\mu \left( x\right) =\int_{\mathbb{R}%
}\left( \sum_{F\in \mathcal{F}}\left\vert \alpha _{\mathcal{F}}\left(
F\right) \mathbf{1}_{F^{\kappa }}\left( x\right) \right\vert ^{2}\right)
^{m}d\mu \left( x\right) \\
&=&\int_{\mathbb{R}}\sum_{\left( F_{1},...,F_{2m}\right) \in \mathcal{F}%
^{2m}}\alpha _{\mathcal{F}}\left( F_{1}\right) ...\alpha _{\mathcal{F}%
}\left( F_{2m}\right) \mathbf{1}_{F_{1}^{\kappa }\cap ...\cap F_{2m}^{\kappa
}}d\mu \left( x\right) \\
&=&C_{m}\int_{\mathbb{R}}\sum_{\left( F_{1},...,F_{2m}\right) \in \mathcal{F}%
_{\ast }^{2m}}\alpha _{\mathcal{F}}\left( F_{1}\right) ...\alpha _{\mathcal{F%
}}\left( F_{2m}\right) \mathbf{1}_{F_{1}^{\kappa }\cap ...\cap
F_{2m}^{\kappa }}d\mu \left( x\right) \\
&=&C_{m}\sum_{\left( F_{1},...,F_{2m}\right) \in \mathcal{F}_{\ast
}^{2m}}\alpha _{\mathcal{F}}\left( F_{1}\right) ...\alpha _{\mathcal{F}%
}\left( F_{2m}\right) \left\vert F_{1}^{\kappa }\right\vert _{\mu }=C_{m}%
\func{Int}^{\kappa }\left( m\right) ,
\end{eqnarray*}%
where from the geometric decay in (\ref{geom decay}), we obtain%
\begin{eqnarray}
\func{Int}^{\kappa }\left( m\right) &\equiv &\sum_{\left(
F_{1},...,F_{2m}\right) \in \mathcal{F}_{\ast }^{2m}}\alpha _{\mathcal{F}%
}\left( F_{1}\right) ...\alpha _{\mathcal{F}}\left( F_{2m}\right) \left\vert
F_{1}^{\kappa }\right\vert _{\mu }\lesssim 2^{-\delta \kappa }\func{Int}%
\left( m\right) ,  \label{Int} \\
\text{where }\func{Int}\left( m\right) &\equiv &\sum_{\left(
F_{1},...,F_{2m}\right) \in \mathcal{F}_{\ast }^{2m}}\alpha _{\mathcal{F}%
}\left( F_{1}\right) ...\alpha _{\mathcal{F}}\left( F_{2m}\right) \left\vert
F_{1}\right\vert _{\mu }.  \notag
\end{eqnarray}

We now set about showing that%
\begin{equation*}
\func{Int}\left( m\right) \lesssim \sum_{F\in \mathcal{F}}\left\vert \alpha
_{\mathcal{F}}\left( F\right) \right\vert ^{2m}\left\vert F\right\vert _{\mu
}\ .
\end{equation*}%
For this, we first prove (\ref{q orth}) in order to outline the main idea.
Using the geometric decay in (\ref{geom decay}) once more we obtain 
\begin{eqnarray*}
\sum_{n=0}^{\infty }\sum_{F^{\prime }\in \mathfrak{C}_{\mathcal{F}}^{\left(
n\right) }\left( F\right) :\ }\alpha _{\mathcal{F}}\left( F^{\prime }\right)
\left\vert F^{\prime }\right\vert _{\mu } &\leq &\sum_{n=0}^{\infty }\sqrt{%
\sum_{F^{\prime }\in \mathfrak{C}_{\mathcal{F}}^{\left( n\right) }\left(
F\right) }\alpha _{\mathcal{F}}\left( F^{\prime }\right) ^{2}\left\vert
F^{\prime }\right\vert _{\mu }}C_{\delta }2^{-\delta n}\sqrt{\left\vert
F\right\vert _{\mu }} \\
&\leq &C_{\delta }\sqrt{\left\vert F\right\vert _{\mu }}\sqrt{%
\sum_{n=0}^{\infty }2^{-\delta n}\sum_{F^{\prime }\in \mathfrak{C}_{\mathcal{%
F}}^{\left( n\right) }\left( F\right) }\alpha _{\mathcal{F}}\left( F^{\prime
}\right) ^{2}\left\vert F^{\prime }\right\vert _{\mu }},
\end{eqnarray*}%
and hence that$\ $%
\begin{eqnarray*}
&&\sum_{F\in \mathcal{F}}\alpha _{\mathcal{F}}\left( F\right) \left\{
\sum_{n=0}^{\infty }\sum_{F^{\prime }\in \mathfrak{C}_{\mathcal{F}}^{\left(
n\right) }\left( F\right) }\alpha _{\mathcal{F}}\left( F^{\prime }\right)
\left\vert F^{\prime }\right\vert _{\mu }\right\} \\
&\lesssim &\sum_{F\in \mathcal{F}}\alpha _{\mathcal{F}}\left( F\right) \sqrt{%
\left\vert F\right\vert _{\mu }}\sqrt{\sum_{n=0}^{\infty }2^{-\delta
n}\sum_{F^{\prime }\in \mathfrak{C}_{\mathcal{F}}^{\left( n\right) }\left(
F\right) }\alpha _{\mathcal{F}}\left( F^{\prime }\right) ^{2}\left\vert
F^{\prime }\right\vert _{\mu }} \\
&\lesssim &\left( \sum_{F\in \mathcal{F}}\alpha _{\mathcal{F}}\left(
F\right) ^{2}\left\vert F\right\vert _{\mu }\right) ^{\frac{1}{2}}\left(
\sum_{n=0}^{\infty }2^{-\delta n}\sum_{F\in \mathcal{F}}\sum_{F^{\prime }\in 
\mathfrak{C}_{\mathcal{F}}^{\left( n\right) }\left( F\right) }\alpha _{%
\mathcal{F}}\left( F^{\prime }\right) ^{2}\left\vert F^{\prime }\right\vert
_{\mu }\right) ^{\frac{1}{2}} \\
&\lesssim &\left\Vert f\right\Vert _{L^{2}\left( \mu \right) }\left(
\sum_{F^{\prime }\in \mathcal{F}}\alpha _{\mathcal{F}}\left( F^{\prime
}\right) ^{2}\left\vert F^{\prime }\right\vert _{\mu }\right) ^{\frac{1}{2}%
}\lesssim \left\Vert f\right\Vert _{L^{2}\left( \mu \right) }^{2}.
\end{eqnarray*}%
This proves (\ref{q orth}) since $\left\Vert \sum_{F\in \mathcal{F}}\alpha _{%
\mathcal{F}}\left( F\right) \mathbf{1}_{F}\right\Vert _{L^{2}\left( \mu
\right) }^{2}$ is dominated by twice the left hand side above.

We now adapt this last argument to apply to (\ref{Int}). For example, in the
case $m=2$, we have that%
\begin{eqnarray*}
&&\func{Int}\left( 2\right) =\sum_{F_{4}\in \mathcal{F}}\alpha _{\mathcal{F}%
}\left( F_{4}\right) \sum_{F_{3}\subset F_{4}}\alpha _{\mathcal{F}}\left(
F_{3}\right) \sum_{F_{2}\subset F_{3}}\alpha _{\mathcal{F}}\left(
F_{2}\right) \sum_{F_{1}\subset F_{2}}\alpha _{\mathcal{F}}\left(
F_{1}\right) \left\vert F_{1}\right\vert _{\mu } \\
&=&\sum_{F_{4}\in \mathcal{F}}\alpha _{\mathcal{F}}\left( F_{4}\right)
\left( \sum_{n_{3}=0}^{\infty }\sum_{F_{3}\in \mathfrak{C}_{\mathcal{F}%
}^{\left( n_{3}\right) }\left( F_{4}\right) }\alpha _{\mathcal{F}}\left(
F_{3}\right) \left( \sum_{n_{2}=0}^{\infty }\sum_{F_{2}\in \mathfrak{C}_{%
\mathcal{F}}^{\left( n_{2}\right) }\left( F_{3}\right) }\alpha _{\mathcal{F}%
}\left( F_{2}\right) \left( \sum_{n_{1}=0}^{\infty }\sum_{F_{1}\in \mathfrak{%
C}_{\mathcal{F}}^{\left( n_{1}\right) }\left( F_{2}\right) }\alpha _{%
\mathcal{F}}\left( F_{1}\right) \left\vert F_{1}\right\vert _{\mu }\right)
\right) \right)
\end{eqnarray*}%
which is at most (we continue to write $m$ in place of $2$ until the very
end of the argument)%
\begin{eqnarray*}
&&C_{\delta }\sum_{n_{3}=0}^{\infty }\sum_{n_{2}=0}^{\infty
}\sum_{n_{1}=0}^{\infty }\sum_{F_{4}\in \mathcal{F}}\alpha _{\mathcal{F}%
}\left( F_{4}\right) \sum_{F_{3}\in \mathfrak{C}_{\mathcal{F}}^{\left(
n_{3}\right) }\left( F_{4}\right) }\alpha _{\mathcal{F}}\left( F_{3}\right)
\\
&&\ \ \ \ \ \ \ \ \ \ \ \ \ \ \ \ \ \ \ \ \ \ \ \ \ \ \ \ \ \ \times
\sum_{F_{2}\in \mathfrak{C}_{\mathcal{F}}^{\left( n_{2}\right) }\left(
F_{3}\right) }\alpha _{\mathcal{F}}\left( F_{2}\right) \left( 2^{-\delta
n_{1}}\left\vert F_{2}\right\vert _{\mu }\right) ^{\frac{2m-1}{2m}}\left(
\sum_{F_{1}\in \mathfrak{C}_{\mathcal{F}}^{\left( n_{1}\right) }\left(
F_{2}\right) }\alpha _{\mathcal{F}}\left( F_{1}\right) ^{2m}\left\vert
F_{1}\right\vert _{\mu }\right) ^{\frac{1}{2m}} \\
&=&C_{\delta }\sum_{n_{3}=0}^{\infty }\sum_{n_{2}=0}^{\infty
}\sum_{n_{1}=0}^{\infty }2^{-\delta \frac{2m-1}{2m}n_{1}}\sum_{F_{4}\in 
\mathcal{F}}\alpha _{\mathcal{F}}\left( F_{4}\right) \sum_{F_{3}\in 
\mathfrak{C}_{\mathcal{F}}^{\left( n_{3}\right) }\left( F_{4}\right) }\alpha
_{\mathcal{F}}\left( F_{3}\right) \\
&&\ \ \ \ \ \ \ \ \ \ \ \ \ \ \ \ \ \ \ \ \ \ \ \ \ \ \ \ \ \ \times
\sum_{F_{2}\in \mathfrak{C}_{\mathcal{F}}^{\left( n_{2}\right) }\left(
F_{3}\right) }\alpha _{\mathcal{F}}\left( F_{2}\right) \left\vert
F_{2}\right\vert _{\mu }^{\frac{1}{2m}}\left( \sum_{F_{1}\in \mathfrak{C}_{%
\mathcal{F}}^{\left( n_{1}\right) }\left( F_{2}\right) }\alpha _{\mathcal{F}%
}\left( F_{1}\right) ^{2m}\left\vert F_{1}\right\vert _{\mu }\right) ^{\frac{%
1}{2m}}\left\vert F_{2}\right\vert _{\mu }^{1-\frac{2}{2m}},
\end{eqnarray*}%
which is in turn dominated by%
\begin{eqnarray*}
&&C_{\delta }\sum_{n_{3}=0}^{\infty }\sum_{n_{2}=0}^{\infty
}\sum_{n_{1}=0}^{\infty }2^{-\delta \frac{2m-1}{2m}n_{1}}\sum_{F_{4}\in 
\mathcal{F}}\alpha _{\mathcal{F}}\left( F_{4}\right) \sum_{F_{3}\in 
\mathfrak{C}_{\mathcal{F}}^{\left( n_{3}\right) }\left( F_{4}\right) }\alpha
_{\mathcal{F}}\left( F_{3}\right) \\
&&\times \left( \sum_{F_{2}\in \mathfrak{C}_{\mathcal{F}}^{\left(
n_{2}\right) }\left( F_{3}\right) }\alpha _{\mathcal{F}}\left( F_{2}\right)
^{2m}\left\vert F_{2}\right\vert _{\mu }\right) ^{\frac{1}{2m}}\left(
\sum_{F_{2}\in \mathfrak{C}_{\mathcal{F}}^{\left( n_{2}\right) }\left(
F_{3}\right) }\sum_{F_{1}\in \mathfrak{C}_{\mathcal{F}}^{\left( n_{1}\right)
}\left( F_{2}\right) }\alpha _{\mathcal{F}}\left( F_{1}\right)
^{2m}\left\vert F_{1}\right\vert _{\mu }\right) ^{\frac{1}{2m}}\left(
2^{-\delta n_{2}}\left\vert F_{3}\right\vert _{\mu }\right) ^{\frac{2m-2}{2m}%
},
\end{eqnarray*}%
where in the last line we have applied H\"{o}lder's inequality with
exponents $\left( 2m,2m,\frac{2m}{2m-2}\right) $, and then used that $%
\sum_{F_{2}\in \mathfrak{C}_{\mathcal{F}}^{\left( n_{2}\right) }\left(
F_{3}\right) }\left\vert F_{2}\right\vert _{\mu }\leq C_{\delta }2^{-\delta
n_{2}}\left\vert F_{3}\right\vert _{\mu }$.

Continuing in this way, we dominate the sum above by%
\begin{eqnarray*}
&\lesssim &\sum_{n_{3}=0}^{\infty }\sum_{n_{2}=0}^{\infty
}\sum_{n_{1}=0}^{\infty }2^{-\delta \frac{2m-1}{2m}n_{1}}\sum_{F_{4}\in 
\mathcal{F}}\alpha _{\mathcal{F}}\left( F_{4}\right) \sum_{F_{3}\in 
\mathfrak{C}_{\mathcal{F}}^{\left( n_{3}\right) }\left( F_{4}\right) }\alpha
_{\mathcal{F}}\left( F_{3}\right) \\
&&\times \left( \sum_{F_{2}\in \mathfrak{C}_{\mathcal{F}}^{\left(
n_{2}\right) }\left( F_{3}\right) }\alpha _{\mathcal{F}}\left( F_{2}\right)
^{2m}\left\vert F_{2}\right\vert _{\mu }\right) ^{\frac{1}{2m}}\left(
\sum_{F_{2}\in \mathfrak{C}_{\mathcal{F}}^{\left( n_{2}\right) }\left(
F_{3}\right) }\sum_{F_{1}\in \mathfrak{C}_{\mathcal{F}}^{\left( n_{1}\right)
}\left( F_{2}\right) }\alpha _{\mathcal{F}}\left( F_{1}\right)
^{2m}\left\vert F_{1}\right\vert _{\mu }\right) ^{\frac{1}{2m}}\left(
2^{-\delta n_{2}}\left\vert F_{3}\right\vert _{\mu }\right) ^{1-\frac{2}{2m}}
\\
&=&\sum_{n_{3}=0}^{\infty }\sum_{n_{2}=0}^{\infty }\sum_{n_{1}=0}^{\infty
}2^{-\delta \left( 1-\frac{1}{2m}\right) n_{1}-\delta \left( 1-\frac{2}{2m}%
\right) n_{2}}\sum_{F_{4}\in \mathcal{F}}\alpha _{\mathcal{F}}\left(
F_{4}\right) \\
&&\times \sum_{F_{3}\in \mathfrak{C}_{\mathcal{F}}^{\left( n_{3}\right)
}\left( F_{4}\right) }\alpha _{\mathcal{F}}\left( F_{3}\right) \left\vert
F_{3}\right\vert _{\mu }^{\frac{1}{2m}}\left( \sum_{F_{2}\in \mathfrak{C}_{%
\mathcal{F}}^{\left( n_{2}\right) }\left( F_{3}\right) }\alpha _{\mathcal{F}%
}\left( F_{2}\right) ^{2m}\left\vert F_{2}\right\vert _{\mu }\right) ^{\frac{%
1}{2m}} \\
&&\times \left( \sum_{F_{2}\in \mathfrak{C}_{\mathcal{F}}^{\left(
n_{2}\right) }\left( F_{3}\right) }\sum_{F_{1}\in \mathfrak{C}_{\mathcal{F}%
}^{\left( n_{1}\right) }\left( F_{2}\right) }\alpha _{\mathcal{F}}\left(
F_{1}\right) ^{2m}\left\vert F_{1}\right\vert _{\mu }\right) ^{\frac{1}{2m}%
}\left\vert F_{3}\right\vert _{\mu }^{1-\frac{3}{2m}}
\end{eqnarray*}%
and continuing with $\frac{2m-4}{2m}=0$ for $m=2$, we have the upper bound,%
\begin{eqnarray*}
&&\sum_{n_{3}=0}^{\infty }\sum_{n_{2}=0}^{\infty }\sum_{n_{1}=0}^{\infty
}2^{-\delta \left[ \left( 1-\frac{1}{2m}\right) n_{1}+\left( 1-\frac{2}{2m}%
\right) n_{2}+\left( 1-\frac{3}{2m}\right) n_{3}\right] }\sum_{F_{4}\in 
\mathcal{F}}\alpha _{\mathcal{F}}\left( F_{4}\right) \left\vert
F_{4}\right\vert _{\mu }^{\frac{1}{2m}}\left( \sum_{F_{3}\in \mathfrak{C}_{%
\mathcal{F}}^{\left( n_{3}\right) }\left( F_{4}\right) }\alpha _{\mathcal{F}%
}\left( F_{3}\right) ^{2m}\left\vert F_{3}\right\vert _{\mu }\right) ^{\frac{%
1}{2m}} \\
&&\times \left( \sum_{F_{3}\in \mathfrak{C}_{\mathcal{F}}^{\left(
n_{3}\right) }\left( F_{4}\right) }\sum_{F_{2}\in \mathfrak{C}_{\mathcal{F}%
}^{\left( n_{2}\right) }\left( F_{3}\right) }\alpha _{\mathcal{F}}\left(
F_{2}\right) ^{2m}\left\vert F_{2}\right\vert _{\mu }\right) ^{\frac{1}{2m}}
\\
&&\times \left( \sum_{F_{3}\in \mathfrak{C}_{\mathcal{F}}^{\left(
n_{3}\right) }\left( F_{4}\right) }\sum_{F_{2}\in \mathfrak{C}_{\mathcal{F}%
}^{\left( n_{2}\right) }\left( F_{3}\right) }\sum_{F_{1}\in \mathfrak{C}_{%
\mathcal{F}}^{\left( n_{1}\right) }\left( F_{2}\right) }\alpha _{\mathcal{F}%
}\left( F_{1}\right) ^{2m}\left\vert F_{1}\right\vert _{\mu }\right) ^{\frac{%
1}{2m}}\left\vert F_{4}\right\vert _{\mu }^{\frac{2m-4}{2m}},
\end{eqnarray*}%
which is at most%
\begin{eqnarray*}
&&\sum_{n_{3}=0}^{\infty }\sum_{n_{2}=0}^{\infty }\sum_{n_{1}=0}^{\infty
}2^{-\delta \left[ \left( 1-\frac{1}{2m}\right) n_{1}+\left( 1-\frac{2}{2m}%
\right) n_{2}+\left( 1-\frac{3}{2m}\right) n_{3}\right] }\left(
\sum_{F_{4}\in \mathcal{F}}\alpha _{\mathcal{F}}\left( F_{4}\right)
^{2m}\left\vert F_{4}\right\vert _{\mu }\right) ^{\frac{1}{2m}} \\
&&\times \left( \sum_{F_{4}\in \mathcal{F}}\sum_{F_{3}\in \mathfrak{C}_{%
\mathcal{F}}^{\left( n_{3}\right) }\left( F_{4}\right) }\alpha _{\mathcal{F}%
}\left( F_{3}\right) ^{2m}\left\vert F_{3}\right\vert _{\mu }\right) ^{\frac{%
1}{2m}}\left( \sum_{F_{4}\in \mathcal{F}}\sum_{F_{3}\in \mathfrak{C}_{%
\mathcal{F}}^{\left( n_{3}\right) }\left( F_{4}\right) }\sum_{F_{2}\in 
\mathfrak{C}_{\mathcal{F}}^{\left( n_{2}\right) }\left( F_{3}\right) }\alpha
_{\mathcal{F}}\left( F_{2}\right) ^{2m}\left\vert F_{2}\right\vert _{\mu
}\right) ^{\frac{1}{2m}} \\
&&\times \left( \sum_{F_{4}\in \mathcal{F}}\sum_{F_{3}\in \mathfrak{C}_{%
\mathcal{F}}^{\left( n_{3}\right) }\left( F_{4}\right) }\sum_{F_{2}\in 
\mathfrak{C}_{\mathcal{F}}^{\left( n_{2}\right) }\left( F_{3}\right)
}\sum_{F_{1}\in \mathfrak{C}_{\mathcal{F}}^{\left( n_{1}\right) }\left(
F_{2}\right) }\alpha _{\mathcal{F}}\left( F_{1}\right) ^{2m}\left\vert
F_{1}\right\vert _{\mu }\right) ^{\frac{1}{2m}}.
\end{eqnarray*}%
Finally, since 
\begin{eqnarray*}
\sum_{F_{4}\in \mathcal{F}}\sum_{F_{3}\in \mathfrak{C}_{\mathcal{F}}^{\left(
n_{3}\right) }\left( F_{4}\right) }\sum_{F_{2}\in \mathfrak{C}_{\mathcal{F}%
}^{\left( n_{2}\right) }\left( F_{3}\right) }\sum_{F_{1}\in \mathfrak{C}_{%
\mathcal{F}}^{\left( n_{1}\right) }\left( F_{2}\right) }\alpha _{\mathcal{F}%
}\left( F_{1}\right) ^{2m}\left\vert F_{1}\right\vert _{\mu } &\leq
&\sum_{F\in \mathcal{F}}\alpha _{\mathcal{F}}\left( F\right) ^{2m}\left\vert
F\right\vert _{\mu }, \\
\sum_{F_{4}\in \mathcal{F}}\sum_{F_{3}\in \mathfrak{C}_{\mathcal{F}}^{\left(
n_{3}\right) }\left( F_{4}\right) }\sum_{F_{2}\in \mathfrak{C}_{\mathcal{F}%
}^{\left( n_{2}\right) }\left( F_{3}\right) }\alpha _{\mathcal{F}}\left(
F_{2}\right) ^{2m}\left\vert F_{2}\right\vert _{\mu } &\leq &\sum_{F\in 
\mathcal{F}}\alpha _{\mathcal{F}}\left( F\right) ^{2m}\left\vert
F\right\vert _{\mu }, \\
\sum_{F_{4}\in \mathcal{F}}\sum_{F_{3}\in \mathfrak{C}_{\mathcal{F}}^{\left(
n_{3}\right) }\left( F_{4}\right) }\alpha _{\mathcal{F}}\left( F_{3}\right)
^{2m}\left\vert F_{3}\right\vert _{\mu } &\leq &\sum_{F\in \mathcal{F}%
}\alpha _{\mathcal{F}}\left( F\right) ^{2m}\left\vert F\right\vert _{\mu },
\end{eqnarray*}%
we obtain that $\func{Int}\left( 2\right) $ is dominated by%
\begin{equation*}
\sum_{n_{3}=0}^{\infty }\sum_{n_{2}=0}^{\infty }\sum_{n_{1}=0}^{\infty
}2^{-\delta \left[ \left( 1-\frac{1}{2m}\right) n_{1}+\left( 1-\frac{2}{2m}%
\right) n_{2}+\left( 1-\frac{3}{2m}\right) n_{3}\right] }\sum_{F\in \mathcal{%
F}}\alpha _{\mathcal{F}}\left( F\right) ^{2m}\left\vert F\right\vert _{\mu
}=C_{\delta ,p}\sum_{F\in \mathcal{F}}\alpha _{\mathcal{F}}\left( F\right)
^{2m}\left\vert F\right\vert _{\mu }\ .
\end{equation*}%
This together with (\ref{Int}), proves%
\begin{equation*}
\int_{\mathbb{R}}\left\vert \alpha _{\mathcal{F}}^{\kappa }\left( x\right)
\right\vert _{\ell ^{2}}^{4}d\mu \left( x\right) \lesssim 2^{-\delta \kappa
}\sum_{F\in \mathcal{F}}\alpha _{\mathcal{F}}\left( F\right) ^{4}\left\vert
F\right\vert _{\mu }\ .
\end{equation*}

Similarly, we can show for $m\geq 3$ that%
\begin{equation*}
\int_{\mathbb{R}}\left\vert \alpha _{\mathcal{F}}^{\kappa }\left( x\right)
\right\vert _{\ell ^{2}}^{2m}d\mu \left( x\right) \lesssim 2^{-\delta \kappa
}\sum_{F\in \mathcal{F}}\alpha _{\mathcal{F}}\left( F\right) ^{2m}\left\vert
F\right\vert _{\mu }.
\end{equation*}%
Altogether then we have%
\begin{equation*}
\int_{\mathbb{R}}\left\vert \alpha _{\mathcal{F}}^{\kappa }\left( x\right)
\right\vert _{\ell ^{2}}^{p}d\mu \left( x\right) \lesssim 2^{-\delta \kappa
}\sum_{F\in \mathcal{F}}\alpha _{\mathcal{F}}\left( F\right) ^{p}\left\vert
F\right\vert _{\mu },\ \ \ \ \ \text{for }p\in \left( 0,2\right] \cup
\left\{ 2m\right\} _{m\in \mathbb{N}}\ ,
\end{equation*}%
where $\alpha _{\mathcal{F}}^{\kappa }\left( x\right) \equiv \left\{ \alpha
_{\mathcal{F}}\left( F\right) \mathbf{1}_{F^{\kappa }}\left( x\right)
\right\} _{F\in \mathcal{F}}$. Marcinkiewicz interpolation \cite[Theorem
1.18 on page 480]{GaRu} applied with the linear operator taking sequences of
numbers $\left\{ \alpha _{\mathcal{F}}\left( F\right) \right\} _{F\in 
\mathcal{F}}\in \ell ^{p}\left( \mathcal{F},\left\vert F\right\vert _{\mu
}\right) $ to sequences of functions $\left\{ \alpha _{\mathcal{F}}\left(
F\right) \mathbf{1}_{F^{\kappa }}\left( x\right) \right\} _{F\in \mathcal{F}%
}\in L^{p}\left( \ell ^{2};\omega \right) $, now gives this inequality for
all $1<p<\infty $, and this completes the proof of (\ref{first claim}),
which is the inequality in (\ref{The claim}).

For the reverse inequality when $\kappa =0$ and $2\leq p<\infty $, we have
with $\alpha _{\mathcal{F}}\left( x\right) =\alpha _{\mathcal{F}}^{0}\left(
x\right) $ that%
\begin{eqnarray*}
&&\int_{\mathbb{R}}\left\vert \alpha _{\mathcal{F}}\left( x\right)
\right\vert _{\ell ^{2}}^{p}d\mu \left( x\right) =\int_{\mathbb{R}}\left(
\sum_{F\in \mathcal{F}}\left\vert \alpha _{\mathcal{F}}\left( F\right) 
\mathbf{1}_{F}\left( x\right) \right\vert ^{2}\right) ^{\frac{p}{2}}d\mu
\left( x\right) \\
&\gtrsim &\int_{\mathbb{R}}\sum_{F\in \mathcal{F}}\left\vert \alpha _{%
\mathcal{F}}\left( F\right) \mathbf{1}_{F}\left( x\right) \right\vert
^{p}d\mu \left( x\right) =\sum_{F\in \mathcal{F}}\alpha _{\mathcal{F}}\left(
F\right) ^{p}\left\vert F\right\vert _{\mu }.
\end{eqnarray*}
\end{proof}

\subsection{$\mathcal{F}$-square functions}

Recall that the Haar square function%
\begin{equation*}
\mathcal{S}f\left( x\right) =\mathcal{S}_{\limfunc{Haar}}^{\mu }f\left(
x\right) \equiv \left( \sum_{I\in \mathcal{D}}\left\vert \bigtriangleup
_{I}^{\mu }f\left( x\right) \right\vert ^{2}\right) ^{\frac{1}{2}}
\end{equation*}%
is bounded on $L^{p}\left( \mu \right) $ for any $1<p<\infty $ and any
locally finite positive Borel measure $\mu $ by Burkholder's theorem \cite%
{Bur1}, \cite{Bur2} and Khintchine's inequality - see also the excellent
lecture notes \cite[Exercise 4 on page 19]{Hyt2} - simply because $\mathcal{S%
}_{\limfunc{Haar}}^{\mu }$ is a martingale difference square function.

We now recall extensions of this result to more complicated corona square
functions with locally finite positive Borel measures on $\mathbb{R}$ that
were derived in \cite{SaWi}\ (and treated there in $\mathbb{R}^{n}$). Fix a $%
\mathcal{D}$-dyadic interval $F_{0}$, let $\mu $\ be a locally finite
positive Borel measure on $F_{0}$, and suppose that $\mathcal{F}$ is a
subset of $\mathcal{D}\left[ F_{0}\right] \equiv \left\{ I\in \mathcal{D}%
:I\subset F_{0}\right\} $. The collection $\left\{ \mathcal{C}_{\mathcal{F}%
}\left( F\right) \right\} _{F\in \mathcal{F}}$ of subsets $\mathcal{C}_{%
\mathcal{F}}\left( F\right) \subset \mathcal{D}\left[ F_{0}\right] $ is
defined by%
\begin{equation*}
\mathcal{C}_{\mathcal{F}}\left( F\right) \equiv \left\{ I\in \mathcal{D}%
:I\subset F\text{ and }I\not\subset F^{\prime }\text{ for any }\mathcal{F}%
\text{-child }F^{\prime }\text{ of }F\right\} ,\ \ \ \ \ F\in \mathcal{F},
\end{equation*}%
and%
\begin{eqnarray*}
&&\mathcal{C}_{\mathcal{F}}\left( F\right) \text{ is connected for each }%
F\in \mathcal{F}, \\
&&F\in \mathcal{C}_{\mathcal{F}}\left( F\right) \text{ and }I\in \mathcal{C}%
_{\mathcal{F}}\left( F\right) \Longrightarrow I\subset F\text{ for each }%
F\in \mathcal{F}, \\
&&\mathcal{C}_{\mathcal{F}}\left( F\right) \cap \mathcal{C}_{\mathcal{F}%
}\left( F^{\prime }\right) =\emptyset \text{ for all distinct }F,F^{\prime
}\in \mathcal{F}, \\
&&\mathcal{D}\left[ F_{0}\right] =\bigcup_{F\in \mathcal{F}}\mathcal{C}_{%
\mathcal{F}}\left( F\right) .
\end{eqnarray*}%
The subset $\mathcal{C}_{\mathcal{F}}\left( F\right) $ of $\mathcal{D}$ is
referred to as the $\mathcal{F}$-corona with top $F$, and the decomposition\ 
$\mathcal{D}\left[ F_{0}\right] =\bigcup_{F\in \mathcal{F}}\mathcal{C}_{%
\mathcal{F}}\left( F\right) $ is referred to as the corresponding corona
decomposition. We emphasize that there is no assumption of $\func{good}$\
intervals here.

Define the corona projections $\mathsf{P}_{\mathcal{C}_{\mathcal{F}}\left(
F\right) }^{\mu }\equiv \sum_{I\in \mathcal{C}_{\mathcal{F}}\left( F\right)
}\bigtriangleup _{I}^{\mu }$ and group them together according to their
depth in the tree $\mathcal{F}$ into the projections%
\begin{equation*}
\mathsf{P}_{k}^{\mu }\equiv \sum_{F\in \mathfrak{C}_{\mathcal{F}}^{k}\left(
F_{0}\right) }\mathsf{P}_{\mathcal{C}_{\mathcal{F}}\left( F\right) }^{\mu }\
.
\end{equation*}%
Note that the $k^{th}$ grandchildren $F\in \mathfrak{C}_{\mathcal{F}%
}^{k}\left( F_{0}\right) $ are pairwise disjoint and hence so are the
supports of the functions $\mathsf{P}_{\mathcal{C}_{\mathcal{F}}\left(
F\right) }^{\mu }f$ for $F\in \mathfrak{C}_{\mathcal{F}}^{k}\left(
F_{0}\right) $. Define the $\mathcal{F}$-square function $\mathcal{S}_{%
\mathcal{F}}f$ by%
\begin{equation*}
\mathcal{S}_{\mathcal{F}}f\left( x\right) =\left( \sum_{k=0}^{\infty
}\left\vert \mathsf{P}_{k}^{\mu }f\left( x\right) \right\vert ^{2}\right) ^{%
\frac{1}{2}}=\left( \sum_{F\in \mathcal{F}}\left\vert \mathsf{P}_{\mathcal{C}%
_{\mathcal{F}}\left( F\right) }^{\mu }f\left( x\right) \right\vert
^{2}\right) ^{\frac{1}{2}}=\left( \sum_{F\in \mathcal{F}}\left\vert
\sum_{I\in \mathcal{C}_{\mathcal{F}}\left( F\right) }\bigtriangleup
_{I}^{\mu }f\left( x\right) \right\vert ^{2}\right) ^{\frac{1}{2}}.
\end{equation*}

\begin{theorem}[\protect\cite{SaWi}]
\label{square thm}Suppose $\mu $ is a locally finite positive Borel measure
on $\mathbb{R}$, and let $\mathcal{F}\subset \mathcal{D}$\footnote{%
It was assumed in \cite{SaWi} that\ $\mathcal{F}$ is $\mu $-Carleson, but
this was a misprint.}. Then for $1<p<\infty $,%
\begin{equation*}
\left\Vert \mathcal{S}_{\mathcal{F}}f\right\Vert _{L^{p}\left( \mu \right)
}\approx \left\Vert f\right\Vert _{L^{p}\left( \mu \right) }.
\end{equation*}
\end{theorem}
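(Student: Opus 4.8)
The plan is to identify $\{\mathsf{P}_{k}^{\mu}f\}_{k\geq 0}$ as the martingale difference sequence of a martingale built from a coarsening of the dyadic filtration, and then to invoke Burkholder's martingale transform theorem together with Khintchine's inequality, exactly as for the ordinary Haar square function $\mathcal{S}_{\limfunc{Haar}}^{\mu}$ recalled above. First note that, since the $k^{\text{th}}$-generation stopping intervals $\mathfrak{C}_{\mathcal{F}}^{k}\left( F_{0}\right) $ are pairwise disjoint, the functions $\left\{ \mathsf{P}_{\mathcal{C}_{\mathcal{F}}\left( F\right) }^{\mu}f\right\} _{F\in \mathfrak{C}_{\mathcal{F}}^{k}\left( F_{0}\right) }$ have pairwise disjoint supports, so $\left\vert \mathsf{P}_{k}^{\mu}f\right\vert ^{2}=\sum_{F\in \mathfrak{C}_{\mathcal{F}}^{k}\left( F_{0}\right) }\left\vert \mathsf{P}_{\mathcal{C}_{\mathcal{F}}\left( F\right) }^{\mu}f\right\vert ^{2}$ pointwise, and $\mathcal{S}_{\mathcal{F}}f=\left( \sum_{k\geq 0}\left\vert \mathsf{P}_{k}^{\mu}f\right\vert ^{2}\right) ^{1/2}$ is genuinely the square function of the sequence $\left\{ \mathsf{P}_{k}^{\mu}f\right\} $. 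It therefore suffices to show this sequence is a martingale difference sequence summing to $f$, the only caveat being a constant term $E_{F_{0}}^{\mu}f$ that vanishes whenever $f$ has Haar support in $\mathcal{D}\left[ F_{0}\right] $ (in particular whenever $\left\vert F_{0}\right\vert _{\mu}=\infty$), which we henceforth assume.

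To construct the filtration, for $k\geq -1$ let $\mathcal{B}_{k}$ be the $\sigma$-algebra on $F_{0}$ generated by all $I\in \mathcal{D}\left[ F_{0}\right] $ that are \emph{not} properly contained in any $G\in \mathfrak{C}_{\mathcal{F}}^{k+1}\left( F_{0}\right) $; thus $\mathcal{B}_{k}$ resolves the full dyadic structure off $\bigcup \mathfrak{C}_{\mathcal{F}}^{k+1}\left( F_{0}\right) $, while each $G\in \mathfrak{C}_{\mathcal{F}}^{k+1}\left( F_{0}\right) $ is an atom. Since a depth-$(k+2)$ stopping interval sits inside a depth-$(k+1)$ one, the $\mathcal{B}_{k}$ increase with $k$, $\mathcal{B}_{-1}=\left\{ \emptyset ,F_{0}\right\} $, and $\bigvee_{k}\mathcal{B}_{k}$ is the full dyadic $\sigma$-algebra on $F_{0}$; hence $\mathbb{E}_{\mu}\left[ f\mid \mathcal{B}_{k}\right] \to f$ in $L^{p}\left( \mu\right) $. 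Telescoping the Haar sum inside a single corona — using $\sum_{I\in \mathcal{D}:\ F^{\prime}\subsetneq I\subseteq F}\bigtriangleup _{I}^{\mu}f=\left( E_{F^{\prime}}^{\mu}f-E_{F}^{\mu}f\right) \mathbf{1}_{F^{\prime}}$ for an $\mathcal{F}$-child $F^{\prime}$ of $F$, and $\sum_{I\in \mathcal{C}_{\mathcal{F}}\left( F\right) }\bigtriangleup _{I}^{\mu}f=f-E_{F}^{\mu}f$ (a.e. $\mu$) on the part of $F$ meeting no $\mathcal{F}$-child — one finds $\mathsf{P}_{\mathcal{C}_{\mathcal{F}}\left( F\right) }^{\mu}f=E_{F^{\prime}}^{\mu}f-E_{F}^{\mu}f$ on each $\mathcal{F}$-child $F^{\prime}$ of $F$ and $=f-E_{F}^{\mu}f$ elsewhere on $F$. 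Summing over $F\in \mathfrak{C}_{\mathcal{F}}^{k}\left( F_{0}\right) $ and comparing with the (two possible) values of $\mathbb{E}_{\mu}\left[ f\mid \mathcal{B}_{k}\right] $ and $\mathbb{E}_{\mu}\left[ f\mid \mathcal{B}_{k-1}\right] $ on these sets gives $\mathsf{P}_{k}^{\mu}f=\mathbb{E}_{\mu}\left[ f\mid \mathcal{B}_{k}\right] -\mathbb{E}_{\mu}\left[ f\mid \mathcal{B}_{k-1}\right] $, so $\left\{ \mathsf{P}_{k}^{\mu}f\right\} _{k\geq 0}$ is the martingale difference sequence of the $L^{p}\left( \mu\right) $-convergent martingale $\left( \mathbb{E}_{\mu}\left[ f\mid \mathcal{B}_{k}\right] \right) _{k\geq 0}$ with limit $f$.

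Granting this, Burkholder's theorem \cite{Bur1}, \cite{Bur2} and Khintchine's inequality — applied verbatim as for $\mathcal{S}_{\limfunc{Haar}}^{\mu}$ above — give, for all $1<p<\infty$, $\left\Vert \left( \sum_{k\geq 0}\left\vert \mathsf{P}_{k}^{\mu}f\right\vert ^{2}\right) ^{1/2}\right\Vert _{L^{p}\left( \mu\right) }\approx \left\Vert f\right\Vert _{L^{p}\left( \mu\right) }$, the upper bound being $L^{p}$-boundedness of the martingale square function and the lower bound its reverse (equivalently, duality against the $L^{p^{\prime}}\left( \mu\right) $ square function). Since the left-hand side equals $\left\Vert \mathcal{S}_{\mathcal{F}}f\right\Vert _{L^{p}\left( \mu\right) }$ by the first paragraph, this is the claimed equivalence. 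I expect the only real work to lie in the middle paragraph: verifying that the coarsened $\sigma$-algebras $\mathcal{B}_{k}$ are well defined and increasing for a general locally finite measure $\mu$ (allowing point masses and coronas of infinite depth), and carrying out the telescoping identity that matches $\mathsf{P}_{k}^{\mu}f$ with $\mathbb{E}_{\mu}\left[ f\mid \mathcal{B}_{k}\right] -\mathbb{E}_{\mu}\left[ f\mid \mathcal{B}_{k-1}\right] $. Once that bookkeeping is in place the inequality is a black-box consequence of Burkholder's theorem; in particular, as the footnote to the statement indicates, no goodness or Carleson hypothesis on $\mathcal{F}$ enters, since nothing about $\mathcal{F}$ is used beyond the facts that its coronas partition $\mathcal{D}\left[ F_{0}\right] $ and are organized into a tree.
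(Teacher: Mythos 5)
Your proposal is correct and follows essentially the same route as the paper (and its source \cite{SaWi}): the paper's subsection on corona martingales identifies $\left\{ \mathsf{P}_{k}^{\mu }f\right\} $ as the martingale difference sequence of the corona martingale $\left\{ \mathsf{E}_{k}^{\mu }f\right\} $ for the filtration whose atoms at level $k$ are the intervals of $\mathfrak{C}_{\mathcal{F}}^{\left( k\right) }\left( F_{0}\right) $, and then invokes Burkholder's theorem and Khintchine's inequality, exactly as you do with your coarsened dyadic $\sigma $-algebras $\mathcal{B}_{k}$ (which agree, up to an index shift, with the paper's $\mathcal{E}_{k}$). Your telescoping identification of $\mathsf{P}_{\mathcal{C}_{\mathcal{F}}\left( F\right) }^{\mu }f$ and the handling of the initial term $E_{F_{0}}^{\mu }f$ are the right bookkeeping, and no Carleson or goodness hypothesis on $\mathcal{F}$ is needed, just as the footnote to the statement indicates.
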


Another square function that will arise in related forms is 
\begin{eqnarray*}
\mathcal{S}_{\rho ,\delta }f\left( x\right) &\equiv &\left( \sum_{I\in 
\mathcal{D}\ :x\in I}\left\vert \mathsf{P}_{I}^{\rho ,\delta }f\left(
x\right) \right\vert ^{2}\right) ^{\frac{1}{2}}, \\
\text{where }\mathsf{P}_{I}^{\rho ,\delta }f\left( x\right) &\equiv
&\sum_{J\in \mathcal{D}:\ 2^{-\rho }\ell \left( I\right) \leq \ell \left(
J\right) \leq 2^{\rho }\ell \left( I\right) }2^{-\delta \limfunc{dist}\left(
J,I\right) }\bigtriangleup _{J}^{\mu }f\left( x\right) .
\end{eqnarray*}

\begin{theorem}
\label{square thm nearby}Suppose $\mu $ is a locally finite positive Borel
measure on $\mathbb{R}$, and let $0<\rho ,\delta <1$. Then for $1<p<\infty $,%
\begin{equation*}
\left\Vert \mathcal{S}_{\rho ,\delta }f\right\Vert _{L^{p}\left( \mu \right)
}\leq C_{p,\rho ,\delta }\left\Vert f\right\Vert _{L^{p}\left( \mu \right) }.
\end{equation*}
\end{theorem}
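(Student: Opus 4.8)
The plan is to dominate $\mathcal{S}_{\rho,\delta}f$ pointwise by a fixed multiple of the ordinary Haar square function $\mathcal{S}f=\mathcal{S}^{\mu}_{\func{Haar}}f$, and then to quote the $L^{p}(\mu)$-boundedness of $\mathcal{S}f$ recalled just above (Burkholder's martingale transform theorem together with Khintchine's inequality), since $\mathcal{S}^{\mu}_{\func{Haar}}$ is a martingale difference square function for an arbitrary locally finite positive Borel measure $\mu$.

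First I would unwind the pointwise meaning of $\mathsf{P}_{I}^{\rho,\delta}f(x)$. Since $\bigtriangleup_{J}^{\mu}f(x)=0$ unless $x\in J$, in the sum defining $\mathsf{P}_{I}^{\rho,\delta}f(x)$ only those $J\in\mathcal{D}$ with $x\in J$ and $2^{-\rho}\ell(I)\le\ell(J)\le 2^{\rho}\ell(I)$ survive; for each admissible dyadic scale there is exactly one such $J$ (the unique dyadic interval of that scale containing $x$), it automatically meets $I$ (both contain $x$), hence $\func{dist}(J,I)=0$ and the weight $2^{-\delta\func{dist}(J,I)}$ equals $1$. Writing $I_{n}=I_{n}(x)$ for the dyadic interval of side length $2^{-n}$ containing $x$ and $a_{m}=a_{m}(x)\equiv\bigtriangleup_{I_{m}(x)}^{\mu}f(x)$, this gives
\[
\mathsf{P}_{I_{n}}^{\rho,\delta}f(x)=\sum_{m:\,|m-n|\le\rho}\bigtriangleup_{I_{m}}^{\mu}f(x),
\qquad
\mathcal{S}_{\rho,\delta}f(x)^{2}=\sum_{n\in\mathbb{Z}}\Big|\sum_{m:\,|m-n|\le\rho}a_{m}\Big|^{2},
\]
which is the squared $\ell^{2}(\mathbb{Z})$-norm of the discrete convolution of $\{a_{m}\}_{m}$ with the finite kernel $\mathbf{1}_{\{|k|\le\rho\}}$, whose $\ell^{1}(\mathbb{Z})$-norm is at most $2\rho+1$. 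Young's inequality on $\mathbb{Z}$ then yields the pointwise bound
\[
\mathcal{S}_{\rho,\delta}f(x)\le (2\rho+1)\Big(\sum_{m}|a_{m}|^{2}\Big)^{1/2}=(2\rho+1)\,\mathcal{S}f(x),
\]
the suppressed weights $2^{-\delta\func{dist}}\le 1$ only helping; combined with $\|\mathcal{S}f\|_{L^{p}(\mu)}\lesssim_{p}\|f\|_{L^{p}(\mu)}$ this gives $\|\mathcal{S}_{\rho,\delta}f\|_{L^{p}(\mu)}\le C_{p,\rho,\delta}\|f\|_{L^{p}(\mu)}$ as required.

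There is essentially no obstacle along this route; the only point requiring care is the first step, namely verifying that imposing $x\in J$ collapses the inner sum, at each admissible scale, to a single term with $\func{dist}(J,I)=0$. The genuine difficulty would appear only in a variant of $\mathcal{S}_{\rho,\delta}$ in which real horizontal shifts survive — e.g. with $J$ drawn from a neighbouring or dilated copy of the grid, so that $\func{dist}(J,I)$ is a positive multiple of $\ell(I)$. In that case the substance of the proof shifts to proving that each shifted square function $\big(\sum_{I\ni x}|\bigtriangleup^{\mu}_{\Phi(I)}f(x)|^{2}\big)^{1/2}$ is bounded on $L^{p}(\mu)$ uniformly (up to polynomial growth) in the shift $\Phi$, so that the geometric factor $2^{-\delta\func{dist}}$ makes the resulting sum over shifts converge with a constant depending on $\rho$ and $\delta$; since $\mu$ is arbitrary there is no translation invariance to exploit, and one would instead regroup the at most $2\rho+1$ consecutive dyadic scales occurring in $\mathsf{P}_{I}^{\rho,\delta}$ into one coarser block and apply Burkholder's theorem to the coarsened martingale — the same tool already used for $\mathcal{S}^{\mu}_{\func{Haar}}$.
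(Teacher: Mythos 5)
Your proof is correct and is essentially the paper's argument: the paper likewise observes the pointwise bound $\mathcal{S}_{\rho ,\delta }f\left( x\right) \leq C_{\rho ,\delta }\,\mathcal{S}_{\limfunc{Haar}}^{\mu }f\left( x\right) $ and then invokes the $L^{p}\left( \mu \right) $-boundedness of the Haar (martingale difference) square function via Burkholder's theorem. Your unwinding of $\mathsf{P}_{I}^{\rho ,\delta }f\left( x\right) $ — noting that only the $J\ni x$ survive, so $\limfunc{dist}\left( J,I\right) =0$ and the sum collapses to at most $2\rho +1$ consecutive scales — merely supplies the details behind the paper's ``it is easy to see,'' so no further comment is needed.
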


\begin{proof}
It is easy to see that $\mathcal{S}_{\rho ,\delta }f\left( x\right) \leq
C_{\rho ,\delta }\mathcal{S}_{\limfunc{Haar}}f\left( x\right) $, and the
boundedness of $\mathcal{S}_{\rho ,\delta }$ now follows from the
boundedness of the Haar square function $\mathcal{S}_{\limfunc{Haar}}$.
\end{proof}

More generally, for $\Lambda \subset \mathcal{D}\left[ I\right] $ we define
projections%
\begin{equation*}
\mathsf{P}_{\Lambda }^{\omega }g\left( x\right) \equiv \sum_{J\in \Lambda
}\bigtriangleup _{J}^{\omega }g\left( x\right) ,
\end{equation*}%
and their associated `absolute' projections 
\begin{equation}
\left\vert \mathsf{P}_{\Lambda }^{\omega }\right\vert g\left( x\right)
\equiv \sqrt{\sum_{J\in \Lambda }\left\vert \bigtriangleup _{J}^{\omega
}g\left( x\right) \right\vert ^{2}}.  \label{abs proj}
\end{equation}

\subsubsection{Corona martingales}

The special type of martingale $\left\{ f_{k}\right\} _{k=1}^{\infty }$ that
we will be working with in this paper is that for which there is

\begin{enumerate}
\item an interval $F_{0}$ (thought of as the universe) and a subset $%
\mathcal{F}\subset \mathcal{D}\left[ F_{0}\right] $,

\item an increasing sequence $\left\{ \mathcal{E}_{k}\right\} _{k=0}^{\infty
}$ of $\sigma $-algebras of the form%
\begin{equation*}
\mathcal{E}_{k}\equiv \left\{ E\text{ Borel }\subset F_{0}:E\cap F\in
\left\{ \emptyset ,F\right\} \text{ for all }F\in \mathfrak{C}_{\mathcal{F}%
}^{\left( k\right) }\left( F_{0}\right) \right\} ,
\end{equation*}

\item and a function $f\in L^{p}\left( \mu \right) \cap L^{2}\left( \mu
\right) $ such that%
\begin{equation*}
f_{k}\left( x\right) =\mathsf{E}_{k}^{\mu }f\left( x\right) ,\ \ \ \ \ x\in 
\mathbb{R},
\end{equation*}%
where%
\begin{eqnarray*}
\mathsf{E}_{k}^{\mu }f\left( x\right) &\equiv &\left\{ 
\begin{array}{ccc}
E_{F}^{\mu }f & \text{ if } & x\in F\text{ for some }F\in \mathfrak{C}_{%
\mathcal{F}}^{\left( k\right) }\left( F_{0}\right) \\ 
f\left( x\right) & \text{ if } & x\in F_{0}\setminus \bigcup \mathfrak{C}_{%
\mathcal{F}}^{\left( k\right) }\left( F_{0}\right)%
\end{array}%
\right. ; \\
&\text{ }&\text{and where }\bigcup \mathfrak{C}_{\mathcal{F}}^{\left(
k\right) }\left( F_{0}\right) \equiv \bigcup_{F\in \mathfrak{C}_{\mathcal{F}%
}^{\left( k\right) }\left( F_{0}\right) }F.
\end{eqnarray*}
\end{enumerate}

Note that the sequence $\left\{ \mathsf{P}_{k}^{\mu }f\left( x\right)
\right\} _{F\in \mathcal{F}}$ of corona projections of the function $f$ is
the \emph{martingale difference sequence} of the $L^{p}$ bounded martingale $%
\left\{ \mathsf{E}_{k}^{\mu }f\left( x\right) \right\} _{F\in \mathcal{F}}$
with respect to the increasing sequence $\left\{ \mathcal{E}_{k}\right\}
_{k=0}^{\infty }$ of $\sigma $-algebras generated by the `atoms' $F\in 
\mathfrak{C}_{\mathcal{F}}^{\left( k\right) }\left( F_{0}\right) $.

\begin{definition}
\label{def mart}We refer to the above construction of a martingale
difference sequence associated with the function $f$, as the $\mathcal{F}$%
-corona martingale difference sequence of $f$.
\end{definition}

\begin{conclusion}
\label{assoc mart}To any function $f\in L^{p}\left( \mu \right) \cap
L^{2}\left( \mu \right) $ and any subset $\mathcal{F}\subset \mathcal{D}%
\left[ F_{0}\right] $, we can associate an $L^{p}$ bounded corona martingale 
$\left\{ \mathsf{E}_{k}^{\mu }f\left( x\right) \right\} _{F\in \mathcal{F}}$%
, whose martingale properties can then be exploited - e.g. Burkholder's
theorem which leads to boundedness of the associated square function, and
the Corona Martingale Comparison Principle in Propositon \ref{CMCP} below.
\end{conclusion}

\subsubsection{Iterated corona martingales\label{iter subsub sec}}

Given stopping times $\mathcal{Q}\subset \mathcal{A}$ in a finite set, we
can view the corona decomposition associated with $\mathcal{A}$ as an \emph{%
iterated} corona decomposition associated with $\mathcal{Q}\circ \mathcal{A}$%
, where the iterated stopping time $\mathcal{Q}\circ \mathcal{A}$ is thought
of as the union of the restricted stopping times $\mathcal{A}\left[ Q\right]
\equiv \mathcal{A}\cap \mathcal{C}_{\mathcal{Q}}\left( Q\right) $ for each $%
Q\in \mathcal{Q}$. The reason for taking this point of view is that the
corona decomposition of each corona $\mathcal{C}_{\mathcal{Q}}\left(
Q\right) $ into coronas $\left\{ \mathcal{C}_{\mathcal{A}\left[ Q\right]
}\left( A\right) \right\} _{A\in \mathcal{A}\left[ Q\right] }$ may carry
special information that is less visible when we view the corona
decomposition $\left\{ \mathcal{C}_{\mathcal{A}}\left( A\right) \right\}
_{A\in \mathcal{A}}$ abstractly. In fact the usual martingale difference
sequence $\left\{ h_{k}\right\} _{k=1}^{\infty }$ associated with a function 
$h$ and the stopping times $\mathcal{A}$, blurs any such information, since
the coronas at a given level in $\mathcal{A}$ may be associated with $Q$'s
at many different levels in $\mathcal{Q}$. We will now define the \emph{%
iterated} martingale difference sequence associated with $\mathcal{Q}\circ 
\mathcal{A}$ which doesn't suffer from this defect, and clearly displays any
information peculiar to the coronas $\mathcal{C}_{\mathcal{Q}}\left(
Q\right) $.

Define the \emph{depth} of a stopping time $\mathcal{S}$ to be the length $%
\limfunc{depth}\left( \mathcal{S}\right) $ of the longest tower in $\mathcal{%
S}$. Then in an iterated corona $\mathcal{Q}\circ \mathcal{A}$ we define the 
\emph{iterated} difference sequence starting with the ground level of $%
\mathcal{Q}$, which we assume is the single interval $T$. Define $h_{1}$ to
be the difference sequence associated with the sequence of coronas 
\begin{equation*}
\mathcal{C}_{\mathcal{A}\left[ T\right] }^{\left( 1\right) }\left( T\right)
=\left\{ \mathcal{C}_{\mathcal{A}}\left( A\right) \right\} _{A\in \mathfrak{C%
}_{\mathcal{A}}\left( T\right) }
\end{equation*}%
which are the $\mathcal{A}\left[ T\right] $ children of $T$. Then set $h_{k}$
to be the difference sequence associated with the coronas $\mathcal{C}_{%
\mathcal{A}\left[ T\right] }^{\left( k\right) }\left( T\right) $ for $1\leq
k\leq D_{1}$ where%
\begin{equation*}
D_{1}\equiv \limfunc{depth}\left( \mathcal{A}\left[ T\right] \right) .
\end{equation*}
Continuing in this way beyond this point would only add vanishing difference
sequences, and corresponding repeated $\sigma $-algebras for each vanishing
difference.

Next define $h_{D_{1}+1}$ to be the difference sequence associated with the
sequence of coronas $\left\{ \mathcal{C}_{\mathcal{A}\left[ Q\right]
}^{\left( 1\right) }\left( Q\right) \right\} _{Q\in \mathfrak{C}_{\mathcal{A}%
}\left( T\right) }$ at level one in $\mathcal{Q}$, and set $h_{D_{1}+k}$ to
be the difference sequence associated with the sequence of coronas $\left\{ 
\mathcal{C}_{\mathcal{A}\left[ Q\right] }^{\left( k\right) }\left( Q\right)
\right\} _{Q\in \mathfrak{C}_{\mathcal{A}}\left( T\right) }$ for $1\leq
k\leq D_{2}$ where%
\begin{equation*}
D_{2}\equiv \max_{Q\in \mathfrak{C}_{\mathcal{A}}\left( T\right) }\limfunc{%
depth}\left( \mathcal{A}\left[ Q\right] \right) .
\end{equation*}

At this point we have defined the iterated difference sequence $\left\{
h_{\ell }\right\} _{\ell =1}^{D_{1}+D_{2}}$ up to $D_{1}+D_{2}$, and we now
also define the \emph{iterated} distance $\limfunc{dist}{}_{\mathcal{Q}\circ 
\mathcal{A}}\left( A,T\right) $ from the root $T$ to an interval $A\in 
\mathcal{A}\left[ Q\right] $ for some $Q\in \mathfrak{C}_{\mathcal{Q}}^{%
\left[ 1\right] }\left( T\right) =\left\{ T\right\} \cup \mathfrak{C}_{%
\mathcal{A}}\left( T\right) $, by%
\begin{equation*}
\limfunc{dist}{}_{\mathcal{Q}\circ \mathcal{A}}\left( A,T\right) \equiv
\left\{ 
\begin{array}{ccc}
\limfunc{dist}{}_{\mathcal{A}\left[ T\right] }\left( A,T\right) & \text{ if }
& A\in \mathcal{A}\left[ T\right] \\ 
D_{1}+\limfunc{dist}{}_{\mathcal{A}\left[ Q\right] }\left( A,Q\right) & 
\text{ if } & A\in \mathcal{A}\left[ Q\right]%
\end{array}%
\right. ,\ \ \ \ \ A\in \mathcal{A}\left[ Q\right] ,Q\in \mathfrak{C}_{%
\mathcal{Q}}^{\left[ 1\right] }\left( T\right) .
\end{equation*}%
We also denote $\limfunc{dist}{}_{\mathcal{Q}\circ \mathcal{A}}\left(
A,T\right) $ by $\limfunc{xdist}_{\mathcal{A}}\left( A,T\right) $ when $%
\mathcal{Q}$ is understood. Note that this iterated distance is in general
larger than the corona distance $\limfunc{dist}{}_{\mathcal{A}}\left(
A,T\right) $.

Then we continue by defining $h_{D_{1}+D_{2}+k}$ to be the difference
sequence associated with the sequence of coronas $\left\{ \mathcal{C}_{%
\mathcal{A}\left[ Q\right] }^{\left( k\right) }\left( Q\right) \right\}
_{Q\in \mathfrak{C}_{\mathcal{Q}}^{\left( 2\right) }\left( T\right) }$ for $%
1\leq k\leq D_{3}$ where%
\begin{equation*}
D_{3}\equiv \max_{Q\in \mathfrak{C}_{\mathcal{Q}}^{\left( 2\right) }\left(
T\right) }\limfunc{depth}\left( \mathcal{A}\left[ Q\right] \right) .
\end{equation*}%
We also define $\limfunc{dist}{}_{\mathcal{Q}\circ \mathcal{A}}\left(
A,T\right) $ for $A\in \mathcal{A}\left[ Q\right] ,Q\in \mathfrak{C}_{%
\mathcal{Q}}^{\left[ 2\right] }\left( T\right) $ in the analogous way.

We then continue this process of defining 
\begin{equation}
h_{k},\ \ \ D_{k},\ \ \ \text{and }\limfunc{dist}{}_{\mathcal{Q}\circ 
\mathcal{A}}\left( A,T\right) =\limfunc{xdist}{}_{\mathcal{A}}\left(
A,T\right) \text{ for }A\in \mathcal{A}\left[ Q\right] \text{ with }Q\in 
\mathfrak{C}_{\mathcal{Q}}^{\left[ k-1\right] }\left( T\right) ,
\label{iter defs}
\end{equation}%
until $k$ has reached $\limfunc{depth}\left( \mathcal{Q}\circ \mathcal{A}%
\right) =D_{1}+D_{2}+...+D_{N}$, where $N=\limfunc{depth}\left( \mathcal{Q}%
\right) $ is the depth of the stopping times $\mathcal{Q}$, i.e. $\mathfrak{C%
}_{\mathcal{Q}}^{\left( N\right) }\left( T\right) \neq \emptyset $ and $%
\mathfrak{C}_{\mathcal{Q}}^{\left( N+1\right) }\left( T\right) =\emptyset $.

We refer to this construction of the \emph{iterated} martingale difference
sequence $\left\{ h_{k}\right\} _{k=1}^{\limfunc{depth}\left( \mathcal{Q}%
\circ \mathcal{A}\right) }$ associated with the function $h$, as the \emph{%
regularization} of the $\mathcal{A}$-corona martingale difference sequence $%
\left\{ f_{k}\right\} _{k=1}^{\limfunc{depth}\left( \mathcal{A}\right) }$,
defined in Definition \ref{def mart}, by the iterated stopping times $%
\mathcal{Q}\circ \mathcal{A}$. Note that $\limfunc{depth}\left( \mathcal{A}%
\right) $ is typically much smaller than $\limfunc{depth}\left( \mathcal{Q}%
\circ \mathcal{A}\right) $.

Finally, we associate to each $A\in \mathcal{A}$, an ordered pair $\left(
d_{1},d_{2}\right) $ where $d_{1}=\limfunc{dist}_{\mathcal{A}}\left(
A,Q\right) $ and $d_{2}=\limfunc{dist}_{\mathcal{Q}}\left( Q,T\right) $
where $Q$ is the unique interval in $\mathcal{Q}$ such that $A\in \mathcal{C}%
_{\mathcal{Q}}\left( Q\right) $. Note that the ordered pairs associated to
intervals $A$ at a fixed level $\limfunc{xdist}_{\mathcal{A}}\left(
A,T\right) $ all coincide. If we let $t$ denote the level in the iterated
martingale difference sequence, then we can unambiguously define%
\begin{equation*}
\left( d_{1}\left( t\right) ,d_{2}\left( t\right) \right) \text{ to be
associated to }A\text{ where }t=\limfunc{xdist}{}_{\mathcal{A}}\left(
A,T\right) .
\end{equation*}

\begin{conclusion}
\label{concl it}Suppose we are presented with a martingale difference
sequence $\left\{ f_{k}\right\} _{k=1}^{\infty }$ for $f=\sum_{k=1}^{\infty
}f_{k}$ relative to a collection of stopping times $\mathcal{A}$ as in
Definition \ref{def mart}. In the special case when $\mathcal{A}$ has an
iterated structure arising from stopping times $\mathcal{Q}\subset \mathcal{A%
}$, we can also write $f=\sum_{\ell =1}^{\infty }h_{\ell }$, where the
iterated martingale difference sequence $\left\{ h_{\ell }\right\} _{\ell
=1}^{\infty }$ is \emph{finer} than $\left\{ h_{k}\right\} _{k=1}^{\infty }$
and has a \emph{regularizing} property, i.e. there is a function $\ell
\rightarrow k=k\left( \ell \right) \leq \ell $ mapping $\mathbb{N}$ to $%
\mathbb{N}$, such that each interval $A\in \mathcal{A}$ associated with a
projection $\mathsf{P}_{\mathcal{C}_{\mathcal{A}}\left( A\right) }$
occurring in the function $f_{\ell }$, is contained an interval $Q\in 
\mathcal{Q}\subset \mathcal{A}$ associated with a projection $\mathsf{P}_{%
\mathcal{C}_{\mathcal{A}}\left( Q\right) }$ occurring in the function $%
h_{k\left( \ell \right) }$. Moreover, there is an iterated distance $%
\limfunc{dist}{}_{\mathcal{Q}\circ \mathcal{A}}\left( A,T\right) $, often
denoted $\limfunc{xdist}_{\mathcal{A}}\left( A,T\right) $ when the iteration
is understood, in the tree $\mathcal{Q}\circ \mathcal{A}$ satisfying%
\begin{eqnarray}
\limfunc{dist}{}_{\mathcal{Q}\circ \mathcal{A}}\left( A,T\right) &=&\limfunc{%
xdist}{}_{\mathcal{A}}\left( A,T\right) =D_{1}+D_{2}+...+D_{m-1}+\limfunc{%
dist}{}_{\mathcal{A}\left[ Q\right] }\left( A,Q\right) ,
\label{def iter dist} \\
\text{for }A &\in &\mathcal{A}\left[ Q\right] \text{ with }Q\in \mathfrak{C}%
_{\mathcal{Q}}^{\left( m\right) }\left( T\right) .  \notag
\end{eqnarray}%
There is also a pair $\left( d_{1}\left( t\right) ,d_{2}\left( t\right)
\right) $ such that%
\begin{eqnarray*}
d_{1}\left( t\right) &=&\limfunc{dist}{}_{\mathcal{A}}\left( A,Q\right) 
\text{ and }d_{2}\left( t\right) =\limfunc{dist}{}_{\mathcal{Q}}\left(
Q,T\right) , \\
\text{for all }A &\in &\mathcal{A}\text{ with }t=\limfunc{xdist}\ _{\mathcal{%
A}}\left( A,T\right) \text{.}
\end{eqnarray*}
\end{conclusion}

\subsection{Vector-valued inequalities}

For any locally finite positive Borel measure $\mu $ on $\mathbb{R}$, let $%
M_{\mu }^{\func{dy}}$ denote the dyadic maximal function,%
\begin{equation*}
M_{\mu }^{\func{dy}}f\left( x\right) \equiv \sup_{x\in I\in \mathcal{D}%
}\left( \frac{1}{\left\vert I\right\vert _{\mu }}\int_{I}\left\vert
f\right\vert d\mu \right) \mathbf{1}_{I}\left( x\right) ,
\end{equation*}%
which is well-known to be bounded on $L^{p}\left( \mu \right) $ for $1<p\leq
\infty $ (since the weak type $\left( 1,1\right) $ and strong type $\left(
\infty ,\infty \right) $ constants are both $1$). We need the $\ell ^{2}$
vector-valued inequality of Fefferman and Stein for the dyadic maximal
operator $M_{\mu }$ for $1<p<\infty $, namely 
\begin{equation}
\left\Vert \left\vert M_{\mu }^{\func{dy}}\mathbf{f}\right\vert _{\ell
^{2}}\right\Vert _{L^{p}\left( \mu \right) }\lesssim \left\Vert \left\vert 
\mathbf{f}\right\vert _{\ell ^{2}}\right\Vert _{L^{p}\left( \mu \right) },\
\ \ \ \ 1<p<\infty ,  \label{FS vv}
\end{equation}%
where $\mathbf{f}\left( x\right) =\left\{ f_{i}\left( x\right) \right\}
_{i=1}^{\infty }$, $M_{\mu }^{\func{dy}}\mathbf{f}=\left\{ M_{\mu }^{\func{dy%
}}f_{i}\left( x\right) \right\} _{i=1}^{\infty }$ and $\left\vert \mathbf{h}%
\left( x\right) \right\vert _{\ell ^{2}}=\sqrt{\sum_{i=1}^{\infty
}\left\vert h_{i}\left( x\right) \right\vert ^{2}}$. We are unable to find
this statement explicitly in the literature for general measures, and we
thank Jos\'{e}-Luis Luna-Garcia for pointing out to us that the case $p\geq
2 $ follows from the duality argument in \cite{FeSt}, and that the case $%
1<p\leq 2$ then follows from the weak type $\left( 1,1\right) $ inequality
in \cite[Theorem A.15 on page 247]{CrMaPe}, together with Marcinkiewicz
interpolation for Banach space valued functions, see e.g. \cite[Theorem 1.18
on page 480]{GaRu}. For the convenience of the reader, we repeat the short\
arguments suggested by Jos\'{e} here.

\begin{proof}[Proof of (\protect\ref{FS vv}) (J.-L. Luna-Garcia)]
First, for any weight $w$, we claim that 
\begin{equation}
\int_{\mathbb{R}}\left\vert M_{\mu }^{\func{dy}}f\left( x\right) \right\vert
^{q}w\left( x\right) d\mu \left( x\right) \leq C_{q}\int_{\mathbb{R}%
}\left\vert f\left( x\right) \right\vert ^{q}M_{\mu }^{\func{dy}}w\left(
x\right) d\mu \left( x\right) ,\ \ \ \ \ 1<q<\infty .  \label{FS 1979}
\end{equation}%
Indeed, let $\lambda >0$ and suppose $\Omega _{\lambda }\equiv \left\{
M_{\mu }^{\func{dy}}f>\lambda \right\} =\overset{\cdot }{\bigcup }%
_{j=1}^{\infty }I_{j}$ where $I_{j}$ are the maximal dyadic intervals
satisfying $\frac{1}{\left\vert I_{j}\right\vert _{\mu }}\int_{I_{j}}\left%
\vert f\right\vert d\mu >\lambda $. Then we have the weak type $\left(
1,1\right) $ inequality,%
\begin{eqnarray*}
\left\vert \left\{ M_{\mu }^{\func{dy}}f>\lambda \right\} \right\vert _{w\mu
} &=&\sum_{j}\left\vert I_{j}\right\vert _{w\mu }=\sum_{j}\left( \frac{1}{%
\left\vert I_{j}\right\vert _{\mu }}\left\vert I_{j}\right\vert _{w\mu
}\right) \left\vert I_{j}\right\vert _{\mu } \\
&\leq &\sum_{j}\left( \frac{1}{\left\vert I_{j}\right\vert _{\mu }}%
\left\vert I_{j}\right\vert _{w\mu }\right) \frac{1}{\lambda }%
\int_{I_{j}}\left\vert f\right\vert d\mu \leq \frac{1}{\lambda }\int_{%
\mathbb{R}}\left\vert f\right\vert \left( M_{\mu }^{\func{dy}}w\right) d\mu ,
\end{eqnarray*}%
as well as the strong type $\left( \infty ,\infty \right) $ inequality.
Marcinkiewicz interpolation now gives (\ref{FS 1979}).

From (\ref{FS 1979}) with $q=\frac{p}{2}\geq 1$ we have,%
\begin{eqnarray*}
&&\left\Vert \left\vert M_{\mu }^{\func{dy}}\mathbf{f}\right\vert _{\ell
^{2}}\right\Vert _{L^{p}\left( \mu \right) }^{2}=\left( \int_{\mathbb{R}%
}\left( \sum_{i=1}^{\infty }\left\vert M_{\mu }^{\func{dy}}f_{i}\right\vert
^{2}\right) ^{\frac{p}{2}}d\mu \right) ^{\frac{2}{p}}=\sup_{\left\Vert
g\right\Vert _{L^{q^{\prime }}\left( \sigma \right) }=1}\int_{\mathbb{R}%
}\left( \sum_{i=1}^{\infty }\left\vert M_{\mu }^{\func{dy}}f_{i}\right\vert
^{2}\right) gd\mu \\
&\leq &\sup_{\left\Vert g\right\Vert _{L^{q^{\prime }}\left( \mu \right)
}=1}\int_{\mathbb{R}}\sum_{i=1}^{\infty }\left\vert f_{i}\right\vert
^{2}M_{\mu }^{\func{dy}}gd\mu \leq \left( \int_{\mathbb{R}}\left(
\sum_{i=1}^{\infty }\left\vert f_{i}\right\vert ^{2}\right) ^{q}d\mu \right)
^{\frac{1}{q}}\left( \int_{\mathbb{R}}\left( M_{\mu }^{\func{dy}}g\right)
^{q^{\prime }}d\mu \right) ^{\frac{1}{q^{\prime }}} \\
&\leq &C_{q}\left( \int_{\mathbb{R}}\left( \sum_{i=1}^{\infty }\left\vert
f_{i}\right\vert ^{2}\right) ^{\frac{p}{2}}d\mu \right) ^{\frac{2}{p}}\left(
\int_{\mathbb{R}}\left\vert g\right\vert ^{q^{\prime }}d\mu \right) ^{\frac{1%
}{q^{\prime }}}=C_{q}\left\Vert \left\vert \mathbf{f}\right\vert _{\ell
^{2}}\right\Vert _{L^{p}\left( \mu \right) }^{2}\ .
\end{eqnarray*}%
This completes the proof that (\ref{FS vv}) holds for $p\geq 2$.

The weak type $\left( 1,1\right) $ inequality in \cite[Theorem A.15 on page
247]{CrMaPe} says that%
\begin{equation*}
\left\vert \left\{ \left\vert M_{\mu }^{\func{dy}}\mathbf{f}\right\vert
_{\ell ^{2}}>\lambda \right\} \right\vert _{\mu }\leq \frac{C}{\lambda }\int
\left\vert \mathbf{f}\right\vert _{\ell ^{2}}d\mu ,
\end{equation*}%
and now the Marcinkiewicz interpolation theorem in \cite[Theorem 1.18 on
page 480]{GaRu} completes the proof of (\ref{FS vv}).
\end{proof}

\begin{description}
\item[Projections and maximal operators] We will sometimes apply (\ref{FS vv}%
) in conjunction with the fact that, by the telescoping identity for Haar
projections $\left\{ \bigtriangleup _{I}^{\mu }\right\} _{I\in \mathcal{D}}$%
, a projection $\mathsf{P}_{\Lambda }^{\mu }$ with $\Lambda $ a connected
subset of $\mathcal{D}\left[ S\right] \setminus \left\{ S\right\} $ for some 
$S\in \mathcal{D}$, is dominated pointwise by the dyadic maximal operator, 
\begin{equation}
\left\vert \mathsf{P}_{\Lambda }^{\mu }f\left( x\right) \right\vert \leq
2M_{\mu }^{\func{dy}}\left( \mathbf{1}_{S}f\right) \left( x\right) .
\label{PM}
\end{equation}%
Indeed, for any $x$, if $I$ is the smallest interval in $\Lambda $
containing $x$, and if $F\in \Lambda $ is the largest, then%
\begin{eqnarray*}
&&\mathsf{P}_{\Lambda }^{\mu }f\left( x\right) =\sum_{K\in \left[
I,S_{I}\right) }\left( E_{K}^{\mu }f-E_{\pi K}^{\mu }f\right) =E_{I}^{\mu
}f\left( x\right) -E_{\pi F}^{\mu }f\left( x\right) , \\
&&\text{and }\left\vert \mathsf{P}_{\Lambda }^{\mu }f\left( x\right)
\right\vert \leq 2M_{\mu }^{\func{dy}}\left( \mathbf{1}_{S}f\right) \left(
x\right) \text{ \ since }\pi F\subset S,
\end{eqnarray*}%
where $\pi K$ is the parent of $K$ in the dyadic grid $\mathcal{D}$. Since $%
\mathsf{P}_{\Lambda }^{\mu }f=\mathsf{P}_{\Lambda }^{\mu }\left( \mathsf{P}%
_{\Lambda }^{\mu }f\right) $ we also have%
\begin{equation}
\left\vert \mathsf{P}_{\Lambda }^{\mu }f\left( x\right) \right\vert \leq
2M_{\mu }^{\func{dy}}\left( \mathsf{P}_{\Lambda }^{\mu }f\right) \left(
x\right) .  \label{PM'}
\end{equation}
\end{description}

We will also need the following `disjoint support' lemma for vector-valued
functions. Define the mixed norm space%
\begin{equation*}
L^{p}\left( \ell ^{2};\mu \right) \equiv \left\{ \mathbf{f}=\left(
f_{i}\right) _{i=1}^{\infty }:\left\Vert \mathbf{f}\right\Vert _{L^{p}\left(
\ell ^{2};\mu \right) }\equiv \left( \int_{\mathbb{R}}\left\vert \mathbf{f}%
\left( x\right) \right\vert _{\ell ^{2}}^{p}d\mu \left( x\right) \right) ^{%
\frac{1}{p}}<\infty \right\} .
\end{equation*}

\begin{lemma}
\label{disjoint supp}Let $1<p<\infty $. Suppose $\left\{ f^{n}\right\}
_{n=1}^{\infty }$ is a sequence in $L^{p}\left( \ell ^{2},\mu \right) $
where $f^{n}\left( x\right) =\left\{ f_{k}^{n}\left( x\right) \right\}
_{k=1}^{\infty }\in \ell ^{2}$, and that for each $x\in \mathbb{R}$, the $%
\mathbb{N}$-supports 
\begin{equation*}
\mathbb{N}\text{-}\limfunc{supp}f^{n}\left( x\right) \equiv \left\{ k\in 
\mathbb{N}:f_{k}^{n}\left( x\right) \neq 0\right\}
\end{equation*}%
of $f^{n}\left( x\right) $ are pairwise disjoint in $n$, i.e.%
\begin{equation}
\mathbb{N}\text{-}\limfunc{supp}f^{n}\left( x\right) \cap \mathbb{N}\text{-}%
\limfunc{supp}f^{m}\left( x\right) =\emptyset ,\ \ \ \ \ \text{for }n\neq m.
\label{N supp}
\end{equation}%
Then%
\begin{equation}
\left\Vert \sum_{n=1}^{\infty }f^{n}\right\Vert _{L^{p}\left( \ell ^{2};\mu
\right) }^{p}=\int_{\mathbb{R}}\left( \sum_{n=1}^{\infty }\left\vert
f^{n}\left( x\right) \right\vert _{\ell ^{2}}^{2}\right) ^{\frac{p}{2}}d\mu
\left( x\right) .  \label{first disj}
\end{equation}%
If the functions $f^{n}\left( x\right) $ are pairwise disjoint in $x$, i.e.
the $\mathbb{R}$-supports 
\begin{equation*}
\mathbb{R}\text{-}\limfunc{supp}f^{n}\left( x\right) \equiv \left\{ x\in 
\mathbb{R}:f^{n}\left( x\right) \neq 0\right\}
\end{equation*}%
of $f^{n}$ satisfy%
\begin{equation}
\mathbb{R}\text{-}\limfunc{supp}f^{n}\cap \mathbb{R}\text{-}\limfunc{supp}%
f^{m}=\emptyset ,\ \ \ \ \ \text{for }n\neq m,  \label{R supp}
\end{equation}%
then 
\begin{equation}
\left\Vert \sum_{n=1}^{\infty }f^{n}\right\Vert _{L^{p}\left( \ell ^{2};\mu
\right) }^{p}=\sum_{n=1}^{\infty }\left\Vert f^{n}\right\Vert _{L^{p}\left(
\ell ^{2};\mu \right) }^{p}\ .  \label{second disj}
\end{equation}
\end{lemma}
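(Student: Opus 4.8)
The plan is to reduce both identities to pointwise statements in $x$ and then integrate, so that only elementary measure-theoretic bookkeeping and Tonelli's theorem are needed. Throughout, write $g\equiv \sum_{n=1}^{\infty }f^{n}$, understood pointwise, so that $g\left( x\right) =\left\{ g_{k}\left( x\right) \right\} _{k=1}^{\infty }$ with $g_{k}\left( x\right) =\sum_{n=1}^{\infty }f_{k}^{n}\left( x\right) $; under either hypothesis the inner series has, for each fixed $x$ and $k$, at most one nonzero term, so $g$ is well defined.

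First I would prove (\ref{first disj}). Fix $x\in \mathbb{R}$. By (\ref{N supp}), for each $k$ there is at most one $n$ with $f_{k}^{n}\left( x\right) \neq 0$, so the cross terms vanish and $\left\vert g_{k}\left( x\right) \right\vert ^{2}=\sum_{n=1}^{\infty }\left\vert f_{k}^{n}\left( x\right) \right\vert ^{2}$. Summing over $k$ and interchanging the two nonnegative sums by Tonelli gives
\[
\left\vert g\left( x\right) \right\vert _{\ell ^{2}}^{2}=\sum_{k=1}^{\infty }\left\vert g_{k}\left( x\right) \right\vert ^{2}=\sum_{k=1}^{\infty }\sum_{n=1}^{\infty }\left\vert f_{k}^{n}\left( x\right) \right\vert ^{2}=\sum_{n=1}^{\infty }\left\vert f^{n}\left( x\right) \right\vert _{\ell ^{2}}^{2}.
\]
Raising to the power $\frac{p}{2}$ and integrating $d\mu $ yields (\ref{first disj}), interpreted as an identity in $\left[ 0,\infty \right] $; in particular, finiteness of the right-hand side is exactly the statement that $g\in L^{p}\left( \ell ^{2};\mu \right) $, which legitimizes all manipulations.

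Next I would deduce (\ref{second disj}) from (\ref{first disj}). If the $\mathbb{R}$-supports satisfy (\ref{R supp}), then for each $x$ at most one $n$ has $f^{n}\left( x\right) \neq 0$; this forces (\ref{N supp}) to hold, so the displayed pointwise identity above is valid, and it also forces the sum $\sum_{n}\left\vert f^{n}\left( x\right) \right\vert _{\ell ^{2}}^{2}$ to have at most one nonzero summand, whence $\left( \sum_{n}\left\vert f^{n}\left( x\right) \right\vert _{\ell ^{2}}^{2}\right) ^{p/2}=\sum_{n}\left\vert f^{n}\left( x\right) \right\vert _{\ell ^{2}}^{p}$. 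Combining the two facts gives the pointwise identity $\left\vert g\left( x\right) \right\vert _{\ell ^{2}}^{p}=\sum_{n=1}^{\infty }\left\vert f^{n}\left( x\right) \right\vert _{\ell ^{2}}^{p}$, and integrating $d\mu $, using Tonelli once more to move the integral inside the sum, gives (\ref{second disj}).

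There is no genuine obstacle: the argument is routine, and the only care required is to keep the two disjointness hypotheses distinct and to observe that every quantity being summed or integrated is nonnegative, so that all interchanges of sums with each other and with the integral are automatically justified.
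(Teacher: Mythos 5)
Your proposal is correct and follows essentially the same route as the paper: both rest on the pointwise identity $\bigl\vert \sum_{n}f^{n}\left( x\right) \bigr\vert _{\ell ^{2}}^{2}=\sum_{n}\left\vert f^{n}\left( x\right) \right\vert _{\ell ^{2}}^{2}$ coming from the disjoint $\mathbb{N}$-supports, followed in the second part by the observation that disjoint $\mathbb{R}$-supports make at most one summand nonzero at each $x$, so the power $\frac{p}{2}$ passes inside the sum before integrating. Your version merely spells out the coordinatewise cancellation and the Tonelli interchanges that the paper leaves implicit.
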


\begin{proof}
For each $x\in \mathbb{R}$, the disjoint $\mathbb{N}$-support hypothesis on
the sequence $\left\{ f^{n}\left( x\right) \right\} _{n=1}^{\infty }$ yields 
$\left\vert \sum_{n=1}^{\infty }f^{n}\left( x\right) \right\vert _{\ell
^{2}}^{2}=\sum_{n=1}^{\infty }\left\vert f^{n}\left( x\right) \right\vert
_{\ell ^{2}}^{2}$, which gives (\ref{first disj}). If the sequence $\left\{
f^{n}\left( x\right) \right\} _{n=1}^{\infty }$ is pairwise disjoint in $x$,
then for all $1<p<\infty \,.$ we have%
\begin{equation*}
\left\Vert \sum_{n=1}^{\infty }f^{n}\right\Vert _{L^{p}\left( \ell ^{2};\mu
\right) }^{p}=\int_{\mathbb{R}}\left( \sum_{n=1}^{\infty }\left\vert
f^{n}\left( x\right) \right\vert _{\ell ^{2}}^{2}\right) ^{\frac{p}{2}}d\mu
\left( x\right) =\int_{\mathbb{R}}\sum_{n=1}^{\infty }\left\vert f^{n}\left(
x\right) \right\vert _{\ell ^{2}}^{p}d\mu \left( x\right)
=\sum_{n=1}^{\infty }\left\Vert f^{n}\right\Vert _{L^{p}\left( \ell ^{2};\mu
\right) }^{p}.
\end{equation*}
\end{proof}

Inequality (\ref{second disj}) will be used throughout the paper, and
especially in the proof of the Corona Martingale Comparison Principle.

\begin{corollary}
\label{disjoint supp'}Let $1<p<\infty $. Suppose $\left\{ f^{n}\right\}
_{n=1}^{\infty }$ and $\left\{ g^{n}\right\} _{n=1}^{\infty }$ are sequences
in $L^{p}\left( \ell ^{2};\mu \right) $, each satisfying (\ref{R supp}), and
that there is $\eta >0$ such that $\left\Vert f^{n}\right\Vert _{L^{p}\left(
\ell ^{2};\mu \right) }\leq \eta \left\Vert g^{n}\right\Vert _{L^{p}\left(
\ell ^{2};\mu \right) }$ for all $n\in \mathbb{N}$. Then%
\begin{equation*}
\left\Vert \sum_{n=1}^{\infty }f^{n}\right\Vert _{L^{p}\left( \ell ^{2};\mu
\right) }\leq \eta \left\Vert \sum_{n=1}^{\infty }g^{n}\right\Vert
_{L^{p}\left( \ell ^{2};\mu \right) }.
\end{equation*}
\end{corollary}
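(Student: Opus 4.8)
The plan is to deduce Corollary \ref{disjoint supp'} from the two identities in Lemma \ref{disjoint supp}, applied separately to the two sequences. First I would invoke \eqref{second disj} for the sequence $\left\{ g^{n}\right\} _{n=1}^{\infty }$, which satisfies \eqref{R supp}, to write
\begin{equation*}
\left\Vert \sum_{n=1}^{\infty }g^{n}\right\Vert _{L^{p}\left( \ell ^{2};\mu \right) }^{p}=\sum_{n=1}^{\infty }\left\Vert g^{n}\right\Vert _{L^{p}\left( \ell ^{2};\mu \right) }^{p}.
\end{equation*}
Likewise, since $\left\{ f^{n}\right\} _{n=1}^{\infty }$ also satisfies \eqref{R supp}, the same identity gives $\left\Vert \sum_{n=1}^{\infty }f^{n}\right\Vert _{L^{p}\left( \ell ^{2};\mu \right) }^{p}=\sum_{n=1}^{\infty }\left\Vert f^{n}\right\Vert _{L^{p}\left( \ell ^{2};\mu \right) }^{p}$.

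Next I would combine these with the pointwise-in-$n$ hypothesis $\left\Vert f^{n}\right\Vert _{L^{p}\left( \ell ^{2};\mu \right) }\leq \eta \left\Vert g^{n}\right\Vert _{L^{p}\left( \ell ^{2};\mu \right) }$. Raising this to the $p$-th power and summing over $n$ yields
\begin{equation*}
\left\Vert \sum_{n=1}^{\infty }f^{n}\right\Vert _{L^{p}\left( \ell ^{2};\mu \right) }^{p}=\sum_{n=1}^{\infty }\left\Vert f^{n}\right\Vert _{L^{p}\left( \ell ^{2};\mu \right) }^{p}\leq \eta ^{p}\sum_{n=1}^{\infty }\left\Vert g^{n}\right\Vert _{L^{p}\left( \ell ^{2};\mu \right) }^{p}=\eta ^{p}\left\Vert \sum_{n=1}^{\infty }g^{n}\right\Vert _{L^{p}\left( \ell ^{2};\mu \right) }^{p},
\end{equation*}
and taking $p$-th roots gives the claimed inequality.

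There is essentially no obstacle here: the content is entirely carried by Lemma \ref{disjoint supp}, and the corollary is just the observation that the $L^{p}\left( \ell ^{2};\mu \right) $-norm of a sum of functions with pairwise disjoint $\mathbb{R}$-supports behaves like an $\ell ^{p}$-sum of the individual norms, so a uniform-in-$n$ comparison of norms passes to the sums. The only minor point to note is that both sums of $p$-th powers may a priori be infinite, but the hypotheses guarantee $\sum_{n}\left\Vert g^{n}\right\Vert _{L^{p}\left( \ell ^{2};\mu \right) }^{p}<\infty $ precisely when $\sum_{n=1}^{\infty }g^{n}\in L^{p}\left( \ell ^{2};\mu \right) $, in which case the chain of equalities and the single inequality above are all between finite quantities; if instead the right-hand side is infinite the inequality is trivial.
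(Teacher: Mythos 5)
Your argument is correct and is exactly the paper's proof: apply (\ref{second disj}) of Lemma \ref{disjoint supp} to both sequences, raise the termwise hypothesis to the $p$-th power, sum in $n$, and take $p$-th roots. The remark about possibly infinite sums is a harmless extra observation.
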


\begin{proof}
From the lemma above we have%
\begin{equation*}
\left\Vert \sum_{n=1}^{\infty }f^{n}\right\Vert _{L^{p}\left( \ell ^{2};\mu
\right) }^{p}=\sum_{n=1}^{\infty }\left\Vert f^{n}\right\Vert _{L^{p}\left(
\ell ^{2};\mu \right) }^{p}\leq \eta ^{p}\sum_{n=1}^{\infty }\left\Vert
g^{n}\right\Vert _{L^{p}\left( \ell ^{2};\mu \right) }^{p}=\eta
^{p}\left\Vert \sum_{n=1}^{\infty }g^{n}\right\Vert _{L^{p}\left( \ell
^{2};\mu \right) }^{p}.
\end{equation*}
\end{proof}

We close this section with a technical lemma that is needed in connection
with the dual tree decomposition below that generalizes the upside down
corona construction of M.\ Lacey in \cite{Lac}.

\begin{lemma}
\label{q<1}Let $1<p<2$. Suppose $G\left( x\right) =\left\{ G_{k}\left(
x\right) \right\} _{k=1}^{\infty }$ and $B\left( x\right) =\left\{
B_{k}\left( x\right) \right\} _{k=1}^{\infty }$ are two sequences of
functions on the real line with pairwise disjoint $\mathbb{N}$-supports, 
\begin{equation*}
G_{k}\left( x\right) B_{k}\left( x\right) =0,\ \ \ \ \ \text{for all }k\text{
and }x.
\end{equation*}%
Then there is a positive constant $c>0$ (independent of the sequences) such
that%
\begin{equation}
\max \left\{ \frac{\Lambda ^{p}}{\left\Vert G\right\Vert _{L^{p}\left( \ell
^{2};\omega \right) }^{p}},\left( \frac{\Lambda ^{p}}{\left\Vert
G\right\Vert _{L^{p}\left( \ell ^{2};\omega \right) }^{p}}\right) ^{\frac{p}{%
2}}\right\} \geq c\frac{\left\Vert B\right\Vert _{L^{p}\left( \ell
^{2};\omega \right) }^{p}}{\left\Vert G\right\Vert _{L^{p}\left( \ell
^{2};\omega \right) }^{p}},  \label{suff}
\end{equation}%
where%
\begin{equation*}
\Lambda ^{p}\equiv \left\Vert G+B\right\Vert _{L^{p}\left( \ell ^{2};\omega
\right) }^{p}-\left\Vert G\right\Vert _{L^{p}\left( \ell ^{2};\omega \right)
}^{p}.
\end{equation*}
\end{lemma}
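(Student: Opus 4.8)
The plan is to reduce (\ref{suff}) to a pointwise elementary inequality together with one application of H\"{o}lder's inequality; no goodness and no hypothesis on common point masses enter. By the invariance of (\ref{suff}) under the scaling $\left( G,B\right) \mapsto \left( \lambda G,\lambda B\right) $, I may normalize $\left\Vert G\right\Vert _{L^{p}\left( \ell ^{2};\omega \right) }=1$ (the statement being vacuous when $G=0$). Put $u\equiv \left\vert G\right\vert _{\ell ^{2}}^{2}$ and $v\equiv \left\vert B\right\vert _{\ell ^{2}}^{2}$, so that $\int u^{p/2}d\omega =1$ and $\int v^{p/2}d\omega =\left\Vert B\right\Vert _{L^{p}\left( \ell ^{2};\omega \right) }^{p}$. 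Since $G$ and $B$ have pairwise disjoint $\mathbb{N}$-supports, $\left\vert G\left( x\right) +B\left( x\right) \right\vert _{\ell ^{2}}^{2}=u\left( x\right) +v\left( x\right) $ for every $x$, and therefore
\[
\Lambda ^{p}=\int_{\mathbb{R}}\left[ \left( u+v\right) ^{p/2}-u^{p/2}\right] d\omega .
\]

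The first step is the pointwise bound: for $\theta =\frac{p}{2}\in \left( \frac{1}{2},1\right) $ and all $x,y\geq 0$,
\[
\left( x+y\right) ^{\theta }-x^{\theta }\ \geq \ c_{p}\min \left\{ y\,x^{\theta -1},\ y^{\theta }\right\} ,
\]
proved by splitting into the case $y\leq x$ (concavity of $s\mapsto s^{\theta }$ bounds the secant slope on $\left[ x,x+y\right] $ below by $\theta \left( 2x\right) ^{\theta -1}$, while $y\,x^{\theta -1}\leq y^{\theta }$ here, so the first term is the minimum) and the case $y>x$ (then $x^{\theta }\leq 2^{-\theta }\left( x+y\right) ^{\theta }$, so the difference is $\gtrsim _{p}\left( x+y\right) ^{\theta }\gtrsim y^{\theta }$, which is now the minimum). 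Applying this with $x=u\left( x\right) $, $y=v\left( x\right) $ and integrating,
\[
\Lambda ^{p}\ \gtrsim _{p}\ \int_{\mathbb{R}}\min \left\{ v\,u^{\theta -1},\ v^{\theta }\right\} d\omega .
\]

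Next, split $\mathbb{R}=E\cup E^{c}$ with $E\equiv \left\{ v\geq u\right\} $; on $E$ the minimum equals $v^{\theta }$, and on $E^{c}$ (where $u>0$) it equals $v\,u^{\theta -1}\leq v^{\theta }$. Since $\int_{E}v^{\theta }+\int_{E^{c}}v^{\theta }=\left\Vert B\right\Vert _{L^{p}\left( \ell ^{2};\omega \right) }^{p}$, one of the two pieces is at least half the total. If $\int_{E}v^{\theta }\geq \frac{1}{2}\left\Vert B\right\Vert _{L^{p}\left( \ell ^{2};\omega \right) }^{p}$, then $\Lambda ^{p}\gtrsim _{p}\int_{E}v^{\theta }\gtrsim \left\Vert B\right\Vert _{L^{p}\left( \ell ^{2};\omega \right) }^{p}$, which gives the first term of the maximum in (\ref{suff}). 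Otherwise $\int_{E^{c}}v^{\theta }\geq \frac{1}{2}\left\Vert B\right\Vert _{L^{p}\left( \ell ^{2};\omega \right) }^{p}$; on $E^{c}$ I would use the identity $v^{\theta }=\left( v\,u^{\theta -1}\right) ^{\theta }u^{\theta \left( 1-\theta \right) }$ together with H\"{o}lder's inequality in the conjugate exponents $\left( \frac{1}{\theta },\frac{1}{1-\theta }\right) =\left( \frac{2}{p},\frac{2}{2-p}\right) $ (both $>1$ as $1<p<2$) to obtain
\[
\int_{E^{c}}v^{\theta }d\omega \leq \left( \int_{E^{c}}v\,u^{\theta -1}d\omega \right) ^{\theta }\left( \int_{E^{c}}u^{\theta }d\omega \right) ^{1-\theta }\leq \left( \int_{\mathbb{R}}\min \left\{ v\,u^{\theta -1},\ v^{\theta }\right\} d\omega \right) ^{\theta },
\]
using $\int_{E^{c}}u^{\theta }\leq \int_{\mathbb{R}}u^{\theta }=1$. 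Combining with the previous display yields $\left\Vert B\right\Vert _{L^{p}\left( \ell ^{2};\omega \right) }^{p}\lesssim _{p}\left( \Lambda ^{p}\right) ^{p/2}$, which is the second term of the maximum (recall $\left\Vert G\right\Vert =1$); undoing the normalization gives (\ref{suff}).

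I expect the only genuinely load-bearing step to be this case split according to where the $\ell ^{2}$-mass of $B$ sits: on the set $\left\{ u>v\right\} $ the gain $\left( u+v\right) ^{p/2}-u^{p/2}$ is only linear in $v$, and recovering $\left\Vert B\right\Vert ^{p}$ from it costs a H\"{o}lder factor, which is precisely what forces the extra power $p/2$ and hence the maximum in the statement. The two-case pointwise inequality and the verification of the H\"{o}lder exponents are routine.
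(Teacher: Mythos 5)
Your proof is correct and follows essentially the same route as the paper's: reduce via the disjoint $\mathbb{N}$-supports to the scalar functions $u=\left\vert G\right\vert _{\ell ^{2}}^{2}$ and $v=\left\vert B\right\vert _{\ell ^{2}}^{2}$, dichotomize according to where the mass of $v^{p/2}$ sits relative to $u$, and in the regime $v<u$ recover $\left\Vert B\right\Vert _{L^{p}\left( \ell ^{2};\omega \right) }^{p}$ from the linear gain by H\"{o}lder with exponents $\frac{2}{p}$ and $\frac{2}{2-p}$, which is exactly the mechanism that produces the exponent $\frac{p}{2}$ and hence the maximum in the paper's argument. Your packaging is a bit cleaner, since the pointwise bound $\left( x+y\right) ^{p/2}-x^{p/2}\gtrsim _{p}\min \left\{ y\,x^{p/2-1},y^{p/2}\right\} $ and the two-set split $\left\{ v\geq u\right\} $, $\left\{ v<u\right\} $ replace the paper's three-region decomposition $L_{\eta },C_{\eta },R_{\eta }$ with its auxiliary parameter $\eta $ and the attendant Case I/Case II analysis, but the substance is the same.
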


\begin{proof}
Lemma \ref{disjoint supp}, together with the pairwise disjoint $\mathbb{N}$%
-support hypotheisis, shows that 
\begin{eqnarray*}
&&\Lambda ^{p}=\int_{\mathbb{R}}\left( \sum_{k=1}^{\infty }\left\vert
G_{k}\left( x\right) +B_{k}\left( x\right) \right\vert ^{2}\right) ^{\frac{p%
}{2}}d\omega \left( x\right) -\int_{\mathbb{R}}\left( \sum_{k=1}^{\infty
}\left\vert G_{k}\left( x\right) \right\vert ^{2}\right) ^{\frac{p}{2}%
}d\omega \left( x\right) \\
&=&\int_{\mathbb{R}}\left( \sum_{k=1}^{\infty }\left\vert G_{k}\left(
x\right) \right\vert ^{2}+\sum_{k=1}^{\infty }\left\vert B_{k}\left(
x\right) \right\vert ^{2}\right) ^{\frac{p}{2}}d\omega \left( x\right)
-\int_{\mathbb{R}}\left( \sum_{k=1}^{\infty }\left\vert G_{k}\left( x\right)
\right\vert ^{2}\right) ^{\frac{p}{2}}d\omega \left( x\right) \\
&=&\int_{\mathbb{R}}\left( \left\vert G\left( x\right) \right\vert _{\ell
^{2}}^{2}+\left\vert B\left( x\right) \right\vert _{\ell ^{2}}^{2}\right) ^{%
\frac{p}{2}}d\omega \left( x\right) -\int_{\mathbb{R}}\left\vert G\left(
x\right) \right\vert _{\ell ^{2}}^{p}d\omega \left( x\right) =\int_{\mathbb{R%
}}\left( g\left( x\right) ^{2}+b\left( x\right) ^{2}\right) ^{\frac{p}{2}%
}d\omega \left( x\right) -\int_{\mathbb{R}}g\left( x\right) ^{p}d\omega
\left( x\right) ,
\end{eqnarray*}%
where $g\left( x\right) \equiv \left\vert G\left( x\right) \right\vert
_{\ell ^{2}}$ and $b\left( x\right) \equiv \left\vert B\left( x\right)
\right\vert _{\ell ^{2}}$ are the functions we work with from now on.

For $0<\eta \ll 1$, we write%
\begin{eqnarray*}
\mathbb{R} &=&L_{\eta }+C_{\eta }+R_{\eta }\ , \\
\text{where }L_{\eta } &\equiv &\left\{ b\left( x\right) \leq \eta g\left(
x\right) \right\} , \\
C_{\eta } &\equiv &\left\{ \eta g\left( x\right) <b\left( x\right) <\frac{1}{%
\eta }g\left( x\right) \right\} , \\
\text{and }R_{\eta } &\equiv &\left\{ g\left( x\right) \leq \eta b\left(
x\right) \right\} ,
\end{eqnarray*}%
and then decompose%
\begin{equation*}
\int_{\mathbb{R}}\left( g\left( x\right) ^{2}+b\left( x\right) ^{2}\right) ^{%
\frac{p}{2}}d\omega \left( x\right) =\left\{ \int_{L_{\eta }}+\int_{C_{\eta
}}+\int_{R_{\eta }}\right\} \left( g\left( x\right) ^{2}+b\left( x\right)
^{2}\right) ^{\frac{p}{2}}d\omega \left( x\right) \equiv T_{L_{\eta
}}+T_{C_{\eta }}+T_{R_{\eta }}.
\end{equation*}

We have%
\begin{eqnarray*}
\left( 1+\frac{1}{\eta ^{2}}\right) ^{\frac{p}{2}}\int_{L_{\eta }}b\left(
x\right) ^{p}d\omega \left( x\right) &\leq &T_{L_{\eta }}\leq \left( 1+\eta
^{2}\right) ^{\frac{p}{2}}\int_{L_{\eta }}g\left( x\right) ^{p}d\omega
\left( x\right) , \\
\left( 1+\eta ^{2}\right) ^{\frac{p}{2}}\int_{C_{\eta }}g\left( x\right)
^{p}d\omega \left( x\right) &\leq &T_{C_{\eta }}\leq \left( 1+\frac{1}{\eta
^{2}}\right) ^{\frac{p}{2}}\int_{C_{\eta }}g\left( x\right) ^{p}d\omega
\left( x\right) , \\
\left( 1+\frac{1}{\eta ^{2}}\right) ^{\frac{p}{2}}\int_{R_{\eta }}g\left(
x\right) ^{p}d\omega \left( x\right) &\leq &T_{R_{\eta }}\leq \left( 1+\eta
^{2}\right) ^{\frac{p}{2}}\int_{R_{\eta }}b\left( x\right) ^{p}d\omega
\left( x\right) ,
\end{eqnarray*}%
and so%
\begin{eqnarray*}
\Lambda ^{p} &=&\int_{\mathbb{R}}\left( g\left( x\right) ^{2}+b\left(
x\right) ^{2}\right) ^{\frac{p}{2}}d\omega \left( x\right) -\int_{\mathbb{R}%
}g\left( x\right) ^{p}d\omega \left( x\right) \\
&=&\int_{L_{\eta }\cup C_{\eta }\cup R_{\eta }}\left[ \left( g\left(
x\right) ^{2}+b\left( x\right) ^{2}\right) ^{\frac{p}{2}}-g\left( x\right)
^{p}\right] d\omega \left( x\right) \\
&\geq &\int_{L_{\eta }}\left[ \left( g\left( x\right) ^{2}+b\left( x\right)
^{2}\right) ^{\frac{p}{2}}-g\left( x\right) ^{p}\right] d\omega \left(
x\right) \\
&&+\left[ \left( 1+\eta ^{2}\right) ^{\frac{p}{2}}-1\right] \int_{C_{\eta
}}g\left( x\right) ^{p}d\omega \left( x\right) \\
&&+\int_{R_{\eta }}\left[ \left( g\left( x\right) ^{2}+b\left( x\right)
^{2}\right) ^{\frac{p}{2}}-g\left( x\right) ^{p}\right] d\omega \left(
x\right) \\
&\geq &\left[ \left( 1+\eta ^{2}\right) ^{\frac{p}{2}}-1\right]
\int_{C_{\eta }}\left( \eta b\left( x\right) \right) ^{p}d\omega \left(
x\right) +\left\{ \int_{L_{\eta }}+\int_{R_{\eta }}\right\} \left[ \left(
g\left( x\right) ^{2}+b\left( x\right) ^{2}\right) ^{\frac{p}{2}}-g\left(
x\right) ^{p}\right] d\omega \left( x\right) .
\end{eqnarray*}

In particular,%
\begin{equation*}
\int_{C_{\eta }}b\left( x\right) ^{p}d\omega \left( x\right) \leq \frac{%
\Lambda ^{p}}{\left[ \left( 1+\eta ^{2}\right) ^{\frac{p}{2}}-1\right] \eta
^{p}}\lesssim \eta ^{-p-2}\Lambda ^{p}.
\end{equation*}%
Since $g\left( x\right) \leq \eta b\left( x\right) $ on $R_{\eta }$, we also
have%
\begin{eqnarray*}
\Lambda ^{p} &\geq &\int_{R_{\eta }}\left[ \left( g\left( x\right)
^{2}+b\left( x\right) ^{2}\right) ^{\frac{p}{2}}-g\left( x\right) ^{p}\right]
d\omega \left( x\right) =\int_{R_{\eta }}\left[ \left( \left( \frac{g\left(
x\right) }{b\left( x\right) }\right) ^{2}+1\right) ^{\frac{p}{2}}-\left( 
\frac{g\left( x\right) }{b\left( x\right) }\right) ^{p}\right] b\left(
x\right) ^{p}d\omega \left( x\right) \\
&\geq &\int_{R_{\eta }}\left( 1-\eta ^{p}\right) b\left( x\right)
^{p}d\omega \left( x\right) =\left( 1-\eta ^{p}\right) \int_{R_{\eta
}}b\left( x\right) ^{p}d\omega \left( x\right) ,
\end{eqnarray*}%
and so altogether we have%
\begin{equation}
\int_{C_{\eta }\cup R_{\eta }}b\left( x\right) ^{p}d\omega \left( x\right)
\lesssim \eta ^{-p-2}\Lambda ^{p}.  \label{altog}
\end{equation}%
Now we continue differently in two exhaustive cases.

\bigskip

\textbf{Case I: }$\int_{\mathbb{R}}b\left( x\right) ^{p}d\omega \left(
x\right) \leq \eta ^{-1}\int_{C_{\eta }\cup R_{\eta }}b\left( x\right)
^{p}d\omega \left( x\right) $.

\bigskip

In this case (\ref{altog}) yields%
\begin{equation*}
\int_{\mathbb{R}}b\left( x\right) ^{p}d\omega \left( x\right) \lesssim \eta
^{-p-2-1}\Lambda ^{p}.
\end{equation*}

\bigskip

\textbf{Case II: }$\int_{C_{\eta }\cup R_{\eta }}b\left( x\right)
^{p}d\omega \left( x\right) <\eta \int_{\mathbb{R}}b\left( x\right)
^{p}d\omega \left( x\right) $.

\bigskip

Since $b\left( x\right) \leq \eta g\left( x\right) $ on $L_{\eta }$, we have
for $\eta \leq \frac{1}{2}$, 
\begin{eqnarray*}
&&\Lambda ^{p}\geq \int_{L_{\eta }}\left[ \left( g\left( x\right)
^{2}+b\left( x\right) ^{2}\right) ^{\frac{p}{2}}-g\left( x\right) ^{p}\right]
d\omega \left( x\right) =\int_{L_{\eta }}\left[ \left( 1+\left( \frac{%
b\left( x\right) }{g\left( x\right) }\right) ^{2}\right) ^{\frac{p}{2}}-1%
\right] g\left( x\right) ^{p}d\omega \left( x\right) \\
&\approx &\int_{L_{\eta }}\left( \frac{b\left( x\right) }{g\left( x\right) }%
\right) ^{2}g\left( x\right) ^{p}d\omega \left( x\right) =\int_{L_{\eta }}%
\frac{b\left( x\right) ^{2}}{g\left( x\right) ^{2-p}}d\omega \left( x\right)
.
\end{eqnarray*}%
Recall that $1<p<2$, so that we can apply H\"{o}lder's inequality with
exponents $\frac{2}{p}$ and $\frac{2}{2-p}$ to obtain 
\begin{eqnarray*}
&&\int_{L_{\eta }}b\left( x\right) ^{p}d\omega \left( x\right)
=\int_{L_{\eta }}\frac{b\left( x\right) ^{p}}{g\left( x\right) ^{p\left( 1-%
\frac{p}{2}\right) }}g\left( x\right) ^{p\left( 1-\frac{p}{2}\right)
}d\omega \left( x\right) \\
&\leq &\left( \int_{L_{\eta }}\frac{b\left( x\right) ^{2}}{g\left( x\right)
^{2-p}}d\omega \left( x\right) \right) ^{\frac{p}{2}}\left( \int_{L_{\eta
}}g\left( x\right) ^{p\left( 1-\frac{p}{2}\right) \frac{2}{2-p}}d\omega
\left( x\right) \right) ^{1-\frac{p}{2}} \\
&=&\left( \int_{L_{\eta }}\frac{b\left( x\right) ^{2}}{g\left( x\right)
^{2-p}}d\omega \left( x\right) \right) ^{\frac{p}{2}}\left( \int_{L_{\eta
}}g\left( x\right) ^{p}d\omega \left( x\right) \right) ^{1-\frac{p}{2}%
}\lesssim \left( \Lambda ^{p}\right) ^{\frac{p}{2}}\left( \int_{\mathbb{R}%
}g\left( x\right) ^{p}d\omega \left( x\right) \right) ^{1-\frac{p}{2}}.
\end{eqnarray*}

Because we are in Case II, we also have%
\begin{eqnarray*}
\int_{\mathbb{R}}b\left( x\right) ^{p}d\omega \left( x\right)
&=&\int_{L_{\eta }}b\left( x\right) ^{p}d\omega \left( x\right)
+\int_{C_{\eta }\cup R_{\eta }}b\left( x\right) ^{p}d\omega \left( x\right)
\\
&\leq &\left( \Lambda ^{p}\right) ^{\frac{p}{2}}\left( \int_{\mathbb{R}%
}g\left( x\right) ^{p}d\omega \left( x\right) \right) ^{1-\frac{p}{2}}+\eta
\int_{\mathbb{R}}b\left( x\right) ^{p}d\omega \left( x\right) \\
&\Longrightarrow &\int_{\mathbb{R}}b\left( x\right) ^{p}d\omega \left(
x\right) \leq \frac{\left( \Lambda ^{p}\right) ^{\frac{p}{2}}\left( \int_{%
\mathbb{R}}g\left( x\right) ^{p}d\omega \left( x\right) \right) ^{1-\frac{p}{%
2}}}{1-\eta }.
\end{eqnarray*}

Thus altogether we have shown that for $\eta \leq \frac{1}{2}$, 
\begin{equation}
\frac{\int_{\mathbb{R}}b\left( x\right) ^{p}d\omega \left( x\right) }{\int_{%
\mathbb{R}}g\left( x\right) ^{p}d\omega \left( x\right) }\leq \max \left\{
\eta ^{-p-2-1}\frac{\Lambda ^{p}}{\int_{\mathbb{R}}g\left( x\right)
^{p}d\omega \left( x\right) },\frac{1}{1-\eta }\left( \frac{\Lambda ^{p}}{%
\int_{\mathbb{R}}g\left( x\right) ^{p}d\omega \left( x\right) }\right) ^{%
\frac{p}{2}}\right\} ,  \label{q2}
\end{equation}%
and this completes the proof of (\ref{suff}) upon taking $\eta =\frac{1}{2}$.
\end{proof}

\section{Beginning the proof of the main theorems}

We build our proof on the decomposition used in \cite{Saw7} - with the
exception of the bounds for the far and stopping forms, which require new
arguments. We assume that the Haar supports of the functions $f\in
L^{p}\left( \sigma \right) \cap L^{2}\left( \sigma \right) $ and $g\in
L^{p^{\prime }}\left( \omega \right) \cap L^{2}\left( \omega \right) $ in
the proof are contained in the $\limfunc{child}$-$\func{good}$ grid $%
\mathcal{D}_{\func{good}}^{\limfunc{child}}$. Here is a brief schematic
diagram of the initial twelve decompositions made below,

\begin{equation}
\fbox{$%
\begin{array}{ccccccccccc}
\left\langle H_{\sigma }f,g\right\rangle _{\omega } &  &  &  &  &  &  &  & 
&  &  \\ 
\downarrow &  &  &  &  &  &  &  &  &  &  \\ 
\mathsf{B}_{\func{below}}\left( f,g\right) & + & \mathsf{B}_{\func{above}%
}\left( f,g\right) & + & \mathsf{B}_{\limfunc{disj}}\left( f,g\right) & + & 
\mathsf{B}_{\limfunc{disj}}^{\ast }\left( f,g\right) & + & \mathsf{B}_{%
\limfunc{comp}}\left( f,g\right) & + & \mathsf{B}_{\limfunc{comp}}^{\ast
}\left( f,g\right) \\ 
\downarrow &  & \downarrow &  &  &  &  &  &  &  &  \\ 
\downarrow &  & \limfunc{similar} &  &  &  &  &  &  &  &  \\ 
\downarrow &  &  &  &  &  &  &  &  &  &  \\ 
\mathsf{B}_{\func{neigh}}\left( f,g\right) & + & \mathsf{B}_{\func{far}%
}\left( f,g\right) & + & \mathsf{B}_{\limfunc{para}}\left( f,g\right) & + & 
\mathsf{B}_{\limfunc{stop}}\left( f,g\right) &  &  &  & 
\end{array}%
$}\ ,  \label{brief}
\end{equation}%
where we note that there is a similar decomposition of $\mathsf{B}_{\func{%
above}}\left( f,g\right) $ into dual neighbour, far, paraproduct and
stopping forms.

The long-range portion $\mathsf{B}_{\limfunc{disj}}^{\func{long}}\left(
f,g\right) $ of the disjoint form will be controlled by the triple quadratic
Muckenhoupt characteristics, but will \emph{also} be controlled by the
global quadratic testing characteristics. Similarly for the far form $%
\mathsf{B}_{\func{far}}\left( f,g\right) $. Here are all of the bounds
listed in the order we will prove them (all bounds except for (9) are valid
for $1<p<\infty $, and their duals follow by symmetry)\footnote{%
The stopping form $\mathsf{B}_{\limfunc{stop}}\left( f,g\right) $ depends
only on the scalar testing characteristic $\mathfrak{T}_{H,p}^{\func{loc}%
}\left( \sigma ,\omega \right) $ through the stopping energy $\mathfrak{X}%
_{p}\left( \sigma ,\omega \right) $ and the $\sigma $-Carleson condition.}:%
\begin{eqnarray}
\boldsymbol{(1)} &&\ \left\vert \mathsf{B}_{\limfunc{comp}}\left( f,g\right)
\right\vert \lesssim \left( \mathfrak{T}_{H,p}^{\ell ^{2},\func{loc}}\left(
\sigma ,\omega \right) +A_{p}^{\ell ^{2},\limfunc{offset}}\left( \sigma
,\omega \right) +\mathcal{WBP}_{H,p}^{\ell ^{2}}\left( \sigma ,\omega
\right) \right) \ \left\Vert f\right\Vert _{L^{p}\left( \sigma \right)
}\left\Vert g\right\Vert _{L^{p^{\prime }}\left( \omega \right) }\ ,
\label{bounds} \\
\boldsymbol{(2)} &&\ \left\vert \mathsf{B}_{\limfunc{disj}}^{\func{long}%
}\left( f,g\right) \right\vert \lesssim A_{p}^{\ell ^{2},\limfunc{trip}%
}\left( \sigma ,\omega \right) \ \left\Vert f\right\Vert _{L^{p}\left(
\sigma \right) }\left\Vert g\right\Vert _{L^{p^{\prime }}\left( \omega
\right) }\ ,  \notag \\
\boldsymbol{(3)} &&\ \left\vert \mathsf{B}_{\limfunc{disj}}^{\func{long}%
}\left( f,g\right) \right\vert \lesssim \mathfrak{T}_{H,p}^{\ell ^{2},%
\limfunc{glob}}\left( \sigma ,\omega \right) \ \left\Vert f\right\Vert
_{L^{p}\left( \sigma \right) }\left\Vert g\right\Vert _{L^{p^{\prime
}}\left( \omega \right) }\ ,  \notag \\
\boldsymbol{(4)} &&\ \left\vert \mathsf{B}_{\limfunc{disj}}^{\limfunc{mid}%
}\left( f,g\right) \right\vert \lesssim A_{p}^{\ell ^{2},\limfunc{offset}%
}\left( \sigma ,\omega \right) \ \left\Vert f\right\Vert _{L^{p}\left(
\sigma \right) }\left\Vert g\right\Vert _{L^{p^{\prime }}\left( \omega
\right) }\ ,  \notag \\
\boldsymbol{(5)} &&\ \left\vert \mathsf{B}_{\func{neigh}}\left( f,g\right)
\right\vert \lesssim A_{p}^{\ell ^{2},\limfunc{offset}}\left( \sigma ,\omega
\right) \ \left\Vert f\right\Vert _{L^{p}(\sigma )}\left\Vert g\right\Vert
_{L^{p^{\prime }}(\omega )}\ ,  \notag \\
\boldsymbol{(6)} &&\ \left\vert \mathsf{B}_{\func{far}}\left( f,g\right)
\right\vert \lesssim \left( \mathfrak{T}_{H,p}^{\ell ^{2},\limfunc{loc}%
}\left( \sigma ,\omega \right) +A_{p}^{\ell ^{2},\limfunc{trip}}\left(
\sigma ,\omega \right) +\mathcal{A}_{p}^{\limfunc{punct}}\left( \sigma
,\omega \right) +\mathcal{E}_{H,p}^{\ell ^{2},\limfunc{ext}}\left( \sigma
,\omega \right) \right) \ \left\Vert f\right\Vert _{L^{p}(\sigma
)}\left\Vert g\right\Vert _{L^{p^{\prime }}(\omega )}\ ,  \notag \\
\boldsymbol{(7)} &&\ \left\vert \mathsf{B}_{\func{far}}\left( f,g\right)
\right\vert \lesssim \left( \mathfrak{T}_{H,p}^{\ell ^{2},\limfunc{glob}%
}\left( \sigma ,\omega \right) +\mathcal{E}_{H,p}^{\ell ^{2},\limfunc{ext}%
}\left( \sigma ,\omega \right) \right) \ \left\Vert f\right\Vert
_{L^{p}(\sigma )}\left\Vert g\right\Vert _{L^{p^{\prime }}(\omega )}\ , 
\notag \\
\boldsymbol{(8)} &&\ \left\vert \mathsf{B}_{\limfunc{para}}\left( f,g\right)
\right\vert \lesssim \mathfrak{T}_{H,p}^{\ell ^{2},\func{loc}}\left( \sigma
,\omega \right) \ \left\Vert f\right\Vert _{L^{p}(\sigma )}\left\Vert
g\right\Vert _{L^{p^{\prime }}(\omega )}\ ,  \notag \\
\boldsymbol{(9)} &&\ \left\vert \mathsf{B}_{\limfunc{stop}}\left( f,g\right)
\right\vert \lesssim \mathfrak{T}_{H,p}^{\func{loc}}\left( \sigma ,\omega
\right) \ \left\Vert f\right\Vert _{L^{p}(\sigma )}\left\Vert g\right\Vert
_{L^{p^{\prime }}(\omega )},\ 1<p<4\ .  \notag
\end{eqnarray}

These bounds, together with the necessity results above, complete the proofs
of both Theorems \ref{main glob} and \ref{main} (the dual of $\left(
9\right) $ requires $1<p^{\prime }<4$ as well) because the global quadratic
testing characteristic dominates all the other characteristics, with the
exception of the triple quadratic Muckenhoupt characteristic. Note that the
quadratic weak boundedness characteristic is used only for the \emph{%
comparable} form, that the triple quadratic Muckenhoupt characteristic is
used only for the long-range portion of the \emph{disjoint} form, and that
the scalar tailed Muckenhoupt characteristic is used only for the \emph{far}
form. Each of the bounds (2) - (5) and (7) - (9) involve just one of our
quadratic hypotheses on the right hand side, while in the first bound (1),
the reader can easily check that the comparable form can be naturally
decomposed into three pieces (with overlapping, separted and adjacent
intervals), each of which are bounded by just one of the characteristics,
but there are too many decompositions of the functional energy inequality to
separate out where each characteristic is being used for control of the far
form.

We now describe these decompositions in detail, repeating what is needed
from \cite{Saw7} for the convenience of the reader. Following \cite{NTV4}
and \cite{LaSaShUr3}, we fix a dyadic grid $\mathcal{D}$ and assume without
loss of generality, see e.g. \cite[(4.3) in Section 4]{NTV4}, that both $f$
and $g$ are supported in a fixed dyadic interval $T\in \mathcal{D}$ and have 
$\int_{T}fd\sigma =\int_{T}gd\omega =0$. We first expand the Hilbert
transform bilinear form $\left\langle H_{\sigma }f,g\right\rangle _{\omega }$
in terms of the Haar decompositions of $f$ and $g$,%
\begin{equation*}
\left\langle H_{\sigma }f,g\right\rangle _{\omega }=\sum_{I,J\in \mathcal{D}%
}\left\langle H_{\sigma }\bigtriangleup _{I}^{\sigma }f,\bigtriangleup
_{J}^{\omega }g\right\rangle _{\omega },
\end{equation*}%
and then assuming the Haar supports of $f$ and $g$ lie in $\mathcal{D}_{%
\func{good}}^{\limfunc{child}}$, we decompose the double sum above as
follows,%
\begin{eqnarray*}
\left\langle H_{\sigma }f,g\right\rangle _{\omega } &=&\left\{ \sum 
_{\substack{ I,J\in \mathcal{D}  \\ J\subset _{\tau }I}}+\sum_{\substack{ %
I,J\in \mathcal{D}  \\ I\subset _{\tau }J}}+\sum_{\substack{ I,J\in \mathcal{%
D}  \\ I\cap J=\emptyset \text{ and }\ell \left( J\right) <2^{-\tau }\ell
\left( I\right) }}+\sum_{\substack{ I,J\in \mathcal{D}  \\ I\cap J=\emptyset 
\text{ and }\ell \left( I\right) <2^{-\tau }\ell \left( J\right) }}\right. \\
&&\ \ \ \ \ \ \ \ \ \ \ \ \ \ \ \ \ \ \ \ \ \ \ \ \ \ \ \ \ \ +\left. \sum 
_{\substack{ I,J\in \mathcal{D}  \\ J\subset I\text{ and }\ell \left(
J\right) \geq 2^{-\tau }\ell \left( I\right) }}+\sum_{\substack{ I,J\in 
\mathcal{D}  \\ I\subset J\text{ and }\ell \left( I\right) \geq 2^{-\tau
}\ell \left( J\right) }}\right\} \left\langle H_{\sigma }\bigtriangleup
_{I}^{\sigma }f,\bigtriangleup _{J}^{\omega }g\right\rangle _{\omega } \\
&\equiv &\mathsf{B}_{\func{below}}\left( f,g\right) +\mathsf{B}_{\func{above}%
}\left( f,g\right) +\mathsf{B}_{\limfunc{disj}}\left( f,g\right) +\mathsf{B}%
_{\limfunc{disj}}^{\ast }\left( f,g\right) +\mathsf{B}_{\limfunc{comp}%
}\left( f,g\right) +\mathsf{B}_{\limfunc{comp}}^{\ast }\left( f,g\right) ,
\end{eqnarray*}%
where 
\begin{equation*}
\tau =r+1,
\end{equation*}%
and where $J\subset _{\tau }I$ is defined in (\ref{def deep embed}). The
first two forms are symmetric, and so it suffices to prove the boundedness
of just one of them, say $\mathsf{B}_{\func{below}}\left( f,g\right) $,$\ $%
for all $1<p<\infty $. Indeed, with the more precise definitions%
\begin{equation*}
\mathsf{B}_{\func{below}}^{H,\left( \sigma ,\omega \right) }\left(
f,g\right) \equiv \sum_{\substack{ I,J\in \mathcal{D}  \\ J\subset _{\tau }I 
}}\left\langle H_{\sigma }\bigtriangleup _{I}^{\sigma }f,\bigtriangleup
_{J}^{\omega }g\right\rangle _{\omega }\ \text{and }\mathsf{B}_{\func{above}%
}^{H,\left( \sigma ,\omega \right) }\left( f,g\right) \equiv \sum_{\substack{
I,J\in \mathcal{D}  \\ I\subset _{\tau }J}}\left\langle H_{\sigma
}\bigtriangleup _{I}^{\sigma }f,\bigtriangleup _{J}^{\omega }g\right\rangle
_{\omega }\ ,
\end{equation*}%
we have%
\begin{equation*}
\mathsf{B}_{\func{above}}^{H,\left( \sigma ,\omega \right) }\left(
f,g\right) =\sum_{\substack{ J,I\in \mathcal{D}  \\ J\subset _{\tau }I}}%
\left\langle H_{\sigma }\bigtriangleup _{J}^{\sigma }f,\bigtriangleup
_{I}^{\omega }g\right\rangle _{\omega }=-\sum_{\substack{ I,J\in \mathcal{D} 
\\ J\subset _{\tau }I}}\left\langle H_{\omega }\bigtriangleup _{I}^{\omega
}g,\bigtriangleup _{J}^{\sigma }f\right\rangle _{\sigma }=-\mathsf{B}_{\func{%
below}}^{H,\left( \omega ,\sigma \right) }\left( g,f\right) .
\end{equation*}

Using a Calder\'{o}n-Zygmund corona decomposition with parameter $\Gamma >1$%
, we will later decompose the below form $\mathsf{B}_{\func{below}}\left(
f,g\right) $ into another four forms, 
\begin{equation*}
\mathsf{B}_{\func{below}}\left( f,g\right) =\mathsf{B}_{\func{neigh}}\left(
f,g\right) +\mathsf{B}_{\func{far}}\left( f,g\right) +\mathsf{B}_{\limfunc{%
para}}\left( f,g\right) +\mathsf{B}_{\limfunc{stop}}\left( f,g\right) ,
\end{equation*}%
in which there is control of averages of $f$ in each corona. At this point
we will have twelve forms in our decomposition of the inner product $%
\left\langle H_{\sigma }f,g\right\rangle _{\omega }$.

\subsection{Comparable form}

We will bound the comparable form%
\begin{equation*}
\mathsf{B}_{\limfunc{comp}}\left( f,g\right) =\sum_{\substack{ I,J\in 
\mathcal{D}  \\ J\subset I\text{ and }\ell \left( J\right) \geq 2^{-\tau
}\ell \left( I\right) }}\left\langle \mathbf{1}_{J}H_{\sigma }\bigtriangleup
_{I}^{\sigma }f,\bigtriangleup _{J}^{\omega }g\right\rangle _{\omega }
\end{equation*}%
for $1<p<\infty $, by the local quadratic testing, offset quadratic
Muckenhoupt, and quadratic weak boundedness characteristics, i.e. we prove%
\begin{equation*}
\left\vert \mathsf{B}_{\limfunc{comp}}\left( f,g\right) \right\vert \lesssim
\left( \mathfrak{T}_{H,p}^{\ell ^{2},\func{loc}}\left( \sigma ,\omega
\right) +A_{p}^{\ell ^{2},\limfunc{offset}}\left( \sigma ,\omega \right) +%
\mathcal{WBP}_{H,p}^{\ell ^{2}}\left( \sigma ,\omega \right) \right)
\left\Vert f\right\Vert _{L^{p}\left( \sigma \right) }\left\Vert
g\right\Vert _{L^{p^{\prime }}\left( \omega \right) }\lesssim \mathfrak{T}%
_{H,p}^{\ell ^{2},\limfunc{glob}}\left( \sigma ,\omega \right) \ .
\end{equation*}%
This is the only place in this paper where we use the quadratic weak
boundedness characteristic $\mathcal{WBP}_{H,p}^{\ell ^{2}}\left( \sigma
,\omega \right) $. Note also that the second inequality has already been
proved in the section on necessity.

We write 
\begin{eqnarray*}
\bigtriangleup _{I}^{\sigma }f &=&\left( E_{I_{\limfunc{left}}}^{\sigma
}\bigtriangleup _{I}^{\sigma }f\right) \mathbf{1}_{I_{\limfunc{left}%
}}+\left( E_{I_{\limfunc{right}}}^{\sigma }\bigtriangleup _{I}^{\sigma
}f\right) \mathbf{1}_{I_{\limfunc{right}}}\ , \\
\bigtriangleup _{J}^{\omega }g &=&\left( E_{J_{\limfunc{left}}}^{\sigma
}\bigtriangleup _{J}^{\omega }g\right) \mathbf{1}_{J_{\limfunc{left}%
}}+\left( E_{J_{\limfunc{right}}}^{\sigma }\bigtriangleup _{J}^{\omega
}g\right) \mathbf{1}_{J_{\limfunc{right}}}
\end{eqnarray*}%
and%
\begin{eqnarray*}
\mathsf{B}_{\limfunc{comp}}\left( f,g\right) &=&\sum_{\substack{ I,J\in 
\mathcal{D}  \\ J\subset I\text{ and }\ell \left( J\right) \geq 2^{-\tau
}\ell \left( I\right) }}\left( E_{I_{\limfunc{left}}}^{\sigma
}\bigtriangleup _{I}^{\sigma }f\right) \left( E_{J_{\limfunc{left}}}^{\sigma
}\bigtriangleup _{J}^{\omega }g\right) \left\langle \mathbf{1}_{J_{\limfunc{%
left}}}H_{\sigma }\mathbf{1}_{I_{\limfunc{left}}},\mathbf{1}_{J_{\limfunc{%
left}}}\right\rangle _{\omega } \\
&&+\sum_{\substack{ I,J\in \mathcal{D}  \\ J\subset I\text{ and }\ell \left(
J\right) \geq 2^{-\tau }\ell \left( I\right) }}\left( E_{I_{\limfunc{left}%
}}^{\sigma }\bigtriangleup _{I}^{\sigma }f\right) \left( E_{J_{\limfunc{right%
}}}^{\sigma }\bigtriangleup _{J}^{\omega }g\right) \left\langle \mathbf{1}%
_{J_{\limfunc{right}}}H_{\sigma }\mathbf{1}_{I_{\limfunc{left}}},\mathbf{1}%
_{J_{\limfunc{right}}}\right\rangle _{\omega } \\
&&+\sum_{\substack{ I,J\in \mathcal{D}  \\ J\subset I\text{ and }\ell \left(
J\right) \geq 2^{-\tau }\ell \left( I\right) }}\left( E_{I_{\limfunc{right}%
}}^{\sigma }\bigtriangleup _{I}^{\sigma }f\right) \left( E_{J_{\limfunc{left}%
}}^{\sigma }\bigtriangleup _{J}^{\omega }g\right) \left\langle \mathbf{1}%
_{J_{\limfunc{left}}}H_{\sigma }\mathbf{1}_{I_{\limfunc{right}}},\mathbf{1}%
_{J_{\limfunc{left}}}\right\rangle _{\omega } \\
&&+\sum_{\substack{ I,J\in \mathcal{D}  \\ J\subset I\text{ and }\ell \left(
J\right) \geq 2^{-\tau }\ell \left( I\right) }}\left( E_{I_{\limfunc{right}%
}}^{\sigma }\bigtriangleup _{I}^{\sigma }f\right) \left( E_{J_{\limfunc{right%
}}}^{\sigma }\bigtriangleup _{J}^{\omega }g\right) \left\langle \mathbf{1}%
_{J_{\limfunc{right}}}H_{\sigma }\mathbf{1}_{I_{\limfunc{right}}},\mathbf{1}%
_{J_{\limfunc{right}}}\right\rangle _{\omega } \\
&\equiv &\mathsf{B}_{\limfunc{comp}}^{\limfunc{left},\limfunc{left}}\left(
f,g\right) +\mathsf{B}_{\limfunc{comp}}^{\limfunc{left},\limfunc{right}%
}\left( f,g\right) +\mathsf{B}_{\limfunc{comp}}^{\limfunc{right},\limfunc{%
left}}\left( f,g\right) +\mathsf{B}_{\limfunc{comp}}^{\limfunc{right},%
\limfunc{right}}\left( f,g\right) .
\end{eqnarray*}%
If the pair of intervals $I_{\zeta /\eta }$ and $J_{\zeta /\eta }$ are
disjoint, $\zeta ,\eta \in \left\{ \limfunc{left},\limfunc{right}\right\} $,
then the sums above are immediately controlled by the quadratic weak
boundedness characteristic when they are adjacent, and by the quadratic
offset Muckenhoupt characteristic when they are not.

If the intervals overlap, then using the $\ell ^{2}$ Cauchy-Schwarz and $%
L^{p}\left( \omega \right) $ H\"{o}lder inequalities we obtain 
\begin{eqnarray*}
&&\left\vert \mathsf{B}_{\limfunc{comp}}\left( f,g\right) \right\vert \leq
\sum_{\zeta ,\eta \in \left\{ \limfunc{left},\limfunc{right}\right\}
}\left\vert \mathsf{B}_{\limfunc{comp}}^{\zeta ,\eta }\left( f,g\right)
\right\vert \\
&\leq &\sum_{\zeta ,\eta \in \left\{ \limfunc{left},\limfunc{right}\right\}
}\left\Vert \left( \sum_{\substack{ I,J\in \mathcal{D}  \\ J\subset I\text{, 
}\ell \left( J\right) \geq 2^{-\tau }\ell \left( I\right) \text{ and }%
I_{\zeta }\cap J_{\eta }\neq \emptyset }}\left\vert \left( E_{I_{\zeta
}}^{\sigma }\bigtriangleup _{I}^{\sigma }f\right) \mathbf{1}_{J_{\eta
}}H_{\sigma }\mathbf{1}_{I_{\zeta }}\left( x\right) \right\vert ^{2}\right)
^{\frac{1}{2}}\right\Vert _{L^{p}\left( \omega \right) } \\
&&\ \ \ \ \ \ \ \ \ \ \ \ \ \ \ \ \ \ \ \ \ \ \ \ \ \times \left\Vert \left(
\sum_{\substack{ I,J\in \mathcal{D}  \\ J\subset I\text{ and }\ell \left(
J\right) \geq 2^{-\tau }\ell \left( I\right) }}\left\vert \left( E_{J_{\eta
}}^{\sigma }\bigtriangleup _{J}^{\omega }g\right) \mathbf{1}_{J_{\eta
}}\left( x\right) \right\vert ^{2}\right) ^{\frac{1}{2}}\right\Vert
_{L^{p^{\prime }}\left( \omega \right) }.
\end{eqnarray*}%
The second factor is dominated by%
\begin{equation*}
\left\Vert \left( \sum_{\substack{ I,J\in \mathcal{D}  \\ J\subset I\text{
and }\ell \left( J\right) \geq 2^{-\tau }\ell \left( I\right) }}\left\vert
\left( \bigtriangleup _{J}^{\omega }g\right) \left( x\right) \right\vert
^{2}\right) ^{\frac{1}{2}}\right\Vert _{L^{p^{\prime }}\left( \omega \right)
}\lesssim \left\Vert \mathcal{S}^{\omega }g\right\Vert _{L^{p^{\prime
}}\left( \omega \right) }\approx \left\Vert g\right\Vert _{L^{p^{\prime
}}\left( \omega \right) }
\end{equation*}%
by the square function estimate in Theorem \ref{square thm}, and since there
are only $\tau +1$ intervals $I\in \mathcal{D}$ with $J\subset I$ and $\ell
\left( J\right) \geq 2^{-\tau }\ell \left( I\right) $.

We now turn to the first factor on the right side above where the intervals $%
I_{\zeta }$ and $J_{\eta }$ overlap, and consider separately the cases $J=I$
and $J_{\eta }\subset I_{\zeta }$ for some choice of $\zeta ,\eta \in
\left\{ \limfunc{left},\limfunc{right}\right\} $. For the case $J=I$ we have%
\begin{eqnarray*}
\left\Vert \left( \sum_{I\in \mathcal{D}}\left( E_{I_{\limfunc{left}%
}}^{\sigma }\bigtriangleup _{I}^{\sigma }f\right) ^{2}\left\vert \mathbf{1}%
_{I_{\limfunc{left}}}H_{\sigma }\mathbf{1}_{I_{\limfunc{left}}}\right\vert
^{2}\right) ^{\frac{1}{2}}\right\Vert _{L^{p}\left( \omega \right) } &\leq &%
\mathfrak{T}_{H,p}^{\ell ^{2},\func{loc}}\left( \sigma ,\omega \right)
\left\Vert \left( \sum_{I\in \mathcal{D}}\left( E_{I_{\limfunc{left}%
}}^{\sigma }\bigtriangleup _{I}^{\sigma }f\right) ^{2}\mathbf{1}_{I_{%
\limfunc{left}}}\right) ^{\frac{1}{2}}\right\Vert _{L^{p}\left( \sigma
\right) } \\
&\lesssim &\mathfrak{T}_{H,p}^{\ell ^{2},\func{loc}}\left( \sigma ,\omega
\right) \left\Vert f\right\Vert _{L^{p}\left( \sigma \right) },
\end{eqnarray*}%
and similarly%
\begin{equation*}
\left\Vert \left( \sum_{I\in \mathcal{D}}\left( E_{I_{\limfunc{right}%
}}^{\sigma }\bigtriangleup _{I}^{\sigma }f\right) ^{2}\left\vert \mathbf{1}%
_{I_{\limfunc{right}}}H_{\sigma }\mathbf{1}_{I_{\limfunc{right}}}\right\vert
^{2}\right) ^{\frac{1}{2}}\right\Vert _{L^{p}\left( \omega \right) }\lesssim 
\mathfrak{T}_{H,p}^{\ell ^{2},\func{loc}}\left( \sigma ,\omega \right)
\left\Vert f\right\Vert _{L^{p}\left( \sigma \right) }\ .
\end{equation*}%
For those $J_{\eta }\subset I_{\zeta }$, we apply the local quadratic
testing condition to $H_{\sigma }\mathbf{1}_{I_{\zeta }}$, and again finish
with the square function Theorem \ref{square thm}. Finally, the estimate for
the dual comparable form $\mathsf{B}_{\limfunc{comp}}^{\ast }\left(
f,g\right) $ is handled symmetrically.

\subsection{Disjoint form}

Here we prove the following estimates for the disjoint form $\mathsf{B}_{%
\limfunc{disj}}\left( f,g\right) $ with absolute values inside the sum: 
\begin{eqnarray}
&&\left\vert \mathsf{B}_{\limfunc{disj}}\right\vert \left( f,g\right) \equiv
\dsum\limits_{\substack{ I,J\in \mathcal{D}  \\ J\cap I=\emptyset \text{ and 
}\ell \left( J\right) <2^{-\tau }\ell \left( I\right) }}\left\vert
\left\langle H_{\sigma }\bigtriangleup _{I}^{\sigma }f,\bigtriangleup
_{J}^{\omega }g\right\rangle _{\omega }\right\vert  \label{routine'} \\
&\lesssim &A_{p}^{\ell ^{2},\limfunc{trip}}\left( \sigma ,\omega \right)
\left\Vert f\right\Vert _{L^{p}\left( \sigma \right) }\left\Vert
g\right\Vert _{L^{p^{\prime }}\left( \omega \right) },\ \ \ \ \ 1<p<\infty ,
\notag
\end{eqnarray}%
and%
\begin{equation}
\left\vert \mathsf{B}_{\limfunc{disj}}\right\vert \left( f,g\right) \lesssim 
\mathfrak{T}_{H,p}^{\ell ^{2},\limfunc{glob}}\left( \sigma ,\omega \right)
\left\Vert f\right\Vert _{L^{p}\left( \sigma \right) }\left\Vert
g\right\Vert _{L^{p^{\prime }}\left( \omega \right) },\ \ \ \ \ 1<p<\infty .
\label{routine''}
\end{equation}

This is the only place in this paper where we use the triple quadratic
Muckenhoupt characteristic $A_{p}^{\ell ^{2},\limfunc{trip}}\left( \sigma
,\omega \right) $, or make \emph{direct} use of the global quadratic testing
characteristic $\mathfrak{T}_{H,p}^{\ell ^{2},\limfunc{glob}}\left( \sigma
,\omega \right) $.

\begin{proof}[Proof of (\protect\ref{routine'}) and (\protect\ref{routine''})%
]
We further decompose the form $\left\vert \mathsf{B}_{\limfunc{disj}%
}\right\vert \left( f,g\right) $ as 
\begin{eqnarray*}
&&\left\vert \mathsf{B}_{\limfunc{disj}}\right\vert \left( f,g\right) \equiv
\sum_{\substack{ I,J\in \mathcal{D}  \\ I\cap J=\emptyset \text{ and }\ell
\left( J\right) \leq 2^{-\tau }\ell \left( I\right) }}\left\langle H_{\sigma
}\bigtriangleup _{I}^{\sigma }f,\bigtriangleup _{J}^{\omega }g\right\rangle
_{\omega } \\
&=&\sum_{I\in \mathcal{D}}\sum_{\substack{ J\in \mathcal{D}:\ \ell \left(
J\right) \leq \ell \left( I\right)  \\ d\left( J,I\right) >\ell \left(
I\right) }}\left\langle H_{\sigma }\bigtriangleup _{I}^{\sigma
}f,\bigtriangleup _{J}^{\omega }g\right\rangle _{\omega }+\sum_{I\in 
\mathcal{D}}\sum_{\substack{ J\in \mathcal{D}:\ \ell \left( J\right) \leq
\ell \left( I\right)  \\ d\left( J,I\right) \leq \ell \left( I\right) }}%
\left\langle H_{\sigma }\bigtriangleup _{I}^{\sigma }f,\bigtriangleup
_{J}^{\omega }g\right\rangle _{\omega } \\
&\equiv &\mathcal{A}^{\limfunc{long}}\left( f,g\right) +\mathcal{A}^{%
\limfunc{mid}}\left( f,g\right) .
\end{eqnarray*}

\bigskip

\textbf{The long-range case: }Here we prove that the long-range form $%
\mathcal{A}^{\limfunc{long}}\left( f,g\right) $ can be bounded \textbf{either%
} by the triple quadratic Muckenhoupt characteristic $A_{p}^{\ell ^{2},%
\limfunc{trip}}\left( \sigma ,\omega \right) $, \textbf{or} by the global
quadratic testing characteristic $\mathfrak{T}_{H,p}^{\ell ^{2},\limfunc{glob%
}}\left( \sigma ,\omega \right) $.

\bigskip

\textbf{Claim \#1}%
\begin{equation}
\left\vert \mathcal{A}^{\limfunc{long}}\left( f,g\right) \right\vert \leq
\sum_{I\in \mathcal{D}}\sum_{\substack{ J\in \mathcal{D}:\ \ell \left(
J\right) \leq \ell \left( I\right)  \\ d\left( J,I\right) >\ell \left(
I\right) }}\left\vert \int_{\mathbb{R}}\left( H_{\sigma }\bigtriangleup
_{I}^{\sigma }f\right) \bigtriangleup _{J}^{\omega }gd\omega \right\vert
\lesssim A_{p}^{\ell ^{2},\limfunc{trip}}\left( \sigma ,\omega \right)
\left\Vert f\right\Vert _{L^{p}\left( \sigma \right) }\left\Vert
g\right\Vert _{L^{p^{\prime }}\left( \omega \right) }.  \label{long est}
\end{equation}%
\textbf{Proof}: In the sum in the middle of the display above, we pigeonhole
the intervals $I$ and $J$ relative to intervals $K\in \mathcal{D}$. Let $%
N,s,t\in \mathbb{N}$. For $K\in \mathcal{D}_{N}\equiv \left\{ I\in \mathcal{D%
}:\ell \left( I\right) =2^{N}\right\} $, we restrict $I$ and $J$ to $I\in 
\mathcal{D}_{N-s}$ and $J\in \mathcal{D}_{N-s-t}$ respectively and write%
\begin{eqnarray*}
&&\sum_{I\in \mathcal{D}}\sum_{\substack{ J\in \mathcal{D}:\ \ell \left(
J\right) \leq \ell \left( I\right)  \\ d\left( J,I\right) >\ell \left(
I\right) }}\left\vert \int_{\mathbb{R}}\left( H_{\sigma }\bigtriangleup
_{I}^{\sigma }f\right) \bigtriangleup _{J}^{\omega }gd\omega \right\vert \\
&=&\sum_{s,t\in \mathbb{N}}\left\{ \sum_{N\in \mathbb{Z}}\sum_{K\in \mathcal{%
D}_{N}}\sum_{\substack{ I\in \mathcal{D}_{N-s}  \\ I\subset K}}\sum 
_{\substack{ J\in \mathcal{D}_{N-s-t}  \\ d\left( J,I\right) \approx \ell
\left( K\right) }}\left\vert \int_{\mathbb{R}}\left( H_{\sigma
}\bigtriangleup _{I}^{\sigma }f\right) \bigtriangleup _{J}^{\omega }gd\omega
\right\vert \right\} =\sum_{s,t\in \mathbb{N}}W_{s,t}, \\
&&\text{where }W_{s,t}\equiv \sum_{N\in \mathbb{Z}}\sum_{K\in \mathcal{D}%
_{N}}\sum_{\substack{ I\in \mathcal{D}_{N-s}  \\ I\subset K}}\sum_{\substack{
J\in \mathcal{D}_{N-s-t}  \\ d\left( J,I\right) \approx \ell \left( K\right) 
}}\left\vert \int_{\mathbb{R}}\left( H_{\sigma }\bigtriangleup _{I}^{\sigma
}f\right) \bigtriangleup _{J}^{\omega }gd\omega \right\vert ,
\end{eqnarray*}%
and observe that%
\begin{equation*}
W_{s,t}\leq \int_{\mathbb{R}}\left\{ \sum_{N\in \mathbb{Z}}\sum_{K\in 
\mathcal{D}_{N}}\sum_{\substack{ J\in \mathcal{D}_{N-s-t}  \\ d\left(
J,I\right) \approx \ell \left( K\right) }}\left\vert \bigtriangleup
_{J}^{\omega }H_{\sigma }\sum_{\substack{ I\in \mathcal{D}_{N-s}  \\ %
I\subset K}}\bigtriangleup _{I}^{\sigma }f\left( x\right) \right\vert
\left\vert \bigtriangleup _{J}^{\omega }g\left( x\right) \right\vert
\right\} d\omega \left( x\right) .
\end{equation*}%
By the Monotonicity Lemma and Poisson Decay Lemma, this is bounded by,%
\begin{eqnarray*}
W_{s,t} &\leq &\int_{\mathbb{R}}\left\{ \sum_{N\in \mathbb{Z}}\sum_{K\in 
\mathcal{D}_{N}}\sum_{\substack{ J\in \mathcal{D}_{N-s-t}  \\ d\left(
J,I\right) \approx \ell \left( K\right) }}\mathrm{P}\left( J,\sum_{\substack{
I\in \mathcal{D}_{N-s}  \\ I\subset K}}\left\vert \bigtriangleup
_{I}^{\sigma }f\right\vert \sigma \right) \mathbf{1}_{J}\left( x\right)
\left\vert \bigtriangleup _{J}^{\omega }g\left( x\right) \right\vert
\right\} d\omega \left( x\right) \\
&\lesssim &2^{-\left( s+t\right) \left( 1-2\varepsilon \right) }\int_{%
\mathbb{R}}\sum_{N\in \mathbb{Z}}\sum_{K\in \mathcal{D}_{N}}\mathrm{P}\left(
K,f_{K}^{N}\sigma \right) \mathbf{1}_{J}\left( x\right) \ g_{K}^{N}\left(
x\right) \ d\omega \left( x\right) ,
\end{eqnarray*}%
where%
\begin{eqnarray*}
f_{K}^{N}\left( x\right) &\equiv &\sum_{\substack{ I\in \mathcal{D}_{N-s} 
\\ I\subset K}}\left\vert \bigtriangleup _{I}^{\sigma }f\left( x\right)
\right\vert \text{ and }g_{K}^{N}\left( x\right) \equiv \sum_{\substack{ %
J\in \mathcal{D}_{N-s-t}  \\ d\left( J,I\right) \approx \ell \left( K\right) 
}}\left\vert \bigtriangleup _{J}^{\omega }g\left( x\right) \right\vert , \\
\text{and }g_{K}^{N}\left( x\right) ^{2} &\leq &2^{s+t}\sum_{\substack{ J\in 
\mathcal{D}_{N-s-t}  \\ d\left( J,I\right) \approx \ell \left( K\right) }}%
\left\vert \bigtriangleup _{J}^{\omega }g\left( x\right) \right\vert ^{2}.
\end{eqnarray*}

Thus we have%
\begin{eqnarray*}
&&W_{s,t}\lesssim 2^{-\left( s+t\right) \left( 1-2\varepsilon \right)
}\sum_{N\in \mathbb{Z}}\int_{\mathbb{R}}\sum_{K\in \mathcal{D}_{N}}\mathrm{P}%
\left( K,f_{K}^{N}\sigma \right) \mathbf{1}_{3K\setminus K}\left( x\right) \
g_{K}^{N}\left( x\right) \ d\omega \left( x\right) \\
&\leq &2^{-\left( s+t\right) \left( 1-2\varepsilon \right) }\left\Vert
\left( \sum_{N\in \mathbb{Z}}\sum_{K\in \mathcal{D}_{N}}\mathrm{P}\left(
K,f_{K}^{N}\sigma \right) ^{2}\mathbf{1}_{3K\setminus K}\left( x\right)
\right) ^{\frac{1}{2}}\right\Vert _{L^{p}\left( \omega \right) }\left\Vert
\left( \sum_{N\in \mathbb{Z}}\sum_{K\in \mathcal{D}_{N}}\left\vert
g_{K}^{N}\left( x\right) \right\vert ^{2}\right) ^{\frac{1}{2}}\right\Vert
_{L^{p^{\prime }}\left( \omega \right) } \\
&\leq &2^{-\left( s+t\right) \left( 1-2\varepsilon \right) }\left\Vert
\left( \sum_{N\in \mathbb{Z}}\sum_{K\in \mathcal{D}_{N}}\left( \frac{%
\int_{K}f_{K}^{N}d\sigma }{\left\vert K\right\vert }\right) ^{2}\mathbf{1}%
_{3K\setminus K}\left( x\right) \right) ^{\frac{1}{2}}\right\Vert
_{L^{p}\left( \omega \right) }2^{\frac{s+t}{2}}\left\Vert \left( \sum_{N\in 
\mathbb{Z}}\sum_{K\in \mathcal{D}_{N}}\sum_{\substack{ J\in \mathcal{D}%
_{N-s-t}  \\ d\left( J,I\right) \approx \ell \left( K\right) }}\left\vert
\bigtriangleup _{J}^{\omega }g\left( x\right) \right\vert ^{2}\right) ^{%
\frac{1}{2}}\right\Vert _{L^{p^{\prime }}\left( \omega \right) } \\
&\leq &2^{-\left( s+t\right) \left( 1-2\varepsilon \right) }A_{p}^{\ell ^{2},%
\limfunc{trip}}\left( \sigma ,\omega \right) \left\Vert \left( \sum_{N\in 
\mathbb{Z}}\sum_{K\in \mathcal{D}_{N}}f_{K}^{N}\left( x\right) ^{2}\right) ^{%
\frac{1}{2}}\right\Vert _{L^{p}\left( \sigma \right) }2^{\frac{s+t}{2}%
}\left\Vert g\right\Vert _{L^{p^{\prime }}\left( \omega \right) }\ ,
\end{eqnarray*}%
where we have used the triple quadratic Muckenhoupt condition in the last
line, after breaking up the annulus $3K\setminus K$ into its left and right
hand intervals, and where we have also used $\left\Vert \mathcal{S}%
g\right\Vert _{L^{p^{\prime }}\left( \omega \right) }\approx \left\Vert
g\right\Vert _{L^{p^{\prime }}\left( \omega \right) }$.

Moreover, this last line is dominated by%
\begin{eqnarray*}
W_{s,t} &\lesssim &2^{-\left( s+t\right) \left( \frac{1}{2}-2\varepsilon
\right) }A_{p}^{\ell ^{2},\limfunc{trip}}\left( \sigma ,\omega \right)
\left\Vert \left( \sum_{N\in \mathbb{Z}}\sum_{K\in \mathcal{D}_{N}}\sum 
_{\substack{ I\in \mathcal{D}_{N-s}  \\ I\subset K}}\left\vert
\bigtriangleup _{I}^{\sigma }f\left( x\right) \right\vert ^{2}\right) ^{%
\frac{1}{2}}\right\Vert _{L^{p}\left( \sigma \right) }\left\Vert
g\right\Vert _{L^{p^{\prime }}\left( \omega \right) } \\
&\lesssim &2^{-\left( s+t\right) \left( \frac{1}{2}-2\varepsilon \right)
}A_{p}^{\ell ^{2},\limfunc{trip}}\left( \sigma ,\omega \right) \left\Vert
f\right\Vert _{L^{p}\left( \sigma \right) }\left\Vert g\right\Vert
_{L^{p^{\prime }}\left( \omega \right) }\ ,
\end{eqnarray*}%
this time using $\left\Vert \mathcal{S}f\right\Vert _{L^{p}\left( \sigma
\right) }\approx \left\Vert f\right\Vert _{L^{p}\left( \sigma \right) }$.
Finally we sum in $s$ and $t$ to obtain%
\begin{eqnarray*}
\left\vert \mathcal{A}^{\limfunc{long}}\left( f,g\right) \right\vert
&\lesssim &\sum_{s,t\in \mathbb{N}}W_{s,t}\lesssim \left( \sum_{s,t\in 
\mathbb{N}}2^{-\left( s+t\right) \left( \frac{1}{2}-2\varepsilon \right)
}\right) A_{p}^{\ell ^{2},\limfunc{trip}}\left( \sigma ,\omega \right)
\left\Vert f\right\Vert _{L^{p}\left( \sigma \right) }\left\Vert
g\right\Vert _{L^{p^{\prime }}\left( \omega \right) } \\
&\lesssim &C_{\varepsilon }A_{p}^{\ell ^{2},\limfunc{trip}}\left( \sigma
,\omega \right) \left\Vert f\right\Vert _{L^{p}\left( \sigma \right)
}\left\Vert g\right\Vert _{L^{p^{\prime }}\left( \omega \right) },
\end{eqnarray*}%
provided we take $0<\varepsilon <\frac{1}{4}$.

\bigskip

\textbf{Claim \#2}%
\begin{equation}
\left\vert \mathcal{A}^{\limfunc{long}}\left( f,g\right) \right\vert \leq
\sum_{I\in \mathcal{D}}\sum_{\substack{ J\in \mathcal{G}:\ \ell \left(
J\right) \leq \ell \left( I\right)  \\ d\left( J,I\right) >\ell \left(
I\right) }}\left\vert \int_{\mathbb{R}}\left( H_{\sigma }\bigtriangleup
_{I}^{\sigma }f\right) \bigtriangleup _{J}^{\omega }gd\omega \right\vert
\lesssim \mathfrak{T}_{H,p}^{\ell ^{2},\limfunc{glob}}\left( \sigma ,\omega
\right) \left\Vert f\right\Vert _{L^{p}\left( \sigma \right) }\left\Vert
g\right\Vert _{L^{p^{\prime }}\left( \omega \right) }.  \label{long est'}
\end{equation}

\textbf{Proof}: Let $t\in \mathbb{Z}_{+}$, and restricting $I$ and $J$ to $%
d\left( J,I\right) >\ell \left( I\right) $ and $\ell \left( J\right)
=2^{-t}\ell \left( I\right) $, we write%
\begin{eqnarray*}
&&\sum_{I\in \mathcal{D}}\sum_{\substack{ J\in \mathcal{D}:\ \ell \left(
J\right) \leq \ell \left( I\right)  \\ d\left( J,I\right) >\ell \left(
I\right) }}\left\vert \int_{\mathbb{R}}\left( H_{\sigma }\bigtriangleup
_{I}^{\sigma }f\right) \bigtriangleup _{J}^{\omega }gd\omega \right\vert
=\sum_{t=0}^{\infty }W_{t}\ , \\
&&\ \ \ \ \ \text{where }W_{t}\equiv \sum_{I\in \mathcal{D}}\sum_{\substack{ %
J\in \mathcal{D}:\ \ell \left( J\right) =2^{-t}\ell \left( I\right)  \\ %
d\left( J,I\right) >\ell \left( I\right) }}\left\vert \int_{\mathbb{R}%
}\left( H_{\sigma }\bigtriangleup _{I}^{\sigma }f\right) \bigtriangleup
_{J}^{\omega }gd\omega \right\vert .
\end{eqnarray*}%
Now%
\begin{eqnarray*}
\int_{\mathbb{R}}\left( H_{\sigma }\bigtriangleup _{I}^{\sigma }f\right)
\bigtriangleup _{J}^{\omega }gd\omega &=&\sum_{+,-}\left( E_{\pm }^{\sigma
}\bigtriangleup _{I}^{\sigma }f\right) \int_{\mathbb{R}}\left( H_{\sigma }%
\mathbf{1}_{I_{\pm }}\right) \bigtriangleup _{J}^{\omega }gd\omega , \\
\text{where }\int_{\mathbb{R}}\left( H_{\sigma }\mathbf{1}_{I_{\pm }}\right)
\bigtriangleup _{J}^{\omega }gd\omega &=&\int_{\mathbb{R}}\left(
\bigtriangleup _{J}^{\omega }H_{\sigma }\mathbf{1}_{I_{\pm }}\right)
\bigtriangleup _{J}^{\omega }gd\omega , \\
\text{and }\bigtriangleup _{J}^{\omega }H_{\sigma }\mathbf{1}_{I_{\pm
}}\left( x\right) &=&h_{J}^{\omega }\left( x\right) \left\langle H_{\sigma }%
\mathbf{1}_{I_{\pm }},h_{J}^{\omega }\right\rangle _{\omega }, \\
\text{and }\left\vert \left\langle H_{\sigma }\mathbf{1}_{I_{\pm
}},h_{J}^{\omega }\right\rangle _{\omega }\right\vert &=&\left\vert
\int_{J}\int_{I_{\pm }}\left\{ \frac{1}{y-z}-\frac{1}{y-c_{J}}\right\}
d\sigma \left( y\right) h_{J}^{\omega }\left( z\right) d\omega \left(
z\right) \right\vert \\
&=&\left\vert \int_{J}\int_{I_{\pm }}\frac{\left( z-c_{J}\right)
h_{J}^{\omega }\left( z\right) }{\left( y-z\right) \left( y-c_{J}\right) }%
d\sigma \left( y\right) d\omega \left( z\right) \right\vert \\
&=&\int_{J}\int_{I_{\pm }}\left\vert \frac{\left( z-c_{J}\right)
h_{J}^{\omega }\left( z\right) }{\left( y-z\right) \left( y-c_{J}\right) }%
\right\vert d\sigma \left( y\right) d\omega \left( z\right) ,
\end{eqnarray*}%
since 
\begin{equation*}
h_{J}^{\omega }=\sqrt{\frac{\left\vert J_{-}\right\vert _{\omega }\left\vert
J_{+}\right\vert _{\omega }}{\left\vert J\right\vert _{\omega }}}\left( 
\frac{1}{\left\vert J_{+}\right\vert _{\omega }}\mathbf{1}_{J_{+}}-\frac{1}{%
\left\vert J_{-}\right\vert _{\omega }}\mathbf{1}_{J_{-}}\right) ,
\end{equation*}%
implies that neither $\left( z-c_{J}\right) h_{J}^{\omega }\left( z\right) $
nor $\left( y-z\right) \left( y-c_{J}\right) $ changes sign in the region of
integration. Thus%
\begin{equation*}
\left\vert \left\langle H_{\sigma }\mathbf{1}_{I_{\pm }},h_{J}^{\omega
}\right\rangle _{\omega }\right\vert \approx \left( \int_{I_{\pm }}\frac{1}{%
\left( y-c_{J}\right) ^{2}}d\sigma \left( y\right) \right) \left\vert
\int_{J}\left( z-c_{J}\right) h_{J}^{\omega }\left( z\right) d\omega \left(
z\right) \right\vert
\end{equation*}%
and%
\begin{eqnarray*}
\int_{J}\left( z-c_{J}\right) h_{J}^{\omega }\left( z\right) d\omega \left(
z\right) &=&\int_{J}\left( z-c_{J}\right) \sqrt{\frac{\left\vert
J_{-}\right\vert _{\omega }\left\vert J_{+}\right\vert _{\omega }}{%
\left\vert J\right\vert _{\omega }}}\left( \frac{1}{\left\vert
J_{+}\right\vert _{\omega }}\mathbf{1}_{J_{+}}-\frac{1}{\left\vert
J_{-}\right\vert _{\omega }}\mathbf{1}_{J_{-}}\right) d\omega \left( z\right)
\\
&=&\sqrt{\frac{\left\vert J_{-}\right\vert _{\omega }\left\vert
J_{+}\right\vert _{\omega }}{\left\vert J\right\vert _{\omega }}}\left( 
\frac{1}{\left\vert J_{+}\right\vert _{\omega }}\int_{J_{+}}\left(
z-c_{J}\right) d\omega \left( z\right) -\frac{1}{\left\vert J_{-}\right\vert
_{\omega }}\int_{J_{-}}\left( z-c_{J}\right) d\omega \left( z\right) \right)
\\
&=&\sqrt{\frac{\left\vert J_{-}\right\vert _{\omega }\left\vert
J_{+}\right\vert _{\omega }}{\left\vert J\right\vert _{\omega }}}\left(
m_{J_{+}}-m_{J_{-}}\right) ,
\end{eqnarray*}%
where $m_{J_{+}}=w_{+}-c_{J}$ and $m_{J_{-}}=w_{-}-c_{J}$ with $w_{\pm }\in
J_{\pm }$. Thus%
\begin{eqnarray*}
&&\left\vert \bigtriangleup _{J}^{\omega }H_{\sigma }\mathbf{1}_{I_{\pm
}}\left( x\right) \right\vert =\left\vert h_{J}^{\omega }\left( x\right)
\left\langle H_{\sigma }\mathbf{1}_{I_{\pm }},h_{J}^{\omega }\right\rangle
_{\omega }\right\vert \\
&\approx &\left( \int_{I_{\pm }}\frac{1}{\left( y-c_{J}\right) ^{2}}d\sigma
\left( y\right) \right) \left\vert h_{J}^{\omega }\left( x\right)
\int_{J}\left( z-c_{J}\right) h_{J}^{\omega }\left( z\right) d\omega \left(
z\right) \right\vert \\
&=&\left( \int_{I_{\pm }}\frac{1}{\left( y-c_{J}\right) ^{2}}d\sigma \left(
y\right) \right) \left\vert m_{J_{+}}-m_{J_{-}}\right\vert \sqrt{\frac{%
\left\vert J_{-}\right\vert _{\omega }\left\vert J_{+}\right\vert _{\omega }%
}{\left\vert J\right\vert _{\omega }}}\left\vert h_{J}^{\omega }\left(
x\right) \right\vert \\
&=&\left( \int_{I_{\pm }}\frac{1}{\left( y-c_{J}\right) ^{2}}d\sigma \left(
y\right) \right) \left\vert m_{J_{+}}-m_{J_{-}}\right\vert \frac{\left\vert
J_{-}\right\vert _{\omega }\left\vert J_{+}\right\vert _{\omega }}{%
\left\vert J\right\vert _{\omega }}\left( \frac{1}{\left\vert
J_{+}\right\vert _{\omega }}\mathbf{1}_{J_{+}}\left( x\right) -\frac{1}{%
\left\vert J_{-}\right\vert _{\omega }}\mathbf{1}_{J_{-}}\left( x\right)
\right) ,
\end{eqnarray*}%
where 
\begin{equation*}
\left\vert m_{J_{+}}-m_{J_{-}}\right\vert =\left\vert w_{+}-w_{-}\right\vert
\leq \ell \left( J\right) =2^{-t}\ell \left( I\right) .
\end{equation*}

We also have%
\begin{equation*}
\inf_{z\in J}H_{\sigma }I_{\pm }\left( z\right) =\inf_{z\in J}\int_{I_{\pm }}%
\frac{1}{y-z}d\sigma \left( y\right) \geq \frac{\left\vert I_{\pm
}\right\vert _{\sigma }}{2\limfunc{dist}\left( J,I\right) },
\end{equation*}%
and so altogether we obtain%
\begin{eqnarray*}
&&\left\vert \bigtriangleup _{J}^{\omega }H_{\sigma }\mathbf{1}_{I_{\pm
}}\left( x\right) \right\vert \leq \left( \int_{I_{\pm }}\frac{1}{\left(
y-c_{J}\right) ^{2}}d\sigma \left( y\right) \right) 2^{-t}\ell \left(
I\right) \mathbf{1}_{J}\left( x\right) \\
&\leq &\frac{2}{\limfunc{dist}\left( J,I\right) ^{2}}\left\vert I_{\pm
}\right\vert _{\sigma }2^{-t}\ell \left( I\right) \mathbf{1}_{J}\left(
x\right) \leq 2^{2-t}\frac{\ell \left( I\right) }{\limfunc{dist}\left(
J,I\right) }H_{\sigma }I_{\pm }\left( x\right) \mathbf{1}_{J}\left( x\right)
.
\end{eqnarray*}%
We conclude that for each fixed $I\in \mathcal{D}$,%
\begin{eqnarray*}
&&\left\vert \sum_{\substack{ J\in \mathcal{D}:\ \ell \left( J\right)
=2^{-t}\ell \left( I\right)  \\ d\left( J,I\right) >\ell \left( I\right) }}%
\int_{\mathbb{R}}\left( H_{\sigma }\bigtriangleup _{I}^{\sigma }f\right)
\bigtriangleup _{J}^{\omega }gd\omega \right\vert \\
&=&\left\vert \sum_{+,-}\left( E_{\pm }^{\sigma }\bigtriangleup _{I}^{\sigma
}f\right) \int_{\mathbb{R}}\sum_{\substack{ J\in \mathcal{D}:\ \ell \left(
J\right) =2^{-t}\ell \left( I\right)  \\ d\left( J,I\right) >\ell \left(
I\right) }}\left( \bigtriangleup _{J}^{\omega }H_{\sigma }\mathbf{1}_{I_{\pm
}}\right) \left( x\right) \bigtriangleup _{J}^{\omega }g\left( x\right)
d\omega \left( x\right) \right\vert \\
&\leq &2^{2-t}\sum_{+,-}\int_{\mathbb{R}}\sum_{\substack{ J\in \mathcal{D}:\
\ell \left( J\right) =2^{-t}\ell \left( I\right)  \\ d\left( J,I\right)
>\ell \left( I\right) }}\left( \left\vert E_{\pm }^{\sigma }\bigtriangleup
_{I}^{\sigma }f\right\vert \left\vert H_{\sigma }I_{\pm }\left( x\right)
\right\vert \mathbf{1}_{J}\left( x\right) \right) \left( \frac{\ell \left(
I\right) }{\limfunc{dist}\left( J,I\right) }\left\vert \bigtriangleup
_{J}^{\omega }g\left( x\right) \right\vert \right) d\omega \left( x\right)
\end{eqnarray*}%
and so%
\begin{eqnarray*}
&&\left\vert \sum_{I\in \mathcal{D}}\sum_{\substack{ J\in \mathcal{D}:\ \ell
\left( J\right) =2^{-t}\ell \left( I\right)  \\ d\left( J,I\right) >\ell
\left( I\right) }}\int_{\mathbb{R}}\left( H_{\sigma }\bigtriangleup
_{I}^{\sigma }f\right) \bigtriangleup _{J}^{\omega }gd\omega \right\vert \\
&\leq &2^{2-t}\sum_{+,-}\int_{\mathbb{R}}\sqrt{\sum_{I\in \mathcal{D}}\sum 
_{\substack{ J\in \mathcal{D}:\ \ell \left( J\right) =2^{-t}\ell \left(
I\right)  \\ d\left( J,I\right) >\ell \left( I\right) }}\left\vert E_{\pm
}^{\sigma }\bigtriangleup _{I}^{\sigma }f\right\vert ^{2}\left\vert
H_{\sigma }I_{\pm }\left( x\right) \right\vert ^{2}\mathbf{1}_{J}\left(
x\right) } \\
&&\ \ \ \ \ \ \ \ \ \ \ \ \ \ \ \ \ \ \ \ \ \ \ \ \ \times \sqrt{\sum_{I\in 
\mathcal{D}}\sum_{\substack{ J\in \mathcal{D}:\ \ell \left( J\right)
=2^{-t}\ell \left( I\right)  \\ d\left( J,I\right) >\ell \left( I\right) }}%
\left( \frac{\ell \left( I\right) }{\limfunc{dist}\left( J,I\right) }\right)
^{2}\left\vert \bigtriangleup _{J}^{\omega }g\left( x\right) \right\vert ^{2}%
}d\omega \left( x\right) \\
&\leq &2^{2-t}\sum_{+,-}\left\Vert \sqrt{\sum_{I\in \mathcal{D}}\sum 
_{\substack{ J\in \mathcal{D}:\ \ell \left( J\right) =2^{-t}\ell \left(
I\right)  \\ d\left( J,I\right) >\ell \left( I\right) }}\left\vert E_{\pm
}^{\sigma }\bigtriangleup _{I}^{\sigma }f\right\vert ^{2}\left\vert
H_{\sigma }I_{\pm }\left( x\right) \right\vert ^{2}\mathbf{1}_{J}\left(
x\right) }\right\Vert _{L^{p}\left( \omega \right) } \\
&&\ \ \ \ \ \ \ \ \ \ \ \ \ \ \ \ \ \ \ \ \ \ \ \ \ \times \left\Vert \sqrt{%
\sum_{I\in \mathcal{D}}\sum_{\substack{ J\in \mathcal{D}:\ \ell \left(
J\right) =2^{-t}\ell \left( I\right)  \\ d\left( J,I\right) >\ell \left(
I\right) }}\left( \frac{\ell \left( I\right) }{\limfunc{dist}\left(
J,I\right) }\right) ^{2}\left\vert \bigtriangleup _{J}^{\omega }g\left(
x\right) \right\vert ^{2}}\right\Vert _{L^{p^{\prime }}\left( \omega \right)
},
\end{eqnarray*}%
where the first norm satisifies%
\begin{eqnarray*}
&&\left\Vert \sqrt{\sum_{I\in \mathcal{D}}\sum_{\substack{ J\in \mathcal{D}%
:\ \ell \left( J\right) =2^{-t}\ell \left( I\right)  \\ d\left( J,I\right)
>\ell \left( I\right) }}\left\vert E_{\pm }^{\sigma }\bigtriangleup
_{I}^{\sigma }f\right\vert ^{2}\left\vert H_{\sigma }I_{\pm }\right\vert ^{2}%
\mathbf{1}_{J}}\right\Vert _{L^{p}\left( \omega \right) }\lesssim \left\Vert 
\sqrt{\sum_{I\in \mathcal{D}}\left\vert E_{\pm }^{\sigma }\bigtriangleup
_{I}^{\sigma }f\right\vert ^{2}\left\vert H_{\sigma }I_{\pm }\right\vert ^{2}%
}\right\Vert _{L^{p}\left( \omega \right) } \\
&\lesssim &\mathfrak{T}_{H,p}^{\ell ^{2},\limfunc{glob}}\left( \sigma
,\omega \right) \left\Vert \sqrt{\sum_{I\in \mathcal{D}}\left\vert E_{\pm
}^{\sigma }\bigtriangleup _{I}^{\sigma }f\right\vert ^{2}\mathbf{1}_{I_{\pm
}}}\right\Vert _{L^{p}\left( \sigma \right) }\leq \mathfrak{T}_{H,p}^{\ell
^{2},\limfunc{glob}}\left( \sigma ,\omega \right) \left\Vert \sqrt{%
\sum_{I\in \mathcal{D}}\left\vert \bigtriangleup _{I}^{\sigma }f\right\vert
^{2}}\right\Vert _{L^{p}\left( \sigma \right) }\lesssim \mathfrak{T}%
_{H,p}^{\ell ^{2},\limfunc{glob}}\left( \sigma ,\omega \right) \left\Vert
f\right\Vert _{L^{p}\left( \sigma \right) },
\end{eqnarray*}%
(note that we choose either $+$ throughout or $-$ throughout) and the second
norm satisfies%
\begin{equation*}
\left\Vert \sqrt{\sum_{I\in \mathcal{D}}\sum_{\substack{ J\in \mathcal{D}:\
\ell \left( J\right) =2^{-t}\ell \left( I\right)  \\ d\left( J,I\right)
>\ell \left( I\right) }}\left( \frac{\ell \left( I\right) }{\limfunc{dist}%
\left( J,I\right) }\right) ^{2}\left\vert \bigtriangleup _{J}^{\omega
}g\right\vert ^{2}}\right\Vert _{L^{p^{\prime }}\left( \omega \right)
}\lesssim \left\Vert \sqrt{\sum_{J\in \mathcal{D}}\left\vert \bigtriangleup
_{J}^{\omega }g\right\vert ^{2}}\right\Vert _{L^{p^{\prime }}\left( \omega
\right) }\lesssim \left\Vert g\right\Vert _{L^{p^{\prime }}\left( \omega
\right) }.
\end{equation*}%
The square function inequalities in Theorem \ref{square thm} were used in
both estimates above.

\bigskip

\textbf{The mid range case}: Here we prove that the mid-range form $\mathcal{%
A}^{\limfunc{mid}}\left( f,g\right) $ can be bounded by the quadratic offset
Muckenhoupt characteristic $A_{p}^{\ell ^{2},\limfunc{offset}}$.

\bigskip

Let%
\begin{equation*}
\mathcal{P}\equiv \left\{ \left( I,J\right) \in \mathcal{D}\times \mathcal{D}%
:J\text{ is good},\ \ell \left( J\right) \leq 2^{-\tau }\ell \left( I\right)
,\text{ }J\subset 3I\setminus I\right\} .
\end{equation*}%
Now we pigeonhole the lengths of $I$ and $J$ and the distance between them
by defining%
\begin{equation*}
\mathcal{P}_{N,m}^{t}\equiv \left\{ \left( I,J\right) \in \mathcal{D}\times 
\mathcal{D}:J\text{ is }\func{good},\ \ell \left( I\right) =2^{N},\ \ell
\left( J\right) =2^{N-t},\text{ }J\subset 3I\setminus I,\ 2^{N-m-1}\leq 
\limfunc{dist}\left( I,J\right) \leq 2^{N-m}\right\} .
\end{equation*}%
Note that the closest a good cube $J$ can come to $I$ is determined by the
goodness inequality, which gives this bound for $2^{N-m}\geq \limfunc{dist}%
\left( I,J\right) $: 
\begin{eqnarray*}
&&2^{N-m}\geq \frac{1}{2}\ell \left( I\right) ^{1-\varepsilon }\ell \left(
J\right) ^{\varepsilon }=\frac{1}{2}2^{N\left( 1-\varepsilon \right)
}2^{\left( N-t\right) \varepsilon }=\frac{1}{2}2^{N-\varepsilon t}; \\
&&\text{which implies }0\leq m\leq \varepsilon t,
\end{eqnarray*}%
where the last inequality holds because we are in the case of the mid-range
term.

Now we use $\mathsf{Q}_{I,t,N,m}^{\omega }\equiv \sum_{J\in \mathcal{D}:\
\left( I,J\right) \in \mathcal{P}_{N,m}^{t}\text{\ }}\bigtriangleup
_{J}^{\omega }$, and apply Cauchy-Schwarz in $I$ with $J\subset 3I\setminus
I $\ to get%
\begin{eqnarray*}
&&\left\vert \sum_{N\in \mathbb{Z}}\sum_{t\in \mathbb{N}}\sum_{m=0}^{%
\varepsilon t}\dsum\limits_{\left( I,J\right) \in \mathcal{P}%
_{N,m}^{t}}\left\langle H_{\sigma }\bigtriangleup _{I}^{\sigma
}f,\bigtriangleup _{J}^{\omega }g\right\rangle _{\omega }\right\vert \\
&\leq &\sum_{t\in \mathbb{N}}\sum_{m=0}^{\varepsilon t}\left\vert \int_{%
\mathbb{R}}\sum_{N\in \mathbb{Z}}\dsum\limits_{I\in \mathcal{D}_{N}}\mathsf{Q%
}_{I,t,N,m}^{\omega }H_{\sigma }\bigtriangleup _{I}^{\sigma }f\left(
x\right) \ \mathsf{Q}_{I,t,N,m}^{\omega }g\left( x\right) d\omega \left(
x\right) \right\vert \\
&\leq &\sum_{t\in \mathbb{N}}\sum_{m=0}^{\varepsilon t}\int_{\mathbb{R}%
}\left( \sum_{N\in \mathbb{Z}}\dsum\limits_{I\in \mathcal{D}_{N}}\left\vert 
\mathsf{Q}_{I,t,N,m}^{\omega }H_{\sigma }\bigtriangleup _{I}^{\sigma
}f\left( x\right) \right\vert ^{2}\right) ^{\frac{1}{2}}\left( \sum_{N\in 
\mathbb{Z}}\dsum\limits_{I\in \mathcal{D}_{N}}\left\vert \mathsf{Q}%
_{I,t,N,m}^{\omega }g\left( x\right) \right\vert ^{2}\right) ^{\frac{1}{2}%
}d\omega \left( x\right) \\
&\lesssim &\sum_{t\in \mathbb{N}}\sum_{m=0}^{\varepsilon t}\left\Vert \left(
\sum_{N\in \mathbb{Z}}\dsum\limits_{I\in \mathcal{D}_{N}}\left\vert \mathsf{Q%
}_{I,t,N,m}^{\omega }H_{\sigma }\bigtriangleup _{I}^{\sigma }f\left(
x\right) \right\vert ^{2}\right) ^{\frac{1}{2}}\right\Vert _{L^{p}\left(
\omega \right) }\left\Vert \left( \sum_{N\in \mathbb{Z}}\dsum\limits_{I\in 
\mathcal{D}_{N}}\left\vert \mathsf{Q}_{I,t,N,m}^{\omega }g\left( x\right)
\right\vert ^{2}\right) ^{\frac{1}{2}}\right\Vert _{L^{p^{\prime }}\left(
\omega \right) },
\end{eqnarray*}%
where the second factor is at most $C\left\Vert g\right\Vert _{L^{p^{\prime
}}\left( \omega \right) }$ by the pairwise disjoint Haar supports of the
projections $\mathsf{Q}_{I,t,N,m}^{\omega }$.

Now recall that for fixed $I$, the intervals $J$ such that $\left(
I,J\right) \in \mathcal{P}_{N,m}^{t}$ satisfy%
\begin{equation*}
\ell \left( I\right) =2^{N},\ \ell \left( J\right) =2^{N-t},\text{ }J\subset
3I\setminus I,\ 2^{N-m-1}\leq \limfunc{dist}\left( I,J\right) \leq 2^{N-m},
\end{equation*}%
and so for $y\in I$ we have $\left\vert y-c_{J}\right\vert \geq 2^{N-m}$ and
so%
\begin{equation*}
\mathrm{P}\left( J,\left\vert \bigtriangleup _{I}^{\sigma }f\right\vert
\sigma \right) =\int_{I}\frac{\ell \left( J\right) }{\left( \ell \left(
J\right) +\left\vert y-c_{J}\right\vert \right) ^{2}}\left\vert
\bigtriangleup _{I}^{\sigma }f\left( y\right) \right\vert d\sigma \left(
y\right) \lesssim 2^{N-t-2\left( N-m\right) }\int_{I}\left\vert
\bigtriangleup _{I}^{\sigma }f\left( y\right) \right\vert d\sigma \left(
y\right) ,
\end{equation*}%
and we obtain%
\begin{equation*}
\left\vert \mathsf{Q}_{I,t,N,m}^{\omega }H_{\sigma }\bigtriangleup
_{I}^{\sigma }f\left( x\right) \right\vert \lesssim \sum_{J\in \mathcal{D}:\
\left( I,J\right) \in \mathcal{P}_{N,m}^{t}}\mathrm{P}\left( J,\left\vert
\bigtriangleup _{I}^{\sigma }f\right\vert \sigma \right) \mathbf{1}%
_{J}\left( x\right) \lesssim 2^{-t+2m}\left( \int_{I}\left\vert
\bigtriangleup _{I}^{\sigma }f\left( y\right) \right\vert d\sigma \left(
y\right) \right) \ \mathbf{1}_{3I\setminus I}\left( x\right) .
\end{equation*}%
Thus the first factor satisfies,%
\begin{eqnarray*}
&&\left\Vert \left( \sum_{N\in \mathbb{Z}}\dsum\limits_{I\in \mathcal{D}%
_{N}}\left\vert \mathsf{Q}_{I,t,N,m}^{\omega }H_{\sigma }\left(
\bigtriangleup _{I}^{\sigma }f\right) \left( x\right) \right\vert
^{2}\right) ^{\frac{1}{2}}\right\Vert _{L^{p}\left( \omega \right) } \\
&\lesssim &2^{-t+2m}\left\Vert \left( \sum_{N\in \mathbb{Z}%
}\dsum\limits_{I\in \mathcal{D}_{N}}\left( \int_{I_{\limfunc{left}%
}}\left\vert \bigtriangleup _{I}^{\sigma }f\right\vert d\sigma +\int_{I_{%
\limfunc{right}}}\left\vert \bigtriangleup _{I}^{\sigma }f\right\vert
d\sigma \right) ^{2}\right) ^{\frac{1}{2}}\mathbf{1}_{3I\setminus I}\left(
x\right) \right\Vert _{L^{p}\left( \omega \right) } \\
&\lesssim &2^{-t+2m}\left\Vert \left( \sum_{N\in \mathbb{Z}%
}\dsum\limits_{I\in \mathcal{D}_{N}}\left( \frac{\left\vert I_{\limfunc{left}%
}\right\vert _{\sigma }}{\left\vert I\right\vert }\right) ^{2}\left( \frac{1%
}{\left\vert I_{\limfunc{left}}\right\vert _{\sigma }}\int_{I_{\limfunc{left}%
}}\left\vert \bigtriangleup _{I}^{\sigma }f\right\vert d\sigma \right)
^{2}\right) ^{\frac{1}{2}}\mathbf{1}_{3I\setminus I}\left( x\right)
\right\Vert _{L^{p}\left( \omega \right) } \\
&&+2^{-t+2m}\left\Vert \left( \sum_{N\in \mathbb{Z}}\dsum\limits_{I\in 
\mathcal{D}_{N}}\left( \frac{\left\vert I_{\limfunc{right}}\right\vert
_{\sigma }}{\left\vert I\right\vert }\right) ^{2}\left( \frac{1}{\left\vert
I_{\limfunc{right}}\right\vert _{\sigma }}\int_{I_{\limfunc{right}%
}}\left\vert \bigtriangleup _{I}^{\sigma }f\right\vert d\sigma \right)
^{2}\right) ^{\frac{1}{2}}\mathbf{1}_{3I\setminus I}\left( x\right)
\right\Vert _{L^{p}\left( \omega \right) }.
\end{eqnarray*}%
By the quadratic offset Muckenhoupt condition, the first term on the right
hand side involving $I_{\limfunc{left}}$ is at most%
\begin{eqnarray*}
&&A_{p}^{\ell ^{2},\limfunc{offset}}\left( \sigma ,\omega \right)
2^{-t+2m}\left\Vert \left( \sum_{N\in \mathbb{Z}}\dsum\limits_{I\in \mathcal{%
D}_{N}}\left( \frac{1}{\left\vert I_{\limfunc{left}}\right\vert _{\sigma }}%
\int_{I_{\limfunc{left}}}\left\vert \bigtriangleup _{I}^{\sigma
}f\right\vert d\sigma \right) ^{2}\mathbf{1}_{I_{\limfunc{left}}}\left(
x\right) \right) ^{\frac{1}{2}}\right\Vert _{L^{p}\left( \sigma \right) } \\
&\lesssim &A_{p}^{\ell ^{2},\limfunc{offset}}\left( \sigma ,\omega \right)
2^{-t+2m}\left\Vert \left( \sum_{N\in \mathbb{Z}}\dsum\limits_{I\in \mathcal{%
D}_{N}}\left\vert \bigtriangleup _{I_{\limfunc{left}}}^{\sigma }f\left(
x\right) \right\vert ^{2}\mathbf{1}_{I_{\limfunc{left}}}\left( x\right)
\right) ^{\frac{1}{2}}\right\Vert _{L^{p}\left( \sigma \right) }\lesssim
A_{p}^{\ell ^{2},\limfunc{offset}}\left( \sigma ,\omega \right)
2^{-t+2m}\left\Vert f\right\Vert _{L^{p}(\sigma )},
\end{eqnarray*}%
where we have used that $\left\vert \bigtriangleup _{I}^{\sigma
}f\right\vert $ is constant on $I_{\limfunc{left}}$, followed by the square
function bound. Similarly for the second term on the right hand side
involving $I_{\limfunc{right}}$.

Summing in $t$ and $m$ we then have%
\begin{eqnarray*}
&&\left\vert \mathcal{A}^{\limfunc{mid}}\left( f,g\right) \right\vert
=\left\vert \sum_{N\in \mathbb{Z}}\sum_{t\in \mathbb{N}}\sum_{m=0}^{%
\varepsilon t}\dsum\limits_{\left( I,J\right) \in \mathcal{P}%
_{N,m}^{t}}\left\langle H_{\sigma }\left( \bigtriangleup _{I}^{\sigma
}f\right) ,\bigtriangleup _{J}^{\omega }g\right\rangle _{\omega }\right\vert
\leq \sum_{t\in \mathbb{N}}\sum_{m=0}^{\varepsilon t}\left\vert \sum_{N\in 
\mathbb{Z}}\dsum\limits_{\left( I,J\right) \in \mathcal{P}%
_{N,m}^{t}}\left\langle H_{\sigma }\left( \bigtriangleup _{I}^{\sigma
}f\right) ,\bigtriangleup _{J}^{\omega }g\right\rangle _{\omega }\right\vert
\\
&\lesssim &\sum_{t\in \mathbb{N}}\sum_{m=0}^{\varepsilon t}A_{p}^{\ell ^{2},%
\limfunc{offset}}\left( \sigma ,\omega \right) 2^{-t+2m}\left\Vert
f\right\Vert _{L^{p}(\sigma )}\left\Vert g\right\Vert _{L^{p^{\prime
}}\left( \omega \right) }\lesssim A_{p}^{\ell ^{2},\limfunc{offset}}\left(
\sigma ,\omega \right) \left\Vert f\right\Vert _{L^{p}(\sigma )}\left\Vert
g\right\Vert _{L^{p^{\prime }}\left( \omega \right) },
\end{eqnarray*}%
since $\sum_{t\in \mathbb{N}}\sum_{m=0}^{\varepsilon t}2^{-t+2m}\leq
\sum_{t\in \mathbb{N}}\left( 1+\varepsilon t\right) 2^{-t\left(
1-2\varepsilon \right) }\leq C_{\varepsilon }$ if $0<\varepsilon <\frac{1}{2}
$.

This completes the proof of both (\ref{routine'}) and (\ref{routine''}).
\end{proof}

\section{Decomposition of the below form}

Let%
\begin{equation*}
\mathcal{P}_{\func{below}}\equiv \left\{ \left( I,J\right) \in \mathcal{D}_{%
\func{good}}^{\limfunc{child}}\times \mathcal{D}_{\func{good}}^{\limfunc{%
child}}:J\subset _{\tau }I\right\}
\end{equation*}%
be the set of pairs of $\limfunc{child}$-$\func{good}$ dyadic intervals $%
\left( I,J\right) $ with $J$ at least $\tau $ levels below and inside $I$.
We begin by splitting the $\func{below}$ form into home and neighbour forms,
where $\theta K$ denotes the dyadic sibling of $K\in \mathcal{D}$, and $%
I_{J} $ denotes the child of $I$ that contains $J$,%
\begin{eqnarray*}
\mathsf{B}_{\func{below}}\left( f,g\right) &=&\sum_{\left( I,J\right) \in 
\mathcal{P}_{\func{below}}}\left\langle H_{\sigma }\left( \mathbf{1}%
_{I_{J}}\bigtriangleup _{I}^{\sigma }f\right) ,\bigtriangleup _{J}^{\omega
}g\right\rangle _{\omega }+\sum_{\left( I,J\right) \in \mathcal{P}_{\func{%
below}}}\left\langle H_{\sigma }\left( \mathbf{1}_{\theta
I_{J}}\bigtriangleup _{I}^{\sigma }f\right) ,\bigtriangleup _{J}^{\omega
}g\right\rangle _{\omega } \\
&\equiv &\mathsf{B}_{\func{home}}\left( f,g\right) +\mathsf{B}_{\func{neigh}%
}\left( f,g\right) .
\end{eqnarray*}

\subsection{Neighbour form}

The neighbour form is controlled by the quadratic offset Muckenhoupt
condition using Lemma \ref{Energy Lemma} and the fact that the intervals $J$
are good, namely we claim%
\begin{equation*}
\left\vert \mathsf{B}_{\func{neigh}}\left( f,g\right) \right\vert \leq
C_{\varepsilon }A_{p}^{\ell ^{2},\limfunc{offset}}\left( \sigma ,\omega
\right) \left\Vert f\right\Vert _{L^{p}(\sigma )}\left\Vert g\right\Vert
_{L^{p^{\prime }}(\omega )},\ \ \ \ \ 1<p<\infty .
\end{equation*}%
We have 
\begin{equation*}
\left\langle H_{\sigma }\left( \mathbf{1}_{\theta \left( I_{J}\right)
}\bigtriangleup _{I}^{\sigma }f\right) ,\bigtriangleup _{J}^{\omega
}g\right\rangle _{\omega }=E_{\theta \left( I_{J}\right) }^{\sigma }\Delta
_{I}^{\sigma }f\cdot \left\langle H_{\sigma }\mathbf{1}_{\theta \left(
I_{J}\right) },\bigtriangleup _{J}^{\omega }g\right\rangle _{\omega },
\end{equation*}%
and thus we can write%
\begin{equation}
\mathsf{B}_{\func{neigh}}\left( f,g\right) =\sum_{I,J\in \mathcal{D}_{\func{%
good}}\text{ and }J\subset _{\tau }I}\left( E_{\theta \left( I_{J}\right)
}^{\sigma }\Delta _{I}^{\sigma }f\right) \ \left\langle H_{\sigma }\mathbf{1}%
_{\theta \left( I_{J}\right) },\Delta _{J}^{\omega }g\right\rangle _{\omega
}\ .  \label{neighbour term}
\end{equation}

To see the claim, momentarily fix an integer $s\geq \tau $. Now we
pigeonhole pairs $\left( I,J\right) $ of intervals by requiring $J\in 
\mathfrak{C}_{\mathcal{D}}^{\left( s\right) }\left( I\right) $, i.e. $%
J\subset I$ and $\ell \left( J\right) =2^{-s}\ell \left( I\right) $, and we
further separate the two cases where $I_{J}=I_{\pm }$, the right and left
children of $I$, so that we have 
\begin{equation*}
\mathsf{B}_{\func{neigh}}\left( f,g\right) =\sum_{I}\sum_{+,-}\sum_{s=\tau
}^{\infty }=\sum_{+,-}\sum_{s=\tau }^{\infty }\sum_{J\in \mathfrak{C}_{%
\mathcal{D}}^{\left( s-1\right) }\left( I_{\pm }\right) }\langle H_{\sigma
}\left( \mathbf{1}_{I_{\mp }}\Delta _{I}^{\sigma }f\right) ,\Delta
_{J}^{\omega }g\rangle _{\omega }.
\end{equation*}%
Observe that%
\begin{align*}
\left\vert A_{\pm }\left( I,s\right) \right\vert & \equiv \left\vert
\sum_{J\in \mathfrak{C}_{\mathcal{D}}^{\left( s-1\right) }\left( I_{\pm
}\right) }\langle H_{\sigma }\left( \mathbf{1}_{I_{\mp }}\Delta _{I}^{\sigma
}f\right) ,\Delta _{J}^{\omega }g\rangle _{\omega }\right\vert \\
& =\left\vert \sum_{I,J\in \mathcal{D}_{\func{good}}\text{ and }J\in 
\mathfrak{C}_{\mathcal{D}}^{\left( s-1\right) }\left( I_{\pm }\right)
}\left( E_{I_{\mp }}^{\sigma }\Delta _{I}^{\sigma }f\right) \ \left\langle
\Delta _{J}^{\omega }H_{\sigma }\mathbf{1}_{I_{\mp }},\Delta _{J}^{\omega
}g\right\rangle _{\omega }\right\vert \\
& =\left\vert \int_{\mathbb{R}}\sum_{I,J\in \mathcal{D}_{\func{good}}\text{
and }J\in \mathfrak{C}_{\mathcal{D}}^{\left( s-1\right) }\left( I_{\pm
}\right) }\left( E_{I_{\mp }}^{\sigma }\Delta _{I}^{\sigma }f\right) \
\Delta _{J}^{\omega }H_{\sigma }\mathbf{1}_{I_{\mp }}\left( x\right) \
\Delta _{J}^{\omega }g\left( x\right) d\omega \left( x\right) \right\vert .
\end{align*}%
If we now apply $\ell ^{2}$ Cauchy-Schwarz followed by $L^{p}\left( \omega
\right) $ H\"{o}lder, we obtain%
\begin{equation*}
\left\vert A_{\pm }\left( I,s\right) \right\vert \leq \left\Vert \left(
\sum_{I\in \mathcal{D}\text{, }J\in \mathfrak{C}_{\mathcal{D}}^{\left(
s-1\right) }\left( I_{\pm }\right) }\left\vert \left( E_{I_{\mp }}^{\sigma
}\Delta _{I}^{\sigma }f\right) \ \Delta _{J}^{\omega }H_{\sigma }\mathbf{1}%
_{I_{\mp }}\left( x\right) \right\vert ^{2}\right) ^{\frac{1}{2}}\right\Vert
_{L^{p}\left( \omega \right) }\left\Vert \left( \sum_{I\in \mathcal{D}\text{%
, }J\in \mathfrak{C}_{\mathcal{D}}^{\left( s-1\right) }\left( I_{\pm
}\right) }\left\vert \Delta _{J}^{\omega }g\left( x\right) \right\vert
^{2}\right) ^{\frac{1}{2}}\right\Vert _{L^{p^{\prime }}\left( \omega \right)
}.
\end{equation*}%
The second factor is at most $C\left\Vert g\right\Vert _{L^{p^{\prime
}}\left( \omega \right) }$ by the square function inequality.

For the first factor we use the pointwise Monotonicity Lemma \ref{energy
pointwise},%
\begin{equation*}
\left\vert \Delta _{J}^{\omega }H_{\sigma }\mathbf{1}_{I_{\mp }}\left(
x\right) \right\vert \leq 2\mathrm{P}\left( J,\mathbf{1}_{I_{\mp }}\sigma
\right) \mathbf{1}_{J}\left( x\right) ,
\end{equation*}%
and then the Poisson Decay Lemma \ref{Poisson inequality}, to obtain%
\begin{equation*}
\left\vert \Delta _{J}^{\omega }H_{\sigma }\mathbf{1}_{I_{\mp }}\left(
x\right) \right\vert \leq 2\mathrm{P}\left( J,\mathbf{1}_{I_{\mp }}\sigma
\right) \mathbf{1}_{J}\left( x\right) \leq C_{\varepsilon }2^{-\left(
1-2\varepsilon \right) s}\mathrm{P}\left( I,\mathbf{1}_{I_{\mp }}\sigma
\right) \mathbf{1}_{J}\left( x\right) .
\end{equation*}%
Thus the first factor is bounded by%
\begin{eqnarray*}
&&C_{\varepsilon }2^{-\left( 1-2\varepsilon \right) s}\left\Vert \left(
\sum_{I,J\in \mathcal{D}_{\func{good}}\text{ and }J\in \mathfrak{C}_{%
\mathcal{D}}^{\left( s\right) }\left( I_{\pm }\right) }\left\vert E_{I_{\mp
}}^{\sigma }\Delta _{I}^{\sigma }f\right\vert ^{2}\mathrm{P}\left( I,\mathbf{%
1}_{I_{\mp }}\sigma \right) ^{2}\mathbf{1}_{J}\left( x\right) \right) ^{%
\frac{1}{2}}\right\Vert _{L^{p}\left( \omega \right) } \\
&\leq &C_{\varepsilon }2^{-\left( 1-2\varepsilon \right) s}\left\Vert \left(
\sum_{I\in \mathcal{D}_{\func{good}}}\left\vert E_{I_{\mp }}^{\sigma }\Delta
_{I}^{\sigma }f\right\vert ^{2}\left( \frac{\left\vert I_{\mp }\right\vert
_{\sigma }}{\left\vert I\right\vert }\right) ^{2}\mathbf{1}_{I_{\pm }}\left(
x\right) \right) ^{\frac{1}{2}}\right\Vert _{L^{p}\left( \omega \right) },
\end{eqnarray*}%
upon using $\mathrm{P}\left( I,\mathbf{1}_{I_{\mp }}\sigma \right) \lesssim 
\frac{\left\vert I_{\mp }\right\vert _{\sigma }}{\left\vert I\right\vert }$
and $\sum_{J\in \mathfrak{C}_{\mathcal{D}}^{\left( s-1\right) }\left( I_{\pm
}\right) }\mathbf{1}_{J}\left( x\right) =\mathbf{1}_{I_{\pm }}\left(
x\right) $.

Now we conclude from the quadratic offset Muckenhoupt condition that the
above term is at most 
\begin{eqnarray*}
&&C_{\varepsilon }2^{-\left( 1-2\varepsilon \right) s}\left\Vert \left(
\sum_{I\in \mathcal{D}_{\func{good}}}\left\vert E_{I_{\mp }}^{\sigma }\Delta
_{I}^{\sigma }f\right\vert ^{2}\left( \frac{\left\vert I_{\mp }\right\vert
_{\sigma }}{\left\vert I\right\vert }\right) ^{2}\mathbf{1}_{I_{\pm
}}\right) ^{\frac{1}{2}}\right\Vert _{L^{p}\left( \omega \right) } \\
&\leq &C_{\varepsilon }A_{p}^{\ell ^{2},\limfunc{offset}}\left( \sigma
,\omega \right) 2^{-\left( 1-2\varepsilon \right) s}\left\Vert \left(
\sum_{I\in \mathcal{D}_{\func{good}}}\left\vert E_{I_{\mp }}^{\sigma }\Delta
_{I}^{\sigma }f\right\vert ^{2}\mathbf{1}_{I_{\pm }}\right) ^{\frac{1}{2}%
}\right\Vert _{L^{p}\left( \sigma \right) } \\
&=&C_{\varepsilon }A_{p}^{\ell ^{2},\limfunc{offset}}\left( \sigma ,\omega
\right) 2^{-\left( 1-2\varepsilon \right) s}\left\Vert \left( \sum_{I\in 
\mathcal{D}_{\func{good}}}\left\vert \Delta _{I}^{\sigma }f\right\vert ^{2}%
\mathbf{1}_{I_{\pm }}\right) ^{\frac{1}{2}}\right\Vert _{L^{p}\left( \sigma
\right) } \\
&\leq &C_{\varepsilon }2^{-\left( 1-2\varepsilon \right) s}A_{p}^{\ell ^{2},%
\limfunc{offset}}\left( \sigma ,\omega \right) \left\Vert f\right\Vert
_{L^{p}\left( \sigma \right) },
\end{eqnarray*}%
where we have used that $\left\vert \Delta _{I}^{\sigma }f\right\vert $ is
constant on each child of $I$, followed by the square function estimate.
Then we can sum in $\pm $ and $s$ to obtain for $0<\varepsilon <\frac{1}{2}$,%
\begin{eqnarray*}
\left\vert \mathsf{B}_{\func{neigh}}\left( f,g\right) \right\vert &\leq
&\sum_{+,-}\sum_{s=\tau }^{\infty }\left\vert A_{\pm }\left( I,s\right)
\right\vert \leq C_{\varepsilon }A_{p}^{\ell ^{2},\limfunc{offset}}\left(
\sigma ,\omega \right) \left( \sum_{+,-}\sum_{s=\tau }^{\infty }2^{-\left(
1-2\varepsilon \right) s}\right) \left\Vert f\right\Vert _{L^{p}\left(
\sigma \right) }\left\Vert g\right\Vert _{L^{p^{\prime }}\left( \omega
\right) } \\
&\leq &C_{\varepsilon }A_{p}^{\ell ^{2},\limfunc{offset}}\left( \sigma
,\omega \right) \left\Vert f\right\Vert _{L^{p}\left( \sigma \right)
}\left\Vert g\right\Vert _{L^{p^{\prime }}\left( \omega \right) }\ .
\end{eqnarray*}

\section{Decomposition of the home form by CZ and $p$-energy coronas}

In order to control the home form, we must pigeonhole the pairs of intervals 
$\left( I,J\right) \in \mathcal{P}_{\func{below}}$ into a collection of
pairwise disjoint corona `boxes' in which both $\sigma $-averages of $f$,
and Poisson-Energies of the measure pair $\left( \sigma ,\omega \right) $,
are controlled. Then we split the home form into two forms according to this
decomposition, which we call the diagonal and far forms. But first we need
to construct the Calder\'{o}n-Zygmund / Poisson-Energy corona decomposition.

Fix $\Gamma >1$ and a large dyadic interval $T$. Define a sequence of
stopping times $\left\{ \mathcal{F}_{n}\right\} _{n=0}^{\infty }$ depending
on $T$, $\sigma $ and $\omega $ recursively as follows. Let $\mathcal{F}%
_{0}=\left\{ T\right\} $. Given $\mathcal{F}_{n}$, define $\mathcal{F}_{n+1}$
to consist of the \textbf{maximal} $\func{good}$ intervals $I^{\prime }$ for
which there is $I\in \mathcal{F}_{n}$ with $I^{\prime }\subset I$ and%
\begin{eqnarray}
&&\text{\textbf{either} }\left( \frac{\mathrm{P}\left( I^{\prime },\mathbf{1}%
_{I\setminus I^{\prime }}\sigma \right) }{\ell \left( I^{\prime }\right) }%
\right) ^{p}\mathsf{E}_{p}\left( J,\omega \right) ^{p}\frac{\left\vert
I^{\prime }\right\vert _{\omega }}{\left\vert I^{\prime }\right\vert
_{\sigma }}>\Gamma ,  \label{energy stop crit} \\
&&\text{\textbf{or} }\frac{1}{\left\vert I^{\prime }\right\vert _{\sigma }}%
\int_{I^{\prime }}\left\vert \mathsf{P}_{\mathcal{D}\left[ I\right]
}^{\sigma }f\right\vert d\sigma >4\frac{1}{\left\vert I\right\vert _{\sigma }%
}\int_{I}\left\vert \mathsf{P}_{\mathcal{D}\left[ I\right] }^{\sigma
}f\right\vert d\sigma ,  \notag
\end{eqnarray}%
where the $p$-energy $\mathsf{E}_{p}\left( J,\omega \right) $ is defined by 
\begin{equation}
\mathsf{E}_{p}\left( J,\omega \right) \equiv \left( \frac{1}{\left\vert
J\right\vert _{\omega }}\int_{J}\left\vert x-\frac{1}{\left\vert
J\right\vert _{\omega }}\int_{J}zd\omega \left( z\right) \right\vert
^{p}d\omega \left( x\right) \right) ^{\frac{1}{p}}\approx \left( \frac{1}{%
\left\vert J\right\vert _{\omega }}\int_{J}\left( \sum_{J^{\prime }\subset
J}\left\vert \bigtriangleup _{J^{\prime }}^{\omega }Z\left( x\right)
\right\vert ^{2}\right) ^{\frac{p}{2}}d\omega \left( x\right) \right) ^{%
\frac{1}{p}},  \label{def p energy}
\end{equation}%
where the equivalence follows from the square function Theorem \ref{square
thm} applied to $\mathbf{1}_{J}\left( Z-\frac{1}{\left\vert J\right\vert
_{\omega }}\int_{J}zd\omega \left( z\right) \right) $ - the $p$-energy $%
\mathsf{E}_{p}\left( J,\omega \right) $ is an $L^{p}$ analogue of the energy
functional introduced in \cite{LaSaUr2}. Also, $\mathsf{P}_{\mathcal{D}\left[
I\right] }^{\sigma }=\sum_{K\in \mathcal{D}:\ K\subset I}\bigtriangleup
_{K}^{\sigma }$, and $Z$ denotes the identity function on $\mathbb{R}$, $%
Z\left( z\right) =z$ for all $z\in \mathbb{R}$, so that 
\begin{equation*}
\bigtriangleup _{J}^{\omega }Z\left( x\right) \equiv \left\langle
Z,h_{J}^{\omega }\right\rangle _{\omega }h_{J}^{\omega }\left( x\right) ,\ \
\ \ \ \text{where }\left\langle Z,h_{J}^{\omega }\right\rangle _{\omega
}=\int_{J}zd\omega \left( z\right) =\int_{J}\left( z-c\right) d\omega \left(
z\right) ,\ \ \ c\in \mathbb{R}.
\end{equation*}%
Set $\mathcal{F}\equiv \bigcup_{n=0}^{\infty }\mathcal{F}_{n}$, which we
refer to as the Calder\'{o}n-Zygmund/Poisson-Energy stopping times for the
dyadic interval $T$, with measures $\sigma $ and $\omega $. Note that $%
\mathcal{F}\subset \mathcal{D}_{\func{good}}$.

\begin{notation}
\label{corona notation}Denote the associated corona with top $F\in \mathcal{F%
}$ by 
\begin{equation*}
\mathcal{C}_{\mathcal{F}}\left( F\right) \equiv \left\{ I\in \mathcal{D}%
:I\subset F\text{ and }I\not\subset F^{\prime }\in \mathcal{F}\text{ for any 
}F^{\prime }\subsetneqq F\right\} ,
\end{equation*}%
and the grandchildren at depth $m\in \mathbb{N}$ of $F$ in the tree $%
\mathcal{F}$ by $\mathfrak{C}_{\mathcal{F}}^{\left( m\right) }\left(
F\right) $, with $\mathfrak{C}_{\mathcal{F}}^{\left( 1\right) }\left(
F\right) $ abbreviated to $\mathfrak{C}_{\mathcal{F}}\left( F\right) $. We
will also denote by $\mathcal{C}_{\mathcal{F}}^{\left( k\right) }\left(
F\right) \equiv \bigcup_{G\in \mathfrak{C}_{\mathcal{F}}^{\left( k\right)
}\left( F\right) }\mathcal{C}_{\mathcal{F}}\left( G\right) $ the union of
all $\mathcal{F}$-coronas at depth $k$ below $F$, and we will denote by $%
\mathcal{C}_{\mathcal{F}}^{\left[ m\right] }\left( F\right) $ (note the use
of square brackets in the exponent) the union of $\mathcal{C}_{\mathcal{F}%
}^{\left( k\right) }\left( F\right) $ for $0\leq k\leq m$. We will
consistantly use calligraphic font $\mathcal{C}$ to denote coronas, and
fraktur font $\mathfrak{C}$ to denote children, and will consistently use
superscripts $\left( m\right) $ with parentheses to denote objects `$m$
levels below', and superscripts $\left[ m\right] $ with brackets to denote
objects `at most $m$ levels below'.
\end{notation}

Finally, we define%
\begin{equation*}
\alpha _{\mathcal{F}}\left( F\right) \equiv \sup_{G\in \mathcal{F}:\
G\supset F}E_{G}^{\sigma }\left\vert \mathsf{P}_{\mathcal{D}\left[ \pi _{%
\mathcal{F}}G\right] }^{\sigma }f\right\vert ,\ \ \ \ \ \text{for }F\in 
\mathcal{F}.
\end{equation*}

The point of introducing the corona decomposition $\mathcal{D}\left[ T\right]
=\overset{\cdot }{\bigcup }_{F\in \mathcal{F}}\mathcal{C}_{\mathcal{F}%
}\left( F\right) $ is that, in each $\func{good}$ corona $\mathcal{C}_{%
\mathcal{F}}^{\func{good}}\left( F\right) \equiv \mathcal{C}_{\mathcal{F}%
}\left( F\right) \cap \mathcal{D}_{\func{good}}$, we obtain control of both
the averages of projections of $f$,%
\begin{equation}
E_{I}^{\sigma }\left\vert \mathsf{P}_{\mathcal{D}\left[ F\right]
}f\right\vert \equiv \frac{1}{\left\vert I\right\vert _{\sigma }}%
\int_{I}\left\vert \mathsf{P}_{\mathcal{D}\left[ F\right] }f\right\vert
d\sigma \leq 4E_{F}^{\sigma }\left\vert f\right\vert ,  \label{en con}
\end{equation}%
by negating the second inequality in (\ref{energy stop crit}), as well as
control of the Stopping-Energy functional,%
\begin{equation}
\mathfrak{X}_{F}\left( \sigma ,\omega \right) ^{p}\equiv \sup_{I\in \mathcal{%
C}_{\mathcal{F}}\left( F\right) \cap \mathcal{D}_{\func{good}}^{\limfunc{%
child}}}\left( \frac{\mathrm{P}\left( I,\mathbf{1}_{F\setminus I}\sigma
\right) }{\ell \left( I\right) }\right) ^{p}\mathsf{E}_{p}\left( I,\omega
\right) ^{p}\frac{\left\vert I\right\vert _{\omega }}{\left\vert
I\right\vert _{\sigma }},  \label{PE char}
\end{equation}
by negating the first inequality in (\ref{energy stop crit}), i.e.%
\begin{equation*}
\frac{E_{I}^{\sigma }\left\vert \mathsf{P}_{F}f\right\vert }{E_{F}^{\sigma
}\left\vert \mathsf{P}_{F}f\right\vert }\leq 4\text{ and }\mathfrak{X}%
_{F;p}\left( \sigma ,\omega \right) ^{p}\leq \Gamma ,\ \ \ \ \ \text{for all 
}I\in \mathcal{C}_{\mathcal{F}}^{\func{good}}\left( F\right) \text{ and }%
F\in \mathcal{F}.
\end{equation*}%
In particular, this inequality shows that the \emph{Stopping-Energy
characteristic }%
\begin{equation*}
\mathfrak{X}_{\mathcal{F};p}\left( \sigma ,\omega \right) \equiv \sup_{F\in 
\mathcal{F}}\mathfrak{X}_{F;p}\left( \sigma ,\omega \right)
\end{equation*}%
of $\sigma $ and $\omega $ with respect to the stopping times $\mathcal{F}$,
is dominated by the parameter $\Gamma $ chosen in (\ref{energy stop crit}).

\subsection{Necessity of the $p$-energy condition}

The proof of the stopping form bound will use the $L^{p}$-analogue of the
Poisson-energy characteristic introduced in \cite[(1.9) on page 3]{LaSaUr2}, 
\begin{equation}
\mathcal{E}_{p}\left( \sigma ,\omega \right) ^{p}\equiv \sup_{I\in \mathcal{D%
}}\sum_{\overset{\cdot }{\bigcup }_{r=1}^{\infty }I_{r}\subset I}\left( 
\frac{\mathrm{P}\left( I_{r},\mathbf{1}_{I\setminus I_{r}}\sigma \right) }{%
\ell \left( I_{r}\right) }\right) ^{p}\mathsf{E}_{p}\left( I_{r},\omega
\right) ^{p}\frac{\left\vert I_{r}\right\vert _{\omega }}{\left\vert
I\right\vert _{\sigma }}\text{ and its dual }\mathcal{E}_{p^{\prime }}\left(
\omega ,\sigma \right) ,  \label{energy char}
\end{equation}%
where the supremum is taken over all pairwise disjoint subdecompositions of
an interval $I$ into dyadic subintervals $I_{r}\in \mathcal{D}\left[ I\right]
$, and where the $p$-energy $\mathsf{E}_{p}\left( J,\omega \right) $ is
defined in (\ref{def p energy}). Now we show the $p$-energy characteristic $%
\mathcal{E}_{p}\left( \sigma ,\omega \right) $ is controlled by the scalar
testing characteristic.

\begin{lemma}
\label{energy cond}For $1<p<\infty $ we have%
\begin{equation*}
\mathfrak{X}_{\mathcal{F},p}\left( \sigma ,\omega \right) \lesssim \mathcal{E%
}_{p}\left( \sigma ,\omega \right) \lesssim \mathfrak{T}_{H,p}\left( \sigma
,\omega \right) .
\end{equation*}
\end{lemma}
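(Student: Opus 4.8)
The plan is to prove the two inequalities $\mathfrak{X}_{\mathcal{F},p}\left( \sigma ,\omega \right) \lesssim \mathcal{E}_{p}\left( \sigma ,\omega \right)$ and $\mathcal{E}_{p}\left( \sigma ,\omega \right) \lesssim \mathfrak{T}_{H,p}\left( \sigma ,\omega \right)$ separately. The first is essentially a definitional reduction: given any $F\in \mathcal{F}$, the corona $\mathcal{C}_{\mathcal{F}}^{\func{good}}\left( F\right)$ provides a single interval $I\subset F$ with $\mathbf{1}_{F\setminus I}\sigma$ in place of a whole pairwise disjoint subdecomposition, so $\mathfrak{X}_{F;p}\left( \sigma ,\omega \right) ^{p}$ is the supremum of one summand from the definition of $\mathcal{E}_{p}\left( \sigma ,\omega \right) ^{p}$ with the single-interval decomposition $\{I\}$ and ambient interval $F$; hence $\mathfrak{X}_{F;p}\left( \sigma ,\omega \right) ^{p}\leq \mathcal{E}_{p}\left( \sigma ,\omega \right) ^{p}$, and taking the supremum over $F$ gives the claim. (A minor point: the definition of $\mathcal{E}_p$ involves $\frac{|I_r|_\omega}{|I|_\sigma}$ with $I$ the ambient interval, which is exactly the normalization in $\mathfrak{X}_{F;p}$ with $I=F$, so no loss occurs.)

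For the second inequality, I would follow the strategy of \cite[Proposition 2.11]{LaSaUr2} for $p=2$, adapted to general $p$ using the pointwise reversal of the Monotonicity Lemma recorded in (\ref{PE bound}). Fix an interval $I$ and a pairwise disjoint family $\{I_r\}\subset \mathcal{D}[I]$. For each $r$ we split $I_r$ into the two halves $I_{r,-}, I_{r,+}$ of equal $\omega$-measure. On $I_{r,+}$, inequality (\ref{PE bound}) gives the pointwise bound
\begin{equation*}
\mathbf{1}_{I_{r,+}}\left( x\right) \frac{\mathrm{P}\left( I_{r},\mathbf{1}_{I\setminus I_{r}}\sigma \right) }{\ell \left( I_{r}\right) }\left[ x-E_{I_{r,-}}^{\omega }Z\right] \leq 2\mathbf{1}_{I_{r,+}}\left( x\right) \left[ H_{\sigma }\mathbf{1}_{I\setminus I_{r}}\left( x\right) -E_{I_{r,-}}^{\omega }\left( H_{\sigma }\mathbf{1}_{I\setminus I_{r}}\right) \right],
\end{equation*}
and a symmetric inequality holds on $I_{r,-}$. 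Since $x\mapsto x - E^\omega_{I_{r,-}}Z$ has $L^p(\omega)$ norm over $I_{r,+}$ comparable to $\mathsf{E}_p(I_r,\omega)|I_r|_\omega^{1/p}$ (using the definition (\ref{def p energy}) and the equal-measure split so that the mean over $I_{r,\pm}$ stays within $O(\ell(I_r))$ of the global mean over $I_r$), raising to the $p$-th power, integrating in $\omega$ over $I_{r,+}$, and summing over $r$ yields
\begin{equation*}
\sum_r \left( \frac{\mathrm{P}\left( I_{r},\mathbf{1}_{I\setminus I_{r}}\sigma \right) }{\ell \left( I_{r}\right) }\right)^p \mathsf{E}_p(I_r,\omega)^p |I_r|_\omega \lesssim \sum_r \int_{I_r} \left| H_\sigma \mathbf{1}_{I\setminus I_r}(x) - \text{(const on each half)} \right|^p d\omega(x),
\end{equation*}
where on the right I absorb the constants $E^\omega_{I_{r,\pm}}(H_\sigma \mathbf{1}_{I\setminus I_r})$ into the integral at the cost of a factor, using that subtracting the $\omega$-average over a half-interval only decreases the $L^p(\omega)$ norm up to a constant. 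Then I split $H_\sigma \mathbf{1}_{I\setminus I_r} = H_\sigma \mathbf{1}_I - H_\sigma \mathbf{1}_{I_r}$: the first piece, being independent of $r$, contributes $\lesssim \int_I |H_\sigma \mathbf{1}_I|^p d\omega \leq \mathfrak{T}_{H,p}(\sigma,\omega)^p |I|_\sigma$ after using disjointness of the $I_r$ and the telescoping of the subtracted constants; the second piece is local and contributes $\sum_r \int_{I_r}|H_\sigma \mathbf{1}_{I_r}|^p d\omega \leq \mathfrak{T}_{H,p}(\sigma,\omega)^p \sum_r |I_r|_\sigma \leq \mathfrak{T}_{H,p}(\sigma,\omega)^p |I|_\sigma$. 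Dividing by $|I|_\sigma$ and taking the supremum over $I$ and over all decompositions $\{I_r\}$ gives $\mathcal{E}_p(\sigma,\omega) \lesssim \mathfrak{T}_{H,p}(\sigma,\omega)$; the dual statement follows by interchanging $\sigma,\omega$ and $p,p'$.

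The main obstacle I anticipate is the bookkeeping in the $p\neq 2$ case when passing from the pointwise inequality to the summed $L^p$ estimate: unlike $p=2$ there is no orthogonality, so I must be careful that (a) subtracting the half-interval averages of $H_\sigma \mathbf{1}_{I\setminus I_r}$ really does only cost a constant factor in $L^p(\omega)$ — this uses the contractivity of $f\mapsto f - E^\omega_E f$ on $L^p(E,\omega)$, valid for all $1<p<\infty$ — and that (b) the global term $\int_I |H_\sigma \mathbf{1}_I|^p\,d\omega$ genuinely dominates $\sum_r \int_{I_r}|H_\sigma \mathbf{1}_I - c_{r,\pm}|^p\,d\omega$ after the constants are chosen as $\omega$-averages on the halves; this again is the contractivity of conditional expectation combined with the pairwise disjointness of the $I_r$. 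The rest — the equivalence in (\ref{def p energy}), the comparison of the half-interval mean to the full mean, and the summation — is routine. I would also note explicitly (as the footnote in Subsection \ref{Sub Rev} indicates) that if no center $c\in(a,b)$ with $|[a,c]|_\omega = |[c,b]|_\omega = \tfrac12|[a,b]|_\omega$ exists (atoms), one uses the standard modification from \cite{LaSaUr2}, which does not affect the bound.
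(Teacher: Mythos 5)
There is a genuine gap in your first step. The normalization in the stopping energy (\ref{PE char}) is $|I|_{\omega }/|I|_{\sigma }$, where $I$ is the corona interval over which the supremum is taken --- not $|I|_{\omega }/|F|_{\sigma }$ as your parenthetical asserts. Consequently the one-interval subdecomposition $\{I\}$ of the ambient interval $F$ in (\ref{energy char}) only yields $\left( \frac{\mathrm{P}(I,\mathbf{1}_{F\setminus I}\sigma )}{\ell (I)}\right) ^{p}\mathsf{E}_{p}(I,\omega )^{p}\frac{|I|_{\omega }}{|F|_{\sigma }}\leq \mathcal{E}_{p}(\sigma ,\omega )^{p}$, which misses the quantity appearing in $\mathfrak{X}_{F;p}$ by the unbounded factor $|F|_{\sigma }/|I|_{\sigma }$. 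Indeed, $\mathfrak{X}_{\mathcal{F},p}\lesssim \mathcal{E}_{p}$ is not a formal consequence of the two definitions for an arbitrary collection $\mathcal{F}$; in the paper it holds because $\mathcal{F}$ is the Calder\'{o}n--Zygmund/Poisson-energy stopping family: every interval in a corona failed the first criterion in (\ref{energy stop crit}), so its stopping energy is at most $\Gamma $, and $\Gamma $ is chosen comparable to the characteristics on the right-hand side of the lemma. Your argument must appeal to that construction, not merely compare definitions.

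Your treatment of $\mathcal{E}_{p}\lesssim \mathfrak{T}_{H,p}$ is essentially the paper's proof --- reverse the Monotonicity Lemma, absorb the subtracted averages, split $H_{\sigma }\mathbf{1}_{I\setminus I_{r}}=H_{\sigma }\mathbf{1}_{I}-H_{\sigma }\mathbf{1}_{I_{r}}$, test on $I$ and on each $I_{r}$, and use $\sum_{r}|I_{r}|_{\sigma }\leq |I|_{\sigma }$ --- except that the paper starts from the two-point inequality (\ref{simple reversal}), raises it to the $p$-th power, integrates in \emph{both} variables over $I_{r}$, and uses Jensen in the form $\int_{I_{r}}|x-E_{I_{r}}^{\omega }Z|^{p}d\omega \leq \frac{1}{|I_{r}|_{\omega }}\int_{I_{r}}\int_{I_{r}}|x-y|^{p}d\omega \,d\omega $, thereby bypassing half-interval means altogether. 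Your route through (\ref{PE bound}) can be made to work, but the one-sided claim that $\Vert x-E_{I_{r,-}}^{\omega }Z\Vert _{L^{p}(I_{r,+},\omega )}$ is comparable to $\mathsf{E}_{p}(I_{r},\omega )|I_{r}|_{\omega }^{1/p}$ is false in the lower-bound direction: put mass $\frac{1}{2}-\delta $ at the right endpoint of $I_{r,-}$ and mass $\delta $ at its left endpoint, and let $\omega $ on $I_{r,+}$ be a point mass $\frac{1}{2}$ at its left endpoint; then the left side is of order $\delta ^{p}\ell (I_{r})^{p}$ while $\mathsf{E}_{p}(I_{r},\omega )^{p}|I_{r}|_{\omega }$ is of order $\delta \,\ell (I_{r})^{p}$. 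The justification via ``the mean stays within $O(\ell (I_{r}))$'' cannot repair this, since the energy may be far smaller than $\ell (I_{r})^{p}|I_{r}|_{\omega }$. What is true, and suffices, is the two-sided bound $\int_{I_{r}}|x-E_{I_{r}}^{\omega }Z|^{p}d\omega \lesssim \int_{I_{r,+}}|x-E_{I_{r,-}}^{\omega }Z|^{p}d\omega +\int_{I_{r,-}}|x-E_{I_{r,+}}^{\omega }Z|^{p}d\omega $, which follows from the quasi-minimality of means, $\int_{E}|f-E_{E}^{\omega }f|^{p}d\omega \leq 2^{p}\int_{E}|f-c|^{p}d\omega $ for any constant $c$; since you do invoke the symmetric half, this part is a fixable imprecision rather than a fatal one.
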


\begin{proof}
Let $\left\{ I_{r}\right\} _{r=1}^{\infty }$ be a pairwise disjoint
decomposition of an interval $I$ into subintervals $I_{r}$. We begin with
inequality (\ref{simple reversal}) from Subsection \ref{Sub Rev}, namely
that for $x,y\in I_{r}\subset I$:%
\begin{equation*}
\frac{\mathrm{P}\left( I_{r},\mathbf{1}_{I\setminus I_{r}}\sigma \right) }{%
\ell \left( I_{r}\right) }\left\vert x-y\right\vert \leq 2\left\vert H%
\mathbf{1}_{I\setminus I_{r}}\sigma \left( x\right) -H\mathbf{1}_{I\setminus
I_{r}}\sigma \left( y\right) \right\vert .
\end{equation*}%
Then taking the $p^{th}$ power and integrating in $d\omega \left( x\right) $
and $d\omega \left( y\right) $ on both sides gives%
\begin{eqnarray*}
&&\left( \frac{\mathrm{P}\left( I_{r},\mathbf{1}_{I\setminus I_{r}}\sigma
\right) }{\ell \left( I_{r}\right) }\right)
^{p}\int_{I_{r}}\int_{I_{r}}\left\vert x-y\right\vert ^{p}d\omega \left(
x\right) d\omega \left( y\right) \\
&\leq &2^{p}\int_{I_{r}}\int_{I_{r}}\left\vert H\mathbf{1}_{I\setminus
I_{r}}\sigma \left( x\right) -H\mathbf{1}_{I\setminus I_{r}}\sigma \left(
y\right) \right\vert ^{p}d\omega \left( x\right) d\omega \left( y\right) \\
&\lesssim &\int_{I_{r}}\int_{I_{r}}\left\vert H\mathbf{1}_{I\setminus
I_{r}}\sigma \left( x\right) \right\vert ^{p}d\omega \left( x\right) d\omega
\left( y\right) +\int_{I_{r}}\int_{I_{r}}\left\vert H\mathbf{1}_{I\setminus
I_{r}}\sigma \left( y\right) \right\vert ^{p}d\omega \left( x\right) d\omega
\left( y\right) \\
&\lesssim &\left\vert I_{r}\right\vert _{\omega }\int_{I_{r}}\left\vert H%
\mathbf{1}_{I\setminus I_{r}}\sigma \left( x\right) \right\vert ^{p}d\omega
\left( x\right) .
\end{eqnarray*}%
Using 
\begin{eqnarray*}
\int_{I_{r}}\left\vert x-E_{I_{r}}^{\omega }Z\right\vert ^{p}d\omega \left(
x\right) &=&\int_{I_{r}}\left\vert \frac{1}{\left\vert I_{r}\right\vert
_{\omega }}\int_{I_{r}}\left( x-y\right) d\omega \left( y\right) \right\vert
^{p}d\omega \left( x\right) \\
&\leq &\frac{1}{\left\vert I_{r}\right\vert _{\omega }}\int_{I_{r}}%
\int_{I_{r}}\left\vert x-y\right\vert ^{p}d\omega \left( y\right) d\omega
\left( x\right) ,
\end{eqnarray*}%
we obtain%
\begin{eqnarray*}
&&\left( \frac{\mathrm{P}\left( I_{r},\mathbf{1}_{I\setminus I_{r}}\sigma
\right) }{\ell \left( I_{r}\right) }\right) ^{p}\int_{I_{r}}\left\vert
x-E_{I_{r}}^{\omega }Z\right\vert ^{p}d\omega \left( x\right) \lesssim
\int_{I_{r}}\left\vert H\mathbf{1}_{I\setminus I_{r}}\sigma \left( x\right)
\right\vert ^{p}d\omega \left( x\right) \\
&\lesssim &\int_{I_{r}}\left\vert H\mathbf{1}_{I}\sigma \left( x\right)
\right\vert ^{p}d\omega \left( x\right) +\int_{I_{r}}\left\vert H\mathbf{1}%
_{I_{r}}\sigma \left( x\right) \right\vert ^{p}d\omega \left( x\right) .
\end{eqnarray*}%
Now summing in $r$ yields%
\begin{eqnarray*}
&&\sum_{r=1}^{\infty }\left( \frac{\mathrm{P}\left( I_{r},\mathbf{1}%
_{I\setminus I_{r}}\sigma \right) }{\left\vert I_{r}\right\vert }\right)
^{p}\int_{I_{r}}\left\vert x-E_{I_{r}}^{\omega }Z\right\vert ^{p}d\omega
\left( x\right) \\
&\lesssim &\int_{I}\left\vert H\mathbf{1}_{I}\sigma \left( x\right)
\right\vert ^{p}d\omega \left( x\right) +\mathfrak{T}_{H,p}\left( \sigma
,\omega \right) \sum_{r=1}^{\infty }\left\vert I_{r}\right\vert _{\sigma
}\lesssim \mathfrak{T}_{H,p}\left( \sigma ,\omega \right) \left\vert
I\right\vert _{\sigma }\ .
\end{eqnarray*}%
The first inequality $\mathfrak{X}_{\mathcal{F},p}\left( \sigma ,\omega
\right) \lesssim \mathcal{E}_{p}\left( \sigma ,\omega \right) $ in the
statement of the lemma follows directly from the $p$-energy stopping time
construction.
\end{proof}

For $1<s<\infty $ define%
\begin{eqnarray*}
M_{\omega ,s}^{\func{dy}}h\left( x\right) &\equiv &\sup_{Q\in \mathcal{D}:\
x\in Q}\left( \frac{1}{\left\vert Q\right\vert _{\omega }}\int_{Q}\left\vert
h\right\vert ^{s}d\omega \right) ^{\frac{1}{s}}, \\
\mathsf{E}_{p,s}\left( I,\omega \right) &\equiv &\left( \int_{I}M_{\omega
,s}^{\func{dy}}\left[ \left( Z-E_{Q}^{\omega }Z\right) \mathbf{1}_{I}\right]
^{p}d\omega \right) ^{\frac{1}{p}},
\end{eqnarray*}%
and the \emph{stronger} $p$-energy constant,%
\begin{equation*}
\mathcal{E}_{p,s}\left( \sigma ,\omega \right) ^{p}\equiv \sup_{I\in 
\mathcal{D}}\sum_{\overset{\cdot }{\bigcup }_{r=1}^{\infty }I_{r}\subset
I}\left( \frac{\mathrm{P}\left( I_{r},\mathbf{1}_{I\setminus I_{r}}\sigma
\right) }{\ell \left( I_{r}\right) }\right) ^{p}\mathsf{E}_{p,s}\left(
I_{r},\omega \right) ^{p}\frac{\left\vert I_{r}\right\vert _{\omega }}{%
\left\vert I\right\vert _{\sigma }}\text{ and its dual }\mathcal{E}%
_{p^{\prime },s}\left( \omega ,\sigma \right) ,
\end{equation*}%
where the supremum is taken over all pairwise disjoint subdecompositions of
an interval $I$ into dyadic subintervals $I_{r}\in \mathcal{D}\left[ I\right]
$.

\begin{lemma}
For $1<s<p<\infty $ we have%
\begin{equation*}
\mathfrak{X}_{\mathcal{F},p}\left( \sigma ,\omega \right) \lesssim \mathcal{E%
}_{p}\left( \sigma ,\omega \right) \leq \mathcal{E}_{p,s}\left( \sigma
,\omega \right) \lesssim \mathfrak{T}_{H,p}\left( \sigma ,\omega \right) .
\end{equation*}
\end{lemma}

\begin{proof}
Let $\left\{ I_{r}\right\} _{r=1}^{\infty }$ be a pairwise disjoint
decomposition of an interval $I$ into subintervals $I_{r}$. We begin with
inequality (\ref{simple reversal}) from Subsection \ref{Sub Rev}, namely
that for $x,y\in I_{r}\subset I$:%
\begin{equation*}
\frac{\mathrm{P}\left( I_{r},\mathbf{1}_{I\setminus I_{r}}\sigma \right) }{%
\ell \left( I_{r}\right) }\left\vert x-y\right\vert \leq 2\left\vert H%
\mathbf{1}_{I\setminus I_{r}}\sigma \left( x\right) -H\mathbf{1}_{I\setminus
I_{r}}\sigma \left( y\right) \right\vert .
\end{equation*}%
Then taking the average in $y$ over $I_{r}$ yields%
\begin{eqnarray*}
&&\frac{\mathrm{P}\left( I_{r},\mathbf{1}_{I\setminus I_{r}}\sigma \right) }{%
\ell \left( I_{r}\right) }\left\vert x-E_{I_{r}}^{\omega }Z\right\vert =%
\frac{\mathrm{P}\left( I_{r},\mathbf{1}_{I\setminus I_{r}}\sigma \right) }{%
\ell \left( I_{r}\right) }\left\vert \frac{1}{\left\vert I_{r}\right\vert
_{\omega }}\int_{I_{r}}\left( x-y\right) d\omega \left( y\right) \right\vert
\\
&\leq &\frac{\mathrm{P}\left( I_{r},\mathbf{1}_{I\setminus I_{r}}\sigma
\right) }{\ell \left( I_{r}\right) }\frac{1}{\left\vert I_{r}\right\vert
_{\omega }}\int_{I_{r}}\left\vert x-y\right\vert d\omega \left( y\right)
\leq 2\frac{1}{\left\vert I_{r}\right\vert _{\omega }}\int_{I_{r}}\left\vert
H\mathbf{1}_{I\setminus I_{r}}\sigma \left( x\right) -H\mathbf{1}%
_{I\setminus I_{r}}\sigma \left( y\right) \right\vert d\omega \left( y\right)
\\
&\equiv &2E_{I_{r}}^{\omega ,y}\left\vert H\mathbf{1}_{I\setminus
I_{r}}\sigma \left( x\right) -H\mathbf{1}_{I\setminus I_{r}}\sigma \left(
y\right) \right\vert ,
\end{eqnarray*}%
where $E_{I_{r}}^{\omega ,y}$ denotes taking the $\omega $-average in $y$
over the cube $I_{r}$. Then multiplying both sides by the indicator $\mathbf{%
1}_{I_{r}}$ and then applying the maximal operator $M_{\omega ,s}^{\func{dy}%
,x}$ in $x$ gives%
\begin{eqnarray*}
&&\frac{\mathrm{P}\left( I_{r},\mathbf{1}_{I\setminus I_{r}}\sigma \right) }{%
\ell \left( I_{r}\right) }M_{\omega ,s}^{\func{dy}}\left[ \left(
Z-E_{I_{r}}^{\omega }Z\right) \mathbf{1}_{I_{r}}\right] \left( x\right) \leq
2M_{\omega ,s}^{\func{dy},x}E_{I_{r}}^{\omega ,y}\left\vert H\mathbf{1}%
_{I\setminus I_{r}}\sigma \left( x\right) \mathbf{1}_{I_{r}}\left( x\right)
-H\mathbf{1}_{I\setminus I_{r}}\sigma \left( y\right) \mathbf{1}%
_{I_{r}}\left( y\right) \right\vert \\
&\leq &2M_{\omega ,s}^{\func{dy},x}E_{I_{r}}^{\omega ,y}\left\vert \mathbf{1}%
_{I_{r}}H\mathbf{1}_{I\setminus I_{r}}\sigma \left( x\right) \right\vert
+2M_{\omega ,s}^{\func{dy},x}E_{I_{r}}^{\omega ,y}\left\vert \mathbf{1}%
_{I_{r}}H\mathbf{1}_{I\setminus I_{r}}\sigma \left( y\right) \right\vert \\
&\leq &2M_{\omega ,s}^{\func{dy}}\mathbf{1}_{I_{r}}H\mathbf{1}_{I\setminus
I_{r}}\sigma \left( x\right) +2E_{I_{r}}^{\omega }\left\vert \mathbf{1}%
_{I_{r}}H\mathbf{1}_{I\setminus I_{r}}\sigma \right\vert \leq 4M_{\omega
,s}^{\func{dy}}\mathbf{1}_{I_{r}}H\mathbf{1}_{I\setminus I_{r}}\sigma \left(
x\right) .
\end{eqnarray*}%
Now raise this to the power $p$ and integrate with respect to $\omega $ to
obtain,%
\begin{eqnarray*}
&&\left( \frac{\mathrm{P}\left( I_{r},\mathbf{1}_{I\setminus I_{r}}\sigma
\right) }{\ell \left( I_{r}\right) }\right) ^{p}\int M_{\omega ,s}^{\func{dy}%
}\left[ \left( Z-E_{I_{r}}^{\omega }Z\right) \mathbf{1}_{I_{r}}\right]
\left( x\right) ^{p}d\omega \left( x\right) \\
&\leq &4^{p}\int \left( M_{\omega ,s}^{\func{dy}}\left[ \mathbf{1}%
_{I_{r}}\left( H\mathbf{1}_{I\setminus I_{r}}\sigma \right) \right] \left(
x\right) \right) ^{p}d\omega \left( x\right) \leq
4^{p}\int_{I_{r}}\left\vert \left( H\mathbf{1}_{I\setminus I_{r}}\sigma
\right) \left( x\right) \right\vert ^{p}d\omega \left( x\right) \\
&\lesssim &\int_{I_{r}}\left\vert H\mathbf{1}_{I}\sigma \left( x\right)
\right\vert ^{p}d\omega \left( x\right) +\int_{I_{r}}\left\vert H\mathbf{1}%
_{I_{r}}\sigma \left( y\right) \right\vert ^{p}d\omega \left( x\right) .
\end{eqnarray*}%
Finally, summing in $r$ produces,%
\begin{eqnarray*}
&&\sum_{r=1}^{\infty }\left( \frac{\mathrm{P}\left( I_{r},\mathbf{1}%
_{I\setminus I_{r}}\sigma \right) }{\left\vert I_{r}\right\vert }\right)
^{p}\int M_{\omega ,s}^{\func{dy}}\left[ \left( Z-E_{I_{r}}^{\omega
}Z\right) \mathbf{1}_{I_{r}}\right] \left( x\right) ^{p}d\omega \left(
x\right) \\
&\lesssim &\int_{I}\left\vert H\mathbf{1}_{I}\sigma \left( x\right)
\right\vert ^{p}d\omega \left( x\right) +\mathfrak{T}_{H,p}\left( \sigma
,\omega \right) \sum_{r=1}^{\infty }\left\vert I_{r}\right\vert _{\sigma
}\lesssim \mathfrak{T}_{H,p}\left( \sigma ,\omega \right) \left\vert
I\right\vert _{\sigma }\ ,
\end{eqnarray*}%
which proves the inequality $\mathcal{E}_{p,s}\left( \sigma ,\omega \right)
\lesssim \mathfrak{T}_{H,p}\left( \sigma ,\omega \right) \mathcal{\ }$for $%
1<s<p<\infty $. The inequality $\mathcal{E}_{p}\left( \sigma ,\omega \right)
\leq \mathcal{E}_{p,s}\left( \sigma ,\omega \right) $ is standard, and the
remaining inequalities in the statement of the lemma were proved in the
earlier lemma.
\end{proof}

\subsection{Consequences of the CZ and $p$-energy corona decomposition}

If we \emph{assume} the finiteness of the energy characteristic $\mathcal{E}%
_{p}\left( \sigma ,\omega \right) $ in (\ref{energy char}) (which is often
referred to as the \emph{energy condition}), and if we take $\Gamma >\max
\left\{ 8\mathcal{E}_{p}\left( \sigma ,\omega \right) ,8\mathfrak{T}%
_{H,p}\left( \sigma ,\omega \right) \right\} $ in (\ref{energy stop crit}),
we obtain a $\sigma $-Carleson\footnote{%
Such conditions are more commonly referred to as $\sigma $-sparse conditions
nowadays.} condition for the Calder\'{o}n-Zygmund/Poisson-Energy stopping
times $\mathcal{F}$,%
\begin{eqnarray*}
&&\sum_{F^{\prime }\in \mathfrak{C}_{\mathcal{F}}\left( F\right) }\left\vert
F^{\prime }\right\vert _{\sigma } \\
&\leq &\frac{1}{\Gamma }\sum_{F^{\prime }\in \mathfrak{C}_{\mathcal{F}%
}\left( F\right) }\min \left\{ \frac{\int_{F^{\prime }}\left\vert \mathsf{P}%
_{F}f\right\vert d\sigma }{E_{F}^{\sigma }\left\vert \mathsf{P}%
_{F}f\right\vert },\left( \frac{\mathrm{P}\left( F^{\prime },\mathbf{1}%
_{F\setminus F^{\prime }}\sigma \right) }{\ell \left( F^{\prime }\right) }%
\right) ^{p}\mathsf{E}_{p}\left( F^{\prime },\omega \right) ^{p}\left\vert
F^{\prime }\right\vert _{\omega },\frac{1}{\left\vert F^{\prime }\right\vert
_{\sigma }}\int_{F^{\prime }}\left\vert M_{\sigma }\mathbf{1}_{F}H_{\sigma }%
\mathbf{1}_{F}\right\vert ^{p}d\omega \right\} \\
&&\ \ \ \ \ \ \ \ \ \ \ \ \ \ \ \ \ \ \ \ \leq \left( \frac{1}{4}\left\vert
F\right\vert _{\sigma }+\frac{\mathcal{E}_{p}\left( \sigma ,\omega \right)
^{p}}{\Gamma }\left\vert F\right\vert _{\sigma }+\frac{\mathfrak{T}%
_{H,p}\left( \sigma ,\omega \right) \left( \sigma ,\omega \right) ^{p}}{%
\Gamma }\left\vert F\right\vert _{\sigma }\right) <\frac{1}{2}\left\vert
F\right\vert _{\sigma },\ \ \ \ \ \text{for all }F\in \mathcal{F}\ ,
\end{eqnarray*}%
since 
\begin{equation*}
\int_{\mathbb{R}}\left\vert M_{\sigma }\mathbf{1}_{F}H_{\sigma }\mathbf{1}%
_{F}\right\vert ^{p}d\omega \leq C_{p}\int_{F}\left\vert \mathbf{1}%
_{F}H_{\sigma }\mathbf{1}_{F}\right\vert ^{p}d\omega \leq C_{p}\mathfrak{T}%
_{H,p}\left( \sigma ,\omega \right) \left( \sigma ,\omega \right)
^{p}\left\vert F\right\vert _{\sigma },
\end{equation*}%
which can then be iterated to obtain \emph{geometric decay in generations}, 
\begin{equation}
\sum_{G\in \mathfrak{C}_{\mathcal{F}}^{\left( m\right) }\left( F\right)
}\left\vert G\right\vert _{\sigma }\leq C_{\delta }2^{-\delta m}\left\vert
F\right\vert _{\sigma },\ \ \ \ \ \text{for all }m\in \mathbb{N}\text{ and }%
F\in \mathcal{F}\ .  \label{sparse}
\end{equation}%
In addition we obtain the quasiorthogonality inequality, given as (3) in the
definition of stopping data above, 
\begin{equation}
\sum_{F\in \mathcal{F}}\left\vert F\right\vert _{\sigma }\alpha _{\mathcal{F}%
}\left( F\right) ^{p}\leq C\int_{\mathbb{R}}\left\vert f\right\vert
^{p}d\sigma ,\ \ \ \ \ 1<p<\infty ,  \label{qorth}
\end{equation}%
which follows easily from that in \cite{LaSaShUr3}, \cite{SaShUr7} or \cite%
{LaWi}, or equivalently from the Carleson embedding theorem, upon noting
that $E_{F}^{\sigma }\left\vert \mathsf{P}_{F}f\right\vert =E_{F}^{\sigma
}\left\vert f-E_{F}^{\sigma }f\right\vert \leq 2E_{F}^{\sigma }\left\vert
f\right\vert $. In fact the reader can easily verify that the triple $\left(
C_{0},\mathcal{F},\alpha _{\mathcal{F}}\right) $ constitutes \emph{stopping
data} for the function $f\in L^{p}\left( \sigma \right) $ for some constant $%
C_{0}$ depending on $\Gamma $, and hence satisfies the stronger
quasiorthogonality property (\ref{q orth}) as well.

The finiteness of the energy characteristic $\mathcal{E}_{p}\left( \sigma
,\omega \right) $ will be needed both to control the Stopping-Energy
characteristic $\mathfrak{X}_{\mathcal{F}}\left( \sigma ,\omega \right)
\lesssim \mathcal{E}_{p}\left( \sigma ,\omega \right) $, which is needed to
control the stopping form, and to enforce (\ref{sparse}), that is in turn
needed to control the far, paraproduct and stopping forms. Finally, we can
appeal to Lemma \ref{energy cond} for 
\begin{equation}
\mathcal{E}_{p}\left( \sigma ,\omega \right) \lesssim \mathfrak{T}%
_{H,p}\left( \sigma ,\omega \right) ,\ \ \ \ \ 1<p<\infty ,
\label{energy bound}
\end{equation}%
that controls $\mathcal{E}_{p}\left( \sigma ,\omega \right) $ by the testing
characteristic for the \emph{Hilbert transform}. Unfortunately this simple
inequality fails, even with a Muckenhoupt characteristic added to the right
hand side, for most other Calder\'{o}n-Zygmund operators in place of the
Hilbert transform, including Riesz transforms in higher dimensions, see \cite%
{SaShUr11} and \cite{Saw5}, and this failure limits the \ current proof to
essentially just the Hilbert transform and similar operators on the real
line as in \cite{SaShUr11}.

\subsubsection{Pigeonholing in corona boxes}

Now we can pigeonhole the pairs of intervals arising in the sum defining the
below form. Given the corona decomposition of $\mathcal{D}$ according to the
Calder\'{o}n-Zygmund stopping times $\mathcal{F}$ constructed above, we
define the analogous decomposition of,%
\begin{eqnarray*}
&&\mathcal{P}_{\func{below}}\equiv \left\{ \left( I,J\right) \in \mathcal{D}%
\times \mathcal{D}:J\subset _{\tau }I\right\} =\bigcup_{F,G\in \mathcal{F}:\
G\subset F}\left[ \mathcal{C}_{\mathcal{F}}\left( F\right) \times \mathcal{C}%
_{\mathcal{F}}\left( G\right) \right] \cap \mathcal{P}_{\func{below}} \\
&=&\left\{ \bigcup_{F\in \mathcal{F}}\left[ \mathcal{C}_{\mathcal{F}}\left(
F\right) \times \mathcal{C}_{\mathcal{F}}\left( F\right) \right] \cap 
\mathcal{P}_{\func{below}}\right\} \bigcup \left\{ \bigcup_{F,G\in \mathcal{F%
}:\ G\subsetneqq F}\left[ \mathcal{C}_{\mathcal{F}}\left( F\right) \times 
\mathcal{C}_{\mathcal{F}}\left( G\right) \right] \cap \mathcal{P}_{\func{%
below}}\right\} \\
&\equiv &\mathcal{P}_{\func{diag}}\bigcup \mathcal{P}_{\func{far}}\ .
\end{eqnarray*}%
Then we consider the corresponding decomposition of the home form into
diagonal and far forms,%
\begin{eqnarray*}
\mathsf{B}_{\func{home}}\left( f,g\right) &=&\sum_{\left( I,J\right) \in 
\mathcal{P}_{\func{diag}}}\left\langle H_{\sigma }\left( \mathbf{1}%
_{I_{J}}\bigtriangleup _{I}^{\sigma }f\right) ,\bigtriangleup _{J}^{\omega
}g\right\rangle _{\omega }+\sum_{\left( I,J\right) \in \mathcal{P}_{\func{far%
}}}\left\langle H_{\sigma }\left( \mathbf{1}_{I_{J}}\bigtriangleup
_{I}^{\sigma }f\right) ,\bigtriangleup _{J}^{\omega }g\right\rangle _{\omega
} \\
&\equiv &\mathsf{B}_{\func{diag}}\left( f,g\right) +\mathsf{B}_{\func{far}%
}\left( f,g\right) .
\end{eqnarray*}

We next decompose the $\func{far}$ form into corona pieces using $\mathcal{P}%
_{\func{far}}^{F,G}\equiv \left[ \mathcal{C}_{\mathcal{F}}\left( F\right)
\times \mathcal{C}_{\mathcal{F}}\left( G\right) \right] \cap \mathcal{P}_{%
\func{below}}$,

\begin{eqnarray*}
\mathsf{B}_{\func{far}}\left( f,g\right) &=&\sum_{F,G\in \mathcal{F}:\
G\subsetneqq F}\left\langle H_{\sigma }\left( \sum_{I\in \mathcal{C}_{%
\mathcal{F}}\left( F\right) }\mathbf{1}_{I_{J}}\bigtriangleup _{I}^{\sigma
}f\right) ,\sum_{J\in \mathcal{C}_{\mathcal{F}}\left( G\right) :\ J\subset
_{\tau }I}\bigtriangleup _{J}^{\omega }g\right\rangle _{\omega } \\
&=&\sum_{F,G\in \mathcal{F}:\ G\subsetneqq F}\sum_{\left( I,J\right) \in 
\mathcal{P}_{\func{far}}^{F,G}}\left\langle H_{\sigma }\left( \mathbf{1}%
_{I_{J}}\bigtriangleup _{I}^{\sigma }f\right) ,\bigtriangleup _{J}^{\omega
}g\right\rangle _{\omega }=\sum_{F,G\in \mathcal{F}:\ G\subsetneqq F}\mathsf{%
B}_{\func{far}}^{F,G}\left( f,g\right) \\
\text{where }\mathsf{B}_{\func{far}}^{F,G}\left( f,g\right) &\equiv
&\sum_{\left( I,J\right) \in \mathcal{P}_{\func{far}}^{F,G}}\left\langle
H_{\sigma }\left( \mathbf{1}_{I_{J}}\bigtriangleup _{I}^{\sigma }f\right)
,\bigtriangleup _{J}^{\omega }g\right\rangle _{\omega }.
\end{eqnarray*}%
Now for $m>\tau $ and $F\in \mathcal{F}$ we define%
\begin{eqnarray*}
\mathsf{B}_{\func{far}}^{F,m}\left( f,g\right) &\equiv &\sum_{G\in \mathfrak{%
C}_{\mathcal{F}}^{\left( m\right) }\left( F\right) }\mathsf{B}_{\func{far}%
}^{F,G}\left( f,g\right) =\sum_{G\in \mathfrak{C}_{\mathcal{F}}^{\left(
m\right) }\left( F\right) }\sum_{\left( I,J\right) \in \mathcal{P}_{\func{far%
}}^{F,G}}\left\langle H_{\sigma }\left( \mathbf{1}_{I_{J}}\bigtriangleup
_{I}^{\sigma }f\right) ,\bigtriangleup _{J}^{\omega }g\right\rangle _{\omega
} \\
&=&\sum_{F^{\prime }\in \mathfrak{C}_{\mathcal{F}}\left( F\right)
}\sum_{G\in \mathfrak{C}_{\mathcal{F}}^{\left( m-1\right) }\left( F^{\prime
}\right) }\sum_{J\in \mathcal{C}_{\mathcal{F}}\left( G\right) }\left\langle
H_{\sigma }\left( \sum_{I\in \mathcal{C}_{\mathcal{F}}\left( F\right) :\
J\subset _{\tau }I}\mathbf{1}_{I_{J}}\bigtriangleup _{I}^{\sigma }f\right)
,\bigtriangleup _{J}^{\omega }g\right\rangle _{\omega }.
\end{eqnarray*}

We will now control the $\func{far}$ form for $1<p<\infty $ in the remainder
of this section, and finally control the \emph{diagonal} form in the last
two sections.

\subsection{Far form}

Here we will control the $\func{far}$ form $\mathsf{B}_{\func{far}}\left(
f,g\right) $ by quadratic local testing and the extended energy
characteristic $\mathcal{E}_{p}^{\ell ^{2},\limfunc{ext}}\left( \sigma
,\omega \right) $. Recall that the $\func{far}$ form is defined by%
\begin{equation*}
\mathsf{B}_{\func{far}}\left( f,g\right) \equiv \sum_{m=1}^{\infty
}\sum_{F\in \mathcal{F}}\mathsf{B}_{\func{far}}^{F,m}\left( f,g\right) ,
\end{equation*}%
where 
\begin{eqnarray*}
\mathsf{B}_{\func{far}}^{F,m}\left( f,g\right) &\equiv &\sum_{G\in \mathfrak{%
C}_{\mathcal{F}}^{\left( m\right) }\left( F\right) }\mathsf{B}_{\func{far}%
}^{F,G}\left( f,g\right) =\sum_{G\in \mathfrak{C}_{\mathcal{F}}^{\left(
m\right) }\left( F\right) }\sum_{\left( I,J\right) \in \mathcal{P}_{\func{far%
}}^{F,G}}\left\langle H_{\sigma }\left( \mathbf{1}_{I_{J}}\bigtriangleup
_{I}^{\sigma }f\right) ,\bigtriangleup _{J}^{\omega }g\right\rangle _{\omega
} \\
&=&\sum_{G\in \mathfrak{C}_{\mathcal{F}}^{\left( m\right) }\left( F\right)
}\sum_{J\in \mathcal{C}_{\mathcal{F}}\left( G\right) }\left\langle H_{\sigma
}\left( \sum_{I\in \mathcal{C}_{\mathcal{F}}\left( F\right) :\ J\subset
_{\tau }I}\mathbf{1}_{I_{J}}\bigtriangleup _{I}^{\sigma }f\right)
,\bigtriangleup _{J}^{\omega }g\right\rangle _{\omega }.
\end{eqnarray*}%
Thus we can write%
\begin{eqnarray*}
\mathsf{B}_{\func{far}}\left( f,g\right) &=&\sum_{m=1}^{\infty }\sum_{F\in 
\mathcal{F}}\sum_{G\in \mathfrak{C}_{\mathcal{F}}^{\left( m\right) }\left(
F\right) }\sum_{J\in \mathcal{C}_{\mathcal{F}}\left( G\right) }\left\langle
H_{\sigma }\left( \sum_{I\in \mathcal{C}_{\mathcal{F}}\left( F\right) :\
J\subset _{\tau }I}\mathbf{1}_{I_{J}}\bigtriangleup _{I}^{\sigma }f\right)
,\bigtriangleup _{J}^{\omega }g\right\rangle _{\omega } \\
&=&\sum_{G\in \mathcal{F}}\sum_{J\in \mathcal{C}_{\mathcal{F}}\left(
G\right) }\left\langle H_{\sigma }\left( \sum_{I\in \mathcal{D}:\
G\subsetneqq I\text{ and }J\subset _{\tau }I}\mathbf{1}_{I_{J}}%
\bigtriangleup _{I}^{\sigma }f\right) ,\bigtriangleup _{J}^{\omega
}g\right\rangle _{\omega },
\end{eqnarray*}%
which we will usually consider with the dummy variable $G$ replaced by $F$,%
\begin{equation*}
\mathsf{B}_{\func{far}}\left( f,g\right) =\sum_{F\in \mathcal{F}}\sum_{J\in 
\mathcal{C}_{\mathcal{F}}\left( F\right) }\left\langle H_{\sigma }\left(
\sum_{I\in \left( F,T\right] \text{ and }J\subset _{\tau }I}\mathbf{1}%
_{I_{J}}\bigtriangleup _{I}^{\sigma }f\right) ,\bigtriangleup _{J}^{\omega
}g\right\rangle _{\omega }.
\end{equation*}%
Given any collection $\mathcal{H}\subset \mathcal{D}$ of intervals, and a
dyadic interval $J$, we define the corresponding Haar projection $\mathsf{P}%
_{\mathcal{H}}^{\omega }$ and its localization $\mathsf{P}_{\mathcal{H}%
;J}^{\omega }$ to $J$ by%
\begin{equation}
\mathsf{P}_{\mathcal{H}}^{\omega }=\sum_{H\in \mathcal{H}}\bigtriangleup
_{H}^{\omega }\text{ and }\mathsf{P}_{\mathcal{H};J}^{\omega }=\sum_{H\in 
\mathcal{H}:\ H\subset J}\bigtriangleup _{H}^{\omega }\ .
\label{def localization}
\end{equation}

Recall that given any interval $F\in \mathcal{D}$, we defined the $\left(
r,\varepsilon \right) $\emph{-Whitney} collection $\mathcal{M}_{\left(
r,\varepsilon \right) -\limfunc{deep}}\left( F\right) $ of $F$ to be the set
of dyadic subintervals $W\subset F$ that are maximal with respect to the
property that $W\subset _{r,\varepsilon }F$, and hence form a pairwise
disjoint decomposition of $F$. Recall also that $\mathcal{E}_{p}^{\ell ^{2},%
\limfunc{ext}}\left( \sigma ,\omega \right) $ denotes the smallest constant
in the `extended energy' inequality (\ref{ext ener}), i.e. 
\begin{equation*}
\int \left( \sum_{F\in \mathcal{F}}\sum_{W\in \mathcal{M}_{\left(
r,\varepsilon \right) -\limfunc{deep}}\left( F\right) \cap \mathcal{C}_{%
\mathcal{F}}\left( F\right) }\left( \frac{\mathrm{P}\left( W,\Phi
_{F}f\sigma \right) }{\ell \left( W\right) }\right) ^{2}\left\vert \mathsf{P}%
_{\mathcal{C}_{\mathcal{F}}\left( F\right) \cap \mathcal{D}\left[ W\right]
}^{\omega }\right\vert ^{2}Z\right) ^{\frac{p}{2}}d\omega \lesssim \mathcal{E%
}_{p}^{\ell ^{2},\limfunc{ext}}\left( \sigma ,\omega \right) ^{p}\left\Vert
f\right\Vert _{L^{p}\left( \sigma \right) }^{p},\ \ \ \ \ \text{for }f\geq 0,
\end{equation*}%
where $\mathcal{F}$ is the collection of stopping times associated with $f$,
and%
\begin{equation*}
\Phi _{F}f\equiv \mathbb{E}_{F}^{\sigma }f+f_{F}=\mathbb{E}_{F}^{\sigma
}f+\sum_{I:\ I\supsetneqq F}\mathbf{1}_{I_{F}}\bigtriangleup _{I}^{\sigma }f.
\end{equation*}

There is a similar definition of the dual characteristic $\mathcal{E}%
_{p^{\prime }}^{\ell ^{2},\limfunc{ext}}\left( \omega ,\sigma \right) $. The
Intertwining Proposition will control the following Intertwining form,%
\begin{equation*}
\mathsf{B}_{\func{Inter}}\left( f,g\right) \equiv \sum_{F\in \mathcal{F}}\
\sum_{I:\ I\supsetneqq F}\ \left\langle H_{\sigma }\left( \mathbf{1}%
_{I_{F}}\bigtriangleup _{I}^{\sigma }f\right) ,\mathsf{P}_{\mathcal{C}_{%
\mathcal{F}}\left( F\right) }^{\omega }g\right\rangle _{\omega }\ ,
\end{equation*}%
whose difference from $\mathsf{B}_{\func{far}}\left( f,g\right) $ is%
\begin{eqnarray*}
\mathsf{B}_{\func{far}}\left( f,g\right) -\mathsf{B}_{\func{Inter}}\left(
f,g\right) &=&\sum_{F\in \mathcal{F}}\sum_{I\in \left( F,T\right]
}\sum_{J\in \mathcal{C}_{\mathcal{F}}\left( F\right) \text{ and }J\subset
_{\tau }I}\left\langle H_{\sigma }\left( \mathbf{1}_{I_{J}}\bigtriangleup
_{I}^{\sigma }f\right) ,\bigtriangleup _{J}^{\omega }g\right\rangle _{\omega
} \\
&&-\sum_{F\in \mathcal{F}}\ \sum_{I\in \left( F,T\right] }\ \sum_{J\in 
\mathcal{C}_{\mathcal{F}}\left( F\right) }\left\langle H_{\sigma }\left( 
\mathbf{1}_{I_{F}}\bigtriangleup _{I}^{\sigma }f\right) ,\bigtriangleup
_{J}^{\omega }g\right\rangle _{\omega } \\
&=&\sum_{F\in \mathcal{F}}\sum_{I\in \left( F,T\right] }\sum_{\substack{ %
J\in \mathcal{C}_{\mathcal{F}}\left( F\right)  \\ \ell \left( J\right) \geq
\ell \left( F\right) -\tau \text{ and }J\subset _{\tau }I}}\left\langle
H_{\sigma }\left( \mathbf{1}_{I_{F}}\bigtriangleup _{I}^{\sigma }f\right)
,\bigtriangleup _{J}^{\omega }g\right\rangle _{\omega }.
\end{eqnarray*}%
Just as for the comparable form $\mathsf{B}_{\limfunc{comp}}\left(
f,g\right) $, this difference form is controlled by 
\begin{eqnarray*}
&&\left\vert \mathsf{B}_{\func{far}}\left( f,g\right) -\mathsf{B}_{\func{%
Inter}}\left( f,g\right) \right\vert \leq \sum_{F\in \mathcal{F}}\sum_{I\in
\left( F,T\right] }\sum_{\substack{ J\in \mathcal{C}_{\mathcal{F}}\left(
F\right)  \\ \ell \left( J\right) \geq \ell \left( F\right) -\tau \text{ and 
}J\subset _{\tau }I}}\left\vert \left\langle H_{\sigma }\left( \mathbf{1}%
_{I_{F}}\bigtriangleup _{I}^{\sigma }f\right) ,\bigtriangleup _{J}^{\omega
}g\right\rangle _{\omega }\right\vert \\
&\lesssim &\left( \mathfrak{T}_{H,p}^{\ell ^{2},\func{loc}}\left( \sigma
,\omega \right) +\mathcal{A}_{p}^{\ell ^{2}}\left( \sigma ,\omega \right) +%
\mathcal{WBP}_{H,p}^{\ell ^{2}}\left( \sigma ,\omega \right) \right) \
\left\Vert f\right\Vert _{L^{p}\left( \sigma \right) }\left\Vert
g\right\Vert _{L^{p^{\prime }}\left( \omega \right) }\ ,
\end{eqnarray*}%
which is also bounded by $\mathfrak{T}_{H,p}^{\ell ^{2},\limfunc{glob}%
}\left( \sigma ,\omega \right) \left\Vert f\right\Vert _{L^{p}\left( \sigma
\right) }\left\Vert g\right\Vert _{L^{p^{\prime }}\left( \omega \right) }$.

\begin{definition}
\label{sigma carleson n}A collection $\mathcal{F}$ of dyadic intervals is $%
\sigma $\emph{-Carleson} if%
\begin{equation*}
\sum_{F\in \mathcal{F}:\ F\subset S}\left\vert F\right\vert _{\sigma }\leq
C_{\mathcal{F}}\left( \sigma \right) \left\vert S\right\vert _{\sigma },\ \
\ \ \ S\in \mathcal{F}.
\end{equation*}%
The constant $C_{\mathcal{F}}\left( \sigma \right) $ is referred to as the
Carleson norm of $\mathcal{F}$.
\end{definition}

We now show that the extended energy inequality (\ref{ext ener}), together
with quadratic interval testing, suffices to prove the Intertwining
Proposition.

Let $\mathcal{F}$ be any subset of $\mathcal{D}$. For any $J\in \mathcal{D}$%
, we define $\pi _{\mathcal{F}}^{0}J$ to be the smallest $F\in \mathcal{F}$
that contains $J$. Then for $s\geq 1$, we recursively define $\pi _{\mathcal{%
F}}^{s}J$ to be the smallest $F\in \mathcal{F}$ that \emph{strictly}
contains $\pi _{\mathcal{F}}^{s-1}J$. This definition satisfies $\pi _{%
\mathcal{F}}^{s+t}J=\pi _{\mathcal{F}}^{s}\pi _{\mathcal{F}}^{t}J$ for all $%
s,t\geq 0$ and $J\in \mathcal{D}$. In particular $\pi _{\mathcal{F}%
}^{s}J=\pi _{\mathcal{F}}^{s}F$ where $F=\pi _{\mathcal{F}}^{0}J$. In the
special case $\mathcal{F}=\mathcal{D}$ we often suppress the subscript $%
\mathcal{F}$ and simply write $\pi ^{s}$ for $\pi _{\mathcal{D}}^{s}$.
Finally, for $F\in \mathcal{F}$, we write $\mathfrak{C}_{\mathcal{F}}\left(
F\right) \equiv \left\{ F^{\prime }\in \mathcal{F}:\pi _{\mathcal{F}%
}^{1}F^{\prime }=F\right\} $ for the collection of $\mathcal{F}$-children of 
$F$.

\begin{proposition}[The Intertwining Proposition]
\label{strongly adapted'}Suppose $1<p<\infty $ and $\sigma ,\omega $ are
locally finite positive Borel measures on $\mathbb{R}$. Then if $\mathcal{F}$
is the collection of stopping times for $f\in L^{p}\left( \sigma \right) $
as above, we have 
\begin{equation*}
\left\vert \sum_{F\in \mathcal{F}}\ \sum_{I:\ I\supsetneqq F}\ \left\langle
H_{\sigma }\left( \mathbf{1}_{I_{F}}\bigtriangleup _{I}^{\sigma }f\right) ,%
\mathsf{P}_{\mathcal{C}_{\mathcal{F}}\left( F\right) }^{\omega
}g\right\rangle _{\omega }\right\vert \lesssim \left( \mathcal{E}_{p}^{\ell
^{2},\limfunc{ext}}\left( \sigma ,\omega \right) +\mathfrak{T}_{H,p}^{\ell
^{2},\func{loc}}\left( \sigma ,\omega \right) \right) \ \left\Vert
f\right\Vert _{L^{p}\left( \sigma \right) }\left\Vert g\right\Vert
_{L^{p^{\prime }}\left( \omega \right) }.
\end{equation*}
\end{proposition}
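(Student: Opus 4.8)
The plan is to follow the standard route for Intertwining-type propositions, adapted to the $\ell^2$-valued $L^p$ setting: first collapse the inner sum over $I \supsetneqq F$ into a telescoping average, then apply the pointwise Monotonicity Lemma (Corollary \ref{point mon} together with Lemma \ref{energy pointwise}) to bound each Haar projection $\bigtriangleup_J^\omega H_\sigma(\cdot)$ by a Poisson factor times $|\bigtriangleup_J^\omega Z|$, and finally split into a ``functional energy'' piece and a ``local testing'' piece. Concretely, I would first fix $F$ and note that for $J \in \mathcal{C}_{\mathcal{F}}(F)$ and $I \supsetneqq F$ with $J \subset_\tau I$, the intervals $\mathbf{1}_{I_F}\bigtriangleup_I^\sigma f$ are supported outside $F$ (or can be split into a part supported outside and a boundedly-many-terms part handled as in the comparable form), so that $\sum_{I \supsetneqq F} \mathbf{1}_{I_F}\bigtriangleup_I^\sigma f$ telescopes to $E_F^\sigma f \cdot \mathbf{1}_{F^c}$-type data restricted appropriately — more precisely, to $\left(E_{F}^{\sigma}(\mathsf{P}_{\mathcal D}^\sigma f) - E_{\pi F}^{\sigma}(\mathsf{P}_{\mathcal D}^\sigma f)\right)$ on the relevant region, whose absolute value is controlled by the stopping bound $\alpha_{\mathcal F}(F)$ up to the usual $\sigma$-Carleson quasiorthogonality (inequality (3) / (\ref{q orth})). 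Writing $h = f$ and using that $\{\alpha_{\mathcal F}(F)\mathbf 1_F\}_{F\in\mathcal F}$ is quasiorthogonal, the $L^p(\sigma)$ norm of the resulting ``ground data'' is $\lesssim C_{\mathcal F}(\sigma)\,\|f\|_{L^p(\sigma)}$.

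Next I would run the Cauchy–Schwarz-in-$\ell^2$ / Hölder-in-$L^p(\omega)$ scheme from the Guide: bound $|\mathsf{B}_{\func{Inter}}(f,g)|$ by
\[
\left\| \Bigl|\{\bigtriangleup_J^\omega H_\sigma (\text{ground data})\}_{F,J}\Bigr|_{\ell^2}\right\|_{L^p(\omega)} \left\| \Bigl|\{\bigtriangleup_J^\omega g\}_{F\in\mathcal F,\ J\in\mathcal C_{\mathcal F}(F)}\Bigr|_{\ell^2}\right\|_{L^{p'}(\omega)},
\]
where the second factor is $\lesssim \|\mathcal S^\omega g\|_{L^{p'}(\omega)} \approx \|g\|_{L^{p'}(\omega)}$ by Theorem \ref{square thm} since each $J$ occurs in exactly one corona. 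For the first factor, Corollary \ref{point mon} gives $|\bigtriangleup_J^\omega H_\sigma(\mathbf 1_{F^c}(\cdot))| \approx \frac{\mathrm P(J, \mathbf 1_{F^c}(\cdot)\sigma)}{\ell(J)}|\bigtriangleup_J^\omega Z|$ whenever $2J \cap F^c$ is in the right position; I then organize the $J$'s in $\mathcal C_{\mathcal F}(F)$ according to which Whitney interval $W \in \mathcal{M}_{(r,\varepsilon)-\func{deep}}(F)$ contains them, use the Poisson Decay Lemma \ref{Poisson inequality} to replace $\frac{\mathrm P(J,\cdot)}{\ell(J)}$ by $\frac{\mathrm P(W,\cdot)}{\ell(W)}$ (as in Lemma \ref{ener rev}), and split off the boundedly many $J$ with $\ell(J) \ge \ell(F) - \tau$ or with $2J$ meeting $F$, which are absorbed into the local testing characteristic $\mathfrak T_{H,p}^{\ell^2,\func{loc}}$ exactly as the difference form $\mathsf{B}_{\func{far}} - \mathsf{B}_{\func{Inter}}$ was handled in the excerpt. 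What remains after this reorganization is precisely the left-hand side of the refined functional energy inequality (\ref{func ener}) applied to the ground data $h$ (whose $L^p(\sigma)$ norm is $\lesssim C_{\mathcal F}(\sigma)\|f\|_{L^p(\sigma)}$), giving the bound by $\mathfrak F_p^{\ell^2}(\sigma,\omega)$.

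The main obstacle I anticipate is the bookkeeping in the telescoping step combined with controlling the ``error'' terms uniformly in $p$: unlike the $p=2$ case there is no orthogonality, so I must be careful that (i) the ground data produced by collapsing $\sum_{I\supsetneqq F}\mathbf 1_{I_F}\bigtriangleup_I^\sigma f$ really is pointwise dominated by $\alpha_{\mathcal F}(F)\mathbf 1_F$ up to a controlled error, and that summing these over $\mathcal F$ and taking $\ell^2$ of the corresponding Poisson sequence does not lose a factor growing with the depth of $\mathcal F$ — this is where the $\sigma$-Carleson hypothesis and the geometric decay (\ref{geom decay}), together with Theorem \ref{using Carleson}, must be invoked to keep the constant $\lesssim C_{\mathcal F}(\sigma)$; and (ii) the short-range $J$'s (those not deeply embedded, or with $\ell(J)$ comparable to $\ell(F)$) must be peeled off cleanly and shown to be controlled by $\mathfrak T_{H,p}^{\ell^2,\func{loc}}$ via the same three-piece (overlapping/separated/adjacent) decomposition used for the comparable form, rather than silently contributing to the functional energy term. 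Once these two points are handled, the inequality $\mathfrak F_p^{\ell^2} + \mathfrak T_{H,p}^{\ell^2,\func{loc}}$ on the right-hand side, scaled by $C_{\mathcal F}(\sigma)$, follows from assembling the two factors above.
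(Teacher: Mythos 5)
Your overall architecture (telescoping the sum over $I\supsetneqq F$, pointwise monotonicity, lifting the deeply embedded $J$'s to Whitney intervals $W\in \mathcal{M}_{\left( r,\varepsilon \right) -\limfunc{deep}}\left( F\right) \cap \mathcal{C}_{\mathcal{F}}\left( F\right) $, the square function bound for the $g$-factor, and the refined functional energy inequality for what remains) is the same as the paper's. But there is a genuine gap at your first step: the functions $\mathbf{1}_{I_{F}}\bigtriangleup _{I}^{\sigma }f$ are supported on $I_{F}$, the child of $I$ \emph{containing} $F$, so they are not supported outside $F$; rather $f_{F}=\sum_{I\supsetneqq F}\mathbf{1}_{I_{F}}\bigtriangleup _{I}^{\sigma }f$ is constant on $F$, with value bounded by stopping data. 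The contribution $\sum_{F\in \mathcal{F}}\left\langle H_{\sigma }\left( \mathbf{1}_{F}f_{F}\right) ,\mathsf{P}_{\mathcal{C}_{\mathcal{F}}\left( F\right) }^{\omega }g\right\rangle _{\omega }$ is therefore a full-fledged term that cannot be fed into the functional energy characteristic (whose definition only involves $h\mathbf{1}_{F^{c}}\sigma $, and the Monotonicity Lemma requires the measure to be separated from $J$), nor handled by a comparable-form or ``boundedly many terms'' argument (the tower of $I\supsetneqq F$ is unbounded, and the pairs $(F,J)$ with $J$ deep inside $F$ are neither adjacent nor separated). This is exactly where the paper spends the local quadratic testing characteristic: since $\left\vert \mathbf{1}_{F}f_{F}\right\vert \leq \alpha _{\mathcal{F}}\left( F\right) \mathbf{1}_{F}$, one applies Cauchy-Schwarz and H\"{o}lder, the square function theorem for the $g$-factor, the testing inequality (\ref{quad cube testing}) to the family $\left\{ \alpha _{\mathcal{F}}\left( F\right) \mathbf{1}_{F}H_{\sigma }\mathbf{1}_{F}\right\} _{F\in \mathcal{F}}$, and then the quasiorthogonality estimate $\left\Vert \left( \sum_{F}\alpha _{\mathcal{F}}\left( F\right) ^{2}\mathbf{1}_{F}\right) ^{1/2}\right\Vert _{L^{p}\left( \sigma \right) }\lesssim \left\Vert f\right\Vert _{L^{p}\left( \sigma \right) }$ from Theorem \ref{using Carleson} with $\kappa =0$. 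Your plan assigns the role of $\mathfrak{T}_{H,p}^{\ell ^{2},\func{loc}}$ only to the near-top $J$'s, so this term is unaccounted for.

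A second, smaller point: outside $F$ the telescoped function is bounded not by $\alpha _{\mathcal{F}}\left( F\right) $ but by the ancestors' averages, $\left\vert f_{F}\right\vert \mathbf{1}_{F^{c}}\leq \Phi \mathbf{1}_{F^{c}}$ with $\Phi =\sum_{F^{\prime \prime }\in \mathcal{F}}\alpha _{\mathcal{F}}\left( F^{\prime \prime }\right) \mathbf{1}_{F^{\prime \prime }}$; and since (\ref{func ener}) is an inequality for a single function $h$ summed over all $F$ simultaneously, you must use this one uniform majorant $h=\Phi $, with $\left\Vert \Phi \right\Vert _{L^{p}\left( \sigma \right) }\lesssim \left\Vert M_{\sigma }^{\func{dy}}f\right\Vert _{L^{p}\left( \sigma \right) }\lesssim \left\Vert f\right\Vert _{L^{p}\left( \sigma \right) }$, rather than $F$-dependent ``ground data'' or the choice ``$h=f$'' your write-up suggests. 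This part is fixable, but as written the functional-energy step does not quite parse.
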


\begin{proof}
We write the left hand side of the display above as%
\begin{equation*}
\sum_{F\in \mathcal{F}}\ \sum_{I:\ I\supsetneqq F}\ \left\langle H_{\sigma
}\left( \mathbf{1}_{I_{F}}\bigtriangleup _{I}^{\sigma }f\right)
,g_{F}\right\rangle _{\omega }=\sum_{F\in \mathcal{F}}\ \left\langle
H_{\sigma }\left( \sum_{I:\ I\supsetneqq F}\mathbf{1}_{I_{F}}\bigtriangleup
_{I}^{\sigma }f\right) ,g_{F}\right\rangle _{\omega }\equiv \sum_{F\in 
\mathcal{F}}\ \left\langle H_{\sigma }f_{F},g_{F}\right\rangle _{\omega }\ ,
\end{equation*}%
where%
\begin{equation*}
g_{F}=\mathsf{P}_{\mathcal{C}_{\mathcal{F}}\left( F\right) }^{\omega
}g=\sum_{J\in \mathcal{C}_{\mathcal{F}}\left( F\right) }\bigtriangleup
_{J}^{\omega }g\text{ and }f_{F}\equiv \sum_{I:\ I\supsetneqq F}\mathbf{1}%
_{I_{F}}\bigtriangleup _{I}^{\sigma }f\ .
\end{equation*}%
Note that $g_{F}$ is supported in $F$, and that $f_{F}$ is constant on $F$.
We note that the intervals $I$ occurring in this sum are linearly and
consecutively ordered by inclusion, along with the intervals $F^{\prime }\in 
\mathcal{F}$ that contain $F$. More precisely, we can write%
\begin{equation*}
F\equiv F_{0}\subsetneqq F_{1}\subsetneqq F_{2}\subsetneqq ...\subsetneqq
F_{n}\subsetneqq F_{n+1}\subsetneqq ...F_{N}
\end{equation*}%
where $F_{m}=\pi _{\mathcal{F}}^{m}F$ for all $m\geq 1$. We can also write%
\begin{equation*}
F=F_{0}\subsetneqq I_{1}\subsetneqq I_{2}\subsetneqq ...\subsetneqq
I_{k}\subsetneqq I_{k+1}\subsetneqq ...\subsetneqq I_{K}=F_{N}
\end{equation*}%
where $I_{k}=\pi _{\mathcal{D}}^{k}F$ for all $k\geq 1$, and by convention
we set $I_{0}=F$. There is a (unique) subsequence $\left\{ k_{m}\right\}
_{m=1}^{N}$ such that%
\begin{equation*}
F_{m}=I_{k_{m}},\ \ \ \ \ 1\leq m\leq N.
\end{equation*}

Recall that%
\begin{equation*}
f_{F}\left( x\right) =\sum_{k=1}^{\infty }\mathbf{1}_{\left( I_{k}\right)
_{F}}\left( x\right) \bigtriangleup _{I_{k}}^{\sigma }f\left( x\right)
=\sum_{k=1}^{\infty }\mathbf{1}_{I_{k}\setminus I_{k-1}}\left( x\right)
\sum_{\ell =k+1}^{\infty }\bigtriangleup _{I_{\ell }}^{\sigma }f\left(
x\right) .
\end{equation*}%
Assume now that $k_{m}\leq k<k_{m+1}$. Using a telescoping sum, we compute
that for 
\begin{equation*}
x\in I_{k+1}\setminus I_{k}\subset F_{m+1}\setminus F_{m},
\end{equation*}%
we have 
\begin{equation*}
\left\vert \sum_{\ell =k+2}^{\infty }\bigtriangleup _{I_{\ell }}^{\sigma
}f\left( x\right) \right\vert =\left\vert \mathbb{E}_{\theta
I_{k+2}}^{\sigma }f-\mathbb{E}_{I_{K}}^{\sigma }f\right\vert \lesssim 
\mathbb{E}_{F_{m+1}}^{\sigma }\left\vert f\right\vert \ .
\end{equation*}%
Note that $f_{F}$ is constant on $F$ and that%
\begin{eqnarray*}
\left\vert f_{F}\right\vert &\leq &\sum_{m=0}^{N}\left( \mathbb{E}%
_{F_{m+1}}^{\sigma }\left\vert f\right\vert \right) \ \mathbf{1}%
_{F_{m+1}\setminus F_{m}}=\left( \mathbb{E}_{F}^{\sigma }\left\vert
f\right\vert \right) \ \mathbf{1}_{F}+\sum_{m=0}^{N}\left( \mathbb{E}_{\pi _{%
\mathcal{F}}^{m+1}F}^{\sigma }\left\vert f\right\vert \right) \ \mathbf{1}%
_{\pi _{\mathcal{F}}^{m+1}F\setminus \pi _{\mathcal{F}}^{m}F} \\
&=&\left( \mathbb{E}_{F}^{\sigma }\left\vert f\right\vert \right) \ \mathbf{1%
}_{F}+\sum_{F^{\prime }\in \mathcal{F}:\ F\subset F^{\prime }}\left( \mathbb{%
E}_{\pi _{\mathcal{F}}F^{\prime }}^{\sigma }\left\vert f\right\vert \right)
\ \mathbf{1}_{\pi _{\mathcal{F}}F^{\prime }\setminus F^{\prime }} \\
&\leq &\alpha _{\mathcal{F}}\left( F\right) \ \mathbf{1}_{F}+\sum_{F^{\prime
}\in \mathcal{F}:\ F\subset F^{\prime }}\alpha _{\mathcal{F}}\left( \pi _{%
\mathcal{F}}F^{\prime }\right) \ \mathbf{1}_{\pi _{\mathcal{F}}F^{\prime
}\setminus F^{\prime }} \\
&\leq &\alpha _{\mathcal{F}}\left( F\right) \ \mathbf{1}_{F}+\sum_{F^{\prime
}\in \mathcal{F}:\ F\subset F^{\prime }}\alpha _{\mathcal{F}}\left( \pi _{%
\mathcal{F}}F^{\prime }\right) \ \mathbf{1}_{\pi _{\mathcal{F}}F^{\prime }}\ 
\mathbf{1}_{F^{c}} \\
&=&\alpha _{\mathcal{F}}\left( F\right) \ \mathbf{1}_{F}+\Phi \ \mathbf{1}%
_{F^{c}}\ ,\ \ \ \ \ \text{\ for all }F\in \mathcal{F},
\end{eqnarray*}%
where%
\begin{equation*}
\Phi \equiv \sum_{F^{\prime \prime }\in \mathcal{F}}\ \alpha _{\mathcal{F}%
}\left( F^{\prime \prime }\right) \ \mathbf{1}_{F^{\prime \prime }}\ .
\end{equation*}

Now we write%
\begin{equation*}
\sum_{F\in \mathcal{F}}\ \left\langle H_{\sigma }f_{F},g_{F}\right\rangle
_{\omega }=\sum_{F\in \mathcal{F}}\ \left\langle H_{\sigma }\left( \mathbf{1}%
_{F}f_{F}\right) ,g_{F}\right\rangle _{\omega }+\sum_{F\in \mathcal{F}}\
\left\langle H_{\sigma }\left( \mathbf{1}_{F^{c}}f_{F}\right)
,g_{F}\right\rangle _{\omega }\equiv I+II.
\end{equation*}%
Then quadratic interval testing, the square function inequalities in Theorem %
\ref{square thm}, and quasi-orthogonality together with the fact that $f_{F}$
is a constant on $F$ bounded by $\alpha _{\mathcal{F}}\left( F\right) $, give%
\begin{eqnarray}
&&\left\vert I\right\vert =\left\vert \int_{\mathbb{R}}\sum_{F\in \mathcal{F}%
}\mathbf{1}_{F}\left( x\right) H_{\sigma }\left( \mathbf{1}_{F}f_{F}\right)
\left( x\right) \ g_{F}\left( x\right) \ d\omega \left( x\right) \right\vert
\label{I} \\
&\leq &\int_{\mathbb{R}}\sum_{F\in \mathcal{F}}\alpha _{\mathcal{F}}\left(
F\right) \left\vert \mathbf{1}_{F}\left( x\right) H_{\sigma }\left( \mathbf{1%
}_{F}\right) \left( x\right) \ g_{F}\left( x\right) \right\vert \ d\omega
\left( x\right)  \notag \\
&\leq &\int_{\mathbb{R}}\sqrt{\sum_{F\in \mathcal{F}}\left\vert \alpha _{%
\mathcal{F}}\left( F\right) \mathbf{1}_{F}\left( x\right) H_{\sigma }\left( 
\mathbf{1}_{F}\right) \left( x\right) \right\vert ^{2}}\ \sqrt{\sum_{F\in 
\mathcal{F}}\left\vert g_{F}\left( x\right) \right\vert ^{2}}\ d\omega
\left( x\right)  \notag \\
&\leq &\ \left\Vert \left\vert \left\{ \alpha _{\mathcal{F}}\left( F\right) 
\mathbf{1}_{F}H_{\sigma }\left( \mathbf{1}_{F}\right) \right\} _{F\in 
\mathcal{F}}\right\vert _{\ell ^{2}}\right\Vert _{L^{p}\left( \omega \right)
}\ \left\Vert \left\vert \left\{ g_{F}\right\} _{F\in \mathcal{F}%
}\right\vert _{\ell ^{2}}\right\Vert _{L^{p^{\prime }}\left( \omega \right) }
\notag \\
&\lesssim &\mathfrak{T}_{H,p}^{\ell ^{2},\func{loc}}\left( \sigma ,\omega
\right) \left\Vert \left\vert \left\{ \alpha _{\mathcal{F}}\left( F\right) 
\mathbf{1}_{F}\right\} _{F\in \mathcal{F}}\right\vert _{\ell
^{2}}\right\Vert _{L^{p}\left( \sigma \right) }\left\Vert g\right\Vert
_{L^{p^{\prime }}\left( \omega \right) }\lesssim C_{\mathcal{F}}\left(
\sigma \right) \mathfrak{T}_{H,p}^{\ell ^{2},\func{loc}}\left( \sigma
,\omega \right) \left\Vert f\right\Vert _{L^{p}\left( \sigma \right)
}\left\Vert g\right\Vert _{L^{p^{\prime }}\left( \omega \right) }.  \notag
\end{eqnarray}

Now $\mathbf{1}_{F^{c}}f_{F}$ is supported outside $F$, and each $J$ in the
Haar support of $g_{F}=\mathsf{P}_{\mathcal{C}_{\mathcal{F}}\left( F\right)
}^{\omega }g$ is either in $\mathcal{N}_{r}\left( F\right) =\left\{ J\in 
\mathcal{D}\left[ F\right] :\ell \left( J\right) \geq 2^{-r}\ell \left(
F\right) \right\} $, in which case the desired bound for term $I$ is
straightforward, or $J$ is $\left( r,\varepsilon \right) $-deeply embedded
in $F$, i.e. $J\subset _{r,\varepsilon }F$, and so $J\subset _{r,\varepsilon
}W$ for some $W\in \mathcal{M}_{\left( r,\varepsilon \right) -\limfunc{deep}%
}\left( F\right) $. Since the corona $\mathcal{C}_{\mathcal{F}}\left(
F\right) $ is connected, it follows that $W\in \mathcal{C}_{\mathcal{F}%
}\left( F\right) $ if $\mathsf{P}_{\mathcal{C}_{\mathcal{F}}\left( F\right)
\cap \mathcal{D}\left[ W\right] }^{\omega }g$ is nonvanishing. Thus with $%
\mathcal{C}_{\mathcal{F}}^{\flat }\left( F\right) \equiv \mathcal{C}_{%
\mathcal{F}}\left( F\right) \setminus \mathcal{N}_{r}\left( F\right) $ we
can apply \ref{ener rev} to obtain%
\begin{eqnarray*}
\left\vert II\right\vert &=&\left\vert \int_{\mathbb{R}}\sum_{F\in \mathcal{F%
}}\ H_{\sigma }\left( \mathbf{1}_{F^{c}}f_{F}\right) \left( x\right) \ 
\mathsf{P}_{\mathcal{C}_{\mathcal{F}}^{\flat }\left( F\right) }^{\omega
}g\left( x\right) \ d\omega \left( x\right) \right\vert \\
&=&\left\vert \int_{\mathbb{R}}\sum_{F\in \mathcal{F}}\sum_{W\in \mathcal{M}%
_{\left( r,\varepsilon \right) -\limfunc{deep}}\left( F\right) \cap \mathcal{%
C}_{\mathcal{F}}\left( F\right) }\ \mathsf{P}_{\mathcal{C}_{\mathcal{F}%
}^{\flat }\left( F\right) \cap \mathcal{D}\left[ W\right] }^{\omega
}H_{\sigma }\left( \mathbf{1}_{F^{c}}f_{F}\right) \left( x\right) \ \mathsf{P%
}_{\mathcal{C}_{\mathcal{F}}^{\flat }\left( F\right) \cap \mathcal{D}\left[ W%
\right] }^{\omega }g\left( x\right) \ d\omega \left( x\right) \right\vert \\
&\lesssim &\int_{\mathbb{R}}\sqrt{\sum_{F\in \mathcal{F}}\sum_{W\in \mathcal{%
M}_{\left( r,\varepsilon \right) -\limfunc{deep}}\left( F\right) \cap 
\mathcal{C}_{\mathcal{F}}\left( F\right) }\ \left\vert \mathsf{P}_{\mathcal{C%
}_{\mathcal{F}}^{\flat }\left( F\right) \cap \mathcal{D}\left[ W\right]
}^{\omega }H_{\sigma }\left( \mathbf{1}_{F^{c}}f_{F}\right) \left( x\right)
\right\vert ^{2}}\  \\
&&\ \ \ \ \ \ \ \ \ \ \ \ \ \ \ \ \ \ \ \ \ \ \ \ \ \times \sqrt{\sum_{F\in 
\mathcal{F}}\sum_{W\in \mathcal{M}_{\left( r,\varepsilon \right) -\limfunc{%
deep}}\left( F\right) \cap \mathcal{C}_{\mathcal{F}}\left( F\right) }\
\left\vert \mathsf{P}_{\mathcal{C}_{\mathcal{F}}^{\flat }\left( F\right)
\cap \mathcal{D}\left[ W\right] }^{\omega }g\left( x\right) \right\vert ^{2}}%
\ d\omega \left( x\right) ,
\end{eqnarray*}%
which is at most%
\begin{eqnarray*}
&&\left\{ \int_{\mathbb{R}}\left( \sum_{F\in \mathcal{F}}\sum_{W\in \mathcal{%
M}_{\left( r,\varepsilon \right) -\limfunc{deep}}\left( F\right) \cap 
\mathcal{C}_{\mathcal{F}}\left( F\right) }\ \left\vert \mathsf{P}_{\mathcal{C%
}_{\mathcal{F}}^{\flat }\left( F\right) \cap \mathcal{D}\left[ W\right]
}^{\omega }H_{\sigma }\left( \mathbf{1}_{F^{c}}f_{F}\right) \left( x\right)
\right\vert ^{2}\right) ^{\frac{p}{2}}d\omega \left( x\right) \right\} ^{%
\frac{1}{p}} \\
&&\times \left\{ \int_{\mathbb{R}}\left( \sum_{F\in \mathcal{F}}\sum_{W\in 
\mathcal{M}_{\left( r,\varepsilon \right) -\limfunc{deep}}\left( F\right)
\cap \mathcal{C}_{\mathcal{F}}\left( F\right) }\ \left\vert \mathsf{P}_{%
\mathcal{C}_{\mathcal{F}}^{\flat }\left( F\right) \cap \mathcal{D}\left[ W%
\right] }^{\omega }g\left( x\right) \right\vert ^{2}\right) ^{\frac{%
p^{\prime }}{2}}d\omega \left( x\right) \right\} ^{\frac{1}{p^{\prime }}}.
\end{eqnarray*}%
The second factor is at most $C_{p}\left\Vert g\right\Vert _{L^{p^{\prime
}}\left( \omega \right) }$ by Theorem \ref{square thm}.

Then we use the Energy Lemma on the first factor to obtain that its $p^{th}$
power is at most,%
\begin{eqnarray*}
&&\int_{\mathbb{R}}\left( \sum_{F\in \mathcal{F}}\sum_{W\in \mathcal{M}%
_{\left( r,\varepsilon \right) -\limfunc{deep}}\left( F\right) \cap \mathcal{%
C}_{\mathcal{F}}\left( F\right) }\ \left( \frac{\mathrm{P}\left( W,\mathbf{1}%
_{F^{c}}f_{F}\sigma \right) }{\ell \left( W\right) }\right) ^{2}\left\vert 
\mathsf{P}_{\mathcal{C}_{\mathcal{F}}^{\flat }\left( F\right) \cap \mathcal{D%
}\left[ W\right] }^{\omega }\right\vert Z\left( x\right) ^{2}\right) ^{\frac{%
p}{2}}d\omega \left( x\right) \\
&\lesssim &\int_{\mathbb{R}}\left( \sum_{F\in \mathcal{F}}\sum_{W\in 
\mathcal{M}_{\left( r,\varepsilon \right) -\limfunc{deep}}\left( F\right)
\cap \mathcal{C}_{\mathcal{F}}\left( F\right) }\ \left( \frac{\mathrm{P}%
\left( W,\mathbf{1}_{F^{c}}\Phi _{F}f\sigma \right) }{\ell \left( W\right) }%
\right) ^{2}\left\vert \mathsf{P}_{\mathcal{C}_{\mathcal{F}}^{\flat }\left(
F\right) \cap \mathcal{D}\left[ W\right] }^{\omega }\right\vert Z\left(
x\right) ^{2}\right) ^{\frac{p}{2}}d\omega \left( x\right) \\
&\leq &\mathcal{E}_{p}^{\ell ^{2},\limfunc{ext}}\left( \sigma ,\omega
\right) ^{p}\lVert f\rVert _{L^{p}\left( \sigma \right) }^{p},
\end{eqnarray*}%
where the last line follows from the definition of the extended energy
characteristic.

This completes the proof of the Intertwining Proposition \ref{strongly
adapted'}.
\end{proof}

Thus we have the following controls of the $\func{far}$ form, 
\begin{eqnarray*}
\left\vert \mathsf{B}_{\func{far}}\left( f,g\right) \right\vert &\lesssim
&\left( \mathcal{E}_{p}^{\ell ^{2},\limfunc{ext}}\left( \sigma ,\omega
\right) +\mathfrak{T}_{H,p}^{\ell ^{2},\limfunc{loc}}\left( \sigma ,\omega
\right) +\mathcal{A}_{p}\left( \sigma ,\omega \right) +\mathcal{WBP}%
_{H,p}^{\ell ^{2}}\left( \sigma ,\omega \right) \right) \ \left\Vert
f\right\Vert _{L^{p}\left( \sigma \right) }\left\Vert g\right\Vert
_{L^{p^{\prime }}\left( \omega \right) }\ , \\
\left\vert \mathsf{B}_{\func{far}}\left( f,g\right) \right\vert &\lesssim
&\left( \mathcal{E}_{p}^{\ell ^{2},\limfunc{ext}}\left( \sigma ,\omega
\right) +\mathfrak{T}_{H,p}^{\ell ^{2},\limfunc{glob}}\left( \sigma ,\omega
\right) \right) \ \left\Vert f\right\Vert _{L^{p}\left( \sigma \right)
}\left\Vert g\right\Vert _{L^{p^{\prime }}\left( \omega \right) }\ .
\end{eqnarray*}%
In Section \ref{ex en section} below, we will show that the extended energy $%
\mathcal{E}_{p}^{\ell ^{2},\limfunc{ext}}\left( \sigma ,\omega \right) $
characteristic is controlled by the norm characteristic $\mathfrak{N}%
_{H,p}\left( \sigma ,\omega \right) $, and hence is a necessary condition.

\section{Reduction of the diagonal form by the NTV reach}

We first apply the\ clever `NTV reach' of \cite{NTV4}, which splits the
diagonal form%
\begin{equation*}
\mathsf{B}_{\func{diag}}\left( f,g\right) =\sum_{\left( I,J\right) \in 
\mathcal{P}_{\func{diag}}}\left\langle H_{\sigma }\left( \mathbf{1}%
_{I_{J}}\bigtriangleup _{I}^{\sigma }f\right) ,\bigtriangleup _{J}^{\omega
}g\right\rangle _{\omega }=\sum_{F\in \mathcal{F}}\sum_{\substack{ \left(
I,J\right) \in \mathcal{C}_{\mathcal{F}}\left( F\right) \times \mathcal{C}_{%
\mathcal{F}}\left( F\right)  \\ J\subset _{\tau }I}}\left( E_{I_{J}}^{\sigma
}\bigtriangleup _{I}^{\sigma }f\right) \left\langle H_{\sigma }\mathbf{1}%
_{I_{J}},\bigtriangleup _{J}^{\omega }g\right\rangle _{\omega },
\end{equation*}%
into a paraproduct and stopping form,%
\begin{eqnarray*}
\mathsf{B}_{\func{diag}}\left( f,g\right) &=&\sum_{F\in \mathcal{F}}\sum 
_{\substack{ \left( I,J\right) \in \mathcal{C}_{\mathcal{F}}\left( F\right)
\times \mathcal{C}_{\mathcal{F}}\left( F\right)  \\ J\subset _{\tau }I}}%
\left( E_{I_{J}}^{\sigma }\bigtriangleup _{I}^{\sigma }f\right) \left\langle
H_{\sigma }\mathbf{1}_{F},\bigtriangleup _{J}^{\omega }g\right\rangle
_{\omega } \\
&&+\sum_{F\in \mathcal{F}}\sum_{\substack{ \left( I,J\right) \in \mathcal{C}%
_{\mathcal{F}}\left( F\right) \times \mathcal{C}_{\mathcal{F}}\left(
F\right)  \\ J\subset _{\tau }I}}\left( E_{I_{J}}^{\sigma }\bigtriangleup
_{I}^{\sigma }f\right) \left\langle H_{\sigma }\mathbf{1}_{F\setminus
I_{J}},\bigtriangleup _{J}^{\omega }g\right\rangle _{\omega } \\
&\equiv &\mathsf{B}_{\limfunc{para}}\left( f,g\right) +\mathsf{B}_{\limfunc{%
stop}}\left( f,g\right) .
\end{eqnarray*}

\subsection{Paraproduct form}

Here we bound the paraproduct form,%
\begin{equation*}
\mathsf{B}_{\limfunc{para}}\left( f,g\right) =\sum_{F\in \mathcal{F}}\mathsf{%
B}_{\limfunc{para}}^{F}\left( f,g\right) =\sum_{F\in \mathcal{F}}\sum_{J\in 
\mathcal{C}_{\mathcal{F}}\left( F\right) }\left\langle \left( E_{J^{\ast
}}^{\sigma }f\right) H_{\sigma }\mathbf{1}_{F},\bigtriangleup _{J}^{\omega
}g\right\rangle _{\omega },
\end{equation*}%
for $1<p<\infty $, where $J^{\ast }=I_{J}$ where $I$ is the smallest
interval in the Haar support of $f$ for which $J$ is $\tau $-deeply embedded
in $I$. Define $\widetilde{g}=\sum_{J\in \mathcal{D}}\frac{E_{J^{\ast
}}^{\sigma }f}{E_{F}^{\sigma }f}\bigtriangleup _{J}^{\omega }g$ and note
that $\left\vert E_{J^{\ast }}^{\sigma }f\right\vert \lesssim \left\vert
E_{F}^{\sigma }f\right\vert $ since $J^{\ast }=I_{J}$ is $\func{good}$
because $I$ is in the Haar support of $f$. Then we obtain%
\begin{eqnarray*}
&&\left\vert \mathsf{B}_{\limfunc{para}}\left( f,g\right) \right\vert
=\left\vert \sum_{F\in \mathcal{F}}\mathsf{B}_{\limfunc{para}}^{F}\left(
f,g\right) \right\vert =\left\vert \sum_{F\in \mathcal{F}}\sum_{J\in 
\mathcal{C}_{\mathcal{F}}\left( F\right) }\left\langle \left( E_{J^{\ast
}}^{\sigma }f\right) H_{\sigma }\mathbf{1}_{F},\bigtriangleup _{J}^{\omega
}g\right\rangle _{\omega }\right\vert \\
&=&\left\vert \sum_{F\in \mathcal{F}}E_{F}^{\sigma }\left\vert f\right\vert
\sum_{J\in \mathcal{C}_{\mathcal{F}}\left( F\right) }\left\langle H_{\sigma }%
\mathbf{1}_{F},\frac{E_{J^{\ast }}^{\sigma }f}{E_{F}^{\sigma }\left\vert
f\right\vert }\bigtriangleup _{J}^{\omega }g\right\rangle _{\omega
}\right\vert =\left\vert \sum_{F\in \mathcal{F}}E_{F}^{\sigma }\left\vert
f\right\vert \left\langle H_{\sigma }\mathbf{1}_{F},\sum_{J\in \mathcal{C}_{%
\mathcal{F}}\left( F\right) }\bigtriangleup _{J}^{\omega }\widetilde{g}%
\right\rangle _{\omega }\right\vert \\
&=&\left\vert \sum_{F\in \mathcal{F}}E_{F}^{\sigma }\left\vert f\right\vert
\left\langle \mathbf{1}_{F}H_{\sigma }\mathbf{1}_{F},\mathsf{P}_{\mathcal{C}%
_{\mathcal{F}}\left( F\right) }^{\omega }\widetilde{g}\right\rangle _{\omega
}\right\vert =\left\vert \int_{\mathbb{R}}\sum_{F\in \mathcal{F}}\ \mathbf{1}%
_{F}H_{\sigma }\left( \mathbf{1}_{F}E_{F}^{\sigma }\left\vert f\right\vert
\right) \left( x\right) \ \mathsf{P}_{\mathcal{C}_{\mathcal{F}}\left(
F\right) }^{\omega }\widetilde{g}\left( x\right) \ d\omega \left( x\right)
\right\vert \\
&\leq &\int_{\mathbb{R}}\left( \sum_{F\in \mathcal{F}}\left\vert \mathbf{1}%
_{F}H_{\sigma }\left( \mathbf{1}_{F}E_{F}^{\sigma }\left\vert f\right\vert
\right) \left( x\right) \right\vert ^{2}\right) ^{\frac{1}{2}}\ \left(
\sum_{F\in \mathcal{F}}\left\vert \mathsf{P}_{\mathcal{C}_{\mathcal{F}%
}\left( F\right) }^{\omega }\widetilde{g}\left( x\right) \right\vert
^{2}\right) ^{\frac{1}{2}}\ d\omega \left( x\right) ,
\end{eqnarray*}%
and we can write%
\begin{equation*}
\left\vert \mathsf{B}_{\limfunc{para}}\left( f,g\right) \right\vert \leq
\left( \int_{\mathbb{R}}\left\vert \left\{ \alpha _{\mathcal{F}}\left(
F\right) \mathbf{1}_{F}H_{\sigma }\mathbf{1}_{F}\left( x\right) \right\}
_{F\in \mathcal{F}}\right\vert _{\ell ^{2}\left( \mathcal{F}\right)
}^{p}d\omega \left( x\right) \right) ^{\frac{1}{p}}\left( \int_{\mathbb{R}%
}\left( \sum_{F\in \mathcal{F}}\left\vert \mathsf{P}_{\mathcal{C}_{\mathcal{F%
}}\left( F\right) }^{\omega }\widetilde{g}\left( x\right) \right\vert
^{2}\right) ^{\frac{p^{\prime }}{2}}d\omega \left( x\right) \right) ^{\frac{1%
}{p^{\prime }}}.
\end{equation*}

We claim the following inequalities for all $1<p<\infty $,%
\begin{eqnarray}
&&\int_{\mathbb{R}}\left\vert \left\{ \alpha _{\mathcal{F}}\left( F\right) 
\mathbf{1}_{F}H_{\sigma }\mathbf{1}_{F}\left( x\right) \right\} _{F\in 
\mathcal{F}}\right\vert _{\ell ^{2}\left( \mathcal{F}\right) }^{p}d\omega
\left( x\right) \lesssim \mathfrak{T}_{H,p}^{\ell ^{2},\func{loc}}\left(
\sigma ,\omega \right) ^{p}\int_{\mathbb{R}}\left\vert \left\{ \alpha _{%
\mathcal{F}}\left( F\right) \mathbf{1}_{F}\left( x\right) \right\} _{F\in 
\mathcal{F}}\right\vert _{\ell ^{2}\left( \mathcal{F}\right) }^{p}d\sigma
\left( x\right)  \label{WTP} \\
&&\ \ \ \ \ \ \ \ \ \ \ \ \ \ \ \ \ \ \ \ \ \ \ \ \ \ \ \ \ \ \ =\mathfrak{T}%
_{H,p}^{\ell ^{2},\func{loc}}\left( \sigma ,\omega \right) ^{p}\int_{\mathbb{%
R}}\left( \sum_{F\in \mathcal{F}}\alpha _{\mathcal{F}}\left( F\right) ^{2}%
\mathbf{1}_{F}\left( x\right) \right) ^{\frac{p}{2}}d\sigma \left( x\right) ,
\notag \\
&&\int_{\mathbb{R}}\left\vert \left\{ \alpha _{\mathcal{F}}\left( F\right) 
\mathbf{1}_{F}\left( x\right) \right\} _{F\in \mathcal{F}}\right\vert _{\ell
^{2}\left( \mathcal{F}\right) }^{p}d\sigma \left( x\right) \lesssim
\sum_{F\in \mathcal{F}}\alpha _{\mathcal{F}}\left( F\right) ^{p}\left\vert
F\right\vert _{\sigma },  \notag \\
&&\text{and in particular }\int_{\mathbb{R}}\left\vert \left\{ \alpha _{%
\mathcal{F}}\left( F\right) \mathbf{1}_{F}H_{\sigma }\mathbf{1}_{F}\left(
x\right) \right\} _{F\in \mathcal{F}}\right\vert _{\ell ^{2}\left( \mathcal{F%
}\right) }^{p}d\omega \left( x\right) \lesssim \sum_{F\in \mathcal{F}}\alpha
_{\mathcal{F}}\left( F\right) ^{p}\left\vert F\right\vert _{\sigma }.  \notag
\end{eqnarray}%
Since by quasiorthogonality we have $\sum_{F\in \mathcal{F}}\alpha _{%
\mathcal{F}}\left( F\right) ^{p}\left\vert F\right\vert _{\sigma }\lesssim
\int_{\mathbb{R}}\left\vert f\right\vert ^{p}d\sigma $, the inequality (\ref%
{WTP}) will then yield%
\begin{equation*}
\int_{\mathbb{R}}\left\vert \left\{ \alpha _{\mathcal{F}}\left( F\right) 
\mathbf{1}_{F}H_{\sigma }\mathbf{1}_{F}\left( x\right) \right\} _{F\in 
\mathcal{F}}\right\vert _{\ell ^{2}\left( \mathcal{F}\right) }^{p}d\omega
\left( x\right) \lesssim \mathfrak{T}_{H,p}^{\ell ^{2},\func{loc}}\left(
\sigma ,\omega \right) ^{p}\int_{\mathbb{R}}\left\vert f\right\vert
^{p}d\sigma .
\end{equation*}%
If we now combine this inequality with the standard square function estimate,%
\begin{equation}
\int_{\mathbb{R}}\left( \sum_{F\in \mathcal{F}}\left\vert \mathsf{P}_{%
\mathcal{C}_{\mathcal{F}}\left( F\right) }^{\omega }\widetilde{g}\left(
x\right) \right\vert ^{2}\right) ^{\frac{p^{\prime }}{2}}d\omega \left(
x\right) \lesssim \int_{\mathbb{R}}\left\vert \widetilde{g}\right\vert
^{p^{\prime }}d\omega ,  \label{SSF}
\end{equation}%
and the inequality $\int_{\mathbb{R}}\left\vert \widetilde{g}\right\vert
^{p^{\prime }}d\omega \lesssim \int_{\mathbb{R}}\left\vert g\right\vert
^{p^{\prime }}d\omega $, which follows from $\left\vert E_{J^{\ast
}}^{\sigma }f\right\vert \lesssim \left\vert E_{F}^{\sigma }f\right\vert $,
we obtain%
\begin{equation*}
\left\vert \sum_{F\in \mathcal{F}}\mathsf{B}_{\limfunc{para}}^{F}\left(
f,g\right) \right\vert \lesssim \mathfrak{T}_{H,p}^{\ell ^{2},\func{loc}%
}\left( \sigma ,\omega \right) \left\Vert f\right\Vert _{L^{p}\left( \sigma
\right) }\left\Vert g\right\Vert _{L^{p^{\prime }}\left( \omega \right) },
\end{equation*}%
which is the desired estimate for the paraproduct form.

Now we turn to proving (\ref{WTP}). The local quadratic testing condition
gives the first inequality in (\ref{WTP}). Indeed, with $\mathcal{F}=\left\{
I_{i}\right\} _{i=1}^{\infty }$ and $a_{i}=\alpha _{\mathcal{F}}\left(
I_{i}\right) $, we have using that the projection $\mathsf{P}_{\mathcal{C}_{%
\mathcal{F}}\left( F\right) }^{\omega }$ is supported in $F$, 
\begin{eqnarray*}
&&\int_{\mathbb{R}}\left\vert \left\{ \alpha _{\mathcal{F}}\left( F\right) 
\mathbf{1}_{F}H_{\sigma }\mathbf{1}_{F}\left( x\right) \right\} _{F\in 
\mathcal{F}}\right\vert _{\ell ^{2}\left( \mathcal{F}\right) }^{p}d\omega
\left( x\right) \lesssim \int_{\mathbb{R}}\left\vert \left\{ \alpha _{%
\mathcal{F}}\left( F\right) \mathbf{1}_{F}\left( x\right) H_{\sigma }\mathbf{%
1}_{F}\left( x\right) \right\} _{F\in \mathcal{F}}\right\vert _{\ell
^{2}\left( \mathcal{F}\right) }^{p}d\omega \left( x\right) \\
&=&\left\Vert \left( \sum_{i=1}^{\infty }\left( a_{i}\mathbf{1}%
_{I_{i}}H_{\sigma }\mathbf{1}_{I_{i}}\right) ^{2}\right) ^{\frac{1}{2}%
}\right\Vert _{L^{p}\left( \omega \right) }^{p}\leq \mathfrak{T}_{H,p}^{\ell
^{2},\func{loc}}\left( \sigma ,\omega \right) ^{p}\left\Vert \left(
\sum_{i=1}^{\infty }\left( a_{i}\mathbf{1}_{I_{i}}\right) ^{2}\right) ^{%
\frac{1}{2}}\right\Vert _{L^{p}\left( \sigma \right) }^{p} \\
&=&\mathfrak{T}_{H,p}^{\ell ^{2},\func{loc}}\left( \sigma ,\omega \right)
^{p}\int_{\mathbb{R}}\left\vert \alpha _{\mathcal{F}}\left( F\right) \mathbf{%
1}_{F}\left( x\right) \right\vert _{\ell ^{2}\left( \mathcal{F}\right)
}^{p}d\sigma \left( x\right) .
\end{eqnarray*}%
The second inequality in (\ref{WTP}) is (\ref{The claim}) in Theorem \ref%
{using Carleson} with $\kappa =0$, and this completes the proof of (\ref{WTP}%
) and hence the control of the paraproduct form for $1<p<\infty $.

\section{Stopping form}

To control the stopping form%
\begin{equation*}
\mathsf{B}_{\limfunc{stop}}\left( f,g\right) =\sum_{F\in \mathcal{F}}\sum 
_{\substack{ \left( I,J\right) \in \mathcal{C}_{\mathcal{F}}\left( F\right)
\times \mathcal{C}_{\mathcal{F}}\left( F\right)  \\ J\subset _{\tau }I}}%
\left( E_{I_{J}}^{\sigma }\bigtriangleup _{I}^{\sigma }f\right) \left\langle
H_{\sigma }\mathbf{1}_{F\setminus I_{J}},\bigtriangleup _{J}^{\omega
}g\right\rangle _{\omega }
\end{equation*}%
we assume that the Haar supports of $f$ and $g$ are contained in $\digamma
\cap \mathcal{D}_{\func{good}}^{\limfunc{child}}$ for some large but finite
and connected subset $\digamma $ of the grid $\mathcal{D}$, and let $%
\mathcal{F}$ denote the Calder\'{o}n-Zygmund stopping times for $f$ with top 
$T$.

\begin{remark}
Note that for each $F\in \mathcal{F}$, the sums over $I$ and $J$ above
depend on $F$ through the corona $\mathcal{C}_{\mathcal{F}}\left( F\right) $%
, and the argument $\mathbf{1}_{F\setminus I_{J}}$ of the Hilbert transform
also depends on $F$.
\end{remark}

The estimate we prove here is%
\begin{equation}
\left\vert \mathsf{B}_{\limfunc{stop}}\left( f,g\right) \right\vert \lesssim 
\mathfrak{T}_{H,p}^{\func{loc}}\left( \sigma ,\omega \right) \left\Vert
f\right\Vert _{L^{p}\left( \sigma \right) }\left\Vert g\right\Vert
_{L^{p^{\prime }}\left( \omega \right) },\ \ \ \ \ 1<p<4,  \label{stop est}
\end{equation}%
for $f\in L^{p}\left( \sigma \right) \cap L^{2}\left( \sigma \right) $ and $%
g\in L^{p^{\prime }}\left( \omega \right) \cap L^{2}\left( \omega \right) $
with Haar supports in $\digamma $, and where the stopping times $\mathcal{F}$
that arise in the definition of $\mathsf{B}_{\limfunc{stop}}\left(
f,g\right) $ are the Calderon-Zygmund stopping times for $f$. The
restriction $\frac{4}{3}<p$ arises from treating the dual stopping form.
Control of the stopping form will require the most formidable estimates yet,
despite that only the simplest of characteristics is used, namely the scalar
testing characteristic $\mathfrak{T}_{H,p}^{\func{loc}}\left( \sigma ,\omega
\right) $. Recall that the inequality $\mathfrak{T}_{H,p}^{\func{loc}}\left(
\sigma ,\omega \right) \leq \mathfrak{T}_{H,p}^{\ell ^{2},\func{loc}}\left(
\sigma ,\omega \right) $ is trivial.

The key technical estimate needed for (\ref{stop est}) is the Quadratic $%
L^{p}$-Stopping Child Lemma below which controls off-diagonal terms, and
which has its roots in the `straddling' lemmas of M. Lacey in \cite[Lemmas
3.19 and 3.16]{Lac}. To prove this lemma, we will need a Corona Martingale
Comparison Principle that replaces a martingale difference sequence with
another having differences comparable in norm. Then we use a square function
generalization of Lacey's upside down corona construction in the dual tree
decomposition to finish the proof with a somewhat lengthy argument, in which
the restriction $\frac{4}{3}<p<4$ arises.

\subsection{Heuristics}

In order to explain the origin of the Quadratic $L^{p}$-Stopping Child Lemma
and Lemma \ref{final level} at the end of the paper, we point out a
significant obstacle arising from the lack of orthogonality when $p\neq 2$,
and which leads to the restriction $\frac{4}{3}<p<4$. The following short
discussion is intended to be heuristic and without precise notation and
definitions. The reader can safely skip this subsubsection and proceed
directly to the next subsubsection on the dual tree decomposition.

We now \textbf{fix} functions $f\in L^{p}\left( \sigma \right) \cap
L^{2}\left( \sigma \right) $ and $g\in L^{p^{\prime }}\left( \omega \right)
\cap L^{2}\left( \omega \right) $, along with the Calder\'{o}n-Zygmund
stopping times $\mathcal{F}$ for $f$, and of course the $p$-energy stopping
times for the measure pair $\left( \sigma ,\omega \right) $. The stopping
form 
\begin{equation*}
\mathsf{B}_{\limfunc{stop}}\left( f,g\right) =\sum_{F\in \mathcal{F}}\mathsf{%
B}_{\limfunc{stop}}\left( \mathsf{P}_{\mathcal{C}_{\mathcal{F}}\left(
F\right) }^{\sigma }f,\mathsf{P}_{\mathcal{C}_{\mathcal{F}}\left( F\right)
}^{\omega }g\right) =\sum_{F\in \mathcal{F}}\mathsf{B}_{\limfunc{stop}}^{%
\mathcal{F},F}\left( f,g\right)
\end{equation*}%
is already a `quadratic' form in the sense that it is a one parameter sum
over $F$, rather than the two parameter sum over $F$ and $G$ that appears
for example in the below form, 
\begin{equation*}
\mathsf{B}_{\func{below}}\left( f,g\right) =\mathsf{B}_{\func{below}}\left(
\sum_{F\in \mathcal{F}}\mathsf{P}_{\mathcal{C}_{\mathcal{F}}\left( F\right)
}^{\sigma }f,\sum_{G\in \mathcal{F}}\mathsf{P}_{\mathcal{C}_{\mathcal{F}%
}\left( G\right) }^{\omega }g\right) =\sum_{F,G\in \mathcal{F}}\mathsf{B}_{%
\func{below}}\left( \mathsf{P}_{\mathcal{C}_{\mathcal{F}}\left( F\right)
}^{\sigma }f,\mathsf{P}_{\mathcal{C}_{\mathcal{F}}\left( G\right) }^{\omega
}g\right) .
\end{equation*}%
The basic idea for controlling the stopping form in \cite[Lemmas 3.19 and
3.16]{Lac} when $p=2$, is to construct additional bottom-up stopping times $%
\mathcal{A}\left[ F\right] $ within each corona $\mathcal{C}_{\mathcal{F}%
}\left( F\right) $ that control energy associated with the Haar support of $%
g $, and then using certain `straddling' lemmas, to reduce control of the
resulting bilinear forms 
\begin{equation*}
\sum_{A,B\in \mathcal{A}\left[ F\right] }\mathsf{B}_{\limfunc{stop}}\left( 
\mathsf{P}_{\mathcal{C}_{\mathcal{A}\left[ F\right] }\left( A\right)
}^{\sigma }f,\mathsf{P}_{\mathcal{C}_{\mathcal{A}\left[ F\right] }\left(
B\right) }^{\omega }g\right)
\end{equation*}%
within each corona $\mathcal{C}_{\mathcal{F}}\left( F\right) $, to their
`quadratic' counterparts 
\begin{equation*}
\sum_{A\in \mathcal{A}\left[ F\right] }\mathsf{B}_{\limfunc{stop}\func{diag}%
}\left( \mathsf{P}_{\mathcal{C}_{\mathcal{A}\left[ F\right] }\left( A\right)
}^{\sigma }f,\mathsf{P}_{\mathcal{C}_{\mathcal{A}\left[ F\right] }\left(
A\right) }^{\sigma }g\right) =\sum_{A\in \mathcal{A}\left[ F\right] }\mathsf{%
B}_{\limfunc{stop}\func{diag}}^{\mathcal{A}\left[ F\right] ,A}\left(
f,g\right) ,
\end{equation*}%
where the norm of the forms $\mathsf{B}_{\limfunc{stop}\func{diag}}^{%
\mathcal{A}\left[ F\right] ,A}$ are small compared to the norm of $\mathsf{B}%
_{\limfunc{stop}}$. At this point one uses the Quasi-Orthogonality Argument
in \cite[page 6]{Lac} to control the entire sum of the iterated `quadratic'
forms, namely 
\begin{eqnarray}
&&\left\vert \sum_{F\in \mathcal{F}}\sum_{A\in \mathcal{A}\left[ F\right] }%
\mathsf{B}_{\limfunc{stop}\func{diag}}^{\mathcal{A}\left[ F\right] ,A}\left(
f,g\right) \right\vert \leq \sum_{F\in \mathcal{F}}\sum_{A\in \mathcal{A}%
\left[ F\right] }\left\Vert \mathsf{B}_{\limfunc{stop}\func{diag}}^{\mathcal{%
A}\left[ F\right] ,A}\right\Vert \left\Vert \mathsf{P}_{\mathcal{C}_{%
\mathcal{A}\left[ F\right] }\left( A\right) }^{\sigma }f\right\Vert
_{L^{2}\left( \sigma \right) }\left\Vert \mathsf{P}_{\mathcal{C}_{\mathcal{A}%
\left[ F\right] }\left( A\right) }^{\omega }g\right\Vert _{L^{2}\left(
\omega \right) }  \label{OL} \\
&\leq &\left( \sup_{\left( F,A\right) \in \mathcal{F}\times \mathcal{A}\left[
F\right] }\left\Vert \mathsf{B}_{\limfunc{stop}\func{diag}}^{\mathcal{A}%
\left[ F\right] ,A}\right\Vert \right) \left( \sum_{F\in \mathcal{F}%
}\sum_{A\in \mathcal{A}\left[ F\right] }\left\Vert \mathsf{P}_{\mathcal{C}_{%
\mathcal{A}\left[ F\right] }\left( A\right) }^{\sigma }f\right\Vert
_{L^{2}\left( \sigma \right) }^{2}\right) ^{\frac{1}{2}}\left( \sum_{F\in 
\mathcal{F}}\sum_{A\in \mathcal{A}\left[ F\right] }\left\Vert \mathsf{P}_{%
\mathcal{C}_{\mathcal{A}\left[ F\right] }\left( A\right) }^{\omega
}g\right\Vert _{L^{2}\left( \omega \right) }^{2}\right) ^{\frac{1}{2}} 
\notag \\
&\leq &\left( \sup_{\left( F,A\right) \in \mathcal{F}\times \mathcal{A}\left[
F\right] }\left\Vert \mathsf{B}_{\limfunc{stop}\func{diag}}^{\mathcal{A}%
\left[ F\right] ,A}\right\Vert \right) \left\Vert f\right\Vert _{L^{2}\left(
\sigma \right) }\left\Vert g\right\Vert _{L^{2}\left( \omega \right) }\leq
\varepsilon \left\Vert \mathsf{B}_{\limfunc{stop}}\right\Vert \left\Vert
f\right\Vert _{L^{2}\left( \sigma \right) }\left\Vert g\right\Vert
_{L^{2}\left( \omega \right) }.  \notag
\end{eqnarray}%
Then one can finish by recursion as in \cite{Lac}, or by absorption as in 
\cite{Saw7}, which formally (ignoring the nature of the smallness factor and
just inserting a small $\varepsilon >0$) becomes%
\begin{eqnarray*}
\left\Vert \mathsf{B}_{\limfunc{stop}}\right\Vert &\equiv &\frac{\left\vert 
\mathsf{B}_{\limfunc{stop}}\left( f,g\right) \right\vert }{\left\Vert
f\right\Vert _{L^{2}\left( \sigma \right) }\left\Vert g\right\Vert
_{L^{2}\left( \omega \right) }} \\
&\leq &\frac{C\left\Vert f\right\Vert _{L^{2}\left( \sigma \right)
}\left\Vert g\right\Vert _{L^{2}\left( \omega \right) }+\left\vert
\sum_{F\in \mathcal{F}}\sum_{A\in \mathcal{A}\left[ F\right] }\mathsf{B}_{%
\limfunc{stop}\func{diag}}^{\mathcal{A}\left[ F\right] ,A}\left( f,g\right)
\right\vert }{\left\Vert f\right\Vert _{L^{2}\left( \sigma \right)
}\left\Vert g\right\Vert _{L^{2}\left( \omega \right) }} \\
&\leq &\frac{C\left\Vert f\right\Vert _{L^{2}\left( \sigma \right)
}\left\Vert g\right\Vert _{L^{2}\left( \omega \right) }+\varepsilon
\left\Vert \mathsf{B}_{\limfunc{stop}}\right\Vert \left\Vert f\right\Vert
_{L^{2}\left( \sigma \right) }\left\Vert g\right\Vert _{L^{2}\left( \omega
\right) }}{\left\Vert f\right\Vert _{L^{2}\left( \sigma \right) }\left\Vert
g\right\Vert _{L^{2}\left( \omega \right) }}=C+\varepsilon \left\Vert 
\mathsf{B}_{\limfunc{stop}}\right\Vert ,
\end{eqnarray*}%
which yields $\left\Vert \mathsf{B}_{\limfunc{stop}}\right\Vert \leq \frac{C%
}{1-\varepsilon }$.

Unfortunately, the inequality (\ref{OL}) fails to generalize for all $%
1<p<\infty $, and we must arrange matters so as to avoid its use when $p\neq
2$. This will be accomplished by applying the above heuristic to more
general forms 
\begin{equation*}
\mathsf{B}_{\limfunc{stop}}^{\mathcal{Q}}\left( f,g\right) =\sum_{F\in 
\mathcal{F}}\sum_{Q\in \mathcal{Q}\left[ F\right] }\mathsf{B}_{\limfunc{stop}%
}\left( \mathsf{P}_{\mathcal{C}_{\mathcal{Q}\left[ F\right] }\left( Q\right)
}^{\sigma }f,\mathsf{P}_{\mathcal{C}_{\mathcal{Q}\left[ F\right] }\left(
Q\right) }^{\omega }g\right) =\sum_{F\in \mathcal{F}}\sum_{Q\in \mathcal{Q}%
\left[ F\right] }\mathsf{B}_{\limfunc{stop}}^{\mathcal{Q}\left[ F\right]
,Q}\left( f,g\right) ,
\end{equation*}%
in which a larger collection of stopping times $\mathcal{Q\supset F}$ is
used in order to introduce an additional layer of stopping times $\mathcal{Q}%
\left[ F\right] =\mathcal{Q}\cap \mathcal{C}_{\mathcal{F}}\left( F\right) $
within each corona $\mathcal{C}_{\mathcal{F}}\left( F\right) $. Then we can
interpret the iterated quadratic form%
\begin{equation*}
\sum_{F\in \mathcal{F}}\sum_{Q\in \mathcal{Q}\left[ F\right] }\sum_{A\in 
\mathcal{A}\left[ Q\right] }\mathsf{B}_{\limfunc{stop}\func{diag}}^{\mathcal{%
A}\left[ Q\right] ,A}\left( f,g\right) =\sum_{F\in \mathcal{F}}\sum_{Q\in 
\mathcal{Q}\left[ F\right] }\sum_{A\in \mathcal{A}\left[ Q\right] }\mathsf{B}%
_{\limfunc{stop}\func{diag}}\left( \mathsf{P}_{\mathcal{C}_{\mathcal{A}\left[
Q\right] }\left( A\right) }^{\sigma }f,\mathsf{P}_{\mathcal{C}_{\mathcal{A}%
\left[ Q\right] }\left( A\right) }^{\omega }g\right)
\end{equation*}%
as simply the form%
\begin{eqnarray}
&&  \label{Q'} \\
&&\mathsf{B}_{\limfunc{stop}}^{\mathcal{Q}\circ \mathcal{A}}\left(
f,g\right) =\sum_{F\in \mathcal{F}}\sum_{Q^{\prime }\in \left( \mathcal{Q}%
\circ \mathcal{A}\right) \left[ F\right] }\mathsf{B}_{\limfunc{stop}%
}^{\left( \mathcal{Q}\circ \mathcal{A}\right) \left[ F\right] ,Q^{\prime
}}\left( f,g\right) =\sum_{F\in \mathcal{F}}\sum_{Q^{\prime }\in \left( 
\mathcal{Q}\circ \mathcal{A}\right) \left[ F\right] }\mathsf{B}_{\limfunc{%
stop}}^{\left( \mathcal{Q}\circ \mathcal{A}\right) \left[ F\right]
,Q^{\prime }}\left( \mathsf{P}_{\mathcal{C}_{\left( \mathcal{Q}\circ 
\mathcal{A}\right) \left[ F\right] }\left( Q^{\prime }\right) }^{\sigma }f,%
\mathsf{P}_{\mathcal{C}_{\left( \mathcal{Q}\circ \mathcal{A}\right) \left[ F%
\right] }\left( Q^{\prime }\right) }^{\omega }g\right) ,  \notag \\
&&\ \ \ \ \ \ \ \ \ \ \ \ \ \ \ \ \ \ \ \ \ \ \ \ \ \text{where }\left( 
\mathcal{Q}\circ \mathcal{A}\right) \left[ F\right] =\bigcup_{Q\in \mathcal{Q%
}\left[ F\right] }\mathcal{A}\left[ Q\right] .  \notag
\end{eqnarray}

This will be shown below to circumvent use of the Quasi-Orthogonality
Argument in the restricted range $1<p<4$ (which is then further restricted
to $\frac{4}{3}<p$ when treating the dual stopping form). See also the
section on Concluding Remarks at the end of the paper for more discussion on
this point. We will be using special stopping collections $\mathcal{Q}$ and $%
\mathcal{A}$ constructed using the upside down corona of Lacey \cite{Lac},
but adapted to $p\neq 2$, and we now turn to this construction.

\subsection{Dual tree decomposition}

To control the stopping form $\mathsf{B}_{\limfunc{stop}}\left( f,g\right) $%
, we need to introduce further corona decompositions within each corona $%
\mathcal{C}_{\mathcal{F}}\left( F\right) $ to which we can apply the $L^{p}$%
-Stopping Child\ Lemma. These coronas will be associated to stopping
intervals $\mathcal{A}=\mathcal{A}\left[ F\right] \subset \mathcal{C}_{%
\mathcal{F}}\left( F\right) $, whose construction, following \cite{Saw7},
uses a dual tree decomposition originating with M. Lacey in \cite{Lac}.
However, our stopping criteria will be different when $p\neq 2$, and the
arguments more involved.

\begin{definition}
Let $\mathcal{T}$ be a tree with root $o$.

\begin{enumerate}
\item Let $P\left( \alpha \right) \equiv \left\{ \beta \in \mathcal{T}:\beta
\succeq \alpha \right\} $ and $S\left( \alpha \right) \equiv \left\{ \beta
\in \mathcal{T}:\beta \preceq \alpha \right\} $ denote the predessor and
successor sets of $\alpha \in \mathcal{T}$.

\item A \emph{geodesic} $\mathfrak{g}$ is a maximal linearly ordered subset
of $\mathcal{T}$. A finite geodesic $\mathfrak{g}$ is an interval $\mathfrak{%
g}=\left[ \alpha ,\beta \right] =P\left( \beta \right) \setminus S\left(
\alpha \right) $, and an infinite geodesic is an interval $\mathfrak{g}=%
\mathfrak{g}\setminus P\left( \alpha \right) $ for some $\alpha \in 
\mathfrak{g}$. Intervals $\left( \alpha ,\beta \right) $, $\left( \alpha
,\beta \right] $ and $\left[ \alpha ,\beta \right] $ are defined similarly.

\item A \emph{stopping time}\footnote{%
This different definition of stopping time used here, is that used in the
theory of trees, but should cause no confusion with the other definition we
use elsewhere, that a stopping time is any subset of $\mathcal{T}$.} $T$ for
a tree $\mathcal{T}$ is a subset $T\subset \mathcal{T}$ such that 
\begin{equation*}
S\left( \beta \right) \cap S\left( \beta ^{\prime }\right) =\emptyset \text{
for all }\beta ,\beta ^{\prime }\in T\text{ with }\beta \neq \beta ^{\prime
}.
\end{equation*}

\item A sequence $\left\{ T_{n}\right\} _{n=0}^{N}$ of stopping times $T_{n}$
is \emph{decreasing} if, for every $\beta \in T_{n+1}$ with $0\leq n<N$,
there is $\beta ^{\prime }\in T_{n}$ such that $S\left( \beta \right)
\subset S\left( \beta ^{\prime }\right) $. We think of such a sequence as
getting further from the root as $n$ increases.

\item For $T$ a stopping time in $\mathcal{T}$ and $\alpha \in \mathcal{T}$,
we define 
\begin{equation*}
\left[ T,\alpha \right) \equiv \bigcup_{\beta \in T}\left[ \beta ,\alpha
\right) ,
\end{equation*}%
where the interval $\left[ \beta ,\alpha \right) =\emptyset $ unless $\beta
\prec \alpha $. In the case $\left[ T,\alpha \right) =\emptyset $, we write $%
\alpha \preceq T$, and in the case $\left[ T,\alpha \right) \not=\emptyset $%
, we write $\alpha \succ T$. The set $\left[ T,\alpha \right) $ can be
thought of as the set of points in the tree $\mathcal{T}$ that `lie between' 
$T$ and $\alpha $ but are strictly less than $\alpha $. We also define $%
\left( T,\alpha \right) $, $\left( T,\alpha \right] $ and $\left[ T,\alpha %
\right] $ in similar fashion.

\item For any $\alpha \in \mathcal{T}$, we define the set of its \emph{%
children} $\mathfrak{C}_{T}\left( \alpha \right) $ to consist of the \emph{%
maximal} elements $\beta \in \mathcal{T}$ such that $\beta \prec \alpha $.
\end{enumerate}
\end{definition}

\bigskip

In the finite tree pictured below, downward arrows point to small tree
elements, and we have for example,%
\begin{eqnarray*}
&&\gamma \prec \beta \prec \alpha ,\ \ \ P\left( \gamma \right) =\left\{
\gamma ,\beta ,\alpha ,o\right\} =\left[ \gamma ,o\right] ,\ \ \ \left(
\gamma ^{\prime \prime \prime },o\right] =\left\{ \beta ^{\prime \prime
},\alpha ,o\right\} , \\
&&\ \ \ \ \ \ \ \ \ \ \ \ \ \ \ \text{and }S\left( \alpha \right) =\left\{
\alpha ,\beta ,\beta ^{\prime },\beta ^{\prime \prime },\gamma ,\gamma
^{\prime },\gamma ^{\prime \prime },\gamma ^{\prime \prime \prime }\right\} .
\end{eqnarray*}%
\begin{equation*}
\left[ 
\begin{array}{ccccccccc}
&  &  &  & o &  &  &  &  \\ 
&  &  & \swarrow & \downarrow & \searrow &  &  &  \\ 
&  & \alpha ^{\prime } &  & \alpha &  & \alpha ^{\prime \prime } &  &  \\ 
&  &  & \swarrow & \downarrow & \searrow &  &  &  \\ 
&  & \beta &  & \beta ^{\prime } &  & \beta ^{\prime \prime } &  &  \\ 
& \swarrow & \downarrow &  &  &  & \downarrow & \searrow &  \\ 
\gamma &  & \gamma ^{\prime } &  &  &  & \gamma ^{\prime \prime } &  & 
\gamma ^{\prime \prime \prime }%
\end{array}%
\right]
\end{equation*}

Lemma \ref{lem I*} below will create a set stopping times for any function $%
\nu :\mathcal{T}\rightarrow L^{p}\left( \ell ^{2}\left( \mathcal{T}\right)
;\omega \right) $ with finite support. It might be useful to point out the
application we have in mind. Namely, we will take $\mathcal{T}$ to be a
connected subset of the grid $\mathcal{D}$, where the root $o$ is a dyadic
interval $T\in \mathcal{T}$, and where $J\preccurlyeq I$ is defined to hold
if and only if $J\subset I$ (i.e. the symbols $\preccurlyeq $ and $\subset $
are consistent). For any subset $\Lambda \subset \mathcal{T}$, we will
consider the sequence-valued function $\nu _{\Lambda }:\mathcal{T}%
\rightarrow L^{p}\left( \ell ^{2}\left( \mathcal{T}\right) ;\omega \right) $
defined by 
\begin{equation}
\nu _{\Lambda }\left( I\right) \equiv \left\{ 
\begin{array}{ccc}
\left\{ \bigtriangleup _{J}^{\omega }Z\left( x\right) \mathbf{1}_{\left\{
I\right\} }\left( J\right) \right\} _{J\in \mathcal{T}} & \text{ if } & I\in
\Lambda \\ 
0 & \text{ if } & I\notin \Lambda%
\end{array}%
\right. \ ,  \label{def mu}
\end{equation}%
i.e. $\nu _{\Lambda }\left( I\right) $ is the sequence $\left\{ f_{J}\left(
x\right) \right\} _{J\in \mathcal{T}}$ where $f_{J}\left( x\right) =\left\{ 
\begin{array}{ccc}
\bigtriangleup _{J}^{\omega }Z\left( x\right) & \text{ if } & J=I\in \Lambda
\\ 
0 & \text{ if } & J\neq I%
\end{array}%
\right. $ and is the zero sequence otherwise. We define the dual integration
operator $I^{\ast }$ on $\nu $ by $I^{\ast }\nu \left( \alpha \right) \equiv
\sum_{\beta \in \mathcal{T}:\ \beta \prec \alpha }\nu \left( \beta \right) $%
. More generally, for any subset $\Omega $ of the tree $\mathcal{T}$, we
define%
\begin{equation*}
I^{\Omega }\nu \equiv \sum_{\beta \in \Omega }\nu \left( \beta \right) ,
\end{equation*}%
in which case $I^{\ast }\nu \left( \alpha \right) =I^{S\left( \alpha \right)
}\nu $. Note that in the application setting discussed above, we have%
\begin{equation*}
\left\Vert I^{\ast }\nu _{\Lambda }\left( I\right) \right\Vert _{L^{p}\left(
\ell ^{2};\omega \right) }^{p}=\left\Vert \left( \sum_{J\in \Lambda :\
J\subset I}\left\vert \bigtriangleup _{J}^{\omega }Z\right\vert ^{2}\right)
^{\frac{1}{2}}\right\Vert _{L^{p}\left( \omega \right) }^{p}=\int_{\mathbb{R}%
}\left( \sum_{J\in \Lambda :\ J\subset I}\left\vert \bigtriangleup
_{J}^{\omega }Z\left( x\right) \right\vert ^{2}\right) ^{\frac{p}{2}}d\omega
\left( x\right) .
\end{equation*}%
Here is the dual stopping time lemma that abstracts and extends that of M.
Lacey in \cite{Lac} to $p\neq 2$. We state this lemma for a tree with
bounded numbers of children, but we will only use the case of a dyadic tree,
which has at most two children.

\begin{lemma}
\label{lem I*}Let $\left( \mathcal{T},\preccurlyeq \right) $ be a tree with
root $o$ and $M\equiv \sup_{\alpha \in \mathcal{T}}\#\mathfrak{C}_{\mathcal{T%
}}\left( \alpha \right) <\infty $, and suppose $\nu :\mathcal{T}\rightarrow
L^{p}\left( \ell ^{2}\left( \mathcal{T}\right) ;\omega \right) $ is
nontrivial with finite support, and that $T_{0}$ is the stopping time
consisting of the minimal tree elements in the support of $\nu $. Fix $%
\Gamma >1$\footnote{%
This is not necessarily the same $\Gamma $ as used in the Calder\'{o}%
n-Zygmund stopping time construction (\ref{energy stop crit}).}. If there is
no element $\alpha \in \mathcal{T}$ with 
\begin{equation*}
\left\Vert I^{\ast }\nu \left( \alpha \right) \right\Vert _{L^{p}\left( \ell
^{2};\omega \right) }^{p}>\Gamma ^{p}\sum_{\beta \in \mathcal{T}:\ \beta
\prec \alpha }\left\Vert I^{\ast }\nu \left( \beta \right) \right\Vert
_{L^{p}\left( \ell ^{2};\omega \right) }^{p},
\end{equation*}%
we say the tree is $\Gamma $-irreducible. Otherwise, there is a unique
increasing sequence $\left\{ T_{n}\right\} _{n=0}^{N+1}$, with $%
T_{N+1}=\left\{ o\right\} $, of stopping times $T_{n}$ such that for all $%
n\in \mathbb{N}$ with $n\leq N$, 
\begin{eqnarray}
\left\Vert I^{\ast }\nu \left( \alpha \right) \right\Vert _{L^{p}\left( \ell
^{2};\omega \right) }^{p} &>&\Gamma ^{p}\sum_{\beta \in T_{n-1}:\ \beta
\prec \alpha }\left\Vert I^{\ast }\nu \left( \beta \right) \right\Vert
_{L^{p}\left( \ell ^{2};\omega \right) }^{p},\ \ \ \ \ \text{for all }\alpha
\in T_{n}\ ;  \label{dec corona} \\
\left\Vert I^{\ast }\nu \left( \gamma \right) \right\Vert _{L^{p}\left( \ell
^{2};\omega \right) }^{p} &\leq &\Gamma ^{p}\sum_{\beta \in T_{n-1}:\ \beta
\prec \gamma }\left\Vert I^{\ast }\nu \left( \beta \right) \right\Vert
_{L^{p}\left( \ell ^{2};\omega \right) }^{p},\text{ \ \ \ \ for all }\gamma
\in \left[ T_{n-1},\alpha \right) \text{ with }\alpha \in T_{n}\ ;  \notag \\
\left\Vert I^{\ast }\nu \left( o\right) \right\Vert _{L^{p}\left( \ell
^{2};\omega \right) }^{p} &\leq &\Gamma ^{p}\sum_{\beta \in T_{N}:\ \beta
\prec \gamma }\left\Vert I^{\ast }\nu \left( \beta \right) \right\Vert
_{L^{p}\left( \ell ^{2};\omega \right) }^{p}\ .  \notag
\end{eqnarray}%
Moreover, this unique sequence $\left\{ T_{n}\right\} _{n=0}^{N+1}$
satisfies the following inequalities,%
\begin{eqnarray}
&&  \label{dec corona small} \\
\frac{\left\Vert I^{\left( T_{n-1},\alpha \right) }\nu \right\Vert
_{L^{p}\left( \ell ^{2};\omega \right) }^{p}}{\sum_{\beta \in T_{n-1}:\
\beta \prec \alpha }\left\Vert I^{\ast }\nu \left( \beta \right) \right\Vert
_{L^{p}\left( \ell ^{2};\omega \right) }^{p}} &\leq &C_{p}\left( \Gamma
^{p}-1\right) ^{\natural },\ \ \ \ \ \text{for all }\alpha \in T_{n}\ ,\ \ \
\ \ 1\leq n\leq N+1,  \notag \\
\frac{\left\Vert I^{\left( T_{n-1},\gamma \right) }\nu \right\Vert
_{L^{p}\left( \ell ^{2};\omega \right) }^{p}}{\sum_{\beta \in T_{n-1}:\
\beta \prec \gamma }\left\Vert I^{\ast }\nu \left( \beta \right) \right\Vert
_{L^{p}\left( \ell ^{2};\omega \right) }^{p}} &\leq &C_{p}\left( \Gamma
^{p}-1\right) ^{\natural },\ \ \ \ \ \text{for all }\gamma \in \left(
S\left( \alpha \right) \setminus \left\{ \alpha \right\} \right) \setminus
\bigcup_{\beta \in T_{n-1}}S\left( \beta \right) ,  \notag
\end{eqnarray}%
where $\Theta ^{\natural }\equiv \max \left\{ \Theta ^{\frac{p}{2}},\Theta
\right\} $ for any $\Theta >0$. When $p\geq 2$, we may drop the constant $%
C_{p}$ and the $\natural $ in the exponent.
\end{lemma}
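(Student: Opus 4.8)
The plan is to build the sequence $\{T_n\}_{n=0}^{N+1}$ by a greedy bottom-up selection, read off the structural conditions (\ref{dec corona}) and uniqueness directly from the construction, and then derive the quantitative inequalities (\ref{dec corona small}) by combining the disjoint-support machinery of Lemma \ref{disjoint supp} with Lemma \ref{q<1} in the range $1<p<2$ and a direct convexity argument when $p\ge2$.

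First I would construct the $T_n$. Since $\nu$ has finite support and all the conditions in the lemma only involve tree elements lying (weakly) above $\operatorname{supp}\nu$, I may pass to the finite subtree spanned by $\operatorname{supp}\nu$ and the root $o$ (this is exactly the situation in the application, where $\mathcal{T}\subset\digamma$ is finite). Set $T_0$ to be the minimal elements of $\operatorname{supp}\nu$. Inductively, given $T_{n-1}$, let $P_n(\alpha)$ denote the inequality $\lVert I^{\ast}\nu(\alpha)\rVert_{L^{p}(\ell^{2};\omega)}^{p}>\Gamma^{p}\sum_{\beta\in T_{n-1}:\ \beta\prec\alpha}\lVert I^{\ast}\nu(\beta)\rVert_{L^{p}(\ell^{2};\omega)}^{p}$, and let $T_n$ consist of the \emph{minimal} elements $\alpha$ lying strictly above $T_{n-1}$ for which $P_n(\alpha)$ holds; if no such $\alpha$ exists below $o$, stop, set $N=n-1$, and append $T_{N+1}=\{o\}$. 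Finiteness of the ambient subtree forces termination. Minimality makes distinct elements of $T_n$ incomparable, so $T_n$ is a stopping time, and one checks the $T_n$ form an increasing sequence (each $\alpha\in T_n$ lies above a $T_{n-1}$-element, and by the time $T_{N+1}=\{o\}$ is reached everything is covered). Conditions (i) and (ii) of (\ref{dec corona}) are immediate: (i) says $P_n$ holds on $T_n$, (ii) is minimality (failure of $P_n$ on $[T_{n-1},\alpha)$), and (iii) is the termination criterion $P_{N+1}(o)$ fails. Uniqueness follows because at each stage $T_n$ is the \emph{only} stopping time satisfying (i) and (ii) relative to $T_{n-1}$; this one sees by comparing two candidate stopping times element by element along geodesics from $T_{n-1}$ to $o$.

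The heart of the matter is (\ref{dec corona small}). Fix $\alpha\in T_n$ (the $\gamma$-version is the same with $\alpha$ replaced by an intermediate element). I would decompose $I^{\ast}\nu(\alpha)=G_\alpha+B_\alpha$, where $G_\alpha$ collects the contribution of $\nu$ on $S(T_{n-1})$ below $\alpha$ and $B_\alpha=I^{(T_{n-1},\alpha)}\nu$ is the contribution strictly between $T_{n-1}$ and $\alpha$. Each tree index is either $\preccurlyeq$ some element of $T_{n-1}$ or strictly between, so $G_\alpha$ and $B_\alpha$ have disjoint $\ell^{2}$-supports; hence Lemma \ref{disjoint supp}, (\ref{first disj}) gives $\lVert I^{\ast}\nu(\alpha)\rVert^{p}=\int_{\mathbb{R}}\bigl(\lvert G_\alpha\rvert_{\ell^{2}}^{2}+\lvert B_\alpha\rvert_{\ell^{2}}^{2}\bigr)^{p/2}d\omega$, and in particular $\Lambda^{p}:=\lVert I^{\ast}\nu(\alpha)\rVert^{p}-\lVert G_\alpha\rVert^{p}$ is exactly the quantity appearing in Lemma \ref{q<1}. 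Running an induction up the tree from $T_{n-1}$, the failure of $P_n$ at the $\le M$ children of each node (condition (ii)), together with the disjoint-support reassembly over children and the elementary splitting inequality (\ref{well}) $(a+b)^{q}\le(1+\varepsilon)a^{q}+C_{q,\varepsilon}b^{q}$ used to isolate the dominant $\lVert G_\alpha\rVert^{p}$ term, yields $\Lambda^{p}\le C_{M,p}(\Gamma^{p}-1)\lVert G_\alpha\rVert^{p}$ (the boundary/diagonal $\lVert\nu(\alpha)\rVert^{p}$ terms in the reassembly vanish or are lower order because of the structure of $\nu$, which in the application is a single weighted-Haar term). Now for $p\ge2$ the super-additivity $\lVert G+B\rVert^{p}\ge\lVert G\rVert^{p}+\lVert B\rVert^{p}$ of $L^{p}(\ell^{2})$ on disjointly supported pieces gives $\lVert B_\alpha\rVert^{p}\le\Lambda^{p}\lesssim(\Gamma^{p}-1)\lVert G_\alpha\rVert^{p}$, so one may drop $C_p$ and the $\natural$; for $1<p<2$ I would instead apply Lemma \ref{q<1} verbatim with $G=G_\alpha$, $B=B_\alpha$, and (\ref{suff}) turns the bound $\Lambda^{p}/\lVert G_\alpha\rVert^{p}\lesssim\Gamma^{p}-1$ into $\lVert B_\alpha\rVert^{p}\lesssim\max\{(\Gamma^{p}-1),(\Gamma^{p}-1)^{p/2}\}\lVert G_\alpha\rVert^{p}=(\Gamma^{p}-1)^{\natural}\lVert G_\alpha\rVert^{p}$, which is (\ref{dec corona small}).

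The step I expect to be the main obstacle is precisely the $1<p<2$ case of (\ref{dec corona small}): without orthogonality one cannot subtract $\lVert G_\alpha\rVert^{p}$ from $\lVert I^{\ast}\nu(\alpha)\rVert^{p}$ and simply read off $\lVert B_\alpha\rVert^{p}$, and the bookkeeping needed to verify the hypothesis $\Lambda^{p}\lesssim(\Gamma^{p}-1)\lVert G_\alpha\rVert^{p}$ of Lemma \ref{q<1} — which requires inserting (\ref{well}) in the right order, the per-child failure of $P_n$, bounded branching, and the disjoint-support identity, while keeping track of the boundary terms — is where all the care lies. A secondary, essentially cosmetic point is making the strict/non-strict conventions in $I^{\ast}$ and in the successor sums $\sum_{\beta\prec\alpha}$ consistent so that $G_\alpha$ and $B_\alpha$ genuinely partition $I^{\ast}\nu(\alpha)$; I would fix this once at the outset.
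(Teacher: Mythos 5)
Your construction of $\{T_n\}$ and your overall toolkit (disjoint supports via Lemma \ref{disjoint supp}, Lemma \ref{q<1} when $1<p<2$, superadditivity when $p\ge 2$) coincide with the paper's, and your plan is salvageable for the second inequality in (\ref{dec corona small}), since at an intermediate $\gamma$ the selection criterion \emph{fails at $\gamma$ itself} and directly furnishes the hypothesis $\Lambda_\gamma^{p}\le(\Gamma^{p}-1)\Vert G_{\gamma}\Vert^{p}$ of Lemma \ref{q<1}. The genuine gap is in the first inequality, at $\alpha\in T_{n}$: your key intermediate claim $\Lambda^{p}=\Vert I^{\ast}\nu(\alpha)\Vert^{p}-\Vert G_{\alpha}\Vert^{p}\le C_{M,p}(\Gamma^{p}-1)\Vert G_{\alpha}\Vert^{p}$ cannot be obtained at $\alpha$, because the stopping criterion \emph{holds} there, giving the reverse inequality $\Lambda^{p}>(\Gamma^{p}-1)\Vert G_{\alpha}\Vert^{p}$; and once the endpoint term $\nu(\alpha)$ is included in $I^{\ast}\nu(\alpha)$ (the convention $I^{\ast}\nu(\alpha)=I^{S(\alpha)}\nu$ is the one that makes the lemma nonvacuous) the claim is simply false: take the support of $\nu$ to be $\{\beta_{0},\alpha\}$ with $\beta_{0}\prec\alpha$, $\Vert\nu(\beta_{0})\Vert$ tiny and $\Vert\nu(\alpha)\Vert$ huge; then $\Lambda^{p}\approx\Vert\nu(\alpha)\Vert^{p}\gg(\Gamma^{p}-1)\Vert G_{\alpha}\Vert^{p}$, although the conclusion (\ref{dec corona small}) — which concerns only the open interval $(T_{n-1},\alpha)$ and hence excludes $\nu(\alpha)$ — holds trivially. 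Dismissing such endpoint terms as ``lower order because of the structure of $\nu$'' is not legitimate: the lemma is stated for general $\nu$, and even for the Haar-coefficient $\nu$ of the application a single term $\Vert\triangle_{\alpha}^{\omega}Z\Vert_{L^{p}(\omega)}$ need not be small relative to $(\Gamma^{p}-1)\sum_{\beta}\Vert I^{\ast}\nu(\beta)\Vert^{p}$.

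Your proposed repair — an ``induction up the tree from $T_{n-1}$'' using (\ref{well}) — would not produce the stated constant either: the $(1+\varepsilon)$ losses in (\ref{well}) compound over the unbounded number of levels between $T_{n-1}$ and $\alpha$, and the $C_{p,\varepsilon}\Vert\nu(\delta)\Vert^{p}$ remainders at intermediate nodes cannot be absorbed when $\Gamma^{p}-1$ is small, which is exactly the regime used later ($\Gamma^{p}=1+\theta$ with $\theta\ll1$). The paper's proof needs neither induction nor (\ref{well}): write $I^{(T_{n-1},\alpha)}\nu=\sum_{\gamma\in\mathfrak{C}_{\mathcal{T}}(\alpha)}\bigl(I^{\ast}\nu(\gamma)-\sum_{\beta\in T_{n-1}:\ \beta\preccurlyeq\gamma}I^{\ast}\nu(\beta)\bigr)$ (children below $T_{n-1}$ contributing nothing), note the summands have pairwise disjoint supports so their $p$-th power norms add by Lemma \ref{disjoint supp}, and observe that at each child $\gamma$ the minimality of $\alpha$ — the second line in (\ref{dec corona}) — says the criterion \emph{fails}, which is precisely the hypothesis $\Lambda_{\gamma}^{p}\le(\Gamma^{p}-1)\sum_{\beta\preccurlyeq\gamma}\Vert I^{\ast}\nu(\beta)\Vert^{p}$ needed for Lemma \ref{q<1}, or, when $p\ge2$, for the one-line superadditivity estimate $(a+b)^{p/2}\ge a^{p/2}+b^{p/2}$; summing over the at most $M$ children gives the bound $MC_{p}(\Gamma^{p}-1)^{\natural}$. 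In short: run your decomposition one level down, at the children of $\alpha$, where the failure of the criterion is available — at $\alpha$ itself it is not.
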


\begin{proof}
If $T_{n}$ is already defined, let $T_{n+1}$ consist of all minimal points $%
\alpha \in \mathcal{T}$ satisfying 
\begin{equation}
\left\Vert I^{\ast }\nu \left( \alpha \right) \right\Vert _{L^{p}\left( \ell
^{2};\omega \right) }^{p}>\Gamma ^{p}\sum_{\beta \in T_{n}:\ \beta \succ
\alpha }\left\Vert I^{\ast }\nu \left( \beta \right) \right\Vert
_{L^{p}\left( \ell ^{2};\omega \right) }^{p}\ ,  \label{stop crit}
\end{equation}%
provided at least one such point $\alpha $ exists. If not then set $N=n$ and
define $T_{N+1}\equiv \left\{ o\right\} $. It is easy to see that the
sequence $\left\{ T_{n}\right\} _{n=0}^{N+1}$ so constructed is an
increasing sequence of stopping times that satisfies (\ref{dec corona}), and
is unique with these properties.

Note that for $q\geq 1$, we have%
\begin{eqnarray}
\left( a+b\right) ^{q} &\geq &a^{q}+b^{q}\text{ for }a,b>0\text{ and }q\geq
1,  \label{q1} \\
\text{i.e. }b^{q} &\leq &\left( a+b\right) ^{q}-a^{q}\text{ for }a,b>0\text{
and }q\geq 1.  \notag
\end{eqnarray}%
Thus for $p\geq 2$, we have $q=\frac{p}{2}\geq 1$ and the first line in (\ref%
{dec corona small}) holds since using Lemma \ref{disjoint supp}, 
\begin{eqnarray*}
&&\left\Vert I^{\left( T_{n-1},\alpha \right) }\nu \right\Vert _{L^{p}\left(
\ell ^{2};\omega \right) }^{p}=\sum_{\gamma \in \mathfrak{C}_{\mathcal{T}%
}\left( \alpha \right) }\left\Vert I^{\left( T_{n-1},\gamma \right) }\nu
\right\Vert _{L^{p}\left( \ell ^{2};\omega \right) }^{p}=\sum_{\gamma \in 
\mathfrak{C}_{\mathcal{T}}\left( \alpha \right) }\left\Vert I^{\ast }\nu
\left( \gamma \right) -\sum_{\beta \in T_{n-1}:\ \beta \preccurlyeq \gamma
}I^{\ast }\nu \left( \beta \right) \right\Vert _{L^{p}\left( \ell
^{2};\omega \right) }^{p} \\
&\leq &\sum_{\gamma \in \mathfrak{C}_{\mathcal{T}}\left( \alpha \right)
}\left\Vert I^{\ast }\nu \left( \gamma \right) \right\Vert _{L^{p}\left(
\ell ^{2};\omega \right) }^{p}-\sum_{\gamma \in \mathfrak{C}_{\mathcal{T}%
}\left( \alpha \right) }\left\Vert \sum_{\beta \in T_{n-1}:\ \beta
\preccurlyeq \gamma }I^{\ast }\nu \left( \beta \right) \right\Vert
_{L^{p}\left( \ell ^{2};\omega \right) }^{p}\ \ \ \ \ \ \ \ \ \ \ \ \ \ \ \
\ \ \ \ \text{by (\ref{q1})}, \\
&\leq &\sum_{\gamma \in \mathfrak{C}_{\mathcal{T}}\left( \alpha \right)
}\left( \Gamma ^{p}\sum_{\beta \in T_{n-1}:\ \beta \prec \gamma }\left\Vert
I^{\ast }\nu \left( \beta \right) \right\Vert _{L^{p}\left( \ell ^{2};\omega
\right) }^{p}-\sum_{\beta \in T_{n-1}:\ \beta \preccurlyeq \gamma
}\left\Vert I^{\ast }\nu \left( \beta \right) \right\Vert _{L^{p}\left( \ell
^{2};\omega \right) }^{p}\right) \\
&\leq &\sum_{\gamma \in \mathfrak{C}_{\mathcal{T}}\left( \alpha \right)
}\left( \Gamma ^{p}-1\right) \sum_{\beta \in T_{n-1}:\ \beta \preccurlyeq
\gamma }\left\Vert I^{\ast }\nu \left( \beta \right) \right\Vert
_{L^{p}\left( \ell ^{2};\omega \right) }^{p}=\left( \Gamma ^{p}-1\right)
\sum_{\beta \in T_{n-1}:\ \beta \prec \alpha }\left\Vert I^{\ast }\nu \left(
\beta \right) \right\Vert _{L^{p}\left( \ell ^{2};\omega \right) }^{p}.
\end{eqnarray*}

In the case $1<p<2$, we must work harder since (\ref{q1}) fails when $q<1$.
In fact we now use Lemma \ref{q<1} from the section on preliminaries to show
that the first line in (\ref{dec corona small}) holds. Indeed, from Lemmas %
\ref{disjoint supp}\ and \ref{q<1}, and using that the maximal elements in $%
\left( T_{n-1},\alpha \right) $ are the children $\gamma \in \mathfrak{C}_{%
\mathcal{T}}\left( \alpha \right) $, we have%
\begin{eqnarray*}
&&\frac{\left\Vert I^{\left( T_{n-1},\alpha \right) }\nu \right\Vert
_{L^{p}\left( \ell ^{2};\omega \right) }^{p}}{\left\Vert \sum_{\beta \in
T_{n-1}:\ \beta \prec \alpha }I^{\ast }\nu \left( \beta \right) \right\Vert
_{L^{p}\left( \ell ^{2};\omega \right) }^{p}}\leq \sum_{\gamma \in \mathfrak{%
C}_{\mathcal{T}}\left( \alpha \right) }\frac{\left\Vert I^{\left(
T_{n-1},\gamma \right) }\nu \right\Vert _{L^{p}\left( \ell ^{2};\omega
\right) }^{p}}{\left\Vert \sum_{\beta \in T_{n-1}:\ \beta \preccurlyeq
\gamma }I^{\ast }\nu \left( \beta \right) \right\Vert _{L^{p}\left( \ell
^{2};\omega \right) }^{p}} \\
&=&\sum_{\gamma \in \mathfrak{C}_{\mathcal{T}}\left( \alpha \right) }\frac{%
\left\Vert I^{\ast }\nu \left( \gamma \right) -\sum_{\beta \in T_{n-1}:\
\beta \preccurlyeq \gamma }I^{\ast }\nu \left( \beta \right) \right\Vert
_{L^{p}\left( \ell ^{2};\omega \right) }^{p}}{\left\Vert \sum_{\beta \in
T_{n-1}:\ \beta \preccurlyeq \gamma }I^{\ast }\nu \left( \beta \right)
\right\Vert _{L^{p}\left( \ell ^{2};\omega \right) }^{p}} \\
&\leq &C_{p}\sum_{\gamma \in \mathfrak{C}_{\mathcal{T}}\left( \alpha \right)
}\left( \frac{\left\Vert I^{\ast }\nu \left( \gamma \right) \right\Vert
_{L^{p}\left( \ell ^{2};\omega \right) }^{p}}{\left\Vert \sum_{\beta \in
T_{n-1}:\ \beta \preccurlyeq \gamma }I^{\ast }\nu \left( \beta \right)
\right\Vert _{L^{p}\left( \ell ^{2};\omega \right) }^{p}}-1\right)
^{\natural },
\end{eqnarray*}%
by (\ref{suff}), which equals, again using Lemma \ref{disjoint supp}, 
\begin{equation*}
C_{p}\sum_{\gamma \in \mathfrak{C}_{\mathcal{T}}\left( \alpha \right)
}\left( \frac{\left\Vert I^{\ast }\nu \left( \gamma \right) \right\Vert
_{L^{p}\left( \ell ^{2};\omega \right) }^{p}}{\sum_{\beta \in T_{n-1}:\
\beta \preccurlyeq \gamma }\left\Vert I^{\ast }\nu \left( \beta \right)
\right\Vert _{L^{p}\left( \ell ^{2};\omega \right) }^{p}}-1\right)
^{\natural }\leq MC_{p}\left( \Gamma ^{p}-1\right) ^{\natural
}=C_{p,M}\left( \Gamma ^{p}-1\right) ^{\natural }.
\end{equation*}

The same arguments prove the second line in (\ref{dec corona small}), since $%
\gamma \in \left( S\left( \alpha \right) \setminus \left\{ \alpha \right\}
\right) \setminus \bigcup_{\beta \in T_{n-1}}S\left( \beta \right) $ was 
\emph{not} chosen by the stopping criterion in the first line of (\ref{dec
corona}), and hence%
\begin{equation*}
\left\Vert I^{\ast }\nu \left( \gamma \right) \right\Vert _{L^{p}\left( \ell
^{2};\omega \right) }^{p}\leq \Gamma ^{p}\sum_{\beta \in T_{n-1}:\ \beta
\prec \gamma }\left\Vert I^{\ast }\nu \left( \beta \right) \right\Vert
_{L^{p}\left( \ell ^{2};\omega \right) }^{p}.
\end{equation*}%
We can now proceed as above, and this completes the proof of Lemma \ref{lem
I*}.
\end{proof}

\subsection{Corona Martingale Comparison Principle}

Suppose $\mu $ is a locally finite positive Borel measure on the real line $%
\mathbb{R}$, that $\mathcal{L}\subset \mathcal{D}$ with top interval $T$,
and that $\left\{ \mathcal{C}_{\mathcal{L}}\left( L\right) \right\} _{L\in 
\mathcal{L}}$ is the associated collection of coronas. For each $k\in 
\mathbb{N}$ define%
\begin{equation*}
\mathsf{P}_{\mathcal{L},k}^{\mu }\equiv \sum_{L\in \mathfrak{C}_{\mathcal{L}%
}^{\left( k\right) }\left( T\right) }\mathsf{P}_{\mathcal{C}_{\mathcal{L}%
}\left( L\right) }^{\mu }\text{ where }\mathsf{P}_{\mathcal{C}_{\mathcal{L}%
}\left( F\right) }^{\mu }\equiv \sum_{I\in \mathcal{C}_{\mathcal{L}}\left(
F\right) }\bigtriangleup _{I}^{\mu }.
\end{equation*}%
Then as shown in \cite[see the section on square functions and vector-valued
inequalities]{SaWi}, the sequence $\left\{ \mathsf{P}_{\mathcal{L},k}^{\mu
}g\right\} _{k\in \mathbb{N}}$ is a martingale difference sequence of an $%
L^{p}$ bounded martingale for any $g\in L^{p}\left( \mu \right) $. We will
refer to such a martingale on the real line as an $L^{p}$\emph{-}$\mathcal{L}
$\emph{\ martingale}. We define $\mathfrak{C}_{\mathcal{L}}^{\left( \ell
\right) }\left( F\right) $ to be the set of $\ell $-grandchildren of $F$ in
the tree $\mathcal{L}$, and%
\begin{equation*}
\mathcal{C}_{\mathcal{L}}^{\left( \ell \right) }\left( F\right) \equiv
\bigcup_{F^{\prime }\in \mathfrak{C}_{\mathcal{L}}^{\left( \ell \right)
}\left( F\right) }\mathcal{C}_{\mathcal{L}}\left( F^{\prime }\right) \text{
and }\mathcal{C}_{\mathcal{L}}^{\left[ m\right] }\left( F\right) \equiv
\bigcup_{\ell =0}^{m}\mathcal{C}_{\mathcal{L}}^{\left( \ell \right) }\left(
F\right) .
\end{equation*}%
The comparison principle for corona martingales is a transplantation theorem
relying on the structure of corona martingales for its success. Variants of
this type of comparison principle for martingale differences arose in work
of J. Zinn almost four decades ago \cite{Zin}. See also Burkholder \cite{Bur}
for related inequalities.

\begin{proposition}[Corona Martingale Comparison Principle]
\label{CMCP}Let $1<p<\infty $, and $m,N\in \mathbb{N}$ with $m>N$.\ Let $\mu 
$ be a locally finite positive Borel measure on $\mathbb{R}$, let $\mathcal{L%
}\subset \mathcal{D}$ with top interval $T$, and suppose that $\left\{ 
\mathsf{P}_{\mathcal{L},k}g\right\} _{k\in \mathbb{N}}$ and $\left\{ \mathsf{%
P}_{\mathcal{L},k}b\right\} _{k\in \mathbb{N}}$ are martingale difference
sequences of $L^{p}$-$\mathcal{L}$ martingales with $\int_{T}gd\mu
=\int_{T}bd\mu =0$. Suppose moreover, that $\mathsf{P}_{\mathcal{C}_{%
\mathcal{L}}^{\left( k\right) }\left( T\right) }^{\mu }g=0$ for $0\leq k\leq
m-1$. Then we have 
\begin{equation}
\left\Vert \left\vert \left\{ \mathsf{P}_{\mathcal{L},k}^{\mu }g\right\}
_{k\in \mathbb{N}}\right\vert _{\ell ^{2}}\right\Vert _{L^{p}\left( \mu
\right) }\lesssim mNM_{\mathcal{L}}^{\left( m,N\right) }\left( g,b\right) \
\max_{0\leq s\leq N}\left\Vert \left\vert \left\{ \mathsf{P}_{\mathcal{L}%
,k+s}^{\mu }b\right\} _{k\in \mathbb{N}}\right\vert _{\ell ^{2}}\right\Vert
_{L^{p}\left( \mu \right) },  \label{part 2}
\end{equation}%
where%
\begin{equation}
M_{\mathcal{L}}^{\left( m,N\right) }\left( g,b\right) \equiv \sup_{L\in 
\mathcal{L}}\frac{\left\Vert \mathsf{P}_{\bigcup_{K\in \mathfrak{C}_{%
\mathcal{L}}^{\left( m\right) }\left( L\right) }\mathcal{D}\left( K\right)
}^{\mu }g\right\Vert _{L^{p}\left( \mu \right) }}{\left\Vert \mathsf{P}_{%
\mathcal{C}_{\mathcal{L}}^{\left[ N\right] }\left( L\right) }^{\mu
}b\right\Vert _{L^{p}\left( \mu \right) }}.  \label{M char}
\end{equation}
\end{proposition}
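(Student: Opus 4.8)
\textbf{Plan of proof for Proposition \ref{CMCP}.}

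The plan is to reduce the vector-valued martingale estimate on $g$ to the corresponding estimate on $b$ by a telescoping comparison, corona level by corona level, exploiting the specific structure of corona martingale difference sequences. First I would unwind the definitions: since $\mathsf{P}_{\mathcal{C}_{\mathcal{L}}^{\left( k\right) }\left( T\right) }^{\mu }g=0$ for $0\le k\le m-1$, the martingale difference $\mathsf{P}_{\mathcal{L},k}^{\mu }g$ vanishes for $k<m$, so the sum $\left\{ \mathsf{P}_{\mathcal{L},k}^{\mu }g\right\}$ is supported on levels $k\ge m$. The key observation is that for each $L\in \mathcal{L}$ the piece $\mathsf{P}_{\bigcup_{K\in \mathfrak{C}_{\mathcal{L}}^{\left( m\right) }\left( L\right) }\mathcal{D}\left( K\right) }^{\mu }g$ is, by the defining inequality \eqref{M char}, controlled in $L^{p}\left( \mu \right) $ norm by $M_{\mathcal{L}}^{\left( m,N\right) }\left( g,b\right) \left\Vert \mathsf{P}_{\mathcal{C}_{\mathcal{L}}^{\left[ N\right] }\left( L\right) }^{\mu }b\right\Vert _{L^{p}\left( \mu \right) }$; the $\mathbb{R}$-supports of these pieces, as $L$ ranges over a fixed generation $\mathfrak{C}_{\mathcal{L}}^{\left( j\right) }\left( T\right)$, are pairwise disjoint, which is exactly the hypothesis of Corollary \ref{disjoint supp'}.

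Next I would organize the sum $\left\vert \left\{ \mathsf{P}_{\mathcal{L},k}^{\mu }g\right\} _{k\in \mathbb{N}}\right\vert _{\ell ^{2}}$. Write $\mathsf{P}_{\mathcal{L},k}^{\mu }g=\sum_{L\in \mathfrak{C}_{\mathcal{L}}^{\left( k\right) }\left( T\right) }\mathsf{P}_{\mathcal{C}_{\mathcal{L}}\left( L\right) }^{\mu }g$, and for fixed $L\in \mathcal{L}$ bundle the $m$ consecutive corona projections $\mathsf{P}_{\mathcal{C}_{\mathcal{L}}\left( L'\right) }^{\mu }g$ for $L'\in \mathfrak{C}_{\mathcal{L}}^{\left[ m-1\right] }\left( L\right)$ that make up $\mathsf{P}_{\bigcup_{K\in \mathfrak{C}_{\mathcal{L}}^{\left( m\right) }\left( L\right) }\mathcal{D}\left( K\right) }^{\mu }g$ — here the factor $m$ in \eqref{part 2} enters, via the triangle inequality in $\ell^2$ bundling $m$ levels, together with the square function Theorem \ref{square thm} to pass between $\left\vert \left\{ \mathsf{P}_{\mathcal{L},k}^{\mu }g\right\}\right\vert_{\ell^2}$ and the single function $\sum_k \mathsf{P}_{\mathcal{L},k}^{\mu}g = \mathsf{P}_{\mathcal{C}_{\mathcal{L}}(T)}^{\mu}g$ restricted to appropriate subtrees. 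Applying Corollary \ref{disjoint supp'} with $\eta = M_{\mathcal{L}}^{\left( m,N\right) }\left( g,b\right)$, the $L^p(\mu)$ norm of the $g$-side is bounded by $M_{\mathcal{L}}^{\left( m,N\right) }\left( g,b\right)$ times the $L^p(\ell^2;\mu)$ norm of the family $\left\{ \mathsf{P}_{\mathcal{C}_{\mathcal{L}}^{\left[ N\right] }\left( L\right) }^{\mu }b\right\}_{L}$. Finally, this last family is expanded back as a sum of corona projections $\mathsf{P}_{\mathcal{L},k+s}^{\mu}b$ over shifts $0\le s\le N$ — since $\mathcal{C}_{\mathcal{L}}^{\left[ N\right] }\left( L\right)$ is the union of $N+1$ consecutive corona generations below $L$ — giving the factor $N$ and the maximum over $0\le s\le N$ on the right of \eqref{part 2}, again via Theorem \ref{square thm}.

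The main obstacle, and the step I would spend the most care on, is making the bookkeeping between the scalar function $\mathsf{P}_{\mathcal{C}_{\mathcal{L}}(T)}^{\mu}g$ and its $\ell^2$ martingale-square-function incarnation fully rigorous while keeping disjointness of supports intact at each stage. Specifically, when I bundle $m$ corona levels and then re-expand $N+1$ levels, I must verify that at every application of Corollary \ref{disjoint supp'} the relevant sequences genuinely have pairwise disjoint $\mathbb{R}$-supports (this holds because distinct $L$ in a fixed $\mathcal{L}$-generation have disjoint carriers, and the coronas $\mathcal{C}_{\mathcal{L}}(L)$ partition $\mathcal{D}[T]$), and that the passage through Theorem \ref{square thm} is applied to genuine $\mathcal{F}$-corona martingales rather than arbitrary sub-sums — which is why the hypothesis is phrased in terms of corona martingales and why $m>N$ matters (it guarantees the bundled $g$-pieces at level $L$ do not overlap the $N$-level $b$-neighbourhoods of sibling tops). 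Tracking the constant $C_p$ from Theorem \ref{square thm} and the triangle-inequality losses yields the stated $mN$ prefactor; I do not expect any genuinely new analytic input beyond Theorem \ref{square thm}, Corollary \ref{disjoint supp'}, and Lemma \ref{disjoint supp}.
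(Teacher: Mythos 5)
Your plan works only for a \emph{single} generation $k$: with $k$ fixed, the pieces $\left\{ \mathsf{P}_{\bigcup_{K\in \mathfrak{C}_{\mathcal{L}}^{\left( m\right) }\left( L\right) }\mathcal{D}\left( K\right) }^{\mu }g\right\} _{L\in \mathfrak{C}_{\mathcal{L}}^{\left( k\right) }\left( T\right) }$ and $\left\{ \mathsf{P}_{\mathcal{C}_{\mathcal{L}}^{\left[ N\right] }\left( L\right) }^{\mu }b\right\} _{L\in \mathfrak{C}_{\mathcal{L}}^{\left( k\right) }\left( T\right) }$ do have pairwise disjoint supports in $\mathcal{D}$ and in $\mathbb{R}$, and Corollary \ref{disjoint supp'} with $\eta =M_{\mathcal{L}}^{\left( m,N\right) }\left( g,b\right) $ gives exactly the paper's estimate (\ref{thus we have'}). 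The gap is in the next step, where you sum over \emph{all} generations. For $L$ and $L^{\prime }$ at different generations with $L^{\prime }\subsetneqq L$, the tail pieces of $g$ below $L$ and below $L^{\prime }$ have nested, overlapping supports (both in the Haar/$\mathcal{D}$ sense and in $\mathbb{R}$), and likewise the $b$-pieces $\mathsf{P}_{\mathcal{C}_{\mathcal{L}}^{\left[ N\right] }\left( L\right) }^{\mu }b$ overlap whenever the generations differ by at most $N$. So the hypothesis (\ref{R supp}) of Corollary \ref{disjoint supp'} fails for the doubly indexed family, and since $p\neq 2$ you cannot pass from the termwise norm bounds $\Vert f^{n}\Vert \leq \eta \Vert g^{n}\Vert $ to $\Vert \sum_{n}f^{n}\Vert \lesssim \eta \Vert \sum_{n}g^{n}\Vert $ without that disjointness; the quantity $M_{\mathcal{L}}^{\left( m,N\right) }$ is a ratio of norms over each $L$, not a pointwise bound, so there is no Fefferman--Stein-type substitute available for free. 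Your appeal to ``the coronas $\mathcal{C}_{\mathcal{L}}(L)$ partition $\mathcal{D}[T]$'' gives disjointness of Haar supports of the individual coronas, not of the bundled tail projections across generations, which is what the argument needs. (A smaller slip: $\mathsf{P}_{\bigcup_{K\in \mathfrak{C}_{\mathcal{L}}^{\left( m\right) }\left( L\right) }\mathcal{D}\left( K\right) }^{\mu }g$ is the projection onto \emph{all} levels at depth $\geq m$ below $L$, not onto the first $m$ corona levels, so your bundling of ``$m$ consecutive corona projections'' does not reconstruct it.)

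This is precisely the point the paper flags (``we cannot apply Corollary \ref{disjoint supp'} to the doubly indexed sequences \dots we must work harder''), and the bulk of its proof supplies the missing mechanism: a good/bad splitting with exceptional sets $\Omega _{m,N,L^{\prime }}$ on which the pointwise comparison fails, a Chebyshev-type measure estimate (\ref{note'}) for those sets — this is where $m>N$ is genuinely used, since it makes $\left\vert B_{L}^{\left[ N\right] }\right\vert _{\ell ^{2}}$ \emph{constant} on each $m$-grandchild, converting the norm-ratio $M_{\mathcal{L}}^{\left( m,N\right) }$ into control of $\left\vert \Omega _{m,N,L^{\prime }}\right\vert _{\mu }$ — then a decomposition of the bad part into $m+1$ residue classes mod $m+1$ and restriction to the annular sets $\Omega _{m,N,k}\setminus \Omega _{m,N,k+m+1}$, which restores genuine $\mathbb{N}$- and $\mathbb{R}$-disjointness, plus the dyadic maximal function comparison (\ref{G tilde}) and the splitting (\ref{Bks}) of $B^{\left[ N\right] }$ into $N+1$ martingale difference sequences. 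Your proposal contains no substitute for this machinery, and your interpretation of the role of $m>N$ (non-overlap with ``sibling tops'') is not the property actually needed. As written, the plan would not close.
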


\begin{remark}
A crucial feature of the proof, peculiar to corona martingales, is that $%
\mathsf{P}_{\mathcal{C}_{\mathcal{L}}^{\left[ N\right] }\left( L\right)
}^{\mu }b$ is constant on the support of $\mathsf{P}_{\bigcup_{K\in 
\mathfrak{C}_{\mathcal{L}}^{\left( m\right) }\left( L\right) }\mathcal{D}%
\left( K\right) }^{\mu }g$ when $m>N$.
\end{remark}

We will frequently use the square function in Theorem \ref{square thm} in
the proof of Proposition \ref{CMCP}, which in particular implies that 
\begin{eqnarray*}
\left\Vert \mathsf{P}_{\mathcal{C}_{\mathcal{L}}^{\left( m\right) }\left(
L\right) }^{\mu }f\right\Vert _{L^{p}\left( \mu \right) } &\approx
&\left\Vert \left\vert \left\{ \mathsf{P}_{\mathcal{C}_{\mathcal{L}}^{\left(
m\right) }\left( L^{\prime }\right) }^{\mu }f\right\} _{L^{\prime }\in 
\mathfrak{C}_{\mathcal{L}}^{\left( m\right) }\left( L\right) }\right\vert
_{\ell ^{2}}\right\Vert _{L^{p}\left( \mu \right) }=\left\Vert \left(
\sum_{L^{\prime }\in \mathfrak{C}_{\mathcal{L}}^{\left( m\right) }\left(
L\right) }\left\vert \mathsf{P}_{\mathcal{C}_{\mathcal{L}}^{\left( m\right)
}\left( L^{\prime }\right) }^{\mu }f\right\vert ^{2}\right) ^{\frac{1}{2}%
}\right\Vert _{L^{p}\left( \mu \right) } \\
&\approx &\left\Vert \left\vert \left\{ \bigtriangleup _{I}^{\mu }f\right\}
_{I\in \mathcal{C}_{\mathcal{L}}^{\left( m\right) }\left( L\right)
}\right\vert _{\ell ^{2}}\right\Vert _{L^{p}\left( \mu \right) }=\left\Vert
\left( \sum_{I\in \mathcal{C}_{\mathcal{L}}^{\left( m\right) }\left(
L\right) }\left\vert \bigtriangleup _{I}^{\mu }f\right\vert ^{2}\right) ^{%
\frac{1}{2}}\right\Vert _{L^{p}\left( \mu \right) },
\end{eqnarray*}%
for any $f\in L^{p}\left( \mu \right) $ and $L\in \mathcal{L}$. Note that
whenever the supports of scalar functions $f_{n}\left( x\right) $ for $n\in 
\mathbb{N}$ are pairwise disjoint for $x\in \mathbb{R}$, then at most one $%
f_{n}\left( x\right) \neq 0$ for any fixed $x$, and so we have 
\begin{equation*}
\left\vert \left\{ f_{n}\left( x\right) \right\} _{n\in \mathbb{N}%
}\right\vert _{\ell ^{2}}=\left( \sum_{n\in \mathbb{N}}\left\vert
f_{n}\left( x\right) \right\vert ^{2}\right) ^{\frac{1}{2}}=\sum_{n\in 
\mathbb{N}}\left\vert f_{n}\left( x\right) \right\vert =\left\vert
\sum_{n\in \mathbb{N}}f_{n}\left( x\right) \right\vert ,\text{\ \ \ \ \ for
all }x\in \mathbb{R}.
\end{equation*}%
In particular this applies to the sequence $\left\{ \mathsf{P}_{\mathcal{C}_{%
\mathcal{L}}^{\left( m\right) }\left( L^{\prime }\right) }^{\mu }f\right\}
_{L^{\prime }\in \mathfrak{C}_{\mathcal{L}}^{\left( m\right) }\left(
L\right) }$ for each fixed $m$.

\begin{proof}
We suppose that $g,b\in L^{p}\left( \mu \right) \cap L^{2}\left( \mu \right) 
$ and define $L^{2}\left( \mu \right) $-projections,%
\begin{eqnarray*}
g\left( x\right) &\equiv &\sum_{k=m}^{\infty }g_{k}\left( x\right) \text{
and }b\left( x\right) \equiv \sum_{k=1}^{\infty }b_{k}\left( x\right) \ ,\ \
\ \ \text{where} \\
g_{k}\left( x\right) &=&\mathsf{P}_{\mathcal{L},k}^{\mu }g\left( x\right)
=\sum_{L\in \mathfrak{C}_{\mathcal{L}}^{\left( k\right) }\left( T\right) }%
\mathsf{P}_{\mathcal{C}_{\mathcal{L}}\left( L\right) }^{\mu }g\left(
x\right) =\sum_{L\in \mathfrak{C}_{\mathcal{L}}^{\left( k\right) }\left(
T\right) \ }\sum_{I\in \mathcal{C}_{\mathcal{L}}\left( L\right) \
}\bigtriangleup _{I}^{\mu }g\left( x\right) , \\
b_{k}\left( x\right) &=&\mathsf{P}_{\mathcal{L},k}^{\mu }b\left( x\right)
=\sum_{L\in \mathfrak{C}_{\mathcal{L}}^{\left( k\right) }\left( T\right) }%
\mathsf{P}_{\mathcal{C}_{\mathcal{L}}\left( L\right) }^{\mu }b\left(
x\right) =\sum_{L\in \mathfrak{C}_{\mathcal{L}}^{\left( k\right) }\left(
T\right) }\sum_{I\in \mathcal{C}_{\mathcal{L}}\left( L\right)
}\bigtriangleup _{I}^{\mu }b\left( x\right) ,
\end{eqnarray*}%
along with their corresponding sequences using capital letters,%
\begin{eqnarray*}
G_{k}\left( x\right) &=&\left\{ \mathsf{P}_{\mathcal{C}_{\mathcal{L}}\left(
L\right) }^{\mu }g\left( x\right) \right\} _{L\in \mathfrak{C}_{\mathcal{L}%
}^{\left( k\right) }\left( T\right) }=\left\{ \left\{ \bigtriangleup
_{I}^{\mu }g\left( x\right) \right\} _{I\in \mathcal{C}_{\mathcal{L}}\left(
L\right) }\right\} _{L\in \mathfrak{C}_{\mathcal{L}}^{\left( k\right)
}\left( T\right) }, \\
B_{k}\left( x\right) &=&\left\{ \mathsf{P}_{\mathcal{C}_{\mathcal{L}}\left(
L\right) }^{\mu }b\left( x\right) \right\} _{L\in \mathfrak{C}_{\mathcal{L}%
}^{\left( k\right) }\left( T\right) }=\left\{ \left\{ \bigtriangleup
_{I}^{\mu }b\left( x\right) \right\} _{I\in \mathcal{C}_{\mathcal{L}}\left(
L\right) }\right\} _{L\in \mathfrak{C}_{\mathcal{L}}^{\left( k\right)
}\left( T\right) }.
\end{eqnarray*}%
Given $m\in \mathbb{N}\cup \left\{ 0\right\} $, we also define more
sequences using capital letters,%
\begin{eqnarray}
&&  \label{more} \\
G_{m,L}\left( x\right) &\equiv &\left\{ \bigtriangleup _{I}^{\mu }g\left(
x\right) \right\} _{I\in \mathcal{C}_{\mathcal{L}}^{\left( m\right) }\left(
L\right) }\text{ and }B_{L}\left( x\right) \equiv \left\{ \bigtriangleup
_{I}^{\mu }b\left( x\right) \right\} _{I\in \mathcal{C}_{\mathcal{L}}\left(
L\right) },  \notag \\
G_{m,k}\left( x\right) &\equiv &\left\{ \mathsf{P}_{\mathcal{C}_{\mathcal{L}%
}^{\left( m\right) }\left( L\right) }^{\mu }g\left( x\right) \right\} _{L\in 
\mathfrak{C}_{\mathcal{L}}^{\left( k\right) }\left( T\right) }\text{ and }%
B_{k}\left( x\right) \equiv \left\{ \mathsf{P}_{\mathcal{C}_{\mathcal{L}%
}\left( L\right) }^{\mu }b\left( x\right) \right\} _{L\in \mathfrak{C}_{%
\mathcal{L}}^{\left( k\right) }\left( T\right) }\ ,  \notag \\
G_{m}^{\limfunc{doub}}\left( x\right) &\equiv &\left\{ \left\{ \mathsf{P}_{%
\mathcal{C}_{\mathcal{L}}^{\left( m\right) }\left( L\right) }^{\mu }g\left(
x\right) \right\} _{L\in \mathfrak{C}_{\mathcal{L}}^{\left( k\right) }\left(
T\right) }\right\} _{k\in \mathbb{N}}=\left\{ G_{m,k}\left( x\right)
\right\} _{k\in \mathbb{N}}\text{ and }B^{\limfunc{doub}}\left( x\right)
\equiv \left\{ \left\{ \mathsf{P}_{\mathcal{C}_{\mathcal{L}}\left( L\right)
}^{\mu }b\left( x\right) \right\} _{L\in \mathfrak{C}_{\mathcal{L}}^{\left(
k\right) }\left( T\right) }\right\} _{k\in \mathbb{N}}\ ,  \notag \\
G_{m,k}^{\limfunc{doub}}\left( x\right) &\equiv &\left\{ \left\{
\bigtriangleup _{I}^{\mu }g\left( x\right) \right\} _{I\in \mathcal{C}_{%
\mathcal{L}}^{\left( m\right) }\left( L\right) }\right\} _{L\in \mathfrak{C}%
_{\mathcal{L}}^{\left( k\right) }\left( T\right) }=\left\{ G_{m,L}\left(
x\right) \right\} _{L\in \mathfrak{C}_{\mathcal{L}}^{\left( k\right) }\left(
T\right) }\text{ and }B_{k}^{\limfunc{doub}}\left( x\right) \equiv \left\{
\left\{ \bigtriangleup _{I}^{\mu }b\left( x\right) \right\} _{I\in \mathcal{C%
}_{\mathcal{L}}\left( L\right) }\right\} _{L\in \mathfrak{C}_{\mathcal{L}%
}^{\left( k\right) }\left( T\right) }\ ,  \notag
\end{eqnarray}%
where the superscript $\limfunc{doub}$ designates a doubly indexed sequence.
We will often write simply%
\begin{eqnarray}
&&G_{m}\left( x\right) \text{ in place of }G_{m}^{\limfunc{doub}}\left(
x\right) ,  \label{convent} \\
&&B\left( x\right) \text{ in place of }B^{\limfunc{doub}}\left( x\right) . 
\notag
\end{eqnarray}%
Note that the projection in the numerator of (\ref{M char}) can be written
in several different ways,%
\begin{eqnarray*}
\mathsf{P}_{\bigcup_{K\in \mathfrak{C}_{\mathcal{L}}^{\left( m\right)
}\left( L\right) }\mathcal{D}\left( K\right) }^{\mu }g &=&\sum_{K\in 
\mathfrak{C}_{\mathcal{L}}^{\left( m\right) }\left( L\right) }\mathsf{P}_{%
\mathcal{D}\left( K\right) }^{\mu }g=\sum_{K\in \mathfrak{C}_{\mathcal{L}%
}^{\left( m\right) }\left( L\right) }\left( \sum_{M\in \mathcal{L}:\
M\subset K}\mathsf{P}_{\mathcal{C}_{\mathcal{L}}\left( M\right) }^{\mu
}g\right) \\
&=&\sum_{L^{\prime }\in \mathcal{L}:\ L^{\prime }\subset L}\mathsf{P}_{%
\mathcal{C}_{\mathcal{L}}^{\left( m\right) }\left( L^{\prime }\right) }^{\mu
}g=\sum_{L^{\prime }\in \mathcal{L}:\ L^{\prime }\preccurlyeq L}\mathsf{P}_{%
\mathcal{C}_{\mathcal{L}}^{\left( m\right) }\left( L^{\prime }\right) }^{\mu
}g,
\end{eqnarray*}%
where in the final expression, we are using the tree ordering on $\mathcal{D}%
\left[ T\right] $. From the square function equivalences we then have%
\begin{equation*}
\left\Vert \mathsf{P}_{\bigcup_{K\in \mathfrak{C}_{\mathcal{L}}^{\left(
m\right) }\left( L\right) }\mathcal{D}\left( K\right) }^{\mu }g\right\Vert
_{L^{p}\left( \mu \right) }\approx \left\Vert \left\vert \left\{ \mathsf{P}_{%
\mathcal{C}_{\mathcal{L}}^{\left( m\right) }\left( L^{\prime }\right) }^{\mu
}g\right\} _{L^{\prime }\preccurlyeq L}\right\vert _{\ell ^{2}}\right\Vert
_{L^{p}\left( \mu \right) }\approx \left\Vert \left\vert G_{m,L^{\prime
}}\left( x\right) _{L^{\prime }\preccurlyeq L}\right\vert _{\ell
^{2}}\right\Vert _{L^{p}\left( \mu \right) },
\end{equation*}%
where the sequence to which the norm $\left\vert \cdot \right\vert _{\ell
^{2}}$ applies is understood by context. For example, 
\begin{eqnarray*}
\left\vert \left\{ \mathsf{P}_{\mathcal{C}_{\mathcal{L}}^{\left( m\right)
}\left( L^{\prime }\right) }^{\mu }g\left( x\right) \right\} _{L^{\prime
}\preccurlyeq L}\right\vert _{\ell ^{2}}^{2} &=&\sum_{L^{\prime
}\preccurlyeq L}\left\vert \mathsf{P}_{\mathcal{C}_{\mathcal{L}}^{\left(
m\right) }\left( L^{\prime }\right) }^{\mu }g\left( x\right) \right\vert ^{2}%
\text{,} \\
\text{and }\left\vert G_{m,L^{\prime }}\left( x\right) _{L^{\prime
}\preccurlyeq L}\right\vert _{\ell ^{2}}^{2} &=&\left\vert \left\{ \left\{
\bigtriangleup _{I}^{\mu }g\left( x\right) \right\} _{I\in \mathcal{C}_{%
\mathcal{L}}^{\left( m\right) }\left( L^{\prime }\right) }\right\}
_{L^{\prime }\preccurlyeq L}\right\vert _{\ell ^{2}}^{2} \\
&=&\left\vert \left\{ \bigtriangleup _{I}^{\mu }g\left( x\right) \right\} 
_{\substack{ \left( L^{\prime },I\right) \in \mathcal{L}\times \mathcal{D} 
\\ L^{\prime }\preccurlyeq L,I\in \mathcal{C}_{\mathcal{L}}^{\left( m\right)
}\left( L^{\prime }\right) }}\right\vert _{\ell ^{2}}^{2}=\sum_{L^{\prime
}\preccurlyeq L}\sum_{I\in \mathcal{C}_{\mathcal{L}}^{\left( m\right)
}\left( L^{\prime }\right) }\left\vert \bigtriangleup _{I}^{\mu }g\left(
x\right) \right\vert ^{2},
\end{eqnarray*}%
where by context, the iterated sequence $G_{m,L^{\prime }}\left( x\right)
_{L^{\prime }\preccurlyeq L}=\left\{ \left\{ \bigtriangleup _{I}^{\mu
}g\left( x\right) \right\} _{I\in \mathcal{C}_{\mathcal{L}}^{\left( m\right)
}\left( L^{\prime }\right) }\right\} _{L^{\prime }\preccurlyeq L}$ is
understood to be the sequence $\left\{ \bigtriangleup _{I}^{\mu }g\left(
x\right) \right\} _{\substack{ \left( L^{\prime },I\right) \in \mathcal{L}%
\times \mathcal{D}  \\ L^{\prime }\preccurlyeq L,I\in \mathcal{C}_{\mathcal{L%
}}^{\left( m\right) }\left( L^{\prime }\right) }}$, with some ordering of
the countable set of such intervals $I$.

Define%
\begin{eqnarray*}
G_{m,L}^{\left[ \infty \right] }\left( x\right) &\equiv &\left\{
G_{m,K}\left( x\right) \right\} _{K\in \mathcal{L}:\ K\subset L}\text{ and }%
B_{L}^{\left[ N\right] }\left( x\right) \equiv \left\{ B_{K}\left( x\right)
\right\} _{K\subset \mathcal{C}_{\mathcal{L}}^{\left[ N\right] }\left(
L\right) }, \\
G_{m,k}^{\left[ \infty \right] }\left( x\right) &\equiv &\left\{ G_{m,L}^{%
\left[ \infty \right] }\left( x\right) \right\} _{L\in \mathfrak{C}_{%
\mathcal{L}}^{\left( k\right) }\left( T\right) }\ \text{ and }B_{k}^{\left[ N%
\right] }\left( x\right) \equiv \left\{ B_{L}^{\left[ N\right] }\left(
x\right) \right\} _{L\in \mathfrak{C}_{\mathcal{L}}^{\left( k\right) }\left(
T\right) }\ ,
\end{eqnarray*}%
where the last line can be interpreted as doubly indexed sequences. Note
that for each \emph{fixed} $k\in \mathbb{N}$, and $m\in \mathbb{N},$ and $%
0\leq \ell \leq N$, both collections of functions 
\begin{equation*}
\left\{ \mathsf{P}_{\mathcal{C}_{\mathcal{L}}^{\left( m\right) }\left(
L\right) }^{\mu }g\right\} _{L\in \mathfrak{C}_{\mathcal{L}}^{\left(
k\right) }\left( T\right) }\text{ and }\left\{ \mathsf{P}_{\mathcal{C}_{%
\mathcal{L}}^{\left( \ell \right) }\left( L\right) }^{\mu }b\right\} _{L\in 
\mathfrak{C}_{\mathcal{L}}^{\left( k\right) }\left( T\right) }
\end{equation*}%
have pairwise disjoint supports in both $\mathcal{D}$ and $\mathbb{R}$, i.e.
both (\ref{N supp}) and (\ref{R supp}) hold for each collection of
functions. Thus from Corollary \ref{disjoint supp'}, we have for each fixed $%
k\in \mathbb{N}$, 
\begin{eqnarray}
&&\left\Vert \left\vert G_{m,k}\right\vert _{\ell ^{2}}\right\Vert
_{L^{p}\left( \mu \right) }^{p}\lesssim \sum_{L\in \mathfrak{C}_{\mathcal{L}%
}^{\left( k\right) }\left( T\right) }\left\Vert \left\vert
G_{m,L}\right\vert _{\ell ^{2}}\right\Vert _{L^{p}\left( \mu \right)
}^{p}\leq M_{\mathcal{L}}^{\left( m,N\right) }\left( g,b\right)
^{p}\sum_{L\in \mathfrak{C}_{\mathcal{L}}^{\left( k\right) }\left( T\right)
}\left\Vert \left\vert B_{L}^{\left[ N\right] }\right\vert _{\ell
^{2}}\right\Vert _{L^{p}\left( \mu \right) }^{p}  \label{thus we have'} \\
&=&M_{\mathcal{L}}^{\left( m,N\right) }\left( g,b\right) ^{p}\sum_{L\in 
\mathfrak{C}_{\mathcal{L}}^{\left( k\right) }\left( T\right) }\left\Vert
\left\vert \sum_{\ell =0}^{N}B_{L}^{\left( \ell \right) }\right\vert _{\ell
^{2}}\right\Vert _{L^{p}\left( \mu \right) }^{p}\lesssim N^{p}M_{\mathcal{L}%
}^{\left( m,N\right) }\left( g,b\right) ^{p}\sum_{\ell =0}^{N}\sum_{L\in 
\mathfrak{C}_{\mathcal{L}}^{\left( k\right) }\left( T\right) }\left\Vert
\left\vert B_{L}^{\left( \ell \right) }\right\vert _{\ell ^{2}}\right\Vert
_{L^{p}\left( \mu \right) }^{p}  \notag \\
&\lesssim &N^{p+1}M_{\mathcal{L}}^{\left( m,N\right) }\left( g,b\right)
^{p}\max_{0\leq \ell \leq N}\left\Vert \left\vert B_{k}^{\left( \ell \right)
}\right\vert _{\ell ^{2}}\right\Vert _{L^{p}\left( \mu \right) }^{p}\leq
N^{p+1}M_{\mathcal{L}}^{\left( m,N\right) }\left( g,b\right) ^{p}\left\Vert
\left\vert B_{k}^{\left[ N\right] }\right\vert _{\ell ^{2}}\right\Vert
_{L^{p}\left( \mu \right) }^{p},  \notag
\end{eqnarray}%
where $B_{L}^{\left( \ell \right) }\equiv \left\{ B_{K}\left( x\right)
\right\} _{K\subset \mathcal{C}_{\mathcal{L}}^{\left( \ell \right) }\left(
L\right) }$.

Note that we cannot apply Corollary \ref{disjoint supp'} to the doubly
indexed sequences 
\begin{equation*}
G_{m}^{\limfunc{doub}}=\left\{ \left\{ \mathsf{P}_{\bigcup_{K\in \mathfrak{C}%
_{\mathcal{L}}^{\left( m\right) }\left( L\right) }\mathcal{D}\left( K\right)
}^{\mu }g\right\} _{L\in \mathfrak{C}_{\mathcal{L}}^{\left( k\right) }\left(
T\right) }\right\} _{k\in \mathbb{N}}\text{ and }B_{N}^{\limfunc{doub}%
}=\left\{ \left\{ \mathsf{P}_{\mathcal{C}_{\mathcal{L}}^{\left[ N\right]
}\left( L\right) }^{\mu }b\left( x\right) \right\} _{L\in \mathfrak{C}_{%
\mathcal{L}}^{\left( k\right) }\left( T\right) }\right\} _{k\in \mathbb{N}}
\end{equation*}%
since we lose the pairwise disjoint property in both $\mathcal{D}$ and $%
\mathbb{R}$. We must work harder to handle this general situation and the
remainder of the proof is devoted to this end. We begin by treating the
function $g$, and then $b$ will be treated using similar ideas at the end of
the argument.

Let $\lambda >1$. For each $L^{\prime }\in \mathfrak{C}_{\mathcal{L}}\left(
L\right) $ with $L\in \mathfrak{C}_{\mathcal{L}}^{\left( k\right) }\left(
T\right) $, define%
\begin{eqnarray}
\Omega _{m,N,L^{\prime }} &\equiv &\left\{ x\in L^{\prime }:\left\vert
G_{m,L^{\prime }}^{\left[ \infty \right] }\left( x\right) \right\vert _{\ell
^{2}}>\lambda M_{\mathcal{L}}^{\left( m,N\right) }\left( g,b\right)
\left\vert B_{L}^{\left[ N\right] }\left( x\right) \right\vert _{\ell
^{2}}\right\} ,  \label{def OmegamNL'} \\
\text{and }\Omega _{m,N,k} &\equiv &\bigcup_{L^{\prime }\in \mathfrak{C}_{%
\mathcal{L}}^{\left( k+1\right) }\left( T\right) }\Omega _{m,N,L^{\prime }}\
,  \notag
\end{eqnarray}%
and note that $\Omega _{m,N,k+1}\subset \Omega _{m,N,k}$ since $\left\vert
G_{m,L^{\prime \prime }}^{\left[ \infty \right] }\left( x\right) \right\vert
_{\ell ^{2}}\leq \left\vert G_{m,L^{\prime }}^{\left[ \infty \right] }\left(
x\right) \right\vert _{\ell ^{2}}\,$for $L^{\prime \prime }\in \mathfrak{C}_{%
\mathcal{L}}\left( L\right) $. Then on $L^{\prime }\setminus \Omega
_{m,N,L^{\prime }}$, we have the pointwise inequality $\left\vert
G_{m,L^{\prime }}^{\left[ \infty \right] }\left( x\right) \right\vert _{\ell
^{2}}\leq \lambda M_{\mathcal{L}}^{\left( m\right) }\left( g,b\right)
\left\vert B_{L}^{\left[ N\right] }\left( x\right) \right\vert _{\ell ^{2}}$%
, and if we write%
\begin{eqnarray}
G_{m} &\equiv &\left\{ \mathsf{P}_{\mathcal{C}_{\mathcal{L}}^{\left(
m\right) }\left( L\right) }^{\mu }g\right\} _{L\in \mathcal{L}}=\left\{ 
\mathsf{P}_{\mathcal{C}_{\mathcal{L}}^{\left( m-1\right) }\left( L^{\prime
}\right) }^{\mu }g\right\} _{\substack{ L\in \mathcal{L}  \\ L^{\prime }\in 
\mathfrak{C}_{\mathcal{L}}\left( L\right) }}  \label{good and bad} \\
&=&\left\{ \mathbf{1}_{\mathbb{R}\setminus \Omega _{m,N,L^{\prime }}}\mathsf{%
P}_{\mathcal{C}_{\mathcal{L}}^{\left( m-1\right) }\left( L^{\prime }\right)
}^{\mu }g\right\} _{\substack{ L\in \mathcal{L}  \\ L^{\prime }\in \mathfrak{%
C}_{\mathcal{L}}\left( L\right) }}+\left\{ \mathbf{1}_{\Omega
_{m,N,L^{\prime }}}\mathsf{P}_{\mathcal{C}_{\mathcal{L}}^{\left( m-1\right)
}\left( L^{\prime }\right) }^{\mu }g\right\} _{\substack{ L\in \mathcal{L} 
\\ L^{\prime }\in \mathfrak{C}_{\mathcal{L}}\left( L\right) }}  \notag \\
&\equiv &G_{m}^{\func{good}}+G_{m}^{\func{bad}},  \notag
\end{eqnarray}%
then it follows that%
\begin{eqnarray}
&&\left\Vert \left\vert G_{m}^{\func{good}}\right\vert _{\ell
^{2}}\right\Vert _{L^{p}\left( \mu \right) }^{p}=\left\Vert \left\vert
\left\{ \mathbf{1}_{\mathbb{R}\setminus \Omega _{m,N,L^{\prime }}}\mathsf{P}%
_{\mathcal{C}_{\mathcal{L}}^{\left( m-1\right) }\left( L^{\prime }\right)
}^{\mu }g\right\} _{\substack{ L\in \mathcal{L}  \\ L^{\prime }\in \mathfrak{%
C}_{\mathcal{L}}\left( L\right) }}\right\vert _{\ell ^{2}}\right\Vert
_{L^{p}\left( \mu \right) }^{p}  \label{follows that} \\
&\leq &\lambda ^{p}M_{\mathcal{L}}^{\left( m,N\right) }\left( g,b\right)
^{p}\left\Vert \left\vert \left\{ \mathsf{P}_{\mathcal{C}_{\mathcal{L}}^{%
\left[ N\right] }\left( L\right) }^{\mu }b\right\} _{\substack{ L\in 
\mathcal{L}  \\ L^{\prime }\in \mathfrak{C}_{\mathcal{L}}\left( L\right) }}%
\right\vert _{\ell ^{2}}\right\Vert _{L^{p}\left( \mu \right) }^{p}\lesssim
\lambda ^{p}M_{\mathcal{L}}^{\left( m,N\right) }\left( g,b\right)
^{p}\left\Vert \left\vert B^{\left[ N\right] }\right\vert _{\ell
^{2}}\right\Vert _{L^{p}\left( \mu \right) }^{p}.  \notag
\end{eqnarray}

To handle the term $G_{m}^{\func{bad}}$ we must work harder. Now since $%
\left\vert B_{L}^{\left[ N\right] }\right\vert _{\ell ^{2}}$ is constant on
each set $L^{\prime }\in \mathfrak{C}_{\mathcal{L}}^{\left( m\right) }\left(
L\right) $ when $m>N$, we note that for $m>N$, $L\in \mathcal{L}$ and$%
\,L^{\prime }\in \mathfrak{C}_{\mathcal{L}}\left( L\right) $, 
\begin{eqnarray}
&&\left\vert \Omega _{m,N,L^{\prime }}\right\vert _{\mu }=\left\vert
L^{\prime }\cap \Omega _{m,N,L^{\prime }}\right\vert _{\mu }\leq
\int_{L^{\prime }}\frac{\left\vert G_{m,L^{\prime }}^{\left[ \infty \right]
}\left( x\right) \right\vert _{\ell ^{2}}^{p}}{\lambda ^{p}M_{\mathcal{L}%
}^{\left( m\right) }\left( g,b\right) ^{p}\left\vert B_{L}^{\left[ N\right]
}\left( x\right) \right\vert _{\ell ^{2}}^{p}}d\mu \left( x\right) =\frac{%
\int_{L^{\prime }}\left\vert G_{m,L^{\prime }}^{\left[ \infty \right]
}\left( x\right) \right\vert _{\ell ^{2}}^{p}d\mu \left( x\right) }{\lambda
^{p}M_{\mathcal{L}}^{\left( m,N\right) }\left( g,b\right) ^{p}E_{L^{\prime
}}^{\mu }\left\vert B_{L}^{\left[ N\right] }\right\vert _{\ell ^{2}}^{p}}
\label{note'} \\
&=&\frac{1}{\lambda ^{p}M_{\mathcal{L}}^{\left( m,N\right) }\left(
g,b\right) ^{p}}\frac{\int_{L^{\prime }}\left\vert G_{m,L^{\prime }}^{\left[
\infty \right] }\left( x\right) \right\vert _{\ell ^{2}}^{p}d\mu \left(
x\right) }{\int_{L^{\prime }}\left\vert B_{L}^{\left[ N\right] }\left(
x\right) \right\vert _{\ell ^{2}}^{p}d\mu \left( x\right) }\left\vert
L^{\prime }\right\vert _{\mu }\leq C_{0}\frac{1}{\lambda ^{p}M_{\mathcal{L}%
}^{\left( m,N\right) }\left( g,b\right) ^{p}}M_{\mathcal{L}}^{\left(
m,N\right) }\left( g,b\right) ^{p}\left\vert L^{\prime }\right\vert _{\mu }=%
\frac{C_{0}}{\lambda ^{p}}\left\vert L^{\prime }\right\vert _{\mu }\ , 
\notag
\end{eqnarray}%
since 
\begin{eqnarray*}
\left\Vert \left\vert G_{m,L^{\prime }}^{\left[ \infty \right] }\left(
x\right) \right\vert _{\ell ^{2}}\right\Vert _{L^{p}\left( \mu \right) }^{p}
&=&\left\Vert \left\vert \sum_{K\subset L^{\prime }}G_{m,K}\left( x\right)
\right\vert _{\ell ^{2}}\right\Vert _{L^{p}\left( \mu \right) }^{p} \\
&=&\left\Vert \left\vert \sum_{K\subset L^{\prime }}\left\{ \bigtriangleup
_{I}^{\mu }g\left( x\right) \right\} _{I\in \mathcal{C}_{\mathcal{L}%
}^{\left( m\right) }\left( K\right) }\right\vert _{\ell ^{2}}\right\Vert
_{L^{p}\left( \mu \right) }^{p}\approx \left\Vert \mathsf{P}_{\bigcup_{G\in 
\mathfrak{C}_{\mathcal{L}}^{\left( m\right) }\left( L\right) }\mathcal{D}%
\left( G\right) }^{\mu }g\right\Vert _{L^{p}\left( \mu \right) }^{p},
\end{eqnarray*}%
by the square function estimates in Theorem \ref{square thm}. In particular,
for $\lambda >C_{0}^{\frac{1}{p}}$ we have 
\begin{equation}
\left\vert L^{\prime }\right\vert _{\mu }\geq \left\vert L^{\prime
}\setminus \Omega _{m,N,L^{\prime }}\right\vert _{\mu }=\left\vert L^{\prime
}\right\vert _{\mu }-\left\vert \Omega _{m,N,L^{\prime }}\right\vert _{\mu
}\geq \left\vert L^{\prime }\right\vert _{\mu }-\frac{C_{0}}{\lambda ^{p}}%
\left\vert L^{\prime }\right\vert _{\mu }=\left( 1-\frac{C_{0}}{\lambda ^{p}}%
\right) \left\vert L^{\prime }\right\vert _{\mu }\ ,  \label{note''}
\end{equation}%
independent of $L^{\prime }$ and $m$ and $N$. The facts that $\left\vert
G_{m,k}^{\left[ \infty \right] }\left( x\right) \right\vert _{\ell ^{2}}$ is
constant on $L^{\prime }$ for $L^{\prime }\in \mathfrak{C}_{\mathcal{L}%
}\left( L\right) $ and $L\in \mathfrak{C}_{\mathcal{L}}^{\left( k\right)
}\left( T\right) $, and\ that 
\begin{equation}
\left\vert L^{\prime }\setminus \Omega _{m,N,L^{\prime }}\right\vert _{\mu
}\approx \left\vert L^{\prime }\right\vert _{\mu },  \label{note'''}
\end{equation}%
will permit us to replace the sequence $\left\{ g_{k}\right\} _{k\geq 1}$ of
functions with $m$ sequences that are pairwise disjoint in $\mathbb{R}$ as
well as in $\mathcal{D}$, and which will in turn permit us to use the
definition of $M_{\mathcal{L}}^{\left( m,N\right) }\left( g,b\right) $ on
each of these $m$ sequences.

Indeed, for $1\leq \kappa \leq m+1$, define 
\begin{eqnarray}
&&  \label{kappa} \\
g^{\left[ \kappa \right] }\left( x\right) &\equiv &\sum_{k\in \kappa +\left(
m+1\right) \mathbb{N}}g_{k}\left( x\right) \text{ and }G^{\left[ \kappa %
\right] }\left( x\right) \equiv \left\{ G_{k}\left( x\right) \right\} _{k\in
\kappa +\left( m+1\right) \mathbb{N}}\ ,  \notag \\
\widehat{g^{\left[ \kappa \right] }}\left( x\right) &\equiv &\sum_{k\in
\kappa +\left( m+1\right) \mathbb{N}}\mathbf{1}_{\Omega _{m,N,k}}\left(
x\right) g_{k}\left( x\right) \text{ and }\widehat{G^{\left[ \kappa \right] }%
}\left( x\right) \equiv \left\{ \mathbf{1}_{\Omega _{m,N,k}}\left( x\right)
G_{k}\left( x\right) \right\} _{k\in \kappa +\left( m+1\right) \mathbb{N}}\ ,
\notag \\
\widetilde{g^{\left[ \kappa \right] }}\left( x\right) &\equiv &\sum_{k\in
\kappa +\left( m+1\right) \mathbb{N}}\mathbf{1}_{\Omega _{m,N,k}\setminus
\Omega _{m,N,k+m+1}}\left( x\right) g_{k}\left( x\right) \text{ and }%
\widetilde{G^{\left[ \kappa \right] }}\left( x\right) \equiv \left\{ \mathbf{%
1}_{\Omega _{m,N,k}\setminus \Omega _{m,N,k+m+1}}\left( x\right) G_{k}\left(
x\right) \right\} _{k\in \kappa +\left( m+1\right) \mathbb{N}},  \notag
\end{eqnarray}%
so that for each $\kappa $, the sequence of functions $\widetilde{g_{\left[
\kappa \right] }}\left( x\right) $ satisfies both (\ref{N supp}) and (\ref{R
supp}).

\textbf{Notation}: \emph{We also make corresponding definitions with the
caret decoration }$\widehat{}$\emph{\ and the tilde decoration }$\widetilde{}
$\emph{, analogous to those in (\ref{more}), but with a superscript }$^{%
\left[ \kappa \right] }$\emph{. Thus the caret denotes restriction to }$%
\Omega _{m,N,k}$\emph{, while tilde denotes restriction to the `annular' set 
}$\Omega _{m,N,k}\setminus \Omega _{m,N,k+m+1}$\emph{.}

Moreover, recalling our convention (\ref{convent}) that we often write $%
G_{m}^{\left[ \kappa \right] }\left( x\right) $ in place of $G_{m}^{\left[
\kappa \right] ,\limfunc{doub}}$, etc., we claim that%
\begin{equation}
\left\vert \widehat{G_{m}^{\left[ \kappa \right] }}\left( x\right)
\right\vert _{\ell ^{2}}\lesssim \left\vert G_{m}^{\func{good}}\left(
x_{0}\right) \right\vert _{\ell ^{2}}+M_{\mu }^{\func{dy}}\left( \left\vert 
\widetilde{G_{m}^{\left[ \kappa \right] }}\right\vert _{\ell ^{2}}\right)
\left( x\right) ,\ \ \ \ \ x\in T,  \label{G tilde}
\end{equation}%
follows from (\ref{note''}). To see this, pick a point $x_{0}\in T$. Since
the Haar support of $g$ is finite, the sets $\Omega _{m,N,k+m+1}$ are
eventually empty, and so there is $k$ such that $x_{0}\in \Omega
_{m,N,k}\setminus \Omega _{m,N,k+m+1}$. Suppose that $L\in \mathfrak{C}_{%
\mathcal{L}}^{\left( k\right) }\left( T\right) $ contains $x_{0}$. Now note
that $\sum_{\substack{ s\in \kappa +\left( m+1\right) \mathbb{N}  \\ s\leq
k-1 }}\left\vert G_{m,s}^{\left[ \kappa \right] }\right\vert _{\ell ^{2}}$
is constant on $L$, since if $L^{\ast }=\pi _{\mathcal{L}}L\in \mathfrak{C}_{%
\mathcal{L}}^{\left( k-1\right) }\left( T\right) $ is the child at level $%
k-1 $ that contains $L$, then for $x\in L$, 
\begin{equation*}
\left\vert G_{L^{\ast }}^{\left[ \kappa \right] }\left( x\right) \right\vert
_{\ell ^{2}}^{2}=\left\vert \left\{ \bigtriangleup _{I}^{\mu }g\left(
x\right) \right\} _{I\in \mathcal{C}_{\mathcal{L}}^{\left( m\right) }\left(
L^{\ast }\right) }\right\vert _{\ell ^{2}}^{2}=\sum_{I\in \left( L,L^{\ast }%
\right] }\left\vert \bigtriangleup _{I}^{\mu }g\left( x\right) \right\vert
^{2}\text{ is constant}.
\end{equation*}%
Thus (\ref{note''}) implies 
\begin{eqnarray*}
&&\sum_{\substack{ s\in \kappa +\left( m+1\right) \mathbb{N}  \\ s\leq k-1}}%
\left\vert G_{m,s}^{\left[ \kappa \right] }\left( x_{0}\right) \right\vert
_{\ell ^{2}}=\sum_{\substack{ s\in \kappa +\left( m+1\right) \mathbb{N}  \\ %
s\leq k-1}}\left\vert G_{m,s}^{\left[ \kappa \right] ,\func{good}}\left(
x_{0}\right) \right\vert _{\ell ^{2}}+\sum_{\substack{ s\in \kappa +\left(
m+1\right) \mathbb{N}  \\ s\leq k-1}}\left\vert \widehat{G_{m,s}^{\left[
\kappa \right] }}\left( x_{0}\right) \right\vert _{\ell ^{2}} \\
&=&\sum_{\substack{ s\in \kappa +\left( m+1\right) \mathbb{N}  \\ s\leq k-1}}%
\left\vert G_{m,s}^{\left[ \kappa \right] ,\func{good}}\left( x_{0}\right)
\right\vert _{\ell ^{2}}+\frac{1}{\left\vert L\setminus \Omega
_{m,N,L}\right\vert _{\mu }}\int_{L\setminus \Omega _{m,N,L}}\sum_{\substack{
s\in \kappa +\left( m+1\right) \mathbb{N}  \\ s\leq k-1}}\left\vert \widehat{%
G_{m,s}^{\left[ \kappa \right] }}\left( x\right) \right\vert _{\ell
^{2}}d\mu \left( x\right) \\
&\lesssim &\left\vert G_{m}^{\func{good}}\left( x_{0}\right) \right\vert
_{\ell ^{2}}+\frac{1}{\left\vert L\right\vert _{\mu }}\int_{L\setminus
\Omega _{m,N,L}}\sum_{\substack{ s\in \kappa +\left( m+1\right) \mathbb{N} 
\\ s\leq k-1}}\left\vert \widetilde{G_{m,s}^{\left[ \kappa \right] }}\left(
x\right) \right\vert _{\ell ^{2}}d\mu \left( x\right) \\
&\lesssim &\left\vert G_{m}^{\func{good}}\left( x_{0}\right) \right\vert
_{\ell ^{2}}+M_{\mu }^{\func{dy}}\left( \sum_{\substack{ s\in \kappa +\left(
m+1\right) \mathbb{N}  \\ s\leq k-1}}\left\vert \widetilde{G_{m,s}^{\left[
\kappa \right] }}\right\vert _{\ell ^{2}}\right) \left( x_{0}\right) ,
\end{eqnarray*}%
since $\widehat{G_{m,s}^{\left[ \kappa \right] }}\left( x\right) =\widetilde{%
G_{m,s}^{\left[ \kappa \right] }}\left( x\right) $ on $L\setminus \Omega
_{m,N,L}$ if $s\leq k-1$. We also have $\left\vert \widehat{G_{m,k}^{\left[
\kappa \right] }}\right\vert _{\ell ^{2}}\left( x\right) =\left\vert 
\widetilde{G_{m,k}^{\left[ \kappa \right] }}\right\vert _{\ell ^{2}}\left(
x\right) $ since $x\notin \Omega _{m,N,k+m+1}$, and thus we conclude%
\begin{eqnarray*}
\sum_{\substack{ s\in \kappa +\left( m+1\right) \mathbb{N}  \\ s\leq k}}%
\left\vert G_{m,k}^{\left[ \kappa \right] }\right\vert _{\ell ^{2}}\left(
x_{0}\right) &\leq &\left\vert G_{m}^{\func{good}}\left( x_{0}\right)
\right\vert _{\ell ^{2}}+\left( \sum_{\substack{ s\in \kappa +\left(
m+1\right) \mathbb{N}  \\ s\leq k-1}}\left\vert \widehat{G_{m,s}^{\left[
\kappa \right] }}\left( x\right) \right\vert _{\ell ^{2}}\right) +\left\vert 
\widehat{G_{k}}\right\vert _{\ell ^{2}}\left( x_{0}\right) \\
&\lesssim &\left\vert G_{m}^{\func{good}}\left( x_{0}\right) \right\vert
_{\ell ^{2}}+M_{\mu }^{\func{dy}}\left( \sum_{\substack{ s\in \kappa +\left(
m+1\right) \mathbb{N}  \\ s\leq k-1}}\left\vert \widetilde{G_{m,s}^{\left[
\kappa \right] }}\right\vert _{\ell ^{2}}\right) \left( x_{0}\right)
+\left\vert \widetilde{G_{k}}\right\vert _{\ell ^{2}}\left( x\right) \\
&\leq &\left\vert G_{m}^{\func{good}}\left( x_{0}\right) \right\vert _{\ell
^{2}}+M_{\mu }^{\func{dy}}\left( \sum_{k=1}^{\infty }\left\vert \widetilde{%
G_{m,k}^{\left[ \kappa \right] }}\right\vert _{\ell ^{2}}\right) \left(
x_{0}\right) ,
\end{eqnarray*}%
which is (\ref{G tilde}).

Altogether then, combining the boundedness of $M_{\mu }^{\func{dy}}$ on $%
L^{p}\left( \mu \right) $ with Lemma \ref{disjoint supp}, (\ref{follows that}%
) and (\ref{thus we have'}), we have\ for each $\kappa $ and $m$,%
\begin{eqnarray*}
&&\left\Vert g^{\left[ \kappa \right] }\right\Vert _{L^{p}\left( \mu \right)
}^{p}\approx \left\Vert \left\vert \left\{ G_{m,k}^{\left[ \kappa \right]
}\right\} _{k\in \kappa +m\mathbb{N}}\right\vert _{\ell ^{2}}\right\Vert
_{L^{p}\left( \mu \right) }^{p} \\
&\lesssim &\lambda ^{p}M_{\mathcal{L}}^{\left( m,N\right) }\left( g,b\right)
^{p}\left\Vert \left\vert B^{\left[ N\right] }\right\vert _{\ell
^{2}}\right\Vert _{L^{p}\left( \mu \right) }^{p}+\left\Vert \left(
\left\vert \left\{ M_{\mu }^{\func{dy}}\widetilde{G_{m,k}^{\left[ \kappa %
\right] }}\right\} _{k\in \kappa +m\mathbb{N}}\right\vert _{\ell
^{2}}\right) \right\Vert _{L^{p}\left( \mu \right) }^{p},
\end{eqnarray*}%
where, recalling that $b_{k}^{\left[ \kappa \right] ,\left[ N\right] }$ is
defined in the above \textbf{Notation},%
\begin{eqnarray*}
&&\left\Vert \left( \left\vert \left\{ M_{\mu }^{\func{dy}}\widetilde{%
G_{m,k}^{\left[ \kappa \right] }}\right\} _{k\in \kappa +m\mathbb{N}%
}\right\vert _{\ell ^{2}}\right) \right\Vert _{L^{p}\left( \mu \right) }^{p}
\\
&\lesssim &\left\Vert \left\vert \left\{ \widetilde{G_{m,k}^{\left[ \kappa %
\right] }}\right\} _{k\in \kappa +m\mathbb{N}}\right\vert _{\ell
^{2}}\right\Vert _{L^{p}\left( \mu \right) }^{p}\approx \sum_{k\in \kappa +m%
\mathbb{N}}\left\Vert \widetilde{g_{m,k}^{\left[ \kappa \right] }}%
\right\Vert _{L^{p}\left( \mu \right) }^{p}\leq \sum_{k\in \kappa +m\mathbb{N%
}}\left\Vert g_{m,k}^{\left[ \kappa \right] }\right\Vert _{L^{p}\left( \mu
\right) }^{p} \\
&\lesssim &M_{\mathcal{L}}^{\left( m,N\right) }\left( g,b\right)
^{p}\sum_{k\in \kappa +m\mathbb{N}}\left\Vert b_{k}^{\left[ \kappa \right] ,%
\left[ N\right] }\right\Vert _{L^{p}\left( \mu \right) }^{p}=M_{\mathcal{L}%
}^{\left( m,N\right) }\left( g,b\right) ^{p}\sum_{k\in \kappa +m\mathbb{N}%
}\int_{\mathbb{R}}\left\vert b_{k}^{\left[ \kappa \right] ,\left[ N\right]
}\left( x\right) \right\vert ^{p}d\mu \left( x\right) \\
&\lesssim &M_{\mathcal{L}}^{\left( m,N\right) }\left( g,b\right) ^{p}\int_{%
\mathbb{R}}\left\vert \sum_{k\in \kappa +m\mathbb{N}}b_{k}^{\left[ \kappa %
\right] ,\left[ N\right] }\left( x\right) \right\vert ^{p}d\mu \left(
x\right) ,\ \ \ \ \ \text{for }p>1\text{,}
\end{eqnarray*}%
so that altogether,%
\begin{equation}
\left\Vert g^{\left[ \kappa \right] }\right\Vert _{L^{p}\left( \mu \right)
}^{p}\lesssim \lambda ^{p}M_{\mathcal{L}}^{\left( m,N\right) }\left(
g,b\right) ^{p}\left\Vert \left\vert B^{\left[ N\right] }\right\vert _{\ell
^{2}}\right\Vert _{L^{p}\left( \mu \right) }^{p}+M_{\mathcal{L}}^{\left(
m,N\right) }\left( g,b\right) ^{p}\int_{\mathbb{R}}\left\vert \sum_{k\in
\kappa +m\mathbb{N}}b_{k}^{\left[ \kappa \right] ,\left[ N\right] }\left(
x\right) \right\vert ^{p}d\mu \left( x\right) .  \label{porism stop}
\end{equation}

At this point we write $B_{k}^{\left[ \kappa \right] ,\left[ N\right] }$ as
a sum of $N+1$ martingale difference sequences $B_{k}^{\left[ \kappa \right]
,\left( s\right) }\left( x\right) $, $0\leq s\leq N$, i.e.%
\begin{eqnarray}
&&B_{k}^{\left[ \kappa \right] ,\left[ N\right] }\left( x\right) =\left\{ 
\mathsf{P}_{\mathcal{C}_{\mathcal{L}}^{\left[ N\right] }\left( L\right)
}^{\mu }b\left( x\right) \right\} _{L\in \mathfrak{C}_{\mathcal{L}}^{\left(
k\right) }\left( T\right) }=\left\{ \sum_{s=0}^{N}\mathsf{P}_{\mathcal{C}_{%
\mathcal{L}}^{\left( s\right) }\left( L\right) }^{\mu }b\left( x\right)
\right\} _{L\in \mathfrak{C}_{\mathcal{L}}^{\left( k\right) }\left( T\right)
}=\sum_{s=0}^{N}B_{k}^{\left[ \kappa \right] ,\left( s\right) }\left(
x\right) ,  \label{Bks} \\
&&\ \ \ \ \ \ \ \ \ \ \text{where }B_{k}^{\left[ \kappa \right] ,\left(
s\right) }\left( x\right) \equiv \left\{ \mathsf{P}_{\mathcal{C}_{\mathcal{L}%
}^{\left( s\right) }\left( L\right) }^{\mu }b\left( x\right) \right\} _{L\in 
\mathfrak{C}_{\mathcal{L}}^{\left( k\right) }\left( T\right) }.  \notag
\end{eqnarray}%
Note that $B_{k}^{\left[ \kappa \right] ,\left( s\right) }$ is a martingale
difference sequence for the $L^{p}\left( \mu \right) $ function 
\begin{equation*}
\mathsf{P}b\left( x\right) \equiv \sum_{k=0}^{\infty }\sum_{L\in \mathfrak{C}%
_{\mathcal{L}}^{\left( k\right) }\left( T\right) }\mathsf{P}_{\mathcal{C}_{%
\mathcal{L}}^{\left[ N\right] }\left( L\right) }^{\mu }b\left( x\right) ,
\end{equation*}%
and hence by the square function estimates in Theorem \ref{square thm}, we
have for each $s$,%
\begin{equation*}
\int_{\mathbb{R}}\left\vert \sum_{k\in \kappa +m\mathbb{N}}b_{k}^{\left[
\kappa \right] ,\left( s\right) }\left( x\right) \right\vert ^{p}d\mu \left(
x\right) \approx \left\Vert \left\vert \left\{ \widehat{B_{m,k}^{\left[
\kappa \right] ,\left( s\right) }}\right\} _{k\in \kappa +m\mathbb{N}%
}\right\vert _{\ell ^{2}}\right\Vert _{L^{p}\left( \mu \right)
}^{p}=\left\Vert \left\vert \left\{ B_{k}^{\left( s\right) }\right\} _{k\in
\kappa +m\mathbb{N}}\right\vert _{\ell ^{2}}\right\Vert _{L^{p}\left( \mu
\right) }^{p},
\end{equation*}%
and hence that%
\begin{eqnarray*}
\left\Vert \widehat{g^{\left[ \kappa \right] }}\right\Vert _{L^{p}\left( \mu
\right) }^{p} &\lesssim &M_{\mathcal{L}}^{\left( m,N\right) }\left(
g,b\right) ^{p}\int_{\mathbb{R}}\left\vert \sum_{k\in \kappa +m\mathbb{N}%
}b_{k}^{\left[ \kappa \right] ,\left[ N\right] }\left( x\right) \right\vert
^{p}d\mu \left( x\right) \\
&=&M_{\mathcal{L}}^{\left( m,N\right) }\left( g,b\right) ^{p}\int_{\mathbb{R}%
}\left\vert \sum_{k\in \kappa +m\mathbb{N}}\sum_{s=0}^{N}B_{k}^{\left[
\kappa \right] ,\left( s\right) }\left( x\right) \right\vert ^{p}d\mu \left(
x\right) \\
&\lesssim &M_{\mathcal{L}}^{\left( m,N\right) }\left( g,b\right)
^{p}N^{p}\sup_{0\leq s\leq N}\int_{\mathbb{R}}\left\vert \sum_{k\in \kappa +m%
\mathbb{N}}B_{k}^{\left[ \kappa \right] ,\left( s\right) }\left( x\right)
\right\vert ^{p}d\mu \left( x\right) .
\end{eqnarray*}%
Altogether,%
\begin{eqnarray*}
\left\Vert \left\vert G_{m}^{\func{bad}}\right\vert _{\ell ^{2}}\right\Vert
_{L^{p}\left( \mu \right) }^{p} &\lesssim &\sum_{\kappa =1}^{m+1}\left\Vert 
\widehat{g^{\left[ \kappa \right] }}\right\Vert _{L^{p}\left( \mu \right)
}^{p}\lesssim M_{\mathcal{L}}^{\left( m,N\right) }\left( g,b\right)
^{p}N^{p}\sum_{\kappa =1}^{m+1}\sup_{0\leq s\leq N}\int_{\mathbb{R}%
}\left\vert \sum_{k\in \kappa +m\mathbb{N}}B_{k}^{\left[ \kappa \right]
,\left( s\right) }\left( x\right) \right\vert ^{p}d\mu \left( x\right) \\
&\lesssim &mN^{p}M_{\mathcal{L}}^{\left( m,N\right) }\left( g,b\right)
^{p}\sup_{0\leq s\leq N}\left\Vert \left\vert \left\{ B_{k}^{\left( s\right)
}\right\} _{k\in \mathbb{N}}\right\vert _{\ell ^{2}}\right\Vert
_{L^{p}\left( \mu \right) }^{p}\ .
\end{eqnarray*}%
Using (\ref{follows that}) and the previous line,%
\begin{eqnarray*}
\left\Vert \left\vert G\right\vert _{\ell ^{2}}\right\Vert _{L^{p}\left( \mu
\right) } &\leq &\left\Vert \left\vert G_{m}^{\func{good}}\right\vert _{\ell
^{2}}\right\Vert _{L^{p}\left( \mu \right) }+\left\Vert \left\vert G_{m}^{%
\func{bad}}\right\vert _{\ell ^{2}}\right\Vert _{L^{p}\left( \mu \right) } \\
&\lesssim &M_{\mathcal{L}}^{\left( m,N\right) }\left( g,b\right) \left\Vert
\left\vert B^{\left[ N\right] }\right\vert _{\ell ^{2}}\right\Vert
_{L^{p}\left( \mu \right) }+mNM_{\mathcal{L}}^{\left( m,N\right) }\left(
g,b\right) \sup_{0\leq s\leq N}\left\Vert \left\vert \left\{ B_{k}^{\left(
s\right) }\right\} _{k\in \mathbb{N}}\right\vert _{\ell ^{2}}\right\Vert
_{L^{p}\left( \mu \right) } \\
&\leq &mNM_{\mathcal{L}}^{\left( m,N\right) }\left( g,b\right) \sup_{0\leq
s\leq N}\left\Vert \left\vert \left\{ B_{k}^{\left( s\right) }\right\}
_{k\in \mathbb{N}}\right\vert _{\ell ^{2}}\right\Vert _{L^{p}\left( \mu
\right) }\ ,
\end{eqnarray*}%
which completes the proof of inequality (\ref{part 2}) in the conclusion of
the Corona Martingale Comparison Principle.
\end{proof}

\subsection{$L^{p}$-Stopping Child Lemma}

We begin by defining the iteration of general stopping times, which we
remind the reader are simply subsets of the dyadic grid $\mathcal{D}$.

\begin{definition}
\label{def comp}Suppose $\mathcal{Q\subset D}$, and for each $Q\in \mathcal{Q%
}$, let $\mathcal{A}\left[ Q\right] \subset \mathcal{C}_{\mathcal{Q}}\left(
Q\right) $ with $Q\in \mathcal{A}\left[ Q\right] $, which can be thought of
as a family $\left\{ \mathcal{A}\left[ Q\right] \right\} _{Q\in \mathcal{Q}}$
of stopping times indexed by $Q\in \mathcal{Q}$. Then we define the
composition $\mathcal{Q\circ A}$ to be $\mathcal{Q\circ A}\equiv
\bigcup_{Q\in \mathcal{Q}}\mathcal{A}\left[ Q\right] $, which can also be
written simply as $\mathcal{A}$ when the additional structure arising from $%
\mathcal{Q}$ is unimportant.
\end{definition}

Recall that the $p$-energy defined in (\ref{def p energy}) by, 
\begin{equation*}
\mathsf{E}_{p}\left( I,\omega \right) \equiv \left( \frac{1}{\left\vert
I\right\vert _{\omega }}\int_{I}\left\vert x-\frac{1}{\left\vert
I\right\vert _{\omega }}\int_{I}zd\omega \left( z\right) \right\vert
^{p}d\omega \left( x\right) \right) ^{\frac{1}{p}}\approx \left( \frac{1}{%
\left\vert I\right\vert _{\omega }}\int_{I}\left( \sum_{J\subset
I}\left\vert \bigtriangleup _{J}^{\omega }Z\left( x\right) \right\vert
^{2}\right) ^{\frac{p}{2}}d\omega \left( x\right) \right) ^{\frac{1}{p}}.
\end{equation*}%
For $\Lambda \subset \mathcal{D}\left[ I\right] $ let 
\begin{equation}
\mathsf{E}_{p}\left( \Lambda ;\omega \right) \equiv \sqrt[p]{\frac{1}{%
\left\vert I\right\vert _{\omega }}\int_{I}\left( \sum_{J\in \Lambda
}\left\vert \bigtriangleup _{J}^{\omega }Z\left( x\right) \right\vert
^{2}\right) ^{\frac{p}{2}}d\omega \left( x\right) }\approx \left\Vert \frac{1%
}{\left\vert I\right\vert _{\omega }}\sum_{J\in \Lambda }\bigtriangleup
_{J}^{\omega }Z\right\Vert _{L^{p}\left( \omega \right) }=\left\Vert \frac{1%
}{\left\vert I\right\vert _{\omega }}\mathsf{P}_{\Lambda }^{\omega
}Z\right\Vert _{L^{p}\left( \omega \right) },  \label{equiv Ep}
\end{equation}%
which generalizes the $p$-energy $\mathsf{E}_{p}\left( I,\omega \right) $
defined for an interval $I$ - indeed, one immediately checks that $\mathsf{E}%
_{p}\left( I,\omega \right) =\mathsf{E}_{p}\left( \mathcal{D}\left[ I\right]
;\omega \right) $.

We will state our stopping child lemma in the context of an iterated
stopping time $\mathcal{Q\circ A}$ where only $\mathcal{A}$ is assumed to
have the structure arising from Lemma \ref{lem I*}. In fact, we will only
apply this lemma later on to the special case $\mathcal{S}^{\left( n\right)
}=\mathcal{S}^{\left( n-1\right) }\circ \mathcal{A}_{n}$ with $\mathcal{Q}=%
\mathcal{S}^{\left( n-1\right) }$ and $\mathcal{A}=\mathcal{A}_{n}$, so we
will state our stopping child lemma only in this case, but observe that the
estimate for the off-diagonal terms here involves no structure from $%
\mathcal{Q}$, unlike the diagonal terms treated later on, which rely
crucially on the structure of $\mathcal{Q}=\mathcal{S}^{\left( n-1\right) }$%
, and which will be treated using the lemmas from the previous subsection.
Here are the details.

We now recursively define the sequence of stopping times $\mathfrak{Q}\equiv
\left\{ \mathcal{S}^{\left( n\right) }\right\} _{n=1}^{\infty }$ that we
will consider in the remainder of the proof. Set $\mathcal{S}^{\left(
0\right) }\equiv \mathcal{F}$ and $\mathcal{S}^{\left( 1\right) }\equiv 
\mathcal{S}^{\left( 0\right) }\mathcal{\circ A}_{1}=\mathcal{F\circ A}_{1}$
where $\mathcal{A}_{1}$ is constructed using Lemma \ref{lem I*} for a dyadic
tree with the parameter $\Gamma >1$ fixed, but close to $1$, and where $\nu
=\nu _{\Lambda _{g_{A_{0}}}^{\omega }}$ for $A_{0}\in \mathcal{S}^{\left(
0\right) }$, \ and $g_{A_{0}}=\mathsf{P}_{\mathcal{D}\left[ A_{0}\right] }g$%
. See Definition \ref{def comp} for $\mathcal{F\circ A}_{1}$. Then set $%
\mathcal{S}^{\left( 2\right) }\equiv \mathcal{S}^{\left( 1\right) }\mathcal{%
\circ A}_{2}$ where $\mathcal{A}_{2}$ is now constructed using Lemma \ref%
{lem I*} relative to the stopping times $\mathcal{S}^{\left( 1\right) }$
instead of $\mathcal{S}^{\left( 0\right) }=\mathcal{F}$, and with $\nu =\nu
_{\Lambda _{g_{A_{1}}}^{\omega }}$ for $A_{1}\in \mathcal{A}_{1}\left[ F%
\right] $, and $g_{A_{1}}=\mathsf{P}_{\mathcal{D}\left[ A_{1}\right] }g$.
Continue by defining recursively,%
\begin{equation*}
\mathcal{S}^{\left( n+1\right) }\equiv \mathcal{S}^{\left( n\right) }%
\mathcal{\circ A}_{n+1},\ \ \ \ \ \text{for all }n\geq 1.
\end{equation*}%
Note that all of the stopping times $\mathcal{A}_{k}$ for $k\geq 1$ are
constructed with the same fixed parameter $\Gamma >1$ in Lemma \ref{lem I*},
but with smaller and smaller collections $\Lambda _{g_{A_{n}}}^{\omega }$ of 
$\mathcal{D}$ as $n$ increases.

We define the separated stopping form%
\begin{equation*}
\mathsf{B}_{\limfunc{stop}\func{sep}}^{\mathcal{A}}\left( f,g\right)
=\sum_{F\in \mathcal{F}}\sum_{Q\in \mathcal{Q}\left[ F\right] }\sum_{A\in 
\mathcal{A}\left[ Q\right] }\mathsf{B}_{\limfunc{stop}\func{sep}}^{\mathcal{A%
}\left[ Q\right] ,A}\left( f,g\right) ,
\end{equation*}%
where%
\begin{equation*}
\mathsf{B}_{\limfunc{stop}\func{sep}}^{\mathcal{A}\left[ Q\right] ,A}\left(
f,g\right) =\sum_{S\in \mathfrak{C}_{\mathcal{A}}\left( A\right) }\sum_{J\in 
\mathcal{C}_{\mathcal{Q}}\left( Q\right) \cap \mathcal{D}\left[ S\right]
}\left\langle \bigtriangleup _{J}^{\omega }H_{\sigma }\varphi
_{J}^{F,S},\bigtriangleup _{J}^{\omega }g\right\rangle _{\omega }
\end{equation*}%
is the local separated form - called `separated' because there is a child $S$
separating the intervals $J$ from\ the intervals $I$ arising in the sum for $%
\varphi _{J}^{F,S}$.

Finally for any \emph{sequence} $\Lambda _{\mathcal{Q}}=\left\{ \Lambda
_{Q}\right\} _{Q\in \mathcal{Q}}$ of subsets $\Lambda _{Q}\subset \mathcal{D}%
\left[ Q\right] $ for $Q\in \mathcal{Q}$, and $\delta >0$, we define%
\begin{eqnarray}
&&  \label{def fAQ} \\
&&\left\vert f\right\vert _{\mathcal{F},\mathcal{Q},\mathcal{A}}^{\Lambda _{%
\mathcal{Q}}}\left( x\right) \equiv \sqrt{\sum_{F\in \mathcal{F}}\sum_{Q\in 
\mathcal{Q}\left[ F\right] }\sum_{A\in \mathcal{A}\left[ Q\right] }2^{-%
\limfunc{dist}\left( A,Q\right) \delta }\sum_{S\in \mathfrak{C}_{\mathcal{A}%
}\left( A\right) }\sum_{K\in \mathcal{W}_{\func{good},\tau }\left( S\right)
}\alpha _{A}\left( S\right) ^{2}\left( \frac{\mathrm{P}\left( K,\mathbf{1}%
_{F\setminus S}\sigma \right) }{\ell \left( K\right) }\right) ^{2}\left\vert 
\mathsf{P}_{S;K}^{\omega ,\Lambda _{Q}}\right\vert Z\left( x\right) ^{2}}, 
\notag \\
&&\text{\ \ \ \ \ \ \ \ \ \ \ \ \ \ \ \ \ \ \ \ where }\mathsf{P}%
_{S;K}^{\omega ,\Lambda _{Q}}h\equiv \sum_{\substack{ J\in \Lambda _{Q}:\
J\subset _{\tau }\Lambda _{f}^{\sigma }\left[ S\right]  \\ J\subset K}}%
\bigtriangleup _{J}^{\omega }h\text{ and }\left\vert \mathsf{P}%
_{S;K}^{\omega ,\Lambda _{Q}}\right\vert h\equiv \sqrt{\sum_{\substack{ J\in
\Lambda _{Q}:\ J\subset _{\tau }\Lambda _{f}^{\sigma }\left[ S\right]  \\ %
J\subset K}}\left\vert \bigtriangleup _{J}^{\omega }h\right\vert ^{2}}. 
\notag
\end{eqnarray}%
Certain special cases of this rather complicated expression can be thought
of as substituting for the role of Lacey's size condition, but constrained
to live in the world of the measure $\omega $.

In our application of the $L^{p}$-Stopping Child Lemma, the main hypothesis (%
\ref{geo dec bound'}) below will follow from iterating the negation of the
first line in (\ref{dec corona}) of the dual tree decomposition.

\begin{lemma}[Quadratic $L^{p}$-Stopping Child Lemma]
\label{straddle3}Let $1<p<\infty $, and $f\in L^{p}\left( \sigma \right)
\cap L^{2}\left( \sigma \right) $, $g\in L^{p^{\prime }}\left( \omega
\right) \cap L^{2}\left( \omega \right) $ have their Haar supports in $%
\mathcal{D}_{\func{good}}^{\limfunc{child}}$, and let $\mathcal{F}$ be a
collection of $\func{good}$ stopping times satisfying a $\sigma $-Carleson
condition. Let $\mathcal{Q}=\mathcal{S}^{\left( n-1\right) }$ so that $%
\mathcal{Q}\left[ F\right] \subset \mathcal{C}_{\mathcal{F}}\left( F\right) $
is a set of $\func{good}$ stopping times with top $F$, and note that $%
\mathcal{Q}=\bigcup_{F\in \mathcal{F}}\mathcal{Q}\left[ F\right] $. Set $%
\mathcal{A}=\mathcal{S}^{\left( n\right) }$ so that for each $Q\in \mathcal{Q%
}$, the collection $\mathcal{A}\left[ Q\right] \subset \mathcal{C}_{\mathcal{%
Q}}\left( Q\right) $ is a set of $\func{good}$ stopping times with top
interval $Q$. For $A\in \mathcal{A}$ and $S\in \mathfrak{C}_{\mathcal{A}%
}\left( A\right) $, set 
\begin{equation}
\alpha _{A}\left( S\right) \equiv \sup_{I\in \left( \Lambda _{f}^{\sigma } 
\left[ S\right] ,A\right] \cap \mathcal{D}_{\func{good}}}\left\vert
E_{I}^{\sigma }f\right\vert ,  \label{def alpha AS}
\end{equation}%
where $\Lambda _{f}^{\sigma }\left[ S\right] $ is the smallest interval in
the Haar support $\Lambda _{f}^{\sigma }$ of $f$ that contains $S$. Finally,
we suppose there is $N\in \mathbb{N}$ and $\delta >0$ such that for all 
\begin{equation*}
\left( F,Q,A,S,K\right) \in \mathcal{F}\times \mathcal{Q}\left[ F\right]
\times \mathcal{A}\left[ Q\right] \times \mathfrak{C}_{\mathcal{A}}\left(
A\right) \times \mathcal{W}_{\func{good}}^{\limfunc{trip}}\left( S\right) 
\text{ and }m\geq 1,
\end{equation*}%
we have the equivalence,%
\begin{equation}
\mathsf{E}_{p}\left( \mathcal{C}_{\mathcal{Q}}\left( Q\right) \cap \left\{ 
\mathcal{D}_{\func{good}}\left[ K\right] \setminus \mathcal{D}_{\func{good}}%
\left[ \cup \mathfrak{C}_{\mathcal{A}}^{\left( N+1\right) }\left( S\right)
\cap K\right] \right\} \cap \Lambda _{g}^{\omega };\omega \right) \approx 
\mathsf{E}_{p}\left( \mathcal{C}_{\mathcal{Q}}\left( Q\right) \cap \mathcal{D%
}_{\func{good}}\left[ K\right] \cap \Lambda _{g}^{\omega };\omega \right) ,
\label{N opt}
\end{equation}%
and also the geometric decay bound (which is trivial when $m=1$),%
\begin{eqnarray}
&&\mathsf{E}_{p}\left( \mathcal{C}_{\mathcal{Q}}\left( Q\right) \cap \left\{ 
\mathcal{D}_{\func{good}}\left[ \bigcup_{B\in \mathfrak{C}_{\mathcal{A}%
}^{\left( m-1\right) }\left( S\right) }B\cap K\right] \right\} \cap \Lambda
_{g}^{\omega };\omega \right)  \label{geo dec bound'} \\
&\leq &C2^{-m\delta }\mathsf{E}_{p}\left( \mathcal{C}_{\mathcal{Q}}\left(
Q\right) \cap \left\{ \mathcal{D}_{\func{good}}\left[ K\right] \setminus 
\mathcal{D}_{\func{good}}\left[ \cup \mathfrak{C}_{\mathcal{A}}^{\left(
N+1\right) }\left( S\right) \cap K\right] \right\} \cap \Lambda _{g}^{\omega
};\omega \right) ,  \notag
\end{eqnarray}%
where $\mathfrak{C}_{\mathcal{A}}^{\left( k\right) }\left( S\right) $ is
defined in Notation \ref{corona notation} (and note that $\delta $ in(\ref%
{geo dec bound'}) is different than the $\delta $ appearing in (\ref{geom
decay})).\newline
Then we have the following nonlinear bound for all $\mathcal{A}=\mathcal{S}%
^{\left( n\right) }\supset \mathcal{Q}=\mathcal{S}^{\left( n-1\right)
}\supset \mathcal{F}$,%
\begin{equation}
\left\vert \mathsf{B}_{\limfunc{stop}\func{sep}}^{\mathcal{A}}\left(
f,g\right) \right\vert \lesssim \left( 1+2^{N\delta }\right) \frac{N^{2}}{%
\delta ^{2}}\left\Vert \left\vert f\right\vert _{\mathcal{F},\mathcal{Q},%
\mathcal{A}}^{\left\{ \mathcal{C}_{\mathcal{A}}^{\left[ N\right] }\left(
A\right) \cap \Lambda _{g_{Q}}^{\omega }\right\} _{A\in \mathcal{A}%
}}\right\Vert _{L^{p}\left( \omega \right) }\left\Vert g\right\Vert
_{L^{p^{\prime }}\left( \omega \right) }\ ,\ \ \ \ \ 1<p<\infty .
\label{stop est'}
\end{equation}
\end{lemma}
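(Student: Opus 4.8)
The plan is to run the Cauchy--Schwarz/H\"{o}lder scheme described in the Guide, and then turn the resulting $f$-factor into the energy quantity $\left\vert f\right\vert _{\mathcal{F},\mathcal{Q},\mathcal{A}}^{\left\{ \mathcal{C}_{\mathcal{A}}^{\left[ N\right] }\left( A\right) \cap \Lambda _{g_{Q}}^{\omega }\right\} }$ by the Monotonicity Lemma and the Poisson Decay Lemma, harvesting the combinatorial constants from the iterated tree structure. First I would write the form as an integral,
\[
\mathsf{B}_{\limfunc{stop}\func{sep}}^{\mathcal{A}}\left( f,g\right) =\int_{\mathbb{R}}\sum_{F\in \mathcal{F}}\sum_{Q\in \mathcal{Q}\left[ F\right] }\sum_{A\in \mathcal{A}\left[ Q\right] }\sum_{S\in \mathfrak{C}_{\mathcal{A}}\left( A\right) }\sum_{\substack{ J\in \mathcal{C}_{\mathcal{Q}}\left( Q\right) \cap \mathcal{D}\left[ S\right] }}\bigtriangleup _{J}^{\omega }H_{\sigma }\varphi _{J}^{F,S}\left( x\right) \ \bigtriangleup _{J}^{\omega }g\left( x\right) \ d\omega \left( x\right) ,
\]
insert the weight $2^{-\frac{1}{2}\limfunc{dist}\left( A,Q\right) \delta }\cdot 2^{\frac{1}{2}\limfunc{dist}\left( A,Q\right) \delta }$, and apply $\ell ^{2}$ Cauchy--Schwarz over the index $\left( F,Q,A,S,J\right) $ followed by $L^{p}\left( \omega \right) $--$L^{p^{\prime }}\left( \omega \right) $ H\"{o}lder, so that we are left with the product of $\left\Vert \left( \sum 2^{-\limfunc{dist}\left( A,Q\right) \delta }\left\vert \bigtriangleup _{J}^{\omega }H_{\sigma }\varphi _{J}^{F,S}\right\vert ^{2}\right) ^{1/2}\right\Vert _{L^{p}\left( \omega \right) }$ and $\left\Vert \left( \sum 2^{\limfunc{dist}\left( A,Q\right) \delta }\left\vert \bigtriangleup _{J}^{\omega }g\right\vert ^{2}\right) ^{1/2}\right\Vert _{L^{p^{\prime }}\left( \omega \right) }$.

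For the $g$-factor, the point is that a fixed $J$ occurs only for $S$ ranging over the $\mathcal{A}$-tower between $J$ and $Q$, and after restricting the Haar support of $g$ below each Whitney interval to depth $N$ via hypothesis \eqref{N opt} and invoking the geometric decay \eqref{geo dec bound'} (which trades the levels of that tower for a factor $2^{-m\delta }$), the weighted multiplicity $\sum _{m}2^{m\delta }\cdot 2^{-m\delta }$ telescopes, producing only a factor comparable to $N/\delta $; Burkholder's square function estimate (Theorem \ref{square thm}), applied to the $\mathcal{C}_{\mathcal{Q}}$-corona martingale of $g$, then bounds this factor by $\tfrac{N}{\delta }\left\Vert g\right\Vert _{L^{p^{\prime }}\left( \omega \right) }$. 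For the $f$-factor I would use, for each $J$ good and $\tau $-deeply embedded in its ambient Haar interval $\Lambda _{f}^{\sigma }\left[ S\right] $, that $\varphi _{J}^{F,S}$ is supported in $F\setminus S$ (hence disjoint from $2J$) and $\left\vert \varphi _{J}^{F,S}\right\vert \lesssim \alpha _{A}\left( S\right) \mathbf{1}_{F\setminus S}$; the Monotonicity Lemma \ref{Energy Lemma} and Corollary \ref{point mon} then give $\left\vert \bigtriangleup _{J}^{\omega }H_{\sigma }\varphi _{J}^{F,S}\right\vert \lesssim \alpha _{A}\left( S\right) \tfrac{\mathrm{P}\left( J,\mathbf{1}_{F\setminus S}\sigma \right) }{\ell \left( J\right) }\left\vert \bigtriangleup _{J}^{\omega }Z\right\vert $, after which the Poisson Decay Lemma \ref{Poisson inequality} passes from $J$ to the enclosing Whitney interval $K\in \mathcal{W}_{\func{good}}^{\limfunc{trip}}\left( S\right) $, and Lemma \ref{ener rev} reassembles $\sum _{J\subset K}\left( \tfrac{\mathrm{P}\left( K,\mathbf{1}_{F\setminus S}\sigma \right) }{\ell \left( K\right) }\right) ^{2}\left\vert \bigtriangleup _{J}^{\omega }Z\right\vert ^{2}$ into $\left( \tfrac{\mathrm{P}\left( K,\mathbf{1}_{F\setminus S}\sigma \right) }{\ell \left( K\right) }\right) ^{2}\left\vert \mathsf{P}_{S;K}^{\omega ,\Lambda _{Q}}\right\vert Z^{2}$ with $\Lambda _{Q}=\mathcal{C}_{\mathcal{A}}^{\left[ N\right] }\left( A\right) \cap \Lambda _{g_{Q}}^{\omega }$; collecting the $2^{-\limfunc{dist}\left( A,Q\right) \delta }$ weight, this is precisely $\left\vert f\right\vert _{\mathcal{F},\mathcal{Q},\mathcal{A}}^{\Lambda _{\mathcal{Q}}}\left( x\right) ^{2}$ up to the price $\left( 1+2^{N\delta }\right) $ paid in \eqref{N opt} and an extra $N/\delta $ coming from the second iterated-distance variable $d_{2}=\limfunc{dist}_{\mathcal{Q}}\left( Q,T\right) $ (Conclusion \ref{concl it}), and multiplying the two factors yields the claimed bound with constant $\left( 1+2^{N\delta }\right) N^{2}/\delta ^{2}$.

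The main obstacle I expect is not any single estimate but the bookkeeping: one must verify that the $5$-tuples $\left( F,Q,A,S,J\right) $ and the Whitney intervals $K$ are each counted with $\ell ^{2}$-controlled multiplicity, that the inserted geometric weight $2^{\pm \frac{1}{2}\limfunc{dist}\left( A,Q\right) \delta }$ lands on the correct side and is absorbed — on the $g$-side by the tree decomposition decay \eqref{geo dec bound'}, and on the $f$-side by appearing literally in the definition \eqref{def fAQ} — and that the two distance parameters $\left( d_{1},d_{2}\right) $ of the iterated corona $\mathcal{Q}\circ \mathcal{A}$ can be summed independently. This is exactly where the restriction to the \emph{separated} form is essential (a child $S$ genuinely separates the intervals $J$ from the intervals $I$ appearing in $\varphi _{J}^{F,S}$, so that $2J$ is disjoint from the support of $\varphi _{J}^{F,S}$), and where the structural hypotheses \eqref{N opt}--\eqref{geo dec bound'} do the real work; no use of the structure of $\mathcal{Q}$ beyond the distance decomposition is needed here, unlike in the diagonal terms treated later via the Corona Martingale Comparison Principle.
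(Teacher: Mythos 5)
Your overall scaffolding (write the form as an integral, Cauchy--Schwarz in $\ell^{2}$, H\"{o}lder, Monotonicity Lemma, lift $J$ to its Whitney interval $K$, square function for the $g$-factor) matches the opening of the paper's argument, but the proposal has a genuine gap at exactly the point where the paper does the real work. After the Monotonicity Lemma, the $f$-factor for the piece of the form at corona-distance $m$ involves the absolute projections $\left\vert \mathsf{P}_{S;K}^{\omega ,\Lambda _{g_{A,m}}^{\omega }}\right\vert Z$, whose Haar support lies in $\mathcal{C}_{\mathcal{A}}^{\left( m-1\right) }\left( S\right) \cap \Lambda _{g}^{\omega }$, i.e.\ at depth $\approx m$ \emph{below} $S$. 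The target quantity $\left\vert f\right\vert _{\mathcal{F},\mathcal{Q},\mathcal{A}}^{\left\{ \mathcal{C}_{\mathcal{A}}^{\left[ N\right] }\left( A\right) \cap \Lambda _{g_{Q}}^{\omega }\right\} }$ instead involves projections supported at depth at most $N$ below $A$; for $m>N+1$ these two Haar supports are disjoint, so there is no ``reassembly'' via Lemma \ref{ener rev} available: one must compare the $L^{p}\left( \ell ^{2};\omega \right) $ norms of two \emph{different} corona martingale difference sequences. The hypotheses (\ref{N opt}) and (\ref{geo dec bound'}) only give the scalar norm ratios $\left\Vert \mathsf{P}_{S;K}^{\omega ,\Lambda _{g_{A,m}}^{\omega }}Z\right\Vert _{L^{p}\left( \omega \right) }\big/\left\Vert \mathsf{P}_{S;K}^{\omega ,N}Z\right\Vert _{L^{p}\left( \omega \right) }\lesssim m2^{-m\delta }$ for each fixed $\left( S,K\right) $; upgrading these local comparisons to a comparison of the full vector-valued norms is trivial when $p=2$ but is precisely the hard step for $p\neq 2$, and it is carried out in the paper by the Corona Martingale Comparison Principle (Proposition \ref{CMCP}), together with a separate treatment of the regime $m+\limfunc{dist}\left( A,Q\right) \leq N$ (which is what produces the $2^{N\delta }$, not (\ref{N opt})). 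You explicitly set the CMCP aside as ``only for the diagonal terms,'' so the key tool is missing from your argument.

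A second, related flaw is your treatment of the $g$-factor. You insert $2^{+\frac{1}{2}\limfunc{dist}\left( A,Q\right) \delta }$ on the $g$-side and claim it is absorbed by (\ref{geo dec bound'}) and Burkholder's theorem at the cost of $N/\delta $. But (\ref{geo dec bound'}) is a statement about the $p$-energies $\mathsf{E}_{p}$ of projections of the identity function $Z$ (i.e.\ about the measure $\omega $ over the Haar support of $g$), not about the Haar coefficients of $g$ itself; there is no decay available for $\left\vert \bigtriangleup _{J}^{\omega }g\right\vert $, and the weighted square function $\bigl\Vert \bigl( \sum 2^{\limfunc{dist}\left( A,Q\right) \delta }\left\vert \bigtriangleup _{J}^{\omega }g\right\vert ^{2}\bigr) ^{1/2}\bigr\Vert _{L^{p^{\prime }}\left( \omega \right) }$ is not bounded by $C\left( N/\delta \right) \left\Vert g\right\Vert _{L^{p^{\prime }}\left( \omega \right) }$ in general, since the weight is unbounded over the Haar support. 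In the paper the $g$-factor is left \emph{unweighted} and bounded by $\left\Vert g\right\Vert _{L^{p^{\prime }}\left( \omega \right) }$ outright; the geometric factor $2^{-\limfunc{dist}\left( A,Q\right) \delta }$ in (\ref{def fAQ}) is not inserted by hand but is generated on the $f$-side through the iterated use of (\ref{geo dec bound'}) inside the CMCP. To repair your proof you would need to reinstate the martingale comparison step (or an equivalent $p\neq 2$ transplantation argument) rather than rely on pointwise reassembly and a weighted square function bound for $g$.
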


Note that in the definition of $\left\vert f\right\vert _{\mathcal{F},%
\mathcal{Q},\mathcal{A}}^{\left\{ \mathcal{C}_{\mathcal{A}}^{\left[ N\right]
}\left( A\right) \cap \Lambda _{g_{Q}}^{\omega }\right\} _{A\in \mathcal{A}%
}} $ in (\ref{def fAQ}), the restrictions $J\in \mathcal{C}_{\mathcal{A}}^{%
\left[ N\right] }\left( S\right) $ and $J\subset K\in \mathcal{W}_{\func{good%
},\tau }\left( S\right) $ on the intervals $J$ arising in the absolute
projection $\left\vert \mathsf{P}_{S;K}^{\omega ,\mathcal{C}_{\mathcal{A}}^{%
\left[ N\right] }\left( A\right) \cap \Lambda _{g_{Q}}^{\omega }}\right\vert 
$, imply that $K$ satisfies%
\begin{equation}
K\in \mathcal{W}_{\func{good},\tau }^{\mathcal{A},\left[ N\right] }\left(
S\right) \equiv \mathcal{C}_{\mathcal{A}}^{\left[ N\right] }\left( S\right)
\cap \mathcal{W}_{\func{good},\tau }\left( S\right) .  \label{K sat}
\end{equation}

\begin{proof}
Recall that%
\begin{eqnarray*}
&&\mathsf{B}_{\limfunc{stop}}^{\mathcal{A}}\left( f,g\right) =\sum_{F\in 
\mathcal{F}}\sum_{A\in \mathcal{A}\left[ F\right] }\mathsf{B}_{\limfunc{stop}%
}^{\mathcal{F},F}\left( \mathsf{P}_{\mathcal{C}_{\mathcal{A}\left[ Q\right]
}\left( A\right) }^{\sigma }f,\mathsf{P}_{\mathcal{C}_{\mathcal{A}\left[ Q%
\right] }\left( A\right) }^{\omega }g\right) , \\
&&\mathsf{B}_{\limfunc{stop}\func{sep}}^{\mathcal{A}}\left( f,g\right)
=\sum_{F\in \mathcal{F}}\sum_{\substack{ A,B\in \mathcal{A}\left[ F\right] 
\\ B\subsetneqq A}}\mathsf{B}_{\limfunc{stop}}^{\mathcal{F},F}\left( \mathsf{%
P}_{\mathcal{C}_{\mathcal{A}\left[ Q\right] }\left( A\right) }^{\sigma }f,%
\mathsf{P}_{\mathcal{C}_{\mathcal{A}\left[ Q\right] }\left( B\right)
}^{\omega }g\right) \\
&&\ \ \ \ \ \ \ \ \ \ \ \ \ \ \ \ \ \ \ =\sum_{F\in \mathcal{F}}\sum_{A\in 
\mathcal{A}\left[ F\right] }\sum_{m=1}^{\infty }\sum_{B\in \mathcal{C}_{%
\mathcal{A}\left[ Q\right] }^{\left( m\right) }\left( A\right) }\mathsf{B}_{%
\limfunc{stop}}^{\mathcal{F},F}\left( \mathsf{P}_{\mathcal{C}_{\mathcal{A}%
\left[ Q\right] }\left( A\right) }^{\sigma }f,\mathsf{P}_{\mathcal{C}_{%
\mathcal{A}\left[ Q\right] }\left( B\right) }^{\omega }g\right) \\
&&\ \ \ \ \ \ \ \ \ \ \ \ \ \ \ \ \ \ \ =\sum_{F\in \mathcal{F}}\sum_{A\in 
\mathcal{A}\left[ F\right] }\sum_{m=1}^{\infty }\mathsf{B}_{\limfunc{stop}}^{%
\mathcal{F},F}\left( \mathsf{P}_{\mathcal{C}_{\mathcal{A}\left[ Q\right]
}\left( A\right) }^{\sigma }f,\mathsf{P}_{\mathcal{C}_{\mathcal{A}\left[ Q%
\right] }^{\left( m\right) }\left( A\right) }^{\omega }g\right) ,
\end{eqnarray*}%
where for each $m\geq 1$, $F\in \mathcal{F}$ and $A\in \mathcal{A}\left[ F%
\right] $, we can write, 
\begin{eqnarray*}
&&\mathsf{B}_{\limfunc{stop}}^{\mathcal{F},F}\left( \mathsf{P}_{\mathcal{C}_{%
\mathcal{A}\left[ Q\right] }\left( A\right) }^{\sigma }f,\mathsf{P}_{%
\mathcal{C}_{\mathcal{A}\left[ Q\right] }^{\left( m\right) }\left( A\right)
}^{\omega }g\right) =\sum_{S\in \mathfrak{C}_{\mathcal{A}}\left( A\right) }%
\mathsf{B}_{\limfunc{stop}}^{\mathcal{F},F}\left( \mathsf{P}_{\mathcal{C}_{%
\mathcal{A}\left[ Q\right] }\left( A\right) }^{\sigma }f,\mathsf{P}_{%
\mathcal{C}_{\mathcal{A}\left[ Q\right] }^{\left( m-1\right) }\left(
S\right) }^{\omega }g\right) \\
&=&\sum_{S\in \mathfrak{C}_{\mathcal{A}}\left( A\right) }\sum_{\substack{ %
\left( I,J\right) \in \left( S,A\right] \times \left\{ \mathcal{C}_{\mathcal{%
F}}\left( F\right) \cap \mathcal{D}\left[ S\right] \right\}  \\ J\subset
_{\tau }I}}\left( E_{I_{J}}^{\sigma }\bigtriangleup _{I}^{\sigma }f\right)
\left\langle H_{\sigma }\left( \mathbf{1}_{F\setminus S}\right)
,\bigtriangleup _{J}^{\omega }\mathsf{P}_{\mathcal{C}_{\mathcal{A}\left[ Q%
\right] }^{\left( m-1\right) }\left( S\right) }^{\omega }g\right\rangle
_{\omega } \\
&=&\sum_{S\in \mathfrak{C}_{\mathcal{A}}\left( A\right) }\sum_{J\in \mathcal{%
C}_{\mathcal{A}\left[ Q\right] }^{\left( m-1\right) }\left( S\right)
}\left\langle \bigtriangleup _{J}^{\omega }H_{\sigma }\varphi _{J}^{\mathcal{%
A},F,S},\bigtriangleup _{J}^{\omega }\mathsf{P}_{\mathcal{C}_{\mathcal{A}%
\left[ Q\right] }^{\left( m-1\right) }\left( S\right) }^{\omega
}g\right\rangle _{\omega }\ ,
\end{eqnarray*}%
where%
\begin{equation*}
\varphi _{J}^{\mathcal{A},F,S}\equiv \sum_{I\in \left( S,A\right] :\
J\subset _{\tau }\Lambda _{f}^{\sigma }\left[ S\right] }\left(
E_{I_{J}}^{\sigma }\bigtriangleup _{I}^{\sigma }f\right) \mathbf{1}%
_{F\setminus S},\ \ \ \ \ A=\pi _{\mathcal{A}}S\ ,
\end{equation*}%
and where $\Lambda _{f}^{\sigma }\left[ S\right] $ is the smallest interval
in the Haar support $\Lambda _{f}^{\sigma }$ of $f$ that strictly contains $%
S $. We rename,%
\begin{equation*}
\mathsf{B}_{\limfunc{stop}\func{sep}}^{\mathcal{A}\left[ F\right]
,A,m}\left( f,g\right) \equiv \mathsf{B}_{\limfunc{stop}}^{\mathcal{F}%
,F}\left( \mathsf{P}_{\mathcal{C}_{\mathcal{A}\left[ Q\right] }\left(
A\right) }^{\sigma }f,\mathsf{P}_{\mathcal{C}_{\mathcal{A}\left[ Q\right]
}^{\left( m\right) }\left( A\right) }^{\omega }g\right) ,
\end{equation*}%
and write,%
\begin{equation*}
\mathsf{B}_{\limfunc{stop}\func{sep}}^{\mathcal{A}\left[ Q\right] ,A}\left(
f,g\right) =\sum_{S\in \mathfrak{C}_{\mathcal{A}}\left( A\right) }\sum_{J\in 
\mathcal{C}_{\mathcal{Q}}\left( Q\right) \cap \mathcal{D}\left[ S\right]
}\left\langle \bigtriangleup _{J}^{\omega }H_{\sigma }\varphi _{J}^{\mathcal{%
A},F,S},\bigtriangleup _{J}^{\omega }g\right\rangle _{\omega
}=\sum_{m=1}^{\infty }\mathsf{B}_{\limfunc{stop}}^{\mathcal{A}\left[ Q\right]
,A,m}\left( f,g\right) \ ,
\end{equation*}%
where%
\begin{equation*}
\mathsf{B}_{\limfunc{stop}\func{sep}}^{\mathcal{A}\left[ Q\right]
,A,m}\left( f,g\right) \equiv \sum_{S\in \mathfrak{C}_{\mathcal{A}}\left(
A\right) }\sum_{J\in \mathcal{C}_{\mathcal{A}}^{\left( m-1\right) }\left(
S\right) }\left\langle \bigtriangleup _{J}^{\omega }H_{\sigma }\varphi _{J}^{%
\mathcal{A},F,S},\bigtriangleup _{J}^{\omega }g\right\rangle _{\omega }\ .
\end{equation*}%
Recalling our convention regarding iterated sums, we define%
\begin{eqnarray*}
\mathsf{B}_{\limfunc{stop}\func{sep}}^{\mathcal{Q}\circ \mathcal{A}}\left(
f,g\right) &\equiv &\sum_{m=1}^{\infty }\mathsf{B}_{\limfunc{stop}\func{sep}%
}^{\mathcal{Q}\circ \mathcal{A},m}\left( f,g\right) , \\
\mathsf{B}_{\limfunc{stop}\func{sep}}^{\mathcal{Q}\circ \mathcal{A},m}\left(
f,g\right) &\equiv &\sum_{F\in \mathcal{F}}\sum_{Q\in \mathcal{Q}}\sum_{A\in 
\mathcal{A}}\mathsf{B}_{\limfunc{stop}\func{sep}}^{\mathcal{A}\left[ Q\right]
,A,m}\left( f,g\right) \\
&=&\sum_{F\in \mathcal{F}}\sum_{Q\in \mathcal{Q}}\sum_{A\in \mathcal{A}%
}\sum_{S\in \mathfrak{C}_{\mathcal{A}}\left( A\right) }\sum_{J\in \mathcal{C}%
_{\mathcal{A}}^{\left( m-1\right) }\left( S\right) }\left\langle
\bigtriangleup _{J}^{\omega }H_{\sigma }\varphi _{J}^{\mathcal{A}%
,F,S},\bigtriangleup _{J}^{\omega }g\right\rangle _{\omega }.
\end{eqnarray*}%
With $g_{A,m}\equiv \mathsf{P}_{\mathcal{C}_{\mathcal{A}}^{\left( m\right)
}\left( A\right) }g$, where $\mathcal{C}_{\mathcal{A}}^{\left( m\right)
}\left( A\right) $ is as in Notation \ref{corona notation}, we obtain for
each $m\in \mathbb{N}$, 
\begin{eqnarray*}
&&\left\vert \mathsf{B}_{\limfunc{stop}\func{sep}}^{\mathcal{Q}\circ 
\mathcal{A},m}\left( f,g\right) \right\vert =\left\vert \sum_{F\in \mathcal{F%
}}\sum_{Q\in \mathcal{Q}}\sum_{A\in \mathcal{A}}\mathsf{B}_{\limfunc{stop}%
\func{sep}}^{\mathcal{A}\left[ Q\right] ,A,m}\left( f,g\right) \right\vert \\
&=&\left\vert \sum_{F\in \mathcal{F}}\sum_{Q\in \mathcal{Q}}\sum_{A\in 
\mathcal{A}}\sum_{S\in \mathfrak{C}_{\mathcal{A}}\left( A\right) }\sum_{J\in 
\mathcal{C}_{\mathcal{A}}^{\left( m-1\right) }\left( S\right) \cap \Lambda
_{g_{A,m}}^{\omega }:\ J\subset _{\tau }\Lambda _{f}^{\sigma }\left[ S\right]
}\int_{\mathbb{R}}\bigtriangleup _{J}^{\omega }H_{\sigma }\varphi _{J}^{%
\mathcal{A},F,S}\left( x\right) \bigtriangleup _{J}^{\omega }g\left(
x\right) d\omega \left( x\right) \right\vert \\
&\leq &\int_{\mathbb{R}}\left\vert \sum_{F\in \mathcal{F}}\sum_{Q\in 
\mathcal{Q}}\sum_{A\in \mathcal{A}}\sum_{S\in \mathfrak{C}_{\mathcal{A}%
}\left( A\right) }\sum_{J\in \mathcal{C}_{\mathcal{A}}^{\left( m-1\right)
}\left( S\right) \cap \Lambda _{g_{A,m}}^{\omega }:\ J\subset _{\tau
}\Lambda _{f}^{\sigma }\left[ S\right] }\bigtriangleup _{J}^{\omega
}H_{\sigma }\varphi _{J}^{\mathcal{A},F,S}\left( x\right) \bigtriangleup
_{J}^{\omega }g\left( x\right) \right\vert d\omega \left( x\right) \\
&\leq &\int_{\mathbb{R}}\left( \sum_{F\in \mathcal{F}}\sum_{Q\in \mathcal{Q}%
}\sum_{A\in \mathcal{A}}\sum_{S\in \mathfrak{C}_{\mathcal{A}}\left( A\right)
}\sum_{J\in \mathcal{C}_{\mathcal{A}}^{\left( m-1\right) }\left( S\right)
\cap \Lambda _{g_{A,m}}^{\omega }:\ J\subset _{\tau }\Lambda _{f}^{\sigma }%
\left[ S\right] }\left\vert \bigtriangleup _{J}^{\omega }H_{\sigma }\varphi
_{J}^{\mathcal{A},F,S}\left( x\right) \right\vert ^{2}\right) ^{\frac{1}{2}}
\\
&&\ \ \ \ \ \ \ \ \ \ \ \ \ \ \ \ \ \ \ \ \times \left( \sum_{F\in \mathcal{F%
}}\sum_{Q\in \mathcal{Q}}\sum_{A\in \mathcal{A}}\sum_{S\in \mathfrak{C}_{%
\mathcal{A}}\left( A\right) }\sum_{J\in \mathcal{C}_{\mathcal{A}}^{\left(
m-1\right) }\left( S\right) \cap \Lambda _{g_{A,m}}^{\omega }:\ J\subset
_{\tau }\Lambda _{f}^{\sigma }\left[ S\right] }\left\vert \bigtriangleup
_{J}^{\omega }g\left( x\right) \right\vert ^{2}\right) ^{\frac{1}{2}}d\omega
\left( x\right) ,
\end{eqnarray*}%
which is at most%
\begin{eqnarray*}
&&\left\Vert \left( \sum_{F\in \mathcal{F}}\sum_{Q\in \mathcal{Q}}\sum_{A\in 
\mathcal{A}}\sum_{S\in \mathfrak{C}_{\mathcal{A}}\left( A\right) }\sum_{J\in 
\mathcal{C}_{\mathcal{A}}^{\left( m-1\right) }\left( S\right) \cap \mathcal{C%
}_{\mathcal{Q}}\left( Q\right) \cap \Lambda _{g_{A,m}}^{\omega }:\ J\subset
_{\tau }\Lambda _{f}^{\sigma }\left[ S\right] }\left\vert \bigtriangleup
_{J}^{\omega }H_{\sigma }\varphi _{J}^{\mathcal{A},F,S}\left( x\right)
\right\vert ^{2}\right) ^{\frac{1}{2}}\right\Vert _{L^{p}\left( \omega
\right) } \\
&&\ \ \ \ \ \ \ \ \ \ \ \ \ \ \ \ \ \ \ \ \times \left\Vert \left(
\sum_{F\in \mathcal{F}}\sum_{Q\in \mathcal{Q}}\sum_{A\in \mathcal{A}%
}\sum_{S\in \mathfrak{C}_{\mathcal{A}}\left( A\right) }\sum_{J\in \mathcal{C}%
_{\mathcal{A}}^{\left( m-1\right) }\left( S\right) \cap \mathcal{C}_{%
\mathcal{Q}}\left( Q\right) \cap \Lambda _{g_{A,m}}^{\omega }:\ J\subset
_{\tau }\Lambda _{f}^{\sigma }\left[ S\right] }\left\vert \bigtriangleup
_{J}^{\omega }g\left( x\right) \right\vert ^{2}\right) ^{\frac{1}{2}%
}\right\Vert _{L^{p^{\prime }}\left( \omega \right) },
\end{eqnarray*}%
where the square function inequality in Theorem \ref{square thm} shows that
the second norm is bounded by $C\left\Vert g\right\Vert _{L^{p^{\prime
}}\left( \omega \right) }$, and because of this we assume without loss of
generality, 
\begin{equation}
\left\Vert g\right\Vert _{L^{p^{\prime }}\left( \omega \right) }=1.
\label{g=1}
\end{equation}%
By the telescoping property of martingale differences, together with the
bound $\alpha _{A}\left( S\right) $ in (\ref{def alpha AS}) on the averages
of $\mathsf{P}_{\mathcal{C}_{\mathcal{A}}\left( A\right) }^{\sigma }f$ in
the tower $\left( \Lambda _{f}^{\sigma }\left[ S\right] ,A\right] $, we have%
\begin{equation}
\left\vert \varphi _{J}^{\mathcal{A},F,S}\left( x\right) \right\vert
=\left\vert \sum_{I\in \left( \Lambda _{f}^{\sigma }\left[ S\right] ,A\right]
:\ J\subset _{\tau }I}\left( E_{I_{J}}^{\sigma }\bigtriangleup _{I}^{\sigma
}f\right) \left( x\right) \mathbf{1}_{A\setminus I_{J}}\left( x\right)
\right\vert \lesssim M_{\sigma }^{\func{dy}}\mathsf{P}_{\mathcal{C}_{%
\mathcal{A}}\left( A\right) }^{\sigma }\left( x\right) \mathbf{1}%
_{A\setminus S}\left( x\right) \equiv \alpha _{A}\left( S\right) \left(
x\right) \ .  \label{bfi 3}
\end{equation}

Next we use the Monotonicity Lemma and the fact that%
\begin{equation*}
\Lambda _{g}^{\omega }\cap \mathcal{D}\left[ S\right] \subset \left(
\bigcup_{K\in \mathcal{W}_{\func{good},\tau }\left( S\right) }K\right) \cup 
\mathcal{N}_{\tau }\left( S\right)
\end{equation*}%
where $\mathcal{W}_{\func{good},\tau }\left( S\right) $ is the collection of
maximal $\func{good}$ intervals $K$ in $I$ with $K\subset _{\tau }S$, and $%
\mathcal{N}_{\tau }\left( S\right) \equiv \left\{ J\subset S:\ell \left(
J\right) \geq 2^{-\tau }\ell \left( S\right) \right\} $ is the set of `$\tau 
$-nearby' dyadic intervals in $S$. Then remembering that $\left\Vert
g\right\Vert _{L^{p^{\prime }}\left( \omega \right) }=1$, we have the
following estimate for the sum in $m\in \mathbb{N}$, 
\begin{eqnarray*}
&&\left\vert \sum_{m=1}^{\infty }\sum_{F\in \mathcal{F}}\sum_{Q\in \mathcal{Q%
}}\sum_{A\in \mathcal{A}}\mathsf{B}_{\limfunc{stop}\func{sep}}^{\mathcal{A}%
\left[ Q\right] ,A,m}\left( f,g\right) \right\vert \\
&\leq &\left\Vert \left( \sum_{m=1}^{\infty }\sum_{F\in \mathcal{F}%
}\sum_{Q\in \mathcal{Q}}\sum_{A\in \mathcal{A}}\sum_{S\in \mathfrak{C}_{%
\mathcal{A}}\left( A\right) }\sum_{J\in \mathcal{C}_{\mathcal{A}}^{\left(
m-1\right) }\left( S\right) \cap \Lambda _{g_{A}}^{\omega }:\ J\subset
_{\tau }\Lambda _{f}^{\sigma }\left[ S\right] }\left\vert \bigtriangleup
_{J}^{\omega }H_{\sigma }\varphi _{J}^{\mathcal{A},F,S}\left( x\right)
\right\vert ^{2}\right) ^{\frac{1}{2}}\right\Vert _{L^{p}\left( \omega
\right) } \\
&\leq &\left\Vert \left( \sum_{m=1}^{\infty }\sum_{F\in \mathcal{F}%
}\sum_{Q\in \mathcal{Q}}\sum_{A\in \mathcal{A}}\sum_{S\in \mathfrak{C}_{%
\mathcal{A}}\left( A\right) }\alpha _{A}\left( S\right) ^{2}\sum_{K\in 
\mathcal{W}_{\func{good},\tau }\left( S\right) }\sum_{J\in \mathcal{C}_{%
\mathcal{A}}^{\left( m-1\right) }\left( S\right) \cap \Lambda
_{g_{A}}^{\omega }:\ J\subset _{\tau }\Lambda _{f}^{\sigma }\left[ S\right]
\cap K}\left( \frac{\mathrm{P}\left( J,\mathbf{1}_{F\setminus S}\sigma
\right) }{\ell \left( J\right) }\right) ^{2}\left\vert \bigtriangleup
_{J}^{\omega }Z\left( x\right) \right\vert ^{2}\right) ^{\frac{1}{2}%
}\right\Vert _{L^{p}\left( \omega \right) } \\
&&+\left\Vert \left( \sum_{m=1}^{\infty }\sum_{F\in \mathcal{F}}\sum_{Q\in 
\mathcal{Q}}\sum_{A\in \mathcal{A}}\sum_{S\in \mathfrak{C}_{\mathcal{A}%
}\left( A\right) }\alpha _{A}\left( S\right) ^{2}\sum_{J\in \mathcal{C}_{%
\mathcal{A}}^{\left( m-1\right) }\left( S\right) \cap \mathcal{N}_{\tau
}\left( S\right) \cap \Lambda _{g}^{\omega }:\ J\subset _{\tau }\Lambda
_{f}^{\sigma }\left[ S\right] }\left( \frac{\mathrm{P}\left( J,\mathbf{1}%
_{F\setminus S}\sigma \right) }{\ell \left( J\right) }\right) ^{2}\left\vert
\bigtriangleup _{J}^{\omega }Z\left( x\right) \right\vert ^{2}\right) ^{%
\frac{1}{2}}\right\Vert _{L^{p}\left( \omega \right) } \\
&\leq &\left( \sum_{m=1}^{\infty }\left\Vert \sqrt{\left\vert \mathsf{B}%
^{m}\right\vert _{\func{straddle}}^{\mathcal{Q}\circ \mathcal{A},\limfunc{%
trip}}\left( f\right) }\right\Vert _{L^{p}\left( \omega \right) }\right)
+\left\Vert \sqrt{\left\vert \mathsf{B}\right\vert _{\func{straddle}}^{%
\mathcal{Q}\circ \mathcal{A},\func{near}}\left( f\right) }\right\Vert
_{L^{p}\left( \omega \right) },
\end{eqnarray*}%
where 
\begin{eqnarray*}
\left\vert \mathsf{B}^{m}\right\vert _{\func{straddle}}^{\mathcal{Q}\circ 
\mathcal{A},\limfunc{trip}}\left( f\right) &\equiv &\sum_{F\in \mathcal{F}%
}\sum_{Q\in \mathcal{Q}}\sum_{A\in \mathcal{A}}\sum_{S\in \mathfrak{C}_{%
\mathcal{A}}\left( A\right) }\alpha _{A}\left( S\right) ^{2}\sum_{K\in 
\mathcal{W}_{\func{good},\tau }\left( S\right) }\sum_{J\in \mathcal{C}_{%
\mathcal{A}}^{\left( m-1\right) }\left( S\right) \cap \Lambda
_{g_{A}}^{\omega }:\ J\subset _{\tau }\Lambda _{f}^{\sigma }\left[ S\right]
\cap K}\left( \frac{\mathrm{P}\left( J,\mathbf{1}_{F\setminus S}\sigma
\right) }{\ell \left( J\right) }\right) ^{2}\left\vert \bigtriangleup
_{J}^{\omega }Z\left( x\right) \right\vert ^{2}, \\
\left\vert \mathsf{B}\right\vert _{\func{straddle}}^{\mathcal{Q}\circ 
\mathcal{A},\func{near}}\left( f\right) &\equiv &\sum_{F\in \mathcal{F}%
}\sum_{Q\in \mathcal{Q}}\sum_{A\in \mathcal{A}}\sum_{S\in \mathfrak{C}_{%
\mathcal{A}}\left( A\right) }\alpha _{A}\left( S\right) ^{2}\sum_{J\in 
\mathcal{N}_{\tau }\left( S\right) \cap \Lambda _{g}^{\omega }:\ J\subset
_{\tau }\Lambda _{f}^{\sigma }\left[ S\right] }\left( \frac{\mathrm{P}\left(
J,\mathbf{1}_{F\setminus S}\sigma \right) }{\ell \left( J\right) }\right)
^{2}\left\vert \bigtriangleup _{J}^{\omega }Z\left( x\right) \right\vert
^{2}.
\end{eqnarray*}

Now%
\begin{eqnarray*}
\frac{\mathrm{P}\left( J,\mathbf{1}_{F\setminus S}\sigma \right) }{\ell
\left( J\right) } &=&\int_{F\setminus S}\frac{1}{\left[ \ell \left( J\right)
+\limfunc{dist}\left( y,c_{J}\right) \right] ^{2}}d\sigma \left( y\right) \\
&\approx &\int_{F\setminus S}\frac{1}{\left[ \ell \left( K\right) +\limfunc{%
dist}\left( y,c_{K}\right) \right] ^{2}}d\sigma \left( y\right) =\frac{%
\mathrm{P}\left( K,\mathbf{1}_{F\setminus S}\sigma \right) }{\ell \left(
K\right) },
\end{eqnarray*}%
for $K\in \mathcal{W}_{\func{good},\tau }\left( S\right) \cup \mathcal{N}%
_{\tau }\left( S\right) $, as one easily verifies using that 
\begin{equation*}
\ell \left( J\right) +\limfunc{dist}\left( y,c_{J}\right) \approx \ell
\left( K\right) +\limfunc{dist}\left( y,c_{K}\right)
\end{equation*}%
in both cases. Thus with absolute projections, as defined in (\ref{abs proj}%
), 
\begin{equation}
\left\vert \mathsf{P}_{S;K}^{\omega ,\Lambda _{g}^{\omega },m}\right\vert
h\equiv \sqrt{\sum_{\substack{ J\in \mathcal{C}_{\mathcal{A}\left[ Q\right]
}^{\left( m-1\right) }\left( S\right) \cap \Lambda _{g}^{\omega }:\ J\subset
_{\tau }\Lambda _{f}^{\sigma }\left[ S\right]  \\ J\subset K}}\left\vert
\bigtriangleup _{J}^{\omega }h\right\vert ^{2}}\ \ \ \text{and \ \ }%
\left\vert \mathsf{P}_{S;K}^{\omega ,N}\right\vert h\equiv \sqrt{\sum 
_{\substack{ J\in \mathcal{C}_{\mathcal{A}}^{\left[ N\right] }\left(
S\right) :\ J\subset _{\tau }\Lambda _{f}^{\sigma }\left[ S\right]  \\ %
J\subset K}}\left\vert \bigtriangleup _{J}^{\omega }h\right\vert ^{2}},
\label{def PSK}
\end{equation}%
for $\left( F,Q,A,S,K\right) \in \mathcal{F}\times \mathcal{Q}\times 
\mathcal{A}\times \mathfrak{C}_{\mathcal{A}}\left( A\right) \times \mathcal{W%
}_{\func{good},\tau }\left( S\right) $, we can `lift' the intervals $J$ to
their corresponding $\func{good}$ Whitney interval $K$, to obtain that, 
\begin{eqnarray*}
&&\left\vert \mathsf{B}^{m}\right\vert _{\func{straddle}}^{\mathcal{Q}\circ 
\mathcal{A},\limfunc{trip}}\left( f\right) \lesssim \sum_{F\in \mathcal{F}%
}\sum_{Q\in \mathcal{Q}}\sum_{A\in \mathcal{A}}\sum_{S\in \mathfrak{C}_{%
\mathcal{A}}\left( A\right) }\alpha _{A}\left( S\right) ^{2}\sum_{K\in 
\mathcal{W}_{\func{good},\tau }\left( S\right) }\left( \frac{\mathrm{P}%
\left( K,\mathbf{1}_{F\setminus S}\sigma \right) }{\ell \left( K\right) }%
\right) ^{2} \\
&&\ \ \ \ \ \ \ \ \ \ \ \ \ \ \ \ \ \ \ \ \ \ \ \ \ \times \sum_{J\in 
\mathcal{C}_{\mathcal{A}}^{\left( m-1\right) }\left( S\right) \cap \Lambda
_{g}^{\omega }:\ J\subset _{\tau }\Lambda _{f}^{\sigma }\left[ S\right] \cap
K}\left\vert \bigtriangleup _{J}^{\omega }Z\left( x\right) \right\vert ^{2}
\\
&=&\sum_{F\in \mathcal{F}}\sum_{Q\in \mathcal{Q}}\sum_{A\in \mathcal{A}%
}\sum_{S\in \mathfrak{C}_{\mathcal{A}}\left( A\right) }\alpha _{A}\left(
S\right) ^{2}\sum_{K\in \mathcal{W}_{\func{good},\tau }\left( S\right)
}\left( \frac{\mathrm{P}\left( K,\mathbf{1}_{F\setminus S}\sigma \right) }{%
\ell \left( K\right) }\right) ^{2}\left\vert \mathsf{P}_{S;K}^{\omega
,\Lambda _{g_{Q}}^{\omega },m}\right\vert Z\left( x\right) ^{2},
\end{eqnarray*}%
and hence%
\begin{equation*}
\left\Vert \sqrt{\left\vert \mathsf{B}^{m}\right\vert _{\func{straddle}}^{%
\mathcal{Q}\circ \mathcal{A},\limfunc{trip}}\left( f\right) }\right\Vert
_{L^{p}\left( \omega \right) }\lesssim \left\Vert \sqrt{\sum_{F\in \mathcal{F%
}}\sum_{Q\in \mathcal{Q}}\sum_{A\in \mathcal{A}\left[ Q\right] }\sum_{S\in 
\mathfrak{C}_{\mathcal{A}}\left( A\right) }\alpha _{A}\left( S\right)
^{2}\sum_{K\in \mathcal{W}_{\func{good},\tau }\left( S\right) }\left( \frac{%
\mathrm{P}\left( K,\mathbf{1}_{F\setminus S}\sigma \right) }{\ell \left(
K\right) }\right) ^{2}\left\vert \mathsf{P}_{S;K}^{\omega ,\Lambda
_{g_{Q}}^{\omega },m}\right\vert Z\left( x\right) ^{2}}\right\Vert
_{L^{p}\left( \omega \right) }.
\end{equation*}

Using the corollary to the disjoint support Lemma \ref{disjoint supp},
together with the geometric decay bound (\ref{geo dec bound'}), we will now
use inequality (\ref{part 2}) in the Corona Martingale Comparison Principle
in Proposition \ref{CMCP} to prove that for every $m\geq 1$,%
\begin{eqnarray}
&&\ \ \ \ \ \ \ \ \ \ \ \ \ \ \ \left\Vert \sqrt{\sum_{F\in \mathcal{F}%
}\sum_{Q\in \mathcal{Q}}\sum_{A\in \mathcal{A}\left[ Q\right] }\sum_{S\in 
\mathfrak{C}_{\mathcal{A}}\left( A\right) }\sum_{K\in \mathcal{W}_{\func{good%
},\tau }\left( S\right) }\alpha _{A}\left( S\right) ^{2}\left( \frac{\mathrm{%
P}\left( K,\mathbf{1}_{F\setminus S}\sigma \right) }{\ell \left( K\right) }%
\right) ^{2}\left\vert \mathsf{P}_{S;K}^{\omega ,\Lambda _{g_{Q}}^{\omega
},m}\right\vert Z\left( x\right) ^{2}}\right\Vert _{L^{p}\left( \omega
\right) }  \label{will now} \\
&\lesssim &\left( 1+2^{N\delta }\right) 2^{-m\delta }mN^{2}\left\Vert \sqrt{%
\sum_{F\in \mathcal{F}}\sum_{Q\in \mathcal{Q}}\sum_{A\in \mathcal{A}\left[ Q%
\right] }\sum_{S\in \mathfrak{C}_{\mathcal{A}}\left( A\right) }\sum_{K\in 
\mathcal{W}_{\func{good},\tau }^{\mathcal{A},\left[ N\right] }\left(
S\right) }\alpha _{A}\left( S\right) ^{2}\left( \frac{\mathrm{P}\left( K,%
\mathbf{1}_{F\setminus S}\sigma \right) }{\ell \left( K\right) }\right)
^{2}\left\vert \mathsf{P}_{S;K}^{\omega ,N}\right\vert Z\left( x\right) ^{2}}%
\right\Vert _{L^{p}\left( \omega \right) }.  \notag
\end{eqnarray}%
where $N$ is as in (\ref{def N}).

Indeed, for $k\geq 1$, $F\in \mathcal{F}$ and $Q\in \mathcal{Q}\left[ F%
\right] $, we begin by letting%
\begin{equation*}
\Lambda _{k}^{F,Q}\equiv \left\{ \left( A,S,K\right) \in \mathfrak{C}_{%
\mathcal{A}\left[ Q\right] }^{\left( k\right) }\left( Q\right) \times 
\mathfrak{C}_{\mathcal{A}}\left( A\right) \times \mathcal{W}_{\func{good}%
,\tau }\left( S\right) \right\} ,
\end{equation*}%
denote the collection of triples $\left( A,S,K\right) $ where $A\in 
\mathfrak{C}_{\mathcal{A}\left[ Q\right] }^{\left( k\right) }\left( Q\right) 
$ lies $k$ levels below the top $Q$ in the tree $\mathcal{A}\left[ Q\right] $%
. Note that the intervals $K$ above are pairwise disjoint in $Q$ for each
fixed $k$. Indeed, they are pairwise disjoint in the intervals $S$, which
are in turn pairwise disjoint in\ the intervals $A$, which are in turn
pairwise disjoint in the interval $Q$ for each fixed $m$.

With $F\in \mathcal{F}$ and $Q\in \mathcal{Q}\left[ F\right] $ fixed, let%
\begin{eqnarray*}
g\left( x\right) &\equiv &\sum_{k}g_{k}\left( x\right) \text{ and }b\left(
x\right) \equiv \sum_{k}b_{k}\left( x\right) \ ,\ \ \ \ \text{where} \\
g_{k}\left( x\right) &=&\sum_{\left( A,S,K\right) \in \Lambda
_{k}^{F,Q}}g_{K,k}\left( x\right) \equiv \sum_{\left( A,S,K\right) \in
\Lambda _{k}^{F,Q}}\left( \frac{\mathrm{P}\left( K,\mathbf{1}_{F\setminus
S}\sigma \right) }{\ell \left( K\right) }\right) \alpha _{A}\left( S\right) 
\mathsf{P}_{S;K}^{\omega ,\Lambda _{g_{Q}}^{\omega },m}Z\left( x\right) , \\
b_{k}\left( x\right) &=&\sum_{\left( A,S,K\right) \in \Lambda
_{k}^{F,Q}}b_{K,k}\left( x\right) \equiv \sum_{\left( A,S,K\right) \in
\Lambda _{k}^{F,Q}}\left( \frac{\mathrm{P}\left( K,\mathbf{1}_{F\setminus
S}\sigma \right) }{\ell \left( K\right) }\right) \alpha _{A}\left( S\right) 
\mathsf{P}_{S;K}^{\omega ,N}Z\left( x\right) , \\
G_{k}\left( x\right) &\equiv &\left\{ g_{K,k}\left( x\right) \right\}
_{\left( A,S,K\right) \in \Lambda _{k}^{F,Q}}\text{ and }B_{k}\left(
x\right) \equiv \left\{ b_{K,k}\left( x\right) \right\} _{\left(
A,S,K\right) \in \Lambda _{k}^{F,Q}}.
\end{eqnarray*}%
Then for fixed $F$ and $Q$, we have from (\ref{equiv Ep}) that the ratio in (%
\ref{M char}) is%
\begin{eqnarray*}
&&M_{\mathcal{L}}^{\left( m,N\right) }\left( g,b\right) \approx
\sup_{k}\sup_{\left( A,S,K\right) \in \Lambda _{k}^{F,Q}}\frac{\left\Vert 
\mathsf{P}_{\mathcal{C}_{\mathcal{A}}^{\left( m\right) }\left( A\right)
}^{\omega }g\left( x\right) \right\Vert _{L^{p}\left( \omega \right) }}{%
\left\Vert \mathsf{P}_{\mathcal{C}_{\mathcal{A}}^{\left[ N\right] }\left(
A\right) }^{\omega }b\left( x\right) \right\Vert _{L^{p}\left( \omega
\right) }} \\
&=&\sup_{k}\sup_{\left( A,S,K\right) \in \Lambda _{k}^{F,Q}}\frac{\left\Vert
\left( \frac{\mathrm{P}\left( K,\mathbf{1}_{F\setminus S}\sigma \right) }{%
\ell \left( K\right) }\right) \alpha _{A}\left( S\right) \mathsf{P}%
_{S;K}^{\omega ,\Lambda _{g}^{\omega },m}Z\left( x\right) \right\Vert
_{L^{p}\left( \omega \right) }}{\left\Vert \left( \frac{\mathrm{P}\left( K,%
\mathbf{1}_{F\setminus S}\sigma \right) }{\ell \left( K\right) }\right)
\alpha _{A}\left( S\right) \mathsf{P}_{S;K}^{\omega ,N}Z\left( x\right)
\right\Vert _{L^{p}\left( \omega \right) }}=\sup_{k}\sup_{\left(
A,S,K\right) \in \Lambda _{k}^{F,Q}}\frac{\left\Vert \mathsf{P}%
_{S;K}^{\omega ,\Lambda _{g}^{\omega },m}Z\left( x\right) \right\Vert
_{L^{p}\left( \omega \right) }}{\left\Vert \mathsf{P}_{S;K}^{\omega
,N}Z\left( x\right) \right\Vert _{L^{p}\left( \omega \right) }} \\
&=&\sup_{k}\sup_{\left( A,S,K\right) \in \Lambda _{k}^{F,Q}}\frac{\mathsf{E}%
_{p}\left( \mathcal{C}_{\mathcal{Q}}\left( Q\right) \cap \mathcal{D}_{\func{%
good}}\left[ \bigcup_{B\in \mathfrak{C}_{\mathcal{A}}^{\left( m-1\right)
}\left( S\right) }B\cap K\right] \cap \Lambda _{g}^{\omega };\omega \right) 
}{\mathsf{E}_{p}\left( \mathcal{C}_{\mathcal{Q}}\left( Q\right) \cap \left\{ 
\mathcal{D}_{\func{good}}\left[ K\right] \setminus \mathcal{D}_{\func{good}}%
\left[ \mathfrak{C}_{\mathcal{A}\left[ Q\right] }^{\left( N+1\right) }\left(
S\right) \cap K\right] \right\} \cap \Lambda _{g}^{\omega };\omega \right) }.
\end{eqnarray*}%
Now the bound, 
\begin{equation}
\frac{\mathsf{E}_{p}\left( \mathcal{C}_{\mathcal{Q}}\left( Q\right) \cap 
\mathcal{D}_{\func{good}}\left[ \bigcup_{B\in \mathfrak{C}_{\mathcal{A}%
}^{\left( m-1\right) }\left( S\right) }B\cap K\right] \cap \Lambda
_{g}^{\omega };\omega \right) }{\mathsf{E}_{p}\left( \mathcal{C}_{\mathcal{Q}%
}\left( Q\right) \cap \left\{ \mathcal{D}_{\func{good}}\left[ K\right]
\setminus \mathcal{D}_{\func{good}}\left[ \mathfrak{C}_{\mathcal{A}\left[ Q%
\right] }^{\left( N+1\right) }\left( S\right) \cap K\right] \right\} \cap
\Lambda _{g}^{\omega };\omega \right) }\lesssim m2^{-m\delta },
\label{the bound}
\end{equation}%
follows directly from (\ref{geo dec bound'}) and then applying (\ref{N opt}%
). Now we write%
\begin{eqnarray*}
&&\sum_{m=1}^{\infty }\left\Vert \sqrt{\left\vert \mathsf{B}^{m}\right\vert
_{\func{straddle}}^{\mathcal{Q}\circ \mathcal{A},\limfunc{trip}}\left(
f\right) }\right\Vert _{L^{p}\left( \omega \right) } \\
&\lesssim &\left\{ \sum_{m=1}^{N}+\sum_{m=N+1}^{\infty }\right\} \left\Vert 
\sqrt{\sum_{F\in \mathcal{F}}\sum_{Q\in \mathcal{Q}\left[ F\right]
}\sum_{A\in \mathcal{A}\left[ Q\right] }\sum_{S\in \mathfrak{C}_{\mathcal{A}%
}\left( A\right) }\sum_{K\in \mathcal{W}_{\func{good},\tau }\left( S\right)
}\alpha _{A}\left( S\right) ^{2}\left( \frac{\mathrm{P}\left( K,\mathbf{1}%
_{F\setminus S}\sigma \right) }{\ell \left( K\right) }\right) ^{2}\left\vert 
\mathsf{P}_{S;K}^{\omega ,\Lambda _{g_{A,m}}^{\omega }}\right\vert Z\left(
x\right) ^{2}}\right\Vert _{L^{p}\left( \omega \right) } \\
&\equiv &\Sigma _{1}^{N}+\Sigma _{N+1}^{\infty },
\end{eqnarray*}%
and apply inequality (\ref{part 2}) in the Martingale Comparison Principle
in Proposition \ref{CMCP}, to the functions $g$ and $b$ above to obtain%
\begin{eqnarray*}
&&\Sigma _{N+1}^{\infty }\lesssim \sum_{m=N+1}^{\infty }\sqrt{mN2^{-m\delta }%
} \\
&&\times \left\Vert \sqrt{\sum_{F\in \mathcal{F}}\sum_{Q\in \mathcal{Q}\left[
F\right] }\sum_{A\in \mathcal{A}\left[ Q\right] }2^{-\limfunc{dist}\left(
A,Q\right) \delta }\sum_{S\in \mathfrak{C}_{\mathcal{A}}\left( A\right)
}\sum_{K\in \mathcal{W}_{\func{good},\tau }\left( S\right) }\alpha
_{A}\left( S\right) ^{2}\left( \frac{\mathrm{P}\left( K,\mathbf{1}%
_{F\setminus S}\sigma \right) }{\ell \left( K\right) }\right) ^{2}\left\vert 
\mathsf{P}_{S;K}^{\omega ,\Lambda _{g_{Q}^{\left[ N\right] }}^{\omega
}}\right\vert Z\left( x\right) ^{2}}\right\Vert _{L^{p}\left( \omega \right)
}
\end{eqnarray*}%
where 
\begin{equation}
g_{Q}\equiv \mathsf{P}_{\mathcal{C}_{\mathcal{Q}}\left( Q\right) }g\text{
and }g_{Q}^{\left[ N\right] }\equiv \mathsf{P}_{\mathcal{C}_{\mathcal{Q}}^{%
\left[ N\right] }\left( Q\right) }g,  \label{def gAs}
\end{equation}%
and where we have used (\ref{the bound}), which gives the decay $%
mN2^{-m\delta }2^{-\limfunc{dist}\left( A,Q\right) \delta }$ when passing
from $\left\vert \mathsf{P}_{S;K}^{\omega ,\Lambda _{g_{A,m}}^{\omega
}}\right\vert Z\left( x\right) $ to $\left\vert \mathsf{P}_{S;K}^{\omega
,N}\right\vert Z\left( x\right) $ to $\left\vert \mathsf{P}_{S;K}^{\omega
,\Lambda _{g_{Q}^{\left[ N\right] }}^{\omega }}\right\vert Z\left( x\right) $%
.

For the finite sum, we cannot\ \emph{directly} use the Martingale Comparison
Principle in Proposition \ref{CMCP} since $m\leq N$. On the other hand, if
we set%
\begin{equation*}
\Omega _{\gamma }\equiv \left\{ x\in K:\left\vert \mathsf{P}_{S;K}^{\omega
,\Lambda _{g_{A,m}}^{\omega }}\right\vert Z\left( x\right) ^{2}>\gamma 2^{-%
\limfunc{dist}\left( A,Q\right) \delta }\left\vert \mathsf{P}_{S;K}^{\omega
,\Lambda _{g_{Q}^{\left[ N\right] }}^{\omega }}\right\vert Z\left( x\right)
^{2}\right\} ,
\end{equation*}%
then we obtain from the negation of the pointwise inequality above, 
\begin{eqnarray*}
&&\sum_{m=1}^{N}\left\Vert \sqrt{\sum_{F\in \mathcal{F}}\sum_{Q\in \mathcal{Q%
}\left[ F\right] }\sum_{A\in \mathcal{A}\left[ Q\right] }\sum_{S\in 
\mathfrak{C}_{\mathcal{A}}\left( A\right) }\sum_{K\in \mathcal{W}_{\func{good%
},\tau }\left( S\right) }\alpha _{A}\left( S\right) ^{2}\left( \frac{\mathrm{%
P}\left( K,\mathbf{1}_{F\setminus S}\sigma \right) }{\ell \left( K\right) }%
\right) ^{2}\left\vert \mathsf{P}_{S;K}^{\omega ,\Lambda _{g_{A,m}}^{\omega
}}\right\vert Z\left( x\right) ^{2}\mathbf{1}_{K\setminus \Omega _{\gamma
}}\left( x\right) }\right\Vert _{L^{p}\left( \omega \right) } \\
&\lesssim &\sqrt{\gamma }N\left\Vert \sqrt{\sum_{F\in \mathcal{F}}\sum_{Q\in 
\mathcal{Q}\left[ F\right] }\sum_{A\in \mathcal{A}\left[ Q\right] }2^{-%
\limfunc{dist}\left( A,Q\right) \delta }\sum_{S\in \mathfrak{C}_{\mathcal{A}%
}\left( A\right) }\sum_{K\in \mathcal{W}_{\func{good},\tau }\left( S\right)
}\alpha _{A}\left( S\right) ^{2}\left( \frac{\mathrm{P}\left( K,\mathbf{1}%
_{F\setminus S}\sigma \right) }{\ell \left( K\right) }\right) ^{2}\left\vert 
\mathsf{P}_{S;K}^{\omega ,\Lambda _{g_{Q}^{\left[ N\right] }}^{\omega
}}\right\vert Z\left( x\right) ^{2}}\right\Vert _{L^{p}\left( \omega \right)
}.
\end{eqnarray*}%
Now note that $\left\vert \mathsf{P}_{S;K}^{\omega ,\Lambda _{g_{Q}^{\left[ N%
\right] }}^{\omega }}\right\vert Z\left( x\right) ^{2}$ is constant on $K$
if $m+\limfunc{dist}\left( A,Q\right) >N$, and so in this case we have 
\begin{eqnarray*}
\left\vert \Omega _{\gamma }\right\vert _{\omega }\gamma 2^{-\limfunc{dist}%
\left( A,Q\right) \delta } &\leq &\int_{\Omega _{\gamma }}\left( \frac{%
\left\vert \mathsf{P}_{S;K}^{\omega ,\Lambda _{g_{A,m}}^{\omega
}}\right\vert Z\left( x\right) ^{2}}{\left\vert \mathsf{P}_{S;K}^{\omega
,\Lambda _{g_{Q}^{\left[ N\right] }}^{\omega }}\right\vert Z\left( x\right)
^{2}}\right) ^{\frac{p}{2}}d\omega \left( x\right) \\
&\leq &\frac{\int_{K}\left( \left\vert \mathsf{P}_{S;K}^{\omega ,\Lambda
_{g_{A,m}}^{\omega }}\right\vert Z\left( x\right) ^{2}\right) ^{\frac{p}{2}%
}d\omega \left( x\right) }{\int_{K}\left( \left\vert \mathsf{P}%
_{S;K}^{\omega ,\Lambda _{g_{Q}^{\left[ N\right] }}^{\omega }}\right\vert
Z\left( x\right) ^{2}\right) ^{\frac{p}{2}}d\omega \left( x\right) }%
\left\vert K\right\vert _{\omega }\lesssim mN2^{-\limfunc{dist}\left(
A,Q\right) \delta }\left\vert K\right\vert _{\omega },
\end{eqnarray*}%
which implies 
\begin{equation*}
\left\vert \Omega _{\gamma }\right\vert _{\omega }\leq \frac{1}{\gamma }%
N^{2}\left\vert K\right\vert _{\omega }<\frac{1}{2}\left\vert K\right\vert
_{\omega }\text{ for }\gamma >2N^{2},
\end{equation*}%
and hence that $\frac{1}{2}\leq \frac{\left\vert K\setminus \Omega _{\gamma
}\right\vert _{\omega }}{\left\vert K\right\vert _{\omega }}\leq 1$. Thus in
the case that $m+\limfunc{dist}\left( A,Q\right) >N$, we can apply a slight
variant of the Martingale Comparison Principle in Proposition \ref{CMCP} with%
\begin{eqnarray*}
g_{k}\left( x\right) &=&\sum_{\left( A,S,K\right) \in \Lambda
_{k}^{F,Q}}g_{K,k}\left( x\right) \equiv \sum_{\left( A,S,K\right) \in
\Lambda _{k}^{F,Q}}\left( \frac{\mathrm{P}\left( K,\mathbf{1}_{F\setminus
S}\sigma \right) }{\ell \left( K\right) }\right) \alpha _{A}\left( S\right) 
\mathsf{P}_{S;K}^{\omega ,\Lambda _{g}^{\omega },m}Z\left( x\right) , \\
b_{k}\left( x\right) &=&\sum_{\left( A,S,K\right) \in \Lambda
_{k}^{F,Q}}b_{K,k}\left( x\right) \equiv \sum_{\left( A,S,K\right) \in
\Lambda _{k}^{F,Q}}\left( \frac{\mathrm{P}\left( K,\mathbf{1}_{F\setminus
S}\sigma \right) }{\ell \left( K\right) }\right) \alpha _{A}\left( S\right) 
\mathsf{P}_{S;K}^{\omega ,\Lambda _{g_{Q}^{\left[ N\right] }}^{\omega
}}Z\left( x\right) ,
\end{eqnarray*}%
where $g_{k}$ is unchanged, but $b_{k}$ now has projection onto the larger
set of frequencies $\Lambda _{g_{Q}^{\left[ N\right] }}^{\omega }$.\ The
only difference in the proof of this variant of Proposition \ref{CMCP}, is
the use\ of the inequality $\frac{1}{2}\leq \frac{\left\vert K\setminus
\Omega _{\gamma }\right\vert _{\omega }}{\left\vert K\right\vert _{\omega }}%
\leq 1$ in place of the inequality (\ref{note'''}). The factor in (\ref{M
char}) also changes to 
\begin{equation*}
M_{\mathcal{L}}^{\left( m,N\right) }\left( g,b\right) \equiv \sup_{A\in 
\mathcal{A}\left[ Q\right] }\frac{\left\Vert \mathsf{P}_{\bigcup_{A\in 
\mathfrak{C}_{\mathcal{A}\left[ Q\right] }^{\left( m\right) }\left( Q\right)
}\mathcal{D}\left( K\right) }^{\omega }g\right\Vert _{L^{p}\left( \omega
\right) }}{\left\Vert \mathsf{P}_{\mathcal{C}_{\mathcal{Q}}^{\left[ N\right]
}\left( Q\right) }^{\omega }b\right\Vert _{L^{p}\left( \omega \right) }},
\end{equation*}%
which has the bound $mN2^{-\limfunc{dist}\left( A,Q\right) \delta }$. Now we
writing the sum in shorthand form,%
\begin{equation*}
\sum_{\left( F,Q,A,S,K\right) \in \mathcal{F}\times \mathcal{Q}\left[ F%
\right] \times \mathcal{A}\left[ Q\right] \times \mathfrak{C}_{\mathcal{A}%
}\left( A\right) \times \mathcal{W}_{\func{good},\tau }\left( S\right)
}=\sum_{\left( F,Q,A,S,K\right) \in \Omega ^{5}}\ ,
\end{equation*}%
we obtain,%
\begin{eqnarray*}
&&\sum_{m+\limfunc{dist}\left( A,Q\right) >N}\left\Vert \sqrt{\sum_{\left(
F,Q,A,S,K\right) \in \Omega ^{5}}\alpha _{A}\left( S\right) ^{2}\left( \frac{%
\mathrm{P}\left( K,\mathbf{1}_{F\setminus S}\sigma \right) }{\ell \left(
K\right) }\right) ^{2}\left\vert \mathsf{P}_{S;K}^{\omega ,\Lambda
_{g_{A,m}}^{\omega }}\right\vert Z\left( x\right) ^{2}\mathbf{1}_{\Omega
_{\gamma }}\left( x\right) }\right\Vert _{L^{p}\left( \omega \right) } \\
&\lesssim &N^{2}\left\Vert \sqrt{\sum_{\left( F,Q,A,S,K\right) \in \Omega
^{5}}2^{-\limfunc{dist}\left( A,Q\right) \delta }\alpha _{A}\left( S\right)
^{2}\left( \frac{\mathrm{P}\left( K,\mathbf{1}_{F\setminus S}\sigma \right) 
}{\ell \left( K\right) }\right) ^{2}\left\vert \mathsf{P}_{S;K}^{\omega
,\Lambda _{g_{Q}^{\left[ N\right] }}^{\omega }}\right\vert Z\left( x\right)
^{2}}\right\Vert _{L^{p}\left( \omega \right) }.
\end{eqnarray*}

Finally in the case that $m+\limfunc{dist}\left( A,Q\right) <N$, we have 
\begin{equation*}
2^{-\limfunc{dist}\left( A,Q\right) \delta }\geq 2^{-\left( N-m\right)
\delta }\geq 2^{-N\delta }>0,
\end{equation*}%
and so we can simply use the pointwise inequality $\left\vert \mathsf{P}%
_{S;K}^{\omega ,\Lambda _{g_{A,m}}^{\omega }}\right\vert Z\left( x\right)
\leq \left\vert \mathsf{P}_{S;K}^{\omega ,Q}\right\vert Z\left( x\right) $
to obtain 
\begin{eqnarray*}
&&\sum_{m+\limfunc{dist}\left( A,Q\right) \leq N}\left\Vert \sqrt{%
\sum_{\left( F,Q,A,S,K\right) \in \Omega ^{5}}\alpha _{A}\left( S\right)
^{2}\left( \frac{\mathrm{P}\left( K,\mathbf{1}_{F\setminus S}\sigma \right) 
}{\ell \left( K\right) }\right) ^{2}\left\vert \mathsf{P}_{S;K}^{\omega
,\Lambda _{g_{A,m}}^{\omega }}\right\vert Z\left( x\right) ^{2}\mathbf{1}%
_{\Omega _{\gamma }}\left( x\right) }\right\Vert _{L^{p}\left( \omega
\right) } \\
&\leq &2^{N\delta }N\max_{\substack{ 0\leq s\leq N  \\ 1\leq m\leq N}}%
\left\Vert \sqrt{\sum_{\left( F,Q,A,S,K\right) \in \Omega ^{5}}2^{-\limfunc{%
dist}\left( A,Q\right) \delta }\alpha _{A}\left( S\right) ^{2}\left( \frac{%
\mathrm{P}\left( K,\mathbf{1}_{F\setminus S}\sigma \right) }{\ell \left(
K\right) }\right) ^{2}\left\vert \mathsf{P}_{S;K}^{\omega ,\Lambda
_{g_{A,\left( s\right) }}^{\omega }}\right\vert Z\left( x\right) ^{2}}%
\right\Vert _{L^{p}\left( \omega \right) }.
\end{eqnarray*}%
These two estimates combine to bound $\Sigma _{1}^{N}$ by $\left(
1+2^{N\delta }\right) N^{2}$ times%
\begin{equation*}
\max_{\substack{ 0\leq s\leq N  \\ 1\leq m\leq N}}\left\Vert \sqrt{%
\sum_{\left( F,Q,A,S,K\right) \in \Omega ^{5}}2^{-\limfunc{dist}\left(
A,Q\right) \delta }\alpha _{A}\left( S\right) ^{2}\left( \frac{\mathrm{P}%
\left( K,\mathbf{1}_{F\setminus S}\sigma \right) }{\ell \left( K\right) }%
\right) ^{2}\left\vert \mathsf{P}_{S;K}^{\omega ,\Lambda _{g_{A,\left(
s\right) }}^{\omega }}\right\vert Z\left( x\right) ^{2}}\right\Vert
_{L^{p}\left( \omega \right) }.
\end{equation*}

Collecting all of our estimates, and using $\Lambda _{g_{A,\left( s\right)
}}^{\omega }\subset \Lambda _{g_{Q}}^{\omega }$, we have shown that%
\begin{equation*}
\left\Vert \sqrt{\left\vert \mathsf{B}^{m}\right\vert _{\func{straddle}}^{%
\mathcal{Q\circ A},\limfunc{trip}}\left( f\right) }\right\Vert _{L^{p}\left(
\omega \right) }\lesssim \left\{ 
\begin{array}{ccc}
2^{-m\delta }mN^{2}\left\Vert \left\vert f\right\vert _{\mathcal{F},\mathcal{%
Q},\mathcal{A}}^{\left\{ \mathcal{C}_{\mathcal{A}}^{\left[ N\right] }\left(
A\right) \cap \Lambda _{g_{Q}}^{\omega }\right\} _{A\in \mathcal{A}%
}}\right\Vert _{L^{p}\left( \omega \right) } & \text{ if } & m>N \\ 
\left( 1+2^{N\delta }\right) N^{2}\left\Vert \left\vert f\right\vert _{%
\mathcal{F},\mathcal{Q},\mathcal{A}}^{\left\{ \mathcal{C}_{\mathcal{A}}^{%
\left[ N\right] }\left( A\right) \cap \Lambda _{g_{Q}}^{\omega }\right\}
_{A\in \mathcal{A}}}\right\Vert _{L^{p}\left( \omega \right) } & \text{ if }
& m\leq N%
\end{array}%
\right. ,
\end{equation*}%
where $\left\vert f\right\vert _{\mathcal{F},\mathcal{Q},\mathcal{A}%
}^{\left\{ \mathcal{C}_{\mathcal{A}}^{\left[ N\right] }\left( A\right) \cap
\Lambda _{g_{Q}}^{\omega }\right\} _{A\in \mathcal{A}}}\left( x\right) $ is
defined in (\ref{def fAQ}) above, and $g_{Q}$ is defined in (\ref{def gAs}).
Now summing in $m$ yields%
\begin{equation*}
\sum_{m=1}^{\infty }\left\Vert \sqrt{\left\vert \mathsf{B}^{m}\right\vert _{%
\func{straddle}}^{\mathcal{Q\circ A},\limfunc{trip}}\left( f\right) }%
\right\Vert _{L^{p}\left( \omega \right) }\lesssim \left( 1+2^{N\delta
}\right) \frac{N^{2}}{\delta ^{2}}\max_{0\leq s\leq N}\left\Vert \left\vert
f\right\vert _{\mathcal{F},\mathcal{Q},\mathcal{A}}^{\left\{ \mathcal{C}_{%
\mathcal{A}}^{\left( s\right) }\left( A\right) \cap \Lambda _{g_{Q}}^{\omega
}\right\} _{A\in \mathcal{A}}}\right\Vert _{L^{p}\left( \omega \right) }.
\end{equation*}

The estimate%
\begin{equation*}
\left\Vert \sqrt{\left\vert \mathsf{B}\right\vert _{\func{straddle}}^{%
\mathcal{Q\circ A},\func{near}}\left( f\right) }\right\Vert _{L^{p}\left(
\omega \right) }\lesssim \left( 1+2^{N\delta }\right) \frac{N^{2}}{\delta
^{2}}\max_{0\leq s\leq N}\left\Vert \left\vert f\right\vert _{\mathcal{F},%
\mathcal{Q},\mathcal{A}}^{\left\{ \mathcal{C}_{\mathcal{A}}^{\left( s\right)
}\left( A\right) \cap \Lambda _{g_{Q}}^{\omega }\right\} _{A\in \mathcal{A}%
}}\right\Vert _{L^{p}\left( \omega \right) }
\end{equation*}%
is similar but easier, since there at most $2^{\tau }$ intervals that are $%
\tau $-nearby any given interval. This completes the proof of the Quadratic $%
L^{p}$-Stopping Child Lemma \ref{straddle3} in view of our assumption (\ref%
{g=1}) that $\left\Vert g\right\Vert _{L^{p^{\prime }}\left( \omega \right)
}=1$.
\end{proof}

\subsection{Completion of the proof}

First, we obtain geometric decay in grandchildren from the second line in (%
\ref{dec corona}) and Lemma \ref{disjoint supp},%
\begin{eqnarray}
\sum_{A^{\prime }\in \mathfrak{C}_{\mathcal{A}\left[ Q\right] }^{\left(
m\right) }\left[ A\right] }\left\Vert \left\vert \mathsf{P}_{\mathcal{D}%
\left[ A^{\prime }\right] \cap \mathcal{C}_{\mathcal{Q}}\left( Q\right) \cap
\Lambda _{g}^{\omega }}^{\omega }\right\vert Z\right\Vert _{L^{p}\left(
\omega \right) }^{p} &=&\left\Vert \left\vert \sum_{A^{\prime }\in \mathfrak{%
C}_{\mathcal{A}\left[ Q\right] }^{\left( m\right) }\left[ A\right] }\mathsf{P%
}_{\mathcal{D}\left[ A^{\prime }\right] \cap \mathcal{C}_{\mathcal{Q}}\left(
Q\right) \cap \Lambda _{g}^{\omega }}^{\omega }\right\vert Z\right\Vert
_{L^{p}\left( \omega \right) }^{p}  \label{pre geo dec} \\
&\leq &\frac{1}{\Gamma ^{pm}}\left\Vert \left\vert \mathsf{P}_{\mathcal{D}%
\left[ A\right] \cap \mathcal{C}_{\mathcal{Q}}\left( Q\right) \cap \Lambda
_{g}^{\omega }}^{\omega }\right\vert Z\right\Vert _{L^{p}\left( \omega
\right) }^{p}\ ,\ \ \ \ \ m\geq 1.  \notag
\end{eqnarray}%
This last estimate can be improved to%
\begin{equation}
\sum_{A^{\prime }\in \mathfrak{C}_{\mathcal{A}\left[ Q\right] }^{\left(
m\right) }\left( A\right) }\left\Vert \left\vert \mathsf{P}_{\mathcal{D}%
\left[ A^{\prime }\right] \cap \mathcal{C}_{\mathcal{Q}}\left( Q\right) \cap
\Lambda _{g}^{\omega }}^{\omega }\right\vert Z\right\Vert _{L^{p}\left(
\omega \right) }^{p}\leq \frac{2}{\Gamma ^{pm}}\left\Vert \left\vert \mathsf{%
P}_{\mathcal{C}_{\mathcal{A}\left[ Q\right] }^{\left[ N\right] }\left(
A\right) \cap \Lambda _{g}^{\omega }}^{\omega }\right\vert Z\right\Vert
_{L^{p}\left( \omega \right) }^{p}\ ,\ \ \ \ \ m\geq 1,  \label{geo dec}
\end{equation}%
provided $N$ is chosen, depending only on $\Gamma $, so that%
\begin{equation}
\frac{1}{\Gamma ^{pN}}<\frac{1}{2},\ \ \ \ \ \text{e.g. }N=\left\lceil \frac{%
\ln 2}{\ln \Gamma ^{p}}\right\rceil ,  \label{def N}
\end{equation}%
where we note for future reference that%
\begin{equation*}
N=\left\lceil \frac{\ln 2}{\ln \left( 1+\theta \right) }\right\rceil \approx 
\frac{1}{\theta },\ \ \ \ \ \text{for }\Gamma ^{p}=1+\theta \text{ and }%
0<\theta \ll 1.
\end{equation*}%
Indeed, with this choice of $N$, we have by (\ref{second disj}) and (\ref%
{pre geo dec}) with $m=N$,%
\begin{eqnarray*}
\left\Vert \left\vert \mathsf{P}_{\mathcal{D}\left[ A\right] \cap \mathcal{C}%
_{\mathcal{Q}}\left( Q\right) \cap \Lambda _{g}^{\omega }}^{\omega
}\right\vert Z\right\Vert _{L^{p}\left( \omega \right) } &\leq &\left\Vert
\left\vert \mathsf{P}_{\mathcal{C}_{\mathcal{A}\left[ Q\right] }^{\left[ N%
\right] }\left( A\right) \cap \Lambda _{g}^{\omega }}^{\omega }\right\vert
Z\right\Vert _{L^{p}\left( \omega \right) }+\left\Vert \left\vert
\sum_{m=N+1}^{\infty }\mathsf{P}_{\mathcal{C}_{\mathcal{A}\left[ Q\right]
}^{\left( m\right) }\left( A\right) \cap \Lambda _{g}^{\omega }}^{\omega
}\right\vert Z\right\Vert _{L^{p}\left( \omega \right) } \\
&=&\left\Vert \left\vert \mathsf{P}_{\mathcal{C}_{\mathcal{A}\left[ Q\right]
}^{\left[ N\right] }\left( A\right) \cap \Lambda _{g}^{\omega }}^{\omega
}\right\vert Z\right\Vert _{L^{p}\left( \omega \right) }+\left(
\sum_{m=N+1}^{\infty }\left\Vert \left\vert \mathsf{P}_{\mathcal{C}_{%
\mathcal{A}\left[ Q\right] }^{\left( m\right) }\left( A\right) \cap \Lambda
_{g}^{\omega }}^{\omega }\right\vert Z\right\Vert _{L^{p}\left( \omega
\right) }^{p}\right) ^{\frac{1}{p}} \\
&\leq &\left\Vert \left\vert \mathsf{P}_{\mathcal{C}_{\mathcal{A}\left[ Q%
\right] }^{\left[ N\right] }\left( A\right) \cap \Lambda _{g}^{\omega
}}^{\omega }\right\vert Z\right\Vert _{L^{p}\left( \omega \right) }+\left(
\sum_{A^{\prime }\in \mathfrak{C}_{\mathcal{A}\left[ Q\right] }^{\left(
m\right) }\left( A\right) }\left\Vert \left\vert \mathsf{P}_{\mathcal{D}%
\left[ A^{\prime }\right] \cap \mathcal{C}_{\mathcal{Q}}\left( Q\right) \cap
\Lambda _{g}^{\omega }}^{\omega }\right\vert Z\right\Vert _{L^{p}\left(
\omega \right) }^{p}\right) ^{\frac{1}{p}} \\
&\leq &\left\Vert \left\vert \mathsf{P}_{\mathcal{C}_{\mathcal{A}\left[ Q%
\right] }^{\left[ N\right] }\left( A\right) \cap \Lambda _{g}^{\omega
}}^{\omega }\right\vert Z\right\Vert _{L^{p}\left( \omega \right) }+\frac{1}{%
2^{\frac{1}{p}}}\left\Vert \left\vert \mathsf{P}_{\mathcal{D}\left[ A\right]
\cap \mathcal{C}_{\mathcal{Q}}\left( Q\right) \cap \Lambda _{g}^{\omega
}}^{\omega }\right\vert Z\right\Vert _{L^{p}\left( \omega \right) }\ .
\end{eqnarray*}%
Thus we obtain%
\begin{equation}
1-\frac{1}{2^{\frac{1}{p}}}\leq \frac{\left\Vert \left\vert \mathsf{P}_{%
\mathcal{C}_{\mathcal{A}\left[ Q\right] }^{\left[ N\right] }\left( A\right)
\cap \Lambda _{g}^{\omega }}^{\omega }\right\vert Z\right\Vert _{L^{p}\left(
\omega \right) }}{\left\Vert \left\vert \mathsf{P}_{\mathcal{D}\left[ A%
\right] \cap \mathcal{C}_{\mathcal{Q}}\left( Q\right) \cap \Lambda
_{g}^{\omega }}^{\omega }\right\vert Z\right\Vert _{L^{p}\left( \omega
\right) }}\leq 1.  \label{N opt'}
\end{equation}%
In particular,%
\begin{equation*}
\left\Vert \left\vert \mathsf{P}_{\mathcal{D}\left[ A\right] \cap \mathcal{C}%
_{\mathcal{Q}}\left( Q\right) \cap \Lambda _{g}^{\omega }}^{\omega
}\right\vert Z\right\Vert _{L^{p}\left( \omega \right) }\leq \frac{2^{\frac{1%
}{p}}}{2^{\frac{1}{p}}-1}\left\Vert \left\vert \mathsf{P}_{\mathcal{C}_{%
\mathcal{A}\left[ Q\right] }^{\left[ N\right] }\left( A\right) \cap \Lambda
_{g}^{\omega }}^{\omega }\right\vert Z\right\Vert _{L^{p}\left( \omega
\right) },
\end{equation*}%
implies (\ref{geo dec}) and, together with (\ref{dec corona small}) implies,%
\begin{equation}
\left\Vert \left\vert \mathsf{P}_{\left[ \mathcal{D}^{\limfunc{no}\func{top}}%
\left[ A\right] \setminus \bigcup_{S\in \mathfrak{C}_{\mathcal{A}\left[ Q%
\right] }\left( A\right) }\mathcal{D}\left[ S\right] \right] \cap \Lambda
_{g}^{\omega }}\right\vert Z\right\Vert _{L^{p}\left( \omega \right)
}^{p}\leq \theta ^{\natural }\left\Vert \left\vert \mathsf{P}_{\mathcal{D}%
\left[ A\right] \cap \mathcal{C}_{\mathcal{Q}}\left( Q\right) \cap \Lambda
_{g}^{\omega }}\right\vert Z\right\Vert _{L^{p}\left( \omega \right)
}^{p}\leq \frac{2}{\left( 2^{\frac{1}{p}}-1\right) ^{p}}\left\Vert
\left\vert \mathsf{P}_{\mathcal{C}_{\mathcal{A}\left[ Q\right] }^{\left[ N%
\right] }\left( A\right) \cap \Lambda _{g}^{\omega }}^{\omega }\right\vert
Z\right\Vert _{L^{p}\left( \omega \right) }^{p}\ ,  \label{tight}
\end{equation}%
for $A\in \mathcal{A}\left[ Q\right] $ and $Q\in \mathcal{Q}\left[ F\right] $%
, where $\mathcal{D}^{\limfunc{no}\func{top}}\left( A\right) \equiv \mathcal{%
D}\left( A\right) \setminus \left\{ A\right\} $.

The stopping form $\mathsf{B}_{\limfunc{stop}}\left( f,g\right) =\mathsf{B}_{%
\limfunc{stop}}^{\mathcal{F}}\left( f,g\right) $ depends on the construction
of the stopping times $\mathcal{F}$, and is given by%
\begin{eqnarray*}
&&\mathsf{B}_{\limfunc{stop}}^{\mathcal{F}}\left( f,g\right) =\sum_{F\in 
\mathcal{F}}\sum_{\substack{ \left( I,J\right) \in \mathcal{C}_{\mathcal{F}%
}\left( F\right) \times \mathcal{C}_{\mathcal{F}}\left( F\right)  \\ %
J\subset _{\tau }I}}\left( E_{I_{J}}^{\sigma }\bigtriangleup _{I}^{\sigma
}f\right) \left\langle H_{\sigma }\mathbf{1}_{F\setminus
I_{J}},\bigtriangleup _{J}^{\omega }g\right\rangle _{\omega }=\sum_{F\in 
\mathcal{F}}\mathsf{B}_{\limfunc{stop}}^{\mathcal{F},F}\left( \mathsf{P}_{%
\mathcal{C}_{\mathcal{F}}\left( F\right) }^{\sigma }f,\mathsf{P}_{\mathcal{C}%
_{\mathcal{F}}\left( F\right) }^{\omega }g\right) , \\
&&\text{where }\mathsf{B}_{\limfunc{stop}}^{\mathcal{F},F}\left( h,k\right)
\equiv \sum_{\substack{ \left( I,J\right) \in \mathcal{C}_{\mathcal{F}%
}\left( F\right) \times \mathcal{C}_{\mathcal{F}}\left( F\right)  \\ %
J\subset _{\tau }I}}\left( E_{I_{J}}^{\sigma }\bigtriangleup _{I}^{\sigma
}h\right) \left\langle H_{\sigma }\mathbf{1}_{F\setminus
I_{J}},\bigtriangleup _{J}^{\omega }k\right\rangle _{\omega }\ .
\end{eqnarray*}%
We will keep $f$, $g$ and $\mathcal{F}$ \emph{fixed} throughout our
treatment of the stopping form, although we will often consider projections $%
\mathsf{P}f$ and $\mathsf{P}g$ of $f$ and $g$, and then abuse notation by
writing simply $f$ or $g$ instead of $\mathsf{P}f$ or $\mathsf{P}g$. More
generally, for any collection of stopping times $\mathcal{A}\supset \mathcal{%
F}$, we have%
\begin{eqnarray*}
\mathsf{B}_{\limfunc{stop}}^{\mathcal{F}}\left( f,g\right) &=&\sum_{F\in 
\mathcal{F}}\sum_{\substack{ \left( I,J\right) \in \mathcal{C}_{\mathcal{A}%
\left[ Q\right] }\left( A\right) \times \mathcal{C}_{\mathcal{A}\left[ Q%
\right] }\left( B\right)  \\ J\subset _{\tau }I}}\left( E_{I_{J}}^{\sigma
}\bigtriangleup _{I}^{\sigma }f\right) \left\langle H_{\sigma }\mathbf{1}%
_{F\setminus I_{J}},\bigtriangleup _{J}^{\omega }g\right\rangle _{\omega } \\
&=&\sum_{F\in \mathcal{F}}\left\{ \sum_{\substack{ A,B\in \mathcal{A}\left[ F%
\right]  \\ B\subsetneqq A}}+\sum_{\substack{ A,B\in \mathcal{A}\left[ F%
\right]  \\ A\subsetneqq B}}\right\} \sum_{\substack{ \left( I,J\right) \in 
\mathcal{C}_{\mathcal{A}\left[ Q\right] }\left( A\right) \times \mathcal{C}_{%
\mathcal{A}\left[ Q\right] }\left( B\right)  \\ J\subset _{\tau }I}}\left(
E_{I_{J}}^{\sigma }\bigtriangleup _{I}^{\sigma }f\right) \left\langle
H_{\sigma }\mathbf{1}_{F\setminus I_{J}},\bigtriangleup _{J}^{\omega
}g\right\rangle _{\omega } \\
&&+\sum_{F\in \mathcal{F}}\sum_{\substack{ J\in \mathcal{C}_{\mathcal{A}%
\left[ Q\right] }\left( A\right)  \\ J\subset _{\tau }A}}\left(
E_{A_{J}}^{\sigma }\bigtriangleup _{A}^{\sigma }f\right) \left\langle
H_{\sigma }\mathbf{1}_{F\setminus A_{J}},\bigtriangleup _{J}^{\omega
}g\right\rangle _{\omega } \\
&&+\sum_{F\in \mathcal{F}}\sum_{\substack{ \left( I,J\right) \in \mathcal{C}%
_{\mathcal{A}\left[ Q\right] }^{\limfunc{no}\func{top}}\left( A\right)
\times \mathcal{C}_{\mathcal{A}\left[ Q\right] }\left( A\right)  \\ J\subset
_{\tau }I}}\left( E_{I_{J}}^{\sigma }\bigtriangleup _{I}^{\sigma }f\right)
\left\langle H_{\sigma }\mathbf{1}_{F\setminus I_{J}},\bigtriangleup
_{J}^{\omega }g\right\rangle _{\omega } \\
&=&\mathsf{B}_{\limfunc{stop}\func{sep}}^{\mathcal{A}}\left( f,g\right) +%
\mathsf{B}_{\limfunc{stop}\func{sep}}^{\mathcal{A},\ast }\left( f,g\right) +%
\mathsf{B}_{\limfunc{stop}\func{top}\func{only}}^{\mathcal{A}}\left(
f,g\right) +\mathsf{B}_{\limfunc{stop}\limfunc{no}\func{top}}^{\mathcal{A}%
}\left( f,g\right) ,
\end{eqnarray*}%
\newline
where%
\begin{eqnarray*}
\mathsf{B}_{\limfunc{stop}\func{sep}}^{\mathcal{A}}\left( f,g\right) &\equiv
&\sum_{F\in \mathcal{F}}\sum_{\substack{ A,B\in \mathcal{A}\left[ F\right] 
\\ B\subsetneqq A}}\sum_{\substack{ \left( I,J\right) \in \mathcal{C}_{%
\mathcal{A}\left[ Q\right] }\left( A\right) \times \mathcal{C}_{\mathcal{A}%
\left[ Q\right] }\left( B\right)  \\ J\subset _{\tau }I}}\left(
E_{I_{J}}^{\sigma }\bigtriangleup _{I}^{\sigma }f\right) \left\langle
H_{\sigma }\mathbf{1}_{F\setminus I_{J}},\bigtriangleup _{J}^{\omega
}g\right\rangle _{\omega }\ , \\
\mathsf{B}_{\limfunc{stop}\func{sep}}^{\mathcal{A},\ast }\left( f,g\right)
&\equiv &\sum_{F\in \mathcal{F}}\sum_{\substack{ A,B\in \mathcal{A}\left[ F%
\right]  \\ A\subsetneqq B}}\sum_{\substack{ \left( I,J\right) \in \mathcal{C%
}_{\mathcal{A}\left[ Q\right] }\left( A\right) \times \mathcal{C}_{\mathcal{A%
}\left[ Q\right] }\left( B\right)  \\ J\subset _{\tau }I}}\left(
E_{I_{J}}^{\sigma }\bigtriangleup _{I}^{\sigma }f\right) \left\langle
H_{\sigma }\mathbf{1}_{F\setminus I_{J}},\bigtriangleup _{J}^{\omega
}g\right\rangle _{\omega }\ , \\
\mathsf{B}_{\limfunc{stop}\func{top}\func{only}}^{\mathcal{A}}\left(
f,g\right) &\equiv &\sum_{F\in \mathcal{F}}\sum_{\substack{ J\in \mathcal{C}%
_{\mathcal{A}\left[ Q\right] }\left( A\right)  \\ J\subset _{\tau }A}}\left(
E_{A_{J}}^{\sigma }\bigtriangleup _{A}^{\sigma }f\right) \left\langle
H_{\sigma }\mathbf{1}_{F\setminus A_{J}},\bigtriangleup _{J}^{\omega
}g\right\rangle _{\omega }\ , \\
\mathsf{B}_{\limfunc{stop}\limfunc{no}\func{top}}^{\mathcal{A}}\left(
f,g\right) &\equiv &\sum_{F\in \mathcal{F}}\sum_{\substack{ \left(
I,J\right) \in \mathcal{C}_{\mathcal{A}\left[ Q\right] }^{\limfunc{no}\func{%
top}}\left( A\right) \times \mathcal{C}_{\mathcal{A}\left[ Q\right] }\left(
A\right)  \\ J\subset _{\tau }I}}\left( E_{I_{J}}^{\sigma }\bigtriangleup
_{I}^{\sigma }f\right) \left\langle H_{\sigma }\mathbf{1}_{F\setminus
I_{J}},\bigtriangleup _{J}^{\omega }g\right\rangle _{\omega }\ ,
\end{eqnarray*}%
and $\mathcal{C}_{\mathcal{A}\left[ Q\right] }^{\limfunc{no}\func{top}%
}\left( A\right) \equiv \mathcal{C}_{\mathcal{A}\left[ Q\right] }\left(
A\right) \setminus \left\{ A\right\} $.

We now consider a stopping time $\mathcal{Q}\in \mathfrak{Q}$, which can by
definition be written as $\mathcal{S}^{\left( n\right) }\circ \mathcal{A}%
_{n+1}$ for some $n\geq 0$. Then from above we have 
\begin{equation*}
\mathsf{B}_{\limfunc{stop}}^{\mathcal{S}^{\left( n\right) }}\left(
f,g\right) =\mathsf{B}_{\limfunc{stop}\func{sep}}^{\mathcal{S}^{\left(
n+1\right) }}\left( f,g\right) +\mathsf{B}_{\limfunc{stop}\func{sep}}^{%
\mathcal{S}^{\left( n+1\right) },\ast }\left( f,g\right) +\mathsf{B}_{%
\limfunc{stop}\func{top}\func{only}}^{\mathcal{S}^{\left( n+1\right)
}}\left( f,g\right) +\mathsf{B}_{\limfunc{stop}\limfunc{no}\func{top}}^{%
\mathcal{S}^{\left( n+1\right) }}\left( f,g\right) ,
\end{equation*}%
where we can control the separated stopping form $\mathsf{B}_{\limfunc{stop}%
\func{sep}}^{\mathcal{S}^{\left( n+1\right) }}\left( f,g\right) $ using the
quadratic $L^{p}$-Stopping Child Lemma \ref{straddle3}. Indeed, we can use
the geometric gain in the index $m$ that measures the distance between the
coronas in the separated form, i.e.%
\begin{eqnarray*}
&&\mathsf{E}_{p}\left( \mathcal{C}_{\mathcal{Q}}\left( Q\right) \cap 
\mathcal{D}_{\func{good}}\left[ \bigcup_{B\in \mathfrak{C}_{\mathcal{A}%
}^{\left( m-1\right) }\left( S\right) }B\cap K\right] \cap \Lambda
_{g}^{\omega };\omega \right) \\
&\lesssim &\Gamma ^{-pm}\mathsf{E}_{p}\left( \mathcal{C}_{\mathcal{Q}}\left(
Q\right) \cap \left\{ \mathcal{D}_{\func{good}}\left[ \mathfrak{C}_{\mathcal{%
A}}\left( S\right) \cap K\right] \setminus \mathcal{D}_{\func{good}}\left[ 
\mathfrak{C}_{\mathcal{A}}^{\left( N+1\right) }\left( S\right) \cap K\right]
\right\} \cap \Lambda _{g}^{\omega };\omega \right) , \\
&&\text{for }\left( F,Q,A,S,K\right) \in \mathcal{F}\times \mathcal{Q}\left[
F\right] \times \mathcal{A}\left[ Q\right] \times \mathfrak{C}_{\mathcal{A}%
}\left( A\right) \times \mathcal{W}_{\func{good}}^{\limfunc{trip}}\left(
S\right) \text{ and }m\geq 1,
\end{eqnarray*}%
together with the equivalence (\ref{N opt'}), to obtain,%
\begin{eqnarray}
\left\vert \mathsf{B}_{\limfunc{stop}\func{sep}}^{\mathcal{S}^{\left(
n+1\right) }}\left( f,g\right) \right\vert &\lesssim &\left( 1+2^{N\delta
}\right) \frac{N^{2}}{\delta ^{2}}\ \left\Vert \left\vert f\right\vert _{%
\mathcal{F},\mathcal{S}^{\left( n\right) },\mathcal{S}^{\left( n+1\right)
}}^{\left\{ \mathcal{C}_{\mathcal{S}^{\left( n+1\right) }}^{\left[ N\right]
}\left( A\right) \cap \Lambda _{g_{Q}}^{\omega }\right\} _{A\in \mathcal{S}%
^{\left( n+1\right) }}}\right\Vert _{L^{p}\left( \omega \right) }\left\Vert
g\right\Vert _{L^{p^{\prime }}\left( \omega \right) }  \label{far below stop}
\\
&\lesssim &\left( 1+2^{N\delta }\right) \frac{N^{3}}{\delta ^{2}}\
\sup_{0\leq s\leq N}\left\Vert \left\vert f\right\vert _{\mathcal{F},%
\mathcal{S}^{\left( n\right) },\mathcal{S}^{\left( n+1\right) }}^{\left\{ 
\mathcal{C}_{\mathcal{S}^{\left( n+1\right) }}^{\left( s\right) }\left(
A\right) \cap \Lambda _{g_{Q}}^{\omega }\right\} _{A\in \mathcal{S}^{\left(
n+1\right) }}}\right\Vert _{L^{p}\left( \omega \right) }\left\Vert
g\right\Vert _{L^{p^{\prime }}\left( \omega \right) }.  \notag
\end{eqnarray}%
Note that the hypothesis (\ref{geo dec bound'}) of the $L^{p}$-Stopping
Child Lemma \ref{straddle3} holds with $2^{-\delta }=\frac{1}{\Gamma }$ by
iterating the negation of the first line in (\ref{dec corona}) of Lemma \ref%
{lem I*}, and as mentioned above, the hypothesis (\ref{N opt}) holds by (\ref%
{N opt'}). Note also that 
\begin{equation*}
\delta =\frac{\ln \Gamma ^{p}}{\ln 2}=\frac{\ln \left( 1+\theta \right) }{%
\ln 2}\approx \theta \approx \frac{1}{N}.
\end{equation*}

The dual separated stopping form $\mathsf{B}_{\limfunc{stop}\func{sep}}^{%
\mathcal{S}^{\left( n+1\right) },\ast }\left( f,g\right) $ is handled
similarly, and the top only stopping form $\mathsf{B}_{\limfunc{stop}\func{%
top}\func{only}}^{\mathcal{S}^{\left( n+1\right) }}\left( f,g\right) $ is
handled as an easy corollary of the proof of the $L^{p}$-Stopping Child
Lemma, since there are only pairs $\left( A,J\right) $ arising in this form,
and they are effectively separated by $A$ itself. The only difference in the
course of the proof is that $\alpha _{\mathcal{A}\left[ Q\right] }\left(
S\right) =\sup_{I\in \left( \Lambda _{f}^{\sigma }\left[ S\right] ,A\right]
\cap \mathcal{D}_{\func{good}}}\left\vert E_{I}^{\sigma }f\right\vert $ is
replaced by $\sup_{I\in \left\{ A_{J},A\right\} }\left\vert E_{I}^{\sigma
}f\right\vert $.

Thus we have using $N\approx \frac{1}{\delta }\approx \frac{1}{\theta }$,
that $\left( 1+2^{N\delta }\right) \frac{N^{3}}{\delta ^{2}}\approx \frac{1}{%
\theta ^{5}}$ and so,%
\begin{eqnarray*}
\left\vert \mathsf{B}_{\limfunc{stop}}^{\mathcal{S}^{\left( n\right)
}}\left( f,g\right) \right\vert &\leq &C\frac{1}{\theta ^{5}}\ \sup_{0\leq
s\leq N}\left\Vert \left\vert f\right\vert _{\mathcal{F},\mathcal{S}^{\left(
n-1\right) },\mathcal{S}^{\left( n\right) }}^{\left\{ \mathcal{C}_{\mathcal{S%
}^{\left( n\right) }}^{\left( s\right) }\left( A\right) \cap \Lambda
_{g_{Q}}^{\omega }\right\} _{A\in \mathcal{S}^{\left( n\right)
}}}\right\Vert _{L^{p}\left( \omega \right) }\left\Vert g\right\Vert
_{L^{p^{\prime }}\left( \omega \right) }+\left\vert \mathsf{B}_{\limfunc{stop%
}\limfunc{no}\func{top}}^{\mathcal{S}^{\left( n+1\right) }}\left( f,g\right)
\right\vert \\
&=&C\frac{1}{\theta ^{5}}\ \sup_{0\leq s\leq N}\left\Vert \left\vert
f\right\vert _{\mathcal{F},\mathcal{S}^{\left( n-1\right) },\mathcal{S}%
^{\left( n\right) }}^{\left\{ \mathcal{C}_{\mathcal{S}^{\left( n\right)
}}^{\left( s\right) }\left( A\right) \cap \Lambda _{g_{Q}}^{\omega }\right\}
_{A\in \mathcal{S}^{\left( n\right) }}}\right\Vert _{L^{p}\left( \omega
\right) }\left\Vert g\right\Vert _{L^{p^{\prime }}\left( \omega \right)
}+\left\vert \mathsf{B}_{\limfunc{stop}}^{\mathcal{S}^{\left( n+1\right)
}}\left( \mathsf{P}^{\limfunc{no}\func{top}}f,\mathsf{P}^{\limfunc{no}\func{%
top}}g\right) \right\vert ,
\end{eqnarray*}%
and iteration yields%
\begin{eqnarray}
&&\left\vert \mathsf{B}_{\limfunc{stop}}^{\mathcal{F}}\left( f,g\right)
\right\vert =\left\vert \mathsf{B}_{\limfunc{stop}}^{\mathcal{S}^{\left(
0\right) }}\left( f,g\right) \right\vert \leq C\frac{1}{\theta ^{5}}%
\sup_{0\leq s\leq N}\left\Vert \left\vert f\right\vert _{\mathcal{F},%
\mathcal{S}^{\left( 0\right) },\mathcal{S}^{\left( 1\right) }}^{\left\{ 
\mathcal{C}_{\mathcal{S}^{\left( 1\right) }}^{\left( s\right) }\left(
A\right) \cap \Lambda _{g_{Q}}^{\omega }\right\} _{A\in \mathcal{S}^{\left(
1\right) }}}\right\Vert _{L^{p}\left( \omega \right) }\left\Vert
g\right\Vert _{L^{p^{\prime }}\left( \omega \right) }+\left\vert \mathsf{B}_{%
\limfunc{stop}}^{\mathcal{S}^{\left( 1\right) }}\left( f,g\right) \right\vert
\label{stopping point} \\
&\leq &C\frac{1}{\theta ^{5}}\left\{ \sum_{k=0}^{n}\sup_{0\leq s\leq
N}\left\Vert \left\vert f\right\vert _{\mathcal{F},\mathcal{S}^{\left(
k-1\right) },\mathcal{S}^{\left( k\right) }}^{\left\{ \mathcal{C}_{\mathcal{S%
}^{\left( k\right) }}^{\left( s\right) }\left( A\right) \cap \Lambda
_{g_{Q}}^{\omega }\right\} _{A\in \mathcal{S}^{\left( k\right)
}}}\right\Vert _{L^{p}\left( \omega \right) }\right\} \left\Vert
g\right\Vert _{L^{p^{\prime }}\left( \omega \right) }+\left\vert \mathsf{B}_{%
\limfunc{stop}}^{\mathcal{S}^{\left( n\right) }}\left( f,g\right)
\right\vert ,  \notag
\end{eqnarray}%
for $n\in \mathbb{N}$, where we have suppressed the projections $\mathsf{P}^{%
\limfunc{no}\func{top}}$ that accummulate as we iterate. Once the lemma in
the next subsubsection is proved, we are done since $\mathsf{B}_{\limfunc{%
stop}}^{\mathcal{S}^{\left( n\right) }}\left( f,g\right) $ vanishes for $n$
sufficiently large because of the finite Haar support assumptions on $f$ and 
$g$, and then using $\min \left\{ \frac{1}{p^{\prime }},\frac{4-p}{2p}%
\right\} >0$ for $1<p<4$, we obtain 
\begin{eqnarray*}
\left\vert \mathsf{B}_{\limfunc{stop}}^{\mathcal{F}}\left( f,g\right)
\right\vert &\lesssim &\frac{1}{\theta ^{5}}\sum_{n=0}^{\infty }\sup_{0\leq
s\leq N}\left\Vert \left\vert f\right\vert _{\mathcal{F},\mathcal{S}^{\left(
n-1\right) },\mathcal{S}^{\left( n\right) }}^{\left\{ \mathcal{C}_{\mathcal{S%
}^{\left( n\right) }}^{\left( s\right) }\left( A\right) \cap \Lambda
_{g_{A}}^{\omega }\right\} _{A\in \mathcal{S}^{\left( n\right)
}}}\right\Vert _{L^{p}\left( \omega \right) }\left\Vert g\right\Vert
_{L^{p^{\prime }}\left( \omega \right) } \\
&\lesssim &\frac{1}{\theta ^{5}}\left( \sum_{n=0}^{\infty }\left(
C_{p}\theta \right) ^{n\min \left\{ \frac{1}{p^{\prime }},\frac{4-p}{2p}%
\right\} }\right) \mathfrak{T}_{H,p}^{\func{loc}}\left( \sigma ,\omega
\right) \left\Vert f\right\Vert _{L^{p}\left( \sigma \right) }\left\Vert
g\right\Vert _{L^{p^{\prime }}\left( \omega \right) } \\
&\leq &C_{p,\theta }\mathfrak{T}_{H,p}^{\func{loc}}\left( \sigma ,\omega
\right) \left\Vert f\right\Vert _{L^{p}\left( \sigma \right) }\left\Vert
g\right\Vert _{L^{p^{\prime }}\left( \omega \right) },
\end{eqnarray*}%
where $C_{p,\theta }<\infty $ provided $C_{p}\theta <1$. Thus we see that
the stopping form is controlled by the scalar testing characteristic $%
\mathfrak{T}_{H,p}^{\func{loc}}\left( \sigma ,\omega \right) $, which is of
course at most $\mathfrak{T}_{H,p}^{\ell ^{2},\func{loc}}\left( \sigma
,\omega \right) $.

\subsubsection{The decay lemma}

Here is the final lemma of the paper.

\begin{lemma}
\label{final level}Let $n\in \mathbb{N}$, $0\leq s\leq N$, and $1<p<4$.\
There is $C_{p}>0$ such that for all $0<\theta <1$, there is a positive
constant $B_{\theta }$ such that,%
\begin{equation}
\left\Vert \left\vert f\right\vert _{\mathcal{F},\mathcal{S}^{\left(
n-1\right) },\mathcal{S}^{\left( n\right) }}^{\left\{ \mathcal{C}_{\mathcal{S%
}^{\left( n\right) }}^{\left( s\right) }\left( Q\right) \cap \Lambda
_{g_{Q}}^{\omega }\right\} _{Q\in \mathcal{S}^{\left( n-1\right)
}}}\right\Vert _{L^{p}\left( \omega \right) }\lesssim \left\{ 
\begin{array}{ccc}
B_{\theta }\left( C_{p}\theta \right) ^{\frac{n}{p^{\prime }}}\mathfrak{T}%
_{H,p}^{\func{loc}}\left( \sigma ,\omega \right) \left\Vert f\right\Vert
_{L^{p}\left( \sigma \right) } & \text{ if } & 1<p\leq 2 \\ 
B_{\theta }\left( C_{p}\theta \right) ^{n\frac{4-p}{2p}}\mathfrak{T}_{H,p}^{%
\func{loc}}\left( \sigma ,\omega \right) \left\Vert f\right\Vert
_{L^{p}\left( \sigma \right) } & \text{ if } & 2\leq p<4%
\end{array}%
\right. .  \label{final level '}
\end{equation}
\end{lemma}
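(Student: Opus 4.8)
The plan is to reduce (\ref{final level '}) to a geometric-decay estimate for \emph{restricted} $p$-energies and then to extract that decay from the dual tree construction.

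First I would raise the left side of (\ref{final level '}) to the $p$th power and expand the definition (\ref{def fAQ}) of $\left\vert f\right\vert _{\mathcal{F},\mathcal{S}^{\left( n-1\right) },\mathcal{S}^{\left( n\right) }}^{\left\{ \mathcal{C}_{\mathcal{S}^{\left( n\right) }}^{\left( s\right) }\left( Q\right) \cap \Lambda _{g_{Q}}^{\omega }\right\} }$. Using the square function Theorem \ref{square thm} in the form (\ref{equiv Ep}) together with the reverse monotonicity of Lemma \ref{ener rev}, each summand $\alpha _{A}\left( S\right) ^{2}\left( \frac{\mathrm{P}\left( K,\mathbf{1}_{F\setminus S}\sigma \right) }{\ell \left( K\right) }\right) ^{2}\left\vert \mathsf{P}_{S;K}^{\omega ,\Lambda _{Q}}\right\vert Z\left( x\right) ^{2}$, once integrated over $K$, is comparable to $\alpha _{A}\left( S\right) ^{p}\left( \frac{\mathrm{P}\left( K,\mathbf{1}_{F\setminus S}\sigma \right) }{\ell \left( K\right) }\right) ^{p}\left\vert K\right\vert _{\omega }\,\mathsf{E}_{p}\left( \Lambda _{Q}\cap \mathcal{D}\left[ K\right] ;\omega \right) ^{p}$, where $\Lambda _{Q}=\mathcal{C}_{\mathcal{S}^{\left( n\right) }}^{\left( s\right) }\left( Q\right) \cap \Lambda _{g_{Q}}^{\omega }$ is the restricted frequency collection (cut to $J\subset _{\tau }\Lambda _{f}^{\sigma }\left[ S\right] $ and $J\subset K$), with the $J$'s lifted to their $\func{good}$ Whitney ancestors $K\in \mathcal{W}_{\func{good},\tau }\left( S\right) $ exactly as in the proof of the Quadratic $L^{p}$-Stopping Child Lemma \ref{straddle3}. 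Bounding $\mathrm{P}\left( K,\mathbf{1}_{F\setminus S}\sigma \right) \le \mathrm{P}\left( K,\mathbf{1}_{F\setminus K}\sigma \right) $ and invoking the defining inequality (\ref{PE char}) of the stopping energy $\mathfrak{X}_{F,p}$ followed by Lemma \ref{energy cond}, one gets $\left( \frac{\mathrm{P}\left( K,\mathbf{1}_{F\setminus K}\sigma \right) }{\ell \left( K\right) }\right) ^{p}\left\vert K\right\vert _{\omega }\le \mathfrak{X}_{\mathcal{F},p}^{p}\,\frac{\left\vert K\right\vert _{\sigma }}{\mathsf{E}_{p}\left( K,\omega \right) ^{p}}\lesssim \mathfrak{T}_{H,p}^{\func{loc}}\left( \sigma ,\omega \right) ^{p}\,\frac{\left\vert K\right\vert _{\sigma }}{\mathsf{E}_{p}\left( K,\omega \right) ^{p}}$, so the $p$th power of the left side collapses to
\[
\mathfrak{T}_{H,p}^{\func{loc}}\left( \sigma ,\omega \right) ^{p}\sum_{\left( F,Q,A,S,K\right) }2^{-\limfunc{dist}\left( A,Q\right) \delta }\,\alpha _{A}\left( S\right) ^{p}\left\vert K\right\vert _{\sigma }\,\rho _{Q,K}^{p},\qquad \rho _{Q,K}:=\frac{\mathsf{E}_{p}\left( \Lambda _{Q}\cap \mathcal{D}\left[ K\right] ;\omega \right) }{\mathsf{E}_{p}\left( K,\omega \right) }\le 1 .
\]
Since the index set is tree-structured (the $K$'s pairwise disjoint inside each $S$, the $S$'s inside each $A$, the $A$'s inside each $Q$), the $\sigma $-Carleson property of $\mathcal{F}$, the disjoint-support Lemma \ref{disjoint supp}, the quasiorthogonality (\ref{qorth}) and the vector-valued Carleson Theorem \ref{using Carleson} (controlling $\sum \alpha _{A}\left( S\right) ^{p}\left\vert K\right\vert _{\sigma }$ by $\left\Vert f\right\Vert _{L^{p}\left( \sigma \right) }^{p}$ through $\alpha _{A}\left( S\right) \lesssim \alpha _{\mathcal{F}}$), together with the geometric series $\sum 2^{-\limfunc{dist}\left( A,Q\right) \delta }\approx \frac{1}{\delta }\approx N$, reduce everything to $B_{\theta }^{p}\,\mathfrak{T}_{H,p}^{\func{loc}}\left( \sigma ,\omega \right) ^{p}\left\Vert f\right\Vert _{L^{p}\left( \sigma \right) }^{p}\,\sup _{Q,K}\rho _{Q,K}^{p}$, where $B_{\theta }$ absorbs the polynomial-in-$N\approx 1/\theta $ losses. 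The whole content is thus the decay $\sup _{Q,K}\rho _{Q,K}\lesssim \left( C_{p}\theta \right) ^{n\,\mathrm{e}\left( p\right) }$ with $\mathrm{e}\left( p\right) =\frac{1}{p^{\prime }}$ for $1<p\le 2$ and $\mathrm{e}\left( p\right) =\frac{4-p}{2p}$ for $2\le p<4$.

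The decay of $\rho _{Q,K}$ is the heart of the matter. The collection $\Lambda _{g_{Q}}^{\omega }$ with $Q\in \mathcal{S}^{\left( n-1\right) }$, further cut to the bounded-depth piece $\mathcal{C}_{\mathcal{S}^{\left( n\right) }}^{\left( s\right) }\left( Q\right) $ with $s\le N$, is precisely the part of the $\omega $-energy surviving $n$ successive runs of the dual tree stopping construction of Lemma \ref{lem I*}, each with the \emph{fixed} parameter $\Gamma ^{p}=1+\theta $ (so $\delta =\frac{\ln \left( 1+\theta \right) }{\ln 2}\approx \theta $). At a single stopping-time level, the part of the $p$-energy lying strictly between a corona top and its children --- exactly the residual seen by $\rho _{Q,K}$ --- is bounded in (\ref{tight}) by $\theta ^{\natural }$ (where $\theta ^{\natural }=\max \{\theta ^{p/2},\theta \}$) times the full corona energy; iterating down the tower $\mathcal{F}=\mathcal{S}^{\left( 0\right) }\subset \mathcal{S}^{\left( 1\right) }\subset \dots \subset \mathcal{S}^{\left( n\right) }$ by means of the geometric decays (\ref{geo dec}), (\ref{dec corona small}) and the disjoint-support corollary yields a factor that is a power of $\theta $ of order $n$. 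The delicate point is the conversion, at each layer, between the $\ell ^{2}$-valued square function estimates --- in which the dual tree criteria (\ref{dec corona}) are phrased --- and the scalar $L^{p}\left( \omega \right) $ norm defining $\mathsf{E}_{p}$: for $1<p\le 2$ this is handled by the subadditivity $\left\Vert \cdot \right\Vert _{\ell ^{p/2}}\le \left\Vert \cdot \right\Vert _{\ell ^{1}}$ together with (\ref{The claim}), costing a per-layer factor $\left( C_{p}\theta \right) ^{1/p^{\prime }}$; for $2\le p<4$ it requires a H\"{o}lder split with exponents $\left( \frac{p}{p-2},\frac{p}{2}\right) $ --- equivalently, interpolating the $L^{p}$ $p$-energy between its $L^{2}$ and $L^{p}$ avatars --- which produces the per-layer factor $\left( C_{p}\theta \right) ^{\left( 4-p\right) /\left( 2p\right) }$, positive precisely because $p<4$ and degenerating at $p=4$. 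At $p=2$ both exponents equal $\frac{1}{2}$, matching the orthogonal argument of \cite{LaSaShUr3},\cite{Lac}.

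The main obstacle is this last step in the regime $2\le p<4$: lacking the Hilbert-space orthogonality of the $p=2$ case, one cannot split the $p$-energy of a union of scales cleanly, and must convert between square functions and $L^{p}$ norms by hand at every layer of the tower --- which is exactly where $p<4$ is forced and why the decay exponent is the weaker $\frac{4-p}{2p}$ rather than the naive $\frac{1}{p}$. (The complementary restriction $\frac{4}{3}<p$ enters only when the same argument is run for the dual stopping form, i.e. with $p^{\prime }$ in place of $p$.) A secondary bookkeeping issue is to keep the $\theta $-dependent constant $B_{\theta }$ --- which also swallows the factors $N^{3}/\delta ^{2}\approx \theta ^{-5}$ from the Corona Martingale Comparison Principle and from the $\limfunc{dist}$-series --- separate from the geometric rate $\left( C_{p}\theta \right) ^{\mathrm{e}\left( p\right) }$, so that upon choosing $\theta $ small the series $\sum _{n}\left( C_{p}\theta \right) ^{n\,\mathrm{e}\left( p\right) }$ converges; this is what finally closes the recursion (\ref{stopping point}) and completes the bound (\ref{stop est}) on the stopping form.
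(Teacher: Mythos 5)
Your outline correctly identifies the two raw ingredients (the per-layer gain from the dual tree construction and the control $\mathfrak{X}_{\mathcal{F};p}\lesssim \mathcal{E}_{p}\lesssim \mathfrak{T}_{H,p}^{\func{loc}}$ of Lemma \ref{energy cond}), but the central reduction — bounding the $p$th power of the left side by $\bigl(\sum_{(F,Q,A,S,K)}2^{-\limfunc{dist}\left( A,Q\right) \delta }\alpha _{A}\left( S\right) ^{p}\left\vert K\right\vert _{\sigma }\bigr)\cdot \sup_{Q,K}\rho _{Q,K}^{p}$ — does not hold, and it is exactly at this decoupling that the paper's proof has to work and where the exponents of the lemma are created. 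For $2<p<4$, replacing $\int \bigl(\sum (\cdot )^{2}\bigr)^{p/2}d\omega $ by the decoupled sum of $p$th powers over $(F,Q,A,S,K)$ is false in general: the intervals $K$ coming from different layers of the iterated tower (and from the $s\leq N$ sublevels) overlap, so the $\ell ^{2}$ sum cannot be collapsed termwise; the paper handles this with the weighted H\"{o}lder step in (\ref{Holder}) using the accumulated distance $d(t)$, and the factor $\bigl[\sum_{t}2^{-d\left( t\right) \eta p/\left( p-2\right) }\bigr]^{p/2-1}\approx (1/\theta )^{n\left( p/2-1\right) }$ it produces (via the tower count (\ref{towers above})) is the true origin of the exponent $\frac{4-p}{2p}$ and of the restriction $p<4$ — it is a global summation loss of size $(1/\theta)^{cn}$, not a "per-layer conversion" that can be hidden inside an estimate for a single ratio $\rho _{Q,K}$. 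For $1<p<2$, the residual sum you propose in step (b) carries decay only in the last layer $\limfunc{dist}\left( A,Q\right) $, and the stopping families $\mathcal{S}^{\left( k\right) }$ produced by Lemma \ref{lem I*} are $\omega $-energy stopping times that are \emph{not} $\sigma $-Carleson, so neither Theorem \ref{using Carleson} nor the quasiorthogonality (\ref{qorth}) (both of which pertain to $\mathcal{F}$) bounds $\sum_{Q_{n}}\alpha _{A}\left( S\right) ^{p}\left\vert K\right\vert _{\sigma }$ by $C_{\theta }\left\Vert f\right\Vert _{L^{p}\left( \sigma \right) }^{p}$ uniformly in $n$; indeed for $p<2$ the inequality $\sum_{Q_{n}}\left\Vert \mathsf{P}_{\mathcal{C}_{\mathcal{S}^{\left( n\right) }}\left( Q_{n}\right) }^{\sigma }f\right\Vert _{L^{p}\left( \sigma \right) }^{p}\lesssim \left\Vert f\right\Vert _{L^{p}\left( \sigma \right) }^{p}$ fails in general. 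The paper avoids this by \emph{retaining} the accumulated weights $\Psi _{Q_{n}}^{p}$ from (\ref{def Psi Q}) and performing a layer-by-layer H\"{o}lder argument in the $\sigma $-world, each layer costing $N^{\left( 2-p\right) /2}$, which is what produces the exponent $\frac{1}{p^{\prime }}$.

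A useful sanity check that the factorization is too strong: the per-pair decay actually available from iterating (\ref{dec corona}) and (\ref{tight}) is $\rho _{Q,K}\lesssim \left( C_{p}\theta ^{\natural }\right) ^{n/p}$, i.e. $\theta ^{n/2}$ for $1<p\leq 2$ and $\theta ^{n/p}$ for $p\geq 2$ — strictly better than the exponents $\frac{n}{p^{\prime }}$ and $n\frac{4-p}{2p}$ in the lemma, and with no visible obstruction at $p=4$. If your steps (a) and (b) were legitimate, you would therefore prove a stronger statement than the paper can, without using $p<4$; the weaker exponents and the restriction $p<4$ exist precisely because the summation over the growing $n$-fold tree incurs losses of size $(1/\theta )^{cn}$ that must be beaten by the $\theta ^{cn}$ gain from (\ref{big gain}). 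That competition — not the estimate of any single ratio $\rho _{Q,K}$ — is the actual content of Lemma \ref{final level}, and your proposal does not carry it out.
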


\begin{proof}
Fix $0\leq s\leq N$ throughout the following arguments. We begin with the
function%
\begin{equation*}
h\left( x\right) \equiv \sum_{Q\in \mathcal{S}^{\left( n-1\right)
}}\sum_{A\in \mathcal{A}^{\left( s\right) }\left[ Q\right] }\sum_{S\in 
\mathfrak{C}_{\mathcal{A}}\left( A\right) }\sum_{K\in \mathcal{W}_{\func{good%
},\tau }\left( S\right) }2^{-\frac{1}{2}\limfunc{dist}\left( A,Q\right)
\delta }\alpha _{A}\left( S\right) \frac{\mathrm{P}\left( K,\mathbf{1}%
_{F\setminus S}\sigma \right) }{\ell \left( K\right) }\mathsf{P}%
_{S;K}^{\omega ,\mathcal{C}_{\mathcal{A}}^{\left( s\right) }\left( A\right)
\cap \Lambda _{g_{Q}}^{\omega }}Z\left( x\right) ,
\end{equation*}%
where 
\begin{equation*}
g_{Q}\equiv \mathsf{P}_{\mathcal{C}_{\mathcal{S}^{\left( n-1\right) }}\left(
Q\right) }^{\omega }g\text{ for }Q\in \mathcal{S}^{\left( n-1\right) },
\end{equation*}%
and the associated sequence associated with $h$ and $\mathcal{A}$,%
\begin{equation*}
\left\{ 2^{-\frac{1}{2}\limfunc{dist}\left( A,Q\right) \delta }\alpha
_{A}\left( S\right) \frac{\mathrm{P}\left( K,\mathbf{1}_{F\setminus S}\sigma
\right) }{\ell \left( K\right) }\mathsf{P}_{S;K}^{\omega ,\mathcal{C}_{%
\mathcal{A}}^{\left( s\right) }\left( A\right) \cap \Lambda _{g_{Q}}^{\omega
}}Z\left( x\right) \right\} _{Q,A,S,K\in \mathcal{S}^{\left( n-1\right)
}\times \mathcal{A}\left[ Q\right] \times \mathfrak{C}_{\mathcal{A}}\left(
A\right) \times \mathcal{W}_{\func{good},\tau }\left( S\right) }.
\end{equation*}%
Note that $\left\Vert h\right\Vert _{L^{p}\left( \omega \right) }=\left\Vert
\left\vert f\right\vert _{\mathcal{F},\mathcal{S}^{\left( n-1\right) },%
\mathcal{S}^{\left( n\right) }}^{\left\{ \mathcal{C}_{\mathcal{S}^{\left(
n\right) }}^{\left( s\right) }\left( Q\right) \cap \Lambda _{g_{Q}}^{\omega
}\right\} _{Q\in \mathcal{S}^{\left( n-1\right) }}}\right\Vert _{L^{p}\left(
\omega \right) }$. In analogy with (\ref{K sat}), we note\ from the
definition in (\ref{def fAQ}) that the projection 
\begin{equation*}
\mathsf{P}_{S;K}^{\omega ,\mathcal{C}_{\mathcal{A}}^{\left( s\right) }\left(
A\right) \cap \Lambda _{g_{Q}}^{\omega }}=\sum_{\substack{ J\in \mathcal{C}_{%
\mathcal{A}}^{\left( s\right) }\left( A\right) \cap \Lambda _{g_{Q}}^{\omega
}:\ J\subset _{\tau }\Lambda _{f}^{\sigma }\left[ S\right]  \\ J\subset K}}%
\bigtriangleup _{J}^{\omega }
\end{equation*}%
vanishes unless $K\in \mathcal{C}_{\mathcal{A}}^{\left( s\right) }\left(
A\right) \cap \mathcal{W}_{\func{good},\tau }\left( S\right) $.

Now recall the definition of an iterated martingale difference sequence $%
\left\{ h_{k}\right\} _{k=1}^{\infty }$ from subsubsection \ref{iter subsub
sec}, where for an iterated stopping time $\mathcal{Q}\circ \mathcal{A}$ ,
we defined in (\ref{iter defs}) and (\ref{def iter dist}), the martingale
differences $h_{k}$, the maximal depths $D_{k}$, and the iterated corona
distance $\limfunc{dist}{}_{\mathcal{Q}\circ \mathcal{A}}\left( A,T\right) =%
\limfunc{xdist}\ _{\mathcal{A}}\left( A,T\right) $ from the root $T$ to $%
A\in \mathcal{A}$. In Conclusion \ref{concl it} we referred to this
construction there as the \emph{regularization} of the `standard' definition
of the $\mathcal{A}$-corona martingale difference sequence given in
Definition \ref{def mart}.

Now we apply this regularization to the multiply iterated stopping times $%
\mathcal{S}^{\left( n\right) }=\mathcal{S}^{\left( 0\right) }\circ \mathcal{S%
}^{\left( 1\right) }\circ ...\circ \mathcal{S}^{\left( n\right) }$. We then
have the following \emph{regularizing} property. If $Q_{n}\in \mathcal{S}%
^{\left( n\right) }$ has associated tower%
\begin{equation}
\mathcal{C}_{\mathcal{S}^{\left( n\right) }}\left( Q_{n}\right) \subset 
\mathcal{C}_{\mathcal{S}^{\left( n-1\right) }}\left( Q_{n-1}\right) \subset 
\mathcal{C}_{\mathcal{S}^{\left( n-2\right) }}\left( Q_{n-2}\right) \subset
...\subset \mathcal{C}_{\mathcal{S}^{\left( 1\right) }}\left( Q_{1}\right)
\subset \mathcal{C}_{\mathcal{S}^{\left( 0\right) }}\left( Q_{0}\right) ,
\label{tower'}
\end{equation}%
then 
\begin{equation*}
\limfunc{xdist}\ _{\mathcal{S}^{\left( n\right) }}\left( Q_{n},T\right)
=D_{1}+D_{2}+...+D_{n-1}+\limfunc{dist}{}_{\mathcal{S}^{\left( n\right)
}}\left( Q_{n},Q_{n-1}\right) ,
\end{equation*}%
where the $D_{k}$ are defined using the single iteration $\mathcal{I}%
^{\left( n-1\right) }\circ \mathcal{S}^{\left( n\right) }$ where $\mathcal{I}%
^{\left( n-1\right) }\equiv \mathcal{S}^{\left( 0\right) }\circ \mathcal{S}%
^{\left( 1\right) }\circ ...\circ \mathcal{S}^{\left( n-1\right) }$. This
regularizing property makes it easier to track levels in the stopping
collection $\mathcal{S}^{\left( n\right) }$ in terms of levels within each
corona $\mathcal{C}_{\mathcal{S}^{\left( k\right) }}\left( Q_{k}\right) $, $%
0\leq k\leq n$, when it comes time for estimates later on. So, keeping in
mind that we write $\mathcal{A}=\mathcal{S}^{\left( n\right) }$ and $A=Q_{n}$
interchangeably, we will now use the \emph{iterated} martingale difference
sequence associated with the stopping times $\mathcal{S}^{\left( n\right) }$%
, which we write as,%
\begin{equation*}
\left\{ h_{k}\left( x\right) \right\} _{k\in \mathbb{N}}=\left\{
\sum_{\left( S,K\right) \in \mathfrak{C}_{\mathcal{S}^{\left( n\right)
}}\left( Q_{n}\right) \times \mathcal{W}_{\func{good},\tau }\left( S\right)
}\alpha _{Q_{n}}\left( S\right) \frac{\mathrm{P}\left( K,\mathbf{1}%
_{F\setminus S}\sigma \right) }{\ell \left( K\right) }\mathsf{P}%
_{S;K}^{\omega ,\mathcal{C}_{\mathcal{S}^{\left( n\right) }}^{\left(
s\right) }\left( Q_{n}\right) \cap \Lambda _{g_{Q_{n-1}}}^{\omega }}Z\left(
x\right) \right\} _{\substack{ Q_{n}\in \mathcal{S}^{\left( n\right) }  \\ 
\limfunc{itdist}{}_{\mathcal{S}^{\left( n\right) }}\left( Q_{n},T\right) =k}}
\end{equation*}%
where $Q_{n-1}$ is determined in terms of $Q_{n}$ by (\ref{tower'}). This
iterated martingale difference sequence has the martingale property because
the projections $\mathsf{P}_{S;K}^{\omega ,\mathcal{C}_{\mathcal{S}^{\left(
n\right) }}^{\left( s\right) }\left( Q_{n}\right) \cap \Lambda
_{g_{Q_{n-1}}}^{\omega }}$ have pairwise disjoint Haar supports. In
addition, recall the pair $\left( d_{1}\left( t\right) ,d_{2}\left( t\right)
\right) $ that in the present case of an $n$-fold iteration, becomes an $n$%
-tuple $\left( d_{1}\left( t\right) ,d_{2}\left( t\right) ,...,d_{n}\left(
t\right) \right) $ associated \ to $A\in \mathcal{A}$ with $t=\limfunc{xdist}%
\ _{\mathcal{S}^{\left( n\right) }}\left( A,T\right) $ and $d_{k}\left(
t\right) =\limfunc{dist}_{\mathcal{S}^{\left( k\right) }}\left(
Q_{k},Q_{k-1}\right) $.

Recall that for each $Q\in \mathcal{S}^{\left( n-1\right) }$ in the sum
above, and each $A\in \mathcal{A}\left[ Q_{n-1}\right] $, we relabel $A$ as $%
Q_{n}$ and $\mathcal{A}$ as $\mathcal{A}_{n}$ so that there is a tower of
coronas (\ref{tower'}), which we repeat here,%
\begin{equation}
\mathcal{C}_{\mathcal{S}^{\left( n-1\right) }}\left( Q_{n-1}\right) \subset 
\mathcal{C}_{\mathcal{S}^{\left( n-2\right) }}\left( Q_{n-2}\right) \subset
...\subset \mathcal{C}_{\mathcal{S}^{\left( 1\right) }}\left( Q_{1}\right)
\subset \mathcal{C}_{\mathcal{S}^{\left( 0\right) }}\left( Q_{0}\right) ,
\label{tower}
\end{equation}%
with $A=Q_{n}\subset Q=Q_{n-1}\subset Q_{n-2}\subset ...\subset Q_{1}\subset
Q_{0}$. Now define 
\begin{equation*}
\Omega \left[ Q_{n-1}\right] \equiv \left\{ \left( S,K\right) \in \mathfrak{C%
}_{\mathcal{S}^{\left( n-1\right) }}\left( Q_{n-1}\right) \times \mathcal{W}%
_{\func{good},\tau }\left( S\right) \right\} .
\end{equation*}%
Then for $K\in \mathcal{W}_{\func{good},\tau }\left( S\right) $ with $%
Q_{n}\in \mathcal{A}_{n}\left[ Q_{n-1}\right] $ and $S\in \mathfrak{C}_{%
\mathcal{A}_{n}}\left( Q_{n}\right) $, i.e. $\left( S,K\right) \in \Omega %
\left[ Q_{n-1}\right] $, the projection $\mathsf{P}_{S;K}^{\omega ,\mathcal{C%
}_{\mathcal{S}^{\left( n\right) }}\left( Q_{n}\right) }Z\left( x\right) $
vanishes unless $K\in \mathcal{C}_{\mathcal{S}^{\left( n\right) }}\left(
Q\right) =\mathcal{C}_{\mathcal{S}^{\left( n\right) }}\left( Q_{n}\right) $,
in which case there is $C_{p}>0$ such that for $2\leq j\leq n$,%
\begin{equation*}
\left\Vert \mathsf{P}_{S;K}^{\omega ,\mathcal{C}_{\mathcal{S}^{\left(
n-1\right) }}\left( Q_{j-1}\right) \cap \Lambda _{g_{_{Q_{n-2}}}}^{\omega
}}Z\left( x\right) \right\Vert _{L^{p}\left( \omega \right) }^{p}\leq
C_{p}\theta ^{\natural }2^{-\limfunc{dist}\left( Q_{j-1},Q_{j-2}\right)
\delta }\left\Vert \mathsf{P}_{S;K}^{\omega ,\mathcal{C}_{\mathcal{S}%
^{\left( n-2\right) }}\left( Q_{n-2}\right) \cap \Lambda
_{g_{_{Q_{j-3}}}}^{\omega }}Z\left( x\right) \right\Vert _{L^{p}\left(
\omega \right) }^{p},
\end{equation*}%
since $K\in \mathcal{C}_{\mathcal{S}^{\left( j-1\right) }}\left(
Q_{j-1}\right) $.

Define%
\begin{equation}
\Psi _{Q_{n}}\equiv 2^{-\sum_{k=1}^{n}\limfunc{dist}\left(
Q_{k},Q_{k-1}\right) \delta },\ \ \ \ \ \text{for }Q=Q_{n}\in \mathcal{Q}%
_{n},  \label{def Psi Q}
\end{equation}%
where $\left\{ Q_{k}\right\} _{k=1}^{n}$ is the tower of intervals $Q_{k}\in 
\mathfrak{C}_{S^{\left( k-1\right) }}\left( Q_{k-1}\right) $ for $1\leq
k\leq n$. Note that%
\begin{equation}
\Psi _{A}=2^{-d\left( t\right) \delta },\ \ \ \ \ \text{where }d\left(
t\right) \equiv \sum_{k=1}^{n}d_{k}\left( t\right) \text{ and }t=\limfunc{%
xdist}\ _{\mathcal{S}^{\left( n\right) }}\left( A,T\right) .
\label{connection}
\end{equation}%
For convenience we set%
\begin{equation*}
B_{n}\left( S,K\right) \equiv \Psi _{Q_{n}}\alpha _{Q_{n}}\left( S\right)
\left( \frac{\mathrm{P}\left( K,\mathbf{1}_{F\setminus S}\sigma \right) }{%
\ell \left( K\right) }\right) .
\end{equation*}%
It follows that for each $Q_{n}\in \mathcal{A}$ and $S\in \mathfrak{C}_{%
\mathcal{A}}$,%
\begin{equation}
\left\Vert \alpha _{Q_{n}}\left( S\right) \left( \frac{\mathrm{P}\left( K,%
\mathbf{1}_{F\setminus S}\sigma \right) }{\ell \left( K\right) }\right) 
\mathsf{P}_{S;K}^{\omega ,\mathcal{C}_{\mathcal{S}^{\left( n\right) }}\left(
Q_{n}\right) }Z\left( x\right) \right\Vert _{L^{p}\left( \omega \right)
}^{p}\lesssim \left( C_{p}\theta ^{\natural }\right) ^{n}\left\Vert
B_{n}\left( S,K\right) \mathsf{P}_{S;K}^{\omega ,\mathcal{C}_{\mathcal{S}%
^{\left( 0\right) }}\left( Q_{0}\right) }Z\left( x\right) \right\Vert
_{L^{p}\left( \omega \right) }^{p},  \label{big gain}
\end{equation}%
where $\left\{ Q_{k}\right\} _{k=1}^{n}$ is the tower associated with $Q_{n}$%
. As a consequence, for each pair $Q_{n}\in \mathcal{A}$ and $S\in \mathfrak{%
C}_{\mathcal{A}}\left( Q_{n}\right) $, the factor $2^{-\left( \sum_{k=1}^{n}%
\limfunc{dist}\left( Q_{k},Q_{k-1}\right) \right) \delta }$ is at most one
and becomes smaller as the distances $\limfunc{dist}\left(
Q_{k},Q_{k-1}\right) $ grow. Since summing over all $Q_{n}\in \mathcal{S}%
^{\left( n\right) }$ can be reindexed as summing over all towers in (\ref%
{tower}), we will consequently write $\sum_{Q_{n}\in \mathcal{S}^{\left(
n\right) }}=\sum_{\left\{ Q_{k}\right\} _{k=0}^{n}}$ interchangeably
depending on context. We now estimate the norm $\left\Vert h\right\Vert
_{L^{p}\left( \omega \right) }^{p}$ separately in the cases $1<p\leq 2$ and $%
2\leq p<4$, beginning with $2\leq p<4$.

\textbf{The case }$2\leq p<4$. In this case $\theta ^{\natural }=\theta $
and this will be reflected in what follows. We now claim that for $2\leq p<4$%
,%
\begin{eqnarray}
&&\left\Vert \left\vert \left\{ \alpha _{A}\left( S\right) \left( \frac{%
\mathrm{P}\left( K,\mathbf{1}_{F\setminus S}\sigma \right) }{\ell \left(
K\right) }\right) \mathsf{P}_{S;K}^{\omega ,\Lambda _{g_{A}}}Z\left(
x\right) \ \mathbf{1}_{K}\left( x\right) \right\} _{\left( F,A,S,K\right)
\in \Omega ^{4}}\right\vert _{\ell ^{2}}\right\Vert _{L^{p}\left( \omega
\right) }^{p}  \label{like} \\
&\lesssim &\left( C_{p}\theta \right) ^{n\left( 2-\frac{p}{2}\right) }%
\mathfrak{X}_{\mathcal{F};p}\left( \sigma ,\omega \right) ^{p}\left\Vert
f\right\Vert _{L^{p}\left( \sigma \right) }^{p}\ ,  \notag
\end{eqnarray}%
or more succinctly,%
\begin{equation*}
\int_{\mathbb{R}}\left( \sum_{K\in \Omega ^{4}}c_{F,A,S,K}^{2}\left\vert 
\mathsf{P}_{S;K}^{\omega ,\Lambda _{g_{A}}}Z\left( x\right) \right\vert
^{2}\ \mathbf{1}_{K}\left( x\right) \right) ^{\frac{p}{2}}d\omega \left(
x\right) \lesssim \left( C_{p}\theta \right) ^{n\left( 2-\frac{p}{2}\right) }%
\mathfrak{X}_{\mathcal{F};p}\left( \sigma ,\omega \right) ^{p}\left\Vert
f\right\Vert _{L^{p}\left( \sigma \right) }^{p},
\end{equation*}%
where 
\begin{equation*}
c_{F,A,S,K}\equiv \alpha _{A}\left( S\right) \left( \frac{\mathrm{P}\left( K,%
\mathbf{1}_{F\setminus S}\sigma \right) }{\ell \left( K\right) }\right) ,
\end{equation*}%
and where the tower $\left\{ Q_{k}\right\} _{k=0}^{n}$ is determined from $%
Q_{k}=A$ and $Q_{0}=F\in \mathcal{F}$.

Using the iterated corona decomposition, we have for $\eta >0$ to be chosen
later,%
\begin{eqnarray*}
&&\int_{\mathbb{R}}\left( \sum_{\left( F,A,S,K\right) \in \Omega
^{4}}c_{F,A,S,K}^{2}\left\vert \mathsf{P}_{K}^{\Lambda _{g_{A}}}Z\left(
x\right) \right\vert ^{2}\ \mathbf{1}_{K}\left( x\right) \right) ^{\frac{p}{2%
}}d\omega \left( x\right) \\
&=&\int_{\mathbb{R}}\left( \sum_{t=1}^{\infty }\sum_{\left( F,A,S,K\right)
\in \Omega _{t}^{4}}c_{F,A,S,K}^{2}\left\vert \mathsf{P}_{K}^{\Lambda
_{g_{A}}}Z\left( x\right) \right\vert ^{2}\ \mathbf{1}_{K}\left( x\right)
\right) ^{\frac{p}{2}}d\omega \left( x\right) \\
&=&\int_{\mathbb{R}}\left( \sum_{t=1}^{\infty }2^{-d\left( t\right) \eta
}\sum_{\left( F,A,S,K\right) \in \Omega _{t}^{4}}2^{d\left( t\right) \eta
}c_{F,A,S,K}^{2}\left\vert \mathsf{P}_{K}^{\Lambda _{g_{A}}}Z\left( x\right)
\right\vert ^{2}\ \mathbf{1}_{K}\left( x\right) \right) ^{\frac{p}{2}%
}d\omega \left( x\right) ,
\end{eqnarray*}%
where $d\left( t\right) $ is defined in (\ref{connection}), and we write $%
\left( F,A,S,K\right) \in \Omega _{t}^{4}$ to mean that $A$ is $t$ levels
below $T$ in the iterated stopping time construction, i.e. $t=\limfunc{xdist%
}{}_{\mathcal{A}}\left( A,T\right) $. Typically, $d\left( t\right) $ is much
smaller that $t$, and this is what gives rise to large negative powers of $%
\theta $ below. By H\"{o}lder's inequality with exponent $\frac{p}{2}$, this
is at most%
\begin{eqnarray}
&&\int_{\mathbb{R}}\left( \left[ \sum_{t=1}^{\infty }2^{-d\left( t\right)
\eta \frac{p}{p-2}}\right] ^{1-\frac{2}{p}}\left( \sum_{t=1}^{\infty }\left[
\sum_{\left( F,A,S,K\right) \in \Omega _{t}^{4}}2^{d\left( t\right) \eta
}c_{F,A,S,K}^{2}\left\vert \mathsf{P}_{K}^{\Lambda _{g_{A}}}Z\left( x\right)
\right\vert ^{2}\ \mathbf{1}_{K}\left( x\right) \right] ^{\frac{p}{2}%
}\right) ^{\frac{2}{p}}\right) ^{\frac{p}{2}}d\omega \left( x\right)
\label{Holder} \\
&=&\left[ \sum_{t=1}^{\infty }2^{-d\left( t\right) \eta \frac{p}{p-2}}\right]
^{\frac{p}{2}-1}\int_{\mathbb{R}}\sum_{t=1}^{\infty }\left[ \sum_{\left(
F,A,S,K\right) \in \Omega _{t}^{4}}2^{d\left( t\right) \eta
}c_{F,A,S,K}^{2}\left\vert \mathsf{P}_{K}^{\Lambda _{g_{A}}}Z\left( x\right)
\right\vert ^{2}\ \mathbf{1}_{K}\left( x\right) \right] ^{\frac{p}{2}%
}d\omega \left( x\right)  \notag \\
&=&\left[ \sum_{t=1}^{\infty }2^{-d\left( t\right) \eta \frac{p}{p-2}}\right]
^{\frac{p}{2}-1}\int_{\mathbb{R}}\sum_{t=1}^{\infty }\sum_{\left(
F,A,S,K\right) \in \Omega _{t}^{4}}2^{d\left( t\right) \eta \frac{p}{2}%
}c_{F,A,S,K}^{p}\left\vert \mathsf{P}_{K}^{\Lambda _{g_{A}}}Z\left( x\right)
\right\vert ^{p}\ \mathbf{1}_{K}\left( x\right) d\omega \left( x\right) . 
\notag
\end{eqnarray}%
Now we compute%
\begin{eqnarray*}
&&\int_{\mathbb{R}}\sum_{t=1}^{\infty }\sum_{\left( F,A,S,K\right) \in
\Omega _{t}^{4}}2^{d\left( t\right) \eta \frac{p}{2}}c_{F,A,S,K}^{p}\left%
\vert \mathsf{P}_{K}^{\Lambda _{g_{A}}}Z\left( x\right) \right\vert ^{p}\ 
\mathbf{1}_{K}\left( x\right) d\omega \left( x\right) \\
&=&\sum_{t=1}^{\infty }\sum_{\left( F,A,S,K\right) \in \Omega
_{t}^{4}}2^{d\left( t\right) \eta \frac{p}{2}}c_{F,A,S,K}^{p}\int_{\mathbb{R}%
}\left\vert \mathsf{P}_{K}^{\Lambda _{g_{A}}}Z\left( x\right) \right\vert
^{p}d\omega \left( x\right) \\
&\leq &\sum_{t=1}^{\infty }\sum_{\left( F,A,S,K\right) \in \Omega
_{t}^{4}}2^{d\left( t\right) \eta \frac{p}{2}}c_{F,A,S,K}^{p}\left(
C_{p}\theta \right) ^{n}2^{-d\left( t\right) \delta }\int_{\mathbb{R}%
}\left\vert \mathsf{P}_{K}^{\Lambda _{g_{F}}}Z\left( x\right) \right\vert
^{p}d\omega \left( x\right) \\
&=&\left( C_{p}\theta \right) ^{n}\sum_{t=1}^{\infty }2^{d\left( t\right)
\left( \eta \frac{p}{2}-\delta \right) }\sum_{\left( F,A,S,K\right) \in
\Omega _{t}^{4}}c_{F,A,S,K}^{p}\int_{\mathbb{R}}\left\vert \mathsf{P}%
_{K}^{\Lambda _{g_{F}}}Z\left( x\right) \right\vert ^{p}d\omega \left(
x\right) ,
\end{eqnarray*}%
which gives,%
\begin{eqnarray*}
&&\int_{\mathbb{R}}\left( \sum_{\left( F,A,S,K\right) \in \Omega
^{4}}c_{F,A,S,K}^{2}\left\vert \mathsf{P}_{K}^{\Lambda _{g_{A}}}Z\left(
x\right) \right\vert ^{2}\ \mathbf{1}_{K}\left( x\right) \right) ^{\frac{p}{2%
}}d\omega \left( x\right) \\
&\leq &\left[ \sum_{t=1}^{\infty }2^{-d\left( t\right) \eta \frac{p}{p-2}}%
\right] ^{\frac{p}{2}-1}\int_{\mathbb{R}}\sum_{t=1}^{\infty }\sum_{\left(
F,A,S,K\right) \in \Omega _{t}^{4}}2^{d\left( t\right) \eta \frac{p}{2}%
}c_{F,A,S,K}^{p}\left\vert \mathsf{P}_{K}^{\Lambda _{g_{A}}}Z\left( x\right)
\right\vert ^{p}\ \mathbf{1}_{K}\left( x\right) d\omega \left( x\right) \\
&\leq &\left( C_{p}\theta \right) ^{n}\left[ \sum_{t=1}^{\infty }2^{-d\left(
t\right) \eta \frac{p}{p-2}}\right] ^{\frac{p}{2}-1}\sum_{t=1}^{\infty
}2^{d\left( t\right) \left( \eta \frac{p}{2}-\delta \right) }\sum_{\left(
F,A,S,K\right) \in \Omega _{t}^{4}}c_{F,A,S,K}^{p}\int_{\mathbb{R}%
}\left\vert \mathsf{P}_{K}^{\Lambda _{g_{F}}}Z\left( x\right) \right\vert
^{p}d\omega \left( x\right) ,
\end{eqnarray*}%
where%
\begin{equation*}
\left[ \sum_{t=1}^{\infty }2^{-d\left( t\right) \eta \frac{p}{p-2}}\right] ^{%
\frac{p}{2}-1}\approx \left( \frac{C_{p}}{\eta }\right) ^{n\left( \frac{p}{2}%
-1\right) },
\end{equation*}%
and%
\begin{eqnarray*}
&&\sum_{t=1}^{\infty }2^{d\left( t\right) \left( \eta \frac{p}{2}-\delta
\right) }\sum_{\left( F,A,S,K\right) \in \Omega ^{4}:K\sim
t}c_{F,A,S,K}^{p}\int_{\mathbb{R}}\left\vert \mathsf{P}_{K}^{\Lambda
_{g_{F}}}Z\left( x\right) \right\vert ^{p}d\omega \left( x\right) \\
&\leq &\sum_{t=1}^{\infty }\sum_{\left( F,A,S,K\right) \in \Omega ^{4}:K\sim
t}c_{F,A,S,K}^{p}\int_{\mathbb{R}}\left\vert \mathsf{P}_{K}^{\Lambda
_{g_{F}}}Z\left( x\right) \right\vert ^{p}d\omega \left( x\right) ,
\end{eqnarray*}%
provided $\eta <\frac{2\delta }{p}$ (note that we are only using $2^{d\left(
t\right) \left( \eta \frac{p}{2}-\delta \right) }\leq 1$ here).

Indeed, we can bound the sum of the decay factors $2^{-d\left( t\right)
\delta }=\Psi _{Q^{\prime }}=2^{-\sum_{k=1}^{n-1}\limfunc{dist}\left(
Q_{k}^{\prime },Q_{k-1}^{\prime }\right) \delta }$ by setting $j_{k}=%
\limfunc{dist}\left( Q_{k}^{\prime },Q_{k-1}^{\prime }\right) $ and
computing, 
\begin{equation}
\sum_{t=1}^{\infty }2^{-d\left( t\right) \beta }=\sum_{t=1}^{\infty
}2^{-\sum_{k=1}^{n}d_{k}\left( t\right) \beta }\leq
\prod_{k=1}^{n}\sum_{j_{k}=0}^{\infty }2^{-j_{k}\beta }=\prod_{k=1}^{n}\frac{%
1}{1-2^{-\beta }}\sim \left( \frac{1}{\beta }\right) ^{n}.
\label{towers above}
\end{equation}%
So with for example $\eta =\frac{\delta }{p}<\frac{2\delta }{p}$, we get
altogether that,

\begin{eqnarray*}
&&\int_{\mathbb{R}}\left( \sum_{\left( F,A,S,K\right) \in \Omega
^{4}}c_{F,A,S,K}^{2}\left\vert \mathsf{P}_{K}^{\Lambda _{g_{A}}}Z\left(
x\right) \right\vert ^{2}\ \mathbf{1}_{K}\left( x\right) \right) ^{\frac{p}{2%
}}d\omega \left( x\right) \\
&\lesssim &\left( C_{p}\theta \right) ^{n}\left( \frac{C_{p}}{\delta }%
\right) ^{n\left( \frac{p}{2}-1\right) }\sum_{t=1}^{\infty }\sum_{\left(
F,A,S,K\right) \in \Omega _{t}^{4}}c_{F,A,S,K}^{p}\int_{\mathbb{R}%
}\left\vert \mathsf{P}_{K}^{\Lambda _{g_{F}}}Z\left( x\right) \right\vert
^{p}d\omega \left( x\right) \\
&\approx &\left( C_{p}\theta \right) ^{n}\left( \frac{C_{p}}{\theta }\right)
^{n\left( \frac{p}{2}-1\right) }\sum_{\left( F,A,S,K\right) \in \Omega
^{4}}c_{F,A,S,K}^{p}\int_{\mathbb{R}}\left\vert \mathsf{P}_{K}^{\Lambda
_{g_{F}}}Z\left( x\right) \right\vert ^{p}d\omega \left( x\right) \\
&=&\left( C_{p}\theta \right) ^{n\left( 2-\frac{p}{2}\right) }\sum_{\left(
F,A,S,K\right) \in \Omega ^{4}}c_{F,A,S,K}^{p}\int_{\mathbb{R}}\left\vert 
\mathsf{P}_{K}^{\Lambda _{g_{F}}}Z\left( x\right) \right\vert ^{p}d\omega
\left( x\right) \ .
\end{eqnarray*}%
Thus we are left to bound the term,%
\begin{eqnarray*}
&&\left( C_{p}\theta \right) ^{n\left( 2-\frac{p}{2}\right) }\sum_{\left(
F,A,S,K\right) \in \Omega ^{4}}c_{F,A,S,K}^{p}\int_{\mathbb{R}}\left\vert 
\mathsf{P}_{K}^{\Lambda _{g_{F}}}Z\left( x\right) \right\vert ^{p}d\omega
\left( x\right) \\
&=&\left( C_{p}\theta \right) ^{n\left( 2-\frac{p}{2}\right) }\sum_{\left(
F,A,S,K\right) \in \Omega ^{4}}\alpha _{A}\left( S\right) ^{p}\left( \frac{%
\mathrm{P}\left( K,\mathbf{1}_{F\setminus S}\sigma \right) }{\ell \left(
K\right) }\right) ^{p}\int_{\mathbb{R}}\left\vert \mathsf{P}_{K}^{\Lambda
_{g_{F}}}Z\left( x\right) \right\vert ^{p}d\omega \left( x\right) \\
&=&\left( C_{p}\theta \right) ^{n\left( 2-\frac{p}{2}\right) }\int_{\mathbb{R%
}}\sum_{F\in \mathcal{F}}\sum_{Q_{n}\in \mathcal{S}^{\left( n\right) }\left[
F\right] }\sum_{\left( S,K\right) \in \Omega \left[ Q_{n}\right] }\alpha
_{Q_{n}}\left( S\right) ^{p}\left( \frac{\mathrm{P}\left( K,\mathbf{1}%
_{F\setminus S}\sigma \right) }{\ell \left( K\right) }\right) ^{p}\left\vert 
\mathsf{P}_{S;K}^{\omega ,\mathcal{C}_{\mathcal{F}}\left( F\right) }Z\left(
x\right) \right\vert ^{p}d\omega \left( x\right) .
\end{eqnarray*}

Using 
\begin{eqnarray}
&&\int_{\mathbb{R}}\left\vert \mathsf{P}_{S;K}^{\omega ,\mathcal{C}_{%
\mathcal{F}}\left( F\right) }Z\left( x\right) \right\vert ^{p}d\omega \left(
x\right) \approx \int_{\mathbb{R}}\left( \sum_{J\ \func{appears}\ \limfunc{in%
}\ \mathsf{P}_{S;K}^{\omega ,\mathcal{C}_{\mathcal{F}}\left( F\right)
}}\left\vert \bigtriangleup _{J}^{\omega }Z\left( x\right) \right\vert
^{2}\right) ^{\frac{p}{2}}d\omega \left( x\right)  \label{from 1} \\
&\leq &\left\vert K\right\vert _{\omega }\left( \frac{1}{\left\vert
K\right\vert _{\omega }}\int_{K}\left( \sum_{J^{\prime }\subset K}\left\vert
\bigtriangleup _{J^{\prime }}^{\omega }Z\left( x\right) \right\vert
^{2}\right) ^{\frac{p}{2}}d\omega \left( x\right) \right) =\left\vert
K\right\vert _{\omega }\mathsf{E}_{p}\left( K,\omega \right) ^{p}  \notag
\end{eqnarray}%
together with (\ref{PE char}), 
\begin{equation}
\sup_{K\in \mathcal{C}_{\mathcal{F}}\left( F\right) \cap \mathcal{D}_{\func{%
good}}^{\limfunc{child}}}\left( \frac{\mathrm{P}\left( K,\mathbf{1}%
_{F\setminus K}\sigma \right) }{\ell \left( K\right) }\right) ^{p}\mathsf{E}%
_{p}\left( K,\omega \right) ^{p}\frac{\left\vert K\right\vert _{\omega }}{%
\left\vert K\right\vert _{\sigma }}\leq \mathfrak{X}_{F;p}\left( \sigma
,\omega \right) ^{p},  \label{from 2}
\end{equation}%
and the boundedness of $M_{\sigma }^{\func{dy}}$,\ and the square function
estimate in Theorem \ref{square thm}, we can\ finally bound the above
integral by%
\begin{eqnarray*}
&&\left( C_{p}\theta \right) ^{n\left( 2-\frac{p}{2}\right) }\mathfrak{X}_{%
\mathcal{F};p}\left( \sigma ,\omega \right) ^{p}\sum_{F\in \mathcal{F}%
}\sum_{Q_{n}\in \mathcal{S}^{\left( n\right) }\left[ F\right] }\sum_{\left(
S,K\right) \in \Omega \left[ Q_{n}\right] }\alpha _{Q_{n}}\left( S\right)
^{p}\left\vert K\right\vert _{\sigma } \\
&\leq &\left( C_{p}\theta \right) ^{n\left( 2-\frac{p}{2}\right) }\mathfrak{X%
}_{\mathcal{F};p}\left( \sigma ,\omega \right) ^{p}\int_{\mathbb{R}%
}\sum_{F\in \mathcal{F}}\sum_{Q_{n}\in \mathcal{S}^{\left( n\right) }\left[ F%
\right] }\sum_{S\in \mathfrak{C}_{\mathcal{S}^{\left( n\right) }}\left[ Q_{n}%
\right] }\left\vert M_{\sigma }\left( \mathsf{P}_{\mathcal{C}_{\mathcal{S}%
^{\left( n\right) }}\left( Q_{n}\right) }^{\sigma }f\right) \left( x\right)
\right\vert ^{p}d\sigma \left( x\right) \\
&\lesssim &\left( C_{p}\theta \right) ^{n\left( 2-\frac{p}{2}\right) }%
\mathfrak{X}_{\mathcal{F};p}\left( \sigma ,\omega \right) ^{p}\int_{\mathbb{R%
}}\sum_{\left( F,Q_{n}\right) \in \mathcal{F}\times \mathcal{S}^{\left(
n\right) }\left[ F\right] }\left\vert \mathsf{P}_{\mathcal{C}_{\mathcal{S}%
^{\left( n\right) }}\left( Q_{n}\right) }^{\sigma }f\left( x\right)
\right\vert ^{p}d\sigma \left( x\right) \\
&\lesssim &\left( C_{p}\theta \right) ^{n\left( 2-\frac{p}{2}\right) }%
\mathfrak{X}_{\mathcal{F};p}\left( \sigma ,\omega \right) ^{p}\int_{\mathbb{R%
}}\left( \sum_{\left( F,Q_{n}\right) \in \mathcal{F}\times \mathcal{S}%
^{\left( n\right) }\left[ F\right] }\left\vert \mathsf{P}_{\mathcal{C}_{%
\mathcal{S}^{\left( n\right) }}\left( Q_{n}\right) }^{\sigma }f\left(
x\right) \right\vert ^{2}\right) ^{\frac{p}{2}}d\sigma \left( x\right) \ \ \
\ \ \text{(since }p>2\text{)} \\
&\lesssim &\left( C_{p}\theta \right) ^{n\left( 2-\frac{p}{2}\right) }%
\mathfrak{X}_{\mathcal{F};p}\left( \sigma ,\omega \right) ^{p}\left\Vert
f\right\Vert _{L^{p}\left( \sigma \right) }^{p}.
\end{eqnarray*}

\textbf{The case }$1<p\leq 2$. In this case $\theta ^{\natural }=\theta ^{%
\frac{p}{2}}$. We have, recalling that $\Omega ^{4}\equiv \mathcal{F}\times 
\mathcal{S}^{\left( n\right) }\left[ F\right] \times \mathfrak{C}_{\mathcal{S%
}^{\left( n\right) }}\left( Q_{n}\right) \times \mathcal{W}_{\func{good}%
,\tau }\left( S\right) $,

\begin{eqnarray*}
&&\left\Vert h\right\Vert _{L^{p}\left( \omega \right) }^{p}=\left\Vert
\left\{ \Psi _{Q_{n}}\alpha _{Q_{n}}\left( S\right) \left( \frac{\mathrm{P}%
\left( K,\mathbf{1}_{F\setminus S}\sigma \right) }{\ell \left( K\right) }%
\right) \mathsf{P}_{S;K}^{\omega ,\mathcal{C}_{\mathcal{S}^{\left( n\right)
}}\left( Q_{n}\right) }Z\left( x\right) \right\} _{\left( F,Q_{n},S,K\right)
\in \Omega ^{4}}\right\Vert _{L^{p}\left( \ell ^{2};\omega \right) }^{p} \\
&=&\int_{\mathbb{R}}\left( \sum_{\left( F,Q_{n},S,K\right) \in \Omega
^{4}}\left\vert \Psi _{Q_{n}}\alpha _{Q_{n}}\left( S\right) \left( \frac{%
\mathrm{P}\left( K,\mathbf{1}_{F\setminus S}\sigma \right) }{\ell \left(
K\right) }\right) \mathsf{P}_{S;K}^{\omega ,\mathcal{C}_{\mathcal{S}^{\left(
n\right) }}\left( Q_{n}\right) }Z\left( x\right) \right\vert ^{2}\right) ^{%
\frac{p}{2}}d\omega \left( x\right) ,
\end{eqnarray*}%
which using $1<p\leq 2$, is at most 
\begin{eqnarray*}
&&\int_{\mathbb{R}}\sum_{\left( F,Q_{n},S,K\right) \in \Omega
^{4}}\left\vert \Psi _{Q_{n}}\alpha _{Q_{n}}\left( S\right) \left( \frac{%
\mathrm{P}\left( K,\mathbf{1}_{F\setminus S}\sigma \right) }{\ell \left(
K\right) }\right) \mathsf{P}_{S;K}^{\omega ,\mathcal{C}_{\mathcal{S}^{\left(
n\right) }}\left( Q_{n}\right) }Z\left( x\right) \right\vert ^{p}d\omega
\left( x\right) \\
&\leq &\left( C_{p}\theta ^{\frac{p}{2}}\right) ^{n}\int_{\mathbb{R}%
}\sum_{\left( F,Q_{n},S,K\right) \in \Omega ^{4}}\Psi _{Q_{n}}^{p}\alpha
_{Q_{n}}\left( S\right) ^{p}\left( \frac{\mathrm{P}\left( K,\mathbf{1}%
_{F\setminus S}\sigma \right) }{\ell \left( K\right) }\right) ^{p}\left\vert 
\mathsf{P}_{S;K}^{\omega ,\mathcal{C}_{\mathcal{F}}\left( F\right) }Z\left(
x\right) \right\vert ^{p}d\omega \left( x\right) ,
\end{eqnarray*}%
where the sequence $\left\{ Q_{k}\right\} _{k=1}^{n-1}$ is the tower
associated with $Q_{n-1}$. Now using (\ref{from 1}) and (\ref{from 2}), we
bound the above integral by%
\begin{eqnarray*}
&&\left( C_{p}\theta ^{\frac{p}{2}}\right) ^{n}\mathfrak{X}_{\mathcal{F}%
;p}\left( \sigma ,\omega \right) ^{p}\sum_{\left( F,Q_{n},S,K\right) \in
\Omega ^{4}}\Psi _{Q_{n}}^{p}\alpha _{Q_{n}}\left( S\right) ^{p}\left\vert
K\right\vert _{\sigma } \\
&\leq &\left( C_{p}\theta ^{\frac{p}{2}}\right) ^{n}\mathfrak{X}_{\mathcal{F}%
;p}\left( \sigma ,\omega \right) ^{p}\int_{\mathbb{R}}\sum_{\left(
F,Q_{n},S,K\right) \in \Omega ^{4}}\Psi _{Q_{n}}^{p}\left\vert M_{\sigma
}\left( \mathsf{P}_{\mathcal{C}_{\mathcal{S}^{\left( n\right) }}\left(
Q_{n}\right) }^{\sigma }f\right) \left( x\right) \right\vert ^{p}d\sigma
\left( x\right) \\
&\lesssim &\left( C_{p}\theta ^{\frac{p}{2}}\right) ^{n}\mathfrak{X}_{%
\mathcal{F};p}\left( \sigma ,\omega \right) ^{p}\int_{\mathbb{R}%
}\sum_{\left( F,Q_{n},S,K\right) \in \Omega ^{4}}\Psi _{Q_{n}}^{p}\left\vert 
\mathsf{P}_{\mathcal{C}_{\mathcal{S}^{\left( n\right) }}\left( Q_{n}\right)
}^{\sigma }f\left( x\right) \right\vert ^{p}d\sigma \left( x\right) ,
\end{eqnarray*}%
where the last line follows from the boundedness of $M_{\sigma }^{\func{dy}}$
on $L^{p}\left( \sigma \right) $.

Again, we will exploit the decay of the factors $\Psi _{Q_{n}}$, as well as
the orthogonality hiding in the corona projections $\mathsf{P}_{\mathcal{C}_{%
\mathcal{S}^{\left( n\right) }}\left( Q_{n}\right) }^{\sigma }f$. We
successively apply the following inequalities for $1\leq k\leq n$,%
\begin{eqnarray*}
&&\int_{\mathbb{R}}\sum_{\left( Q_{k-1},Q_{k}\right) \in \mathcal{S}^{\left(
k-1\right) }\times \mathcal{S}^{\left( k\right) }\left[ Q_{k-1}\right] }2^{-p%
\limfunc{dist}\left( \left( Q_{k},Q_{k-1}\right) \right) \delta }\left\vert 
\mathsf{P}_{\mathcal{C}_{\mathcal{S}^{\left( k\right) }}\left( Q_{k}\right)
}^{\sigma }f\left( x\right) \right\vert ^{p}d\sigma \left( x\right) \\
&\leq &\int_{\mathbb{R}}\left( \sum_{\left( Q_{k-1},Q_{k}\right) \in 
\mathcal{S}^{\left( k-1\right) }\times \mathcal{S}^{\left( k\right) }\left[
Q_{k-1}\right] }\left( 2^{-p\limfunc{dist}\left( \left( Q_{k},Q_{k-1}\right)
\right) \delta }\right) ^{\frac{2}{2-p}}\right) ^{\frac{2-p}{2}} \\
&&\ \ \ \ \ \ \ \ \ \ \ \ \ \ \ \ \ \ \ \ \times \left( \sum_{\left(
Q_{k-1},Q_{k}\right) \in \mathcal{S}^{\left( k-1\right) }\times \mathcal{S}%
^{\left( k\right) }\left[ Q_{k-1}\right] }\left\vert \mathsf{P}_{\mathcal{C}%
_{\mathcal{S}^{\left( k\right) }}\left( Q_{k}\right) }^{\sigma }f\left(
x\right) \right\vert ^{2}\right) ^{\frac{p}{2}}d\sigma \left( x\right) \\
&\leq &N^{\frac{2-p}{2}}\int_{\mathbb{R}}\sum_{\left( Q_{k-1}\right) \in 
\mathcal{S}^{\left( k-1\right) }}\left\vert \mathsf{P}_{\mathcal{C}_{%
\mathcal{S}^{\left( k-1\right) }}\left( Q_{k-1}\right) }^{\sigma }f\left(
x\right) \right\vert ^{p}d\sigma \left( x\right) ,
\end{eqnarray*}%
to obtain%
\begin{eqnarray*}
&&\int_{\mathbb{R}}\sum_{\left( F,Q_{n},S,K\right) \in \Omega ^{4}}\Psi
_{Q_{n}}^{p}\left\vert \mathsf{P}_{\mathcal{C}_{\mathcal{S}^{\left( n\right)
}}\left( Q_{n}\right) }^{\sigma }f\left( x\right) \right\vert ^{p}d\sigma
\left( x\right) \\
&\lesssim &N^{\frac{2-p}{2}n}\int_{\mathbb{R}}\sum_{Q_{0}\in \mathcal{S}%
^{\left( 0\right) }}\left\vert \mathsf{P}_{\mathcal{C}_{\mathcal{S}^{\left(
0\right) }}\left( Q_{0}\right) }^{\sigma }f\left( x\right) \right\vert
^{p}d\sigma \left( x\right) =N^{\frac{2-p}{2}n}\sum_{F\in \mathcal{F}}\int_{%
\mathbb{R}}\left\vert \mathsf{P}_{\mathcal{C}_{\mathcal{F}}\left( F\right)
}^{\sigma }f\left( x\right) \right\vert ^{p}d\sigma \left( x\right) \\
&\lesssim &N^{\frac{2-p}{2}n}\sum_{F\in \mathcal{F}}\left\{ \left( \frac{1}{%
\left\vert F\right\vert _{\sigma }}\int_{F}\left\vert f\right\vert d\sigma
\right) ^{p}\left\vert F\right\vert _{\sigma }+\sum_{F^{\prime }\in 
\mathfrak{C}_{\mathcal{F}}\left( F\right) }\left( \frac{1}{\left\vert
F^{\prime }\right\vert _{\sigma }}\int_{F^{\prime }}\left\vert f\right\vert
d\sigma \right) ^{p}\left\vert F^{\prime }\right\vert _{\sigma }\right\}
\lesssim N^{\frac{2-p}{2}n}\left\Vert f\right\Vert _{L^{p}\left( \sigma
\right) }^{p},
\end{eqnarray*}%
where the last line follows from (\ref{en con}), the pointwise inequality, 
\begin{equation*}
\left\vert \mathsf{P}_{\mathcal{C}_{\mathcal{F}}\left( F\right) }^{\sigma
}f\left( x\right) \right\vert \lesssim \sup_{I\in \mathcal{C}_{\mathcal{F}%
}\left( F\right) }\left( E_{I}^{\sigma }\left\vert f\right\vert \right) 
\mathbf{1}_{F}\left( x\right) +\sum_{F^{\prime }\in \mathfrak{C}_{\mathcal{F}%
}\left( F\right) }\left( E_{F^{\prime }}^{\sigma }\left\vert f\right\vert
\right) \mathbf{1}_{F^{\prime }}\left( x\right) \lesssim \left(
E_{F}^{\sigma }\left\vert f\right\vert \right) \mathbf{1}_{F}\left( x\right)
+\sum_{F^{\prime }\in \mathfrak{C}_{\mathcal{F}}\left( F\right) }\left(
E_{F^{\prime }}^{\sigma }\left\vert f\right\vert \right) \mathbf{1}%
_{F^{\prime }}\left( x\right) ,
\end{equation*}%
the quasiorthogonality inequality (3) in the subsection on Carleson measures
above, and the $\sigma $-Carleson property of $\mathcal{F}$. This completes
the proof of the case $1<p\leq 2$ with the estimate%
\begin{eqnarray*}
&&\left\Vert \left\vert f\right\vert _{\mathcal{F},\mathcal{S}^{\left(
n-1\right) },\mathcal{S}^{\left( n\right) }}^{\left\{ \mathcal{C}_{\mathcal{S%
}^{\left( n\right) }}^{\left( s\right) }\left( A\right) \cap \Lambda
_{g_{A}}^{\omega }\right\} _{A\in \mathcal{S}^{\left( n\right)
}}}\right\Vert _{L^{p}\left( \omega \right) }\lesssim \left( C_{p}\theta ^{%
\frac{p}{2}}\right) ^{\frac{n}{p}}N^{n\frac{2-p}{2p}}\mathfrak{X}_{\mathcal{F%
};p}\left( \sigma ,\omega \right) \left\Vert f\right\Vert _{L^{p}\left(
\sigma \right) } \\
&\approx &C_{p}^{\frac{n}{p}}\theta ^{\frac{n}{2}}\left( \frac{C}{\theta }%
\right) ^{\frac{n}{p}-\frac{n}{2}}\mathfrak{X}_{\mathcal{F};p}\left( \sigma
,\omega \right) \left\Vert f\right\Vert _{L^{p}\left( \sigma \right)
}=\left( \left( C_{p}C^{1-\frac{p}{2}}\right) ^{\frac{p^{\prime }}{p}}\theta
\right) ^{\frac{n}{p^{\prime }}}\mathfrak{X}_{\mathcal{F};p}\left( \sigma
,\omega \right) \left\Vert f\right\Vert _{L^{p}\left( \sigma \right) }.
\end{eqnarray*}%
Now Lemma \ref{energy cond} yields, 
\begin{equation*}
\mathfrak{X}_{\mathcal{F};p}\left( \sigma ,\omega \right) \lesssim \mathcal{E%
}_{p}\left( \sigma ,\omega \right) \lesssim \mathfrak{T}_{H,p}^{\func{loc}%
}\left( \sigma ,\omega \right) ,
\end{equation*}%
which completes the proof of Lemma \ref{final level}.
\end{proof}

This finishes our control of the stopping form $\mathsf{B}_{\limfunc{stop}%
}\left( f,g\right) $\ in (\ref{stop est}) for $1<p<4$. The dual stopping
form requires $1<p^{\prime }<4$.

Now we turn to the final ingredient in the proof Theorems \ref{main glob}
and \ref{main}, which is the control of the extended energy characteristic
by the norm characteristic.

\section{Control of extended energy \label{ex en section}}

Fix $p\in \left( \frac{4}{3},2\right) \cup \left( 2,4\right) $ and consider
the two `dual' intertwining forms arising in the Intertwining Proposition, 
\begin{eqnarray*}
\mathsf{B}_{\limfunc{lower}}^{\limfunc{inter}}\left( f,g\right) &\equiv
&\sum_{F\in \mathcal{F}}\ \sum_{I:\ I\supsetneqq F}\ \left\langle H_{\sigma
}\left( \mathbf{1}_{I_{F}}\bigtriangleup _{I}^{\sigma }f\right) ,\mathsf{P}_{%
\mathcal{C}_{\mathcal{F}}\left( F\right) }^{\omega }g\right\rangle _{\omega
}\ , \\
\mathsf{B}_{\limfunc{upper}}^{\limfunc{inter}}\left( f,g\right) &\equiv
&\sum_{G\in \mathcal{G}}\ \sum_{J:\ J\supsetneqq G}\ \left\langle H_{\omega
}\left( \mathbf{1}_{J_{G}}\bigtriangleup _{J}^{\omega }g\right) ,\mathsf{P}_{%
\mathcal{C}_{\mathcal{G}}\left( G\right) }^{\sigma }f\right\rangle _{\sigma
}\ ,
\end{eqnarray*}%
With 
\begin{eqnarray*}
g_{F} &=&\mathsf{P}_{\mathcal{C}_{\mathcal{F}}\left( F\right) }^{\omega
}g=\sum_{J\in \mathcal{C}_{\mathcal{F}}\left( F\right) }\bigtriangleup
_{J}^{\omega }g\text{ }, \\
f_{F} &\equiv &\sum_{I:\ I\supsetneqq F}\mathbf{1}_{I_{F}}\bigtriangleup
_{I}^{\sigma }f=\sum_{I:\ I\supsetneqq F}\mathbf{1}_{I_{F}}\left\langle
f,h_{I}\right\rangle h_{I}=\sum_{I:\ I\supsetneqq F}\mathbb{E}_{\theta
I_{F}}f\ ,
\end{eqnarray*}%
the proof of the Intertwining Proposition shows that $\mathsf{B}_{\limfunc{%
lower}}^{\limfunc{inter}}\left( f,g\right) $ satisfies%
\begin{eqnarray}
&&\mathsf{B}_{\limfunc{lower}}^{\limfunc{inter}}\left( f,g\right)
=\sum_{F\in \mathcal{F}}\ \sum_{I:\ I\supsetneqq F}\ \left\langle H_{\sigma
}\left( \mathbf{1}_{I_{F}}\bigtriangleup _{I}^{\sigma }f\right)
,g_{F}\right\rangle _{\omega }  \label{proof shows} \\
&=&\sum_{F\in \mathcal{F}}\ \left\langle H_{\sigma }\left( \sum_{I:\
I\supsetneqq F}\mathbf{1}_{I_{F}}\bigtriangleup _{I}^{\sigma }f\right) ,%
\mathsf{P}_{\mathcal{C}_{\mathcal{F}}\left( F\right) }^{\omega
}g\right\rangle _{\omega }\equiv \sum_{F\in \mathcal{F}}\ \left\langle
H_{\sigma }f_{F},\mathsf{P}_{\mathcal{C}_{\mathcal{F}}\left( F\right)
}^{\omega }g\right\rangle _{\omega }  \notag \\
&=&\sum_{F\in \mathcal{F}}\ \left\langle H_{\sigma }\left( \mathbf{1}%
_{F}f_{F}\right) ,\mathsf{P}_{\mathcal{C}_{\mathcal{F}}\left( F\right)
}^{\omega }g\right\rangle _{\omega }+\sum_{F\in \mathcal{F}}\ \left\langle
H_{\sigma }\left( \mathbf{1}_{F^{c}}f_{F}\right) ,\mathsf{P}_{\mathcal{C}_{%
\mathcal{F}}\left( F\right) }^{\omega }g\right\rangle _{\omega }\equiv 
\mathsf{I}\left( f,g\right) +\mathsf{B}_{\limfunc{lower}}^{\limfunc{hole}%
}\left( f,g\right) \ ,  \notag
\end{eqnarray}%
where the form $\mathsf{I}\left( f,g\right) $ is controlled in (\ref{I})
above by local quadratric testing,%
\begin{equation*}
\left\vert \mathsf{I}\left( f,g\right) \right\vert =\left\vert \sum_{F\in 
\mathcal{F}}\ \left\langle H_{\sigma }\left( \mathbf{1}_{F}f_{F}\right) ,%
\mathsf{P}_{\mathcal{C}_{\mathcal{F}}\left( F\right) }^{\omega
}g\right\rangle _{\omega }\right\vert \lesssim \mathfrak{T}_{H,p}^{\ell ^{2},%
\limfunc{loc}}\left( \sigma ,\omega \right) \left\Vert f\right\Vert
_{L^{p}\left( \sigma \right) }\left\Vert g\right\Vert _{L^{p^{\prime
}}\left( \omega \right) }.
\end{equation*}

Let $\mathfrak{U}_{H,p,\limfunc{lower}}^{\ell ^{2},\limfunc{hole}}\left(
\sigma ,\omega \right) $ denote the best constant in the bilinear inequality
for $\mathsf{B}_{\limfunc{lower}}^{\limfunc{hole}}\left( f,g\right) $, i.e. 
\begin{equation}
\left\vert \mathsf{B}_{\limfunc{lower}}^{\limfunc{hole}}\left( f,g\right)
\right\vert =\left\vert \sum_{F\in \mathcal{F}}\ \left\langle H_{\sigma
}\left( \mathbf{1}_{F^{c}}f_{F}\right) ,\mathsf{P}_{\mathcal{C}_{\mathcal{F}%
}\left( F\right) }^{\omega }g\right\rangle _{\omega }\right\vert \lesssim 
\mathfrak{U}_{H,p,\limfunc{lower}}^{\ell ^{2},\limfunc{hole}}\left( \sigma
,\omega \right) \left\Vert f\right\Vert _{L^{p}\left( \sigma \right)
}\left\Vert g\right\Vert _{L^{p^{\prime }}\left( \omega \right) },
\label{hole ineq}
\end{equation}%
and similarly define the `dual' characteristic $\mathfrak{U}_{H,p^{\prime },%
\limfunc{upper}}^{\ell ^{2},\limfunc{hole}}\left( \sigma ,\omega \right) $
as the bound for the bilinear form $\mathsf{B}_{\limfunc{upper}}^{\limfunc{%
hole}}\left( f,g\right) $.

\begin{lemma}
\label{hole ext}For $1<p<\infty $ we have%
\begin{equation*}
\mathfrak{U}_{H,p,\limfunc{lower}}^{\ell ^{2},\limfunc{hole}}\left( \sigma
,\omega \right) \approx \mathcal{E}_{p}^{\ell ^{2},\limfunc{ext}}\left(
\sigma ,\omega \right) \text{ and }\mathfrak{U}_{H,p^{\prime },\limfunc{upper%
}}^{\ell ^{2},\limfunc{hole}}\left( \sigma ,\omega \right) \approx \mathcal{E%
}_{p^{\prime }}^{\ell ^{2},\limfunc{ext},\ast }\left( \omega ,\sigma \right)
.
\end{equation*}
\end{lemma}

\begin{proof}
Upon considering positive and negative parts of a function, we may assume
without loss of generality that both $f\geq 0$ and $g\geq 0$ when bounding\
the Hilbert transform bilinear form $\left\langle H_{\sigma
}f,g\right\rangle _{\omega }$ at the outset of the proof.

First note by duality that the bilinear inequality%
\begin{equation*}
\left\vert \mathsf{B}_{\limfunc{lower}}^{\limfunc{hole}}\left( f,g\right)
\right\vert =\left\vert \sum_{F\in \mathcal{F}}\left\langle \mathsf{P}_{%
\mathcal{C}_{\mathcal{F}}\left( F\right) }^{\omega }H_{\sigma }\left( 
\mathbf{1}_{F^{c}}f_{F}\right) ,g\right\rangle _{\omega }\right\vert
\lesssim \mathfrak{U}_{H,p,\limfunc{lower}}^{\ell ^{2},\limfunc{hole}}\left(
\sigma ,\omega \right) \left\Vert f\right\Vert _{L^{p}\left( \sigma \right)
}\left\Vert g\right\Vert _{L^{p^{\prime }}\left( \omega \right) }\ ,
\end{equation*}%
is equivalent to the norm inequality 
\begin{equation*}
\int_{\mathbb{R}}\left\vert \sum_{F\in \mathcal{F}}\mathsf{P}_{\mathcal{C}_{%
\mathcal{F}}\left( F\right) }^{\omega }H_{\sigma }\left( \mathbf{1}%
_{F^{c}}f_{F}\right) \right\vert ^{p}d\omega \lesssim \left( \mathfrak{U}%
_{H,p,\limfunc{lower}}^{\ell ^{2},\limfunc{hole}}\right) ^{p}\left( \sigma
,\omega \right) \left\Vert f\right\Vert _{L^{p}\left( \sigma \right) }^{p}\ .
\end{equation*}%
Now we exploit the fact that $\left\langle Z,h_{J}^{\omega }\right\rangle
_{\omega }\geq 0$, our assumption that $f\geq 0$, and the fact that the
support of the projection $\mathsf{P}_{\mathcal{C}_{\mathcal{F}}\left(
F\right) }^{\omega }=\sum_{I\in \mathcal{C}_{\mathcal{F}}\left( F\right)
}\bigtriangleup _{I}^{\omega }$ is \emph{separated} from the support of the
function $\mathbf{1}_{F^{c}}f_{F}$. By the Monotonicity Lemma,%
\begin{equation}
\left\langle H_{\sigma }\left( \mathbf{1}_{F^{c}}f_{F}\right) ,h_{J}^{\omega
}\right\rangle _{\omega }\approx \frac{\mathrm{P}\left( J,\mathbf{1}%
_{F^{c}}f_{F}\sigma \right) }{\ell \left( J\right) }\left\langle
Z,h_{J}^{\omega }\right\rangle _{\omega }\ ,  \label{ML}
\end{equation}%
since 
\begin{eqnarray*}
f_{F} &=&\sum_{m=0}^{N}\left( E_{F_{m+1}}^{\sigma }f\right) \ \mathbf{1}%
_{F_{m+1}\setminus F_{m}}=\left( E_{F}^{\sigma }f\right) \ \mathbf{1}%
_{F}+\sum_{m=0}^{N}\left( E_{\pi _{\mathcal{F}}^{m+1}F}^{\sigma }f\right) \ 
\mathbf{1}_{\pi _{\mathcal{F}}^{m+1}F\setminus \pi _{\mathcal{F}}^{m}F} \\
&=&\left( E_{F}^{\sigma }f\right) \ \mathbf{1}_{F}+\sum_{F^{\prime }\in 
\mathcal{F}:\ F\subset F^{\prime }}\left( E_{\pi _{\mathcal{F}}F^{\prime
}}^{\sigma }f\right) \ \mathbf{1}_{\pi _{\mathcal{F}}F^{\prime }\setminus
F^{\prime }}\ \geq 0.
\end{eqnarray*}

We conclude that $\mathfrak{U}_{H,p,\limfunc{lower}}^{\ell ^{2},\limfunc{hole%
}}\left( \sigma ,\omega \right) $ is comparable to the best constant, which
we continue to denote by $\mathfrak{U}_{H,p,\limfunc{lower}}^{\ell ^{2},%
\limfunc{hole}}\left( \sigma ,\omega \right) $, in the norm inequality,%
\begin{equation*}
\int_{\mathbb{R}}\left\vert \sum_{J\in \mathcal{C}_{\mathcal{F}}\left(
F\right) }\frac{\mathrm{P}\left( J,\sum_{F^{\prime }\in \mathcal{F}:\
F\subset F^{\prime }}\left( E_{\pi _{\mathcal{F}}F^{\prime }}^{\sigma
}f\right) \ \mathbf{1}_{\pi _{\mathcal{F}}F^{\prime }\setminus F^{\prime
}}\sigma \right) }{\ell \left( J\right) }\bigtriangleup _{J}^{\omega
}Z\left( x\right) \right\vert ^{p}d\omega \lesssim \left( \mathfrak{U}_{H,p,%
\limfunc{lower}}^{\ell ^{2},\limfunc{hole}}\left( \sigma ,\omega \right)
\right) ^{p}\left\Vert f\right\Vert _{L^{p}\left( \sigma \right) }^{p},\ \ \
\ \ \text{for }f\geq 0.
\end{equation*}%
By the square function theorem, $\mathfrak{U}_{H,p,\limfunc{lower}}^{\ell
^{2},\limfunc{hole}}$ is thus comparable to the best constant in the
inequality,%
\begin{eqnarray*}
&&\int_{\mathbb{R}}\left( \sum_{F\in \mathcal{F}}\sum_{J\in \mathcal{C}_{%
\mathcal{F}}\left( F\right) }\left( \frac{\mathrm{P}\left( J,\sum_{F^{\prime
}\in \mathcal{F}:\ F\subset F^{\prime }}\left( E_{\pi _{\mathcal{F}%
}F^{\prime }}^{\sigma }f\right) \ \mathbf{1}_{\pi _{\mathcal{F}}F^{\prime
}\setminus F^{\prime }}\sigma \right) }{\ell \left( J\right) }\right)
^{2}\left\vert \bigtriangleup _{J}^{\omega }Z\left( x\right) \right\vert
^{2}\right) ^{\frac{p}{2}}d\omega \\
&&\ \ \ \ \ \ \ \ \ \ \ \ \ \ \ \lesssim \left( \mathfrak{U}_{H,p,\limfunc{%
lower}}^{\ell ^{2},\limfunc{hole}}\left( \sigma ,\omega \right) \right)
^{p}\left\Vert f\right\Vert _{L^{p}\left( \sigma \right) }^{p},\ \ \ \ \ 
\text{for }f\geq 0,
\end{eqnarray*}%
equivalently,%
\begin{eqnarray*}
&&\int_{\mathbb{R}}\left( \sum_{F\in \mathcal{F}}\sum_{W\in \mathcal{M}_{%
\mathbf{r}-\limfunc{deep}}\left( F\right) }\left( \frac{\mathrm{P}\left(
W,\sum_{F^{\prime }\in \mathcal{F}:\ F\subset F^{\prime }}\left( E_{\pi _{%
\mathcal{F}}F^{\prime }}^{\sigma }f\right) \ \mathbf{1}_{\pi _{\mathcal{F}%
}F^{\prime }\setminus F^{\prime }}\sigma \right) }{\ell \left( W\right) }%
\right) ^{2}\left\vert \mathsf{P}_{\mathcal{C}_{\mathcal{F}}\left( F\right)
\cap \mathcal{D}\left[ W\right] }^{\omega }\right\vert Z\left( x\right)
^{2}\right) ^{\frac{p}{2}}d\omega \\
&&\ \ \ \ \ \ \ \ \ \ \ \ \ \ \ \ \ \ \ \ \lesssim \left( \mathfrak{U}_{H,p,%
\limfunc{lower}}^{\ell ^{2},\limfunc{hole}}\left( \sigma ,\omega \right)
\right) ^{p}\left\Vert f\right\Vert _{L^{p}\left( \sigma \right) }^{p},\ \ \
\ \ \text{for }f\geq 0,
\end{eqnarray*}%
where $\mathcal{M}_{\mathbf{r}-\limfunc{deep}}\left( F\right) $ consists of
the maximal $\mathbf{r}$-deeply embedded subcubes of $F$, and 
\begin{equation*}
\left\vert \mathsf{P}_{\mathcal{C}_{\mathcal{F}}\left( F\right) \cap 
\mathcal{D}\left[ W\right] }^{\omega }\right\vert Z\left( x\right) \equiv
\left( \sum_{J\in \mathcal{C}_{\mathcal{F}}\left( F\right) \cap \mathcal{D}%
\left[ W\right] }\left\vert \bigtriangleup _{J}^{\omega }Z\left( x\right)
\right\vert ^{2}\right) ^{\frac{1}{2}}.
\end{equation*}%
We can rewrite the sum $\sum_{F\in \mathcal{F}}\sum_{W\in \mathcal{M}_{%
\mathbf{r}-\limfunc{deep}}\left( F\right) }$ as $\sum_{W\in \mathbb{W}}$ due
to the fact that $W$ determines $F$ uniquely. If we write $\mathsf{P}%
_{W}^{\omega }=\mathsf{P}_{\mathcal{C}_{\mathcal{F}}\left( F\right) \cap 
\mathcal{D}\left[ W\right] }^{\omega }$ for $W\in \mathbb{W}$, then we have%
\begin{equation*}
\int_{\mathbb{R}}\left( \sum_{W\in \mathbb{W}}\left( \frac{\mathrm{P}\left(
W,\mathbf{1}_{F^{c}}f_{F}\sigma \right) }{\ell \left( W\right) }\right)
^{2}\left\vert \mathsf{P}_{\mathcal{C}_{\mathcal{F}}\left( F\right) \cap 
\mathcal{D}\left[ W\right] }^{\omega }\right\vert ^{2}Z\right) ^{\frac{p}{2}%
}d\omega \lesssim \left( \mathfrak{U}_{H,p,\limfunc{lower}}^{\ell ^{2},%
\limfunc{hole}}\left( \sigma ,\omega \right) \right) ^{p}\left\Vert
f\right\Vert _{L^{p}\left( \sigma \right) }^{p},\ \ \ \ \ \text{for }f\geq 0.
\end{equation*}

For $F\in \mathcal{F}$, where $\mathcal{F}$ is the collection of stopping
times associated with a function $f$, define $\Phi _{F}f$ by%
\begin{equation*}
\Phi _{F}f\equiv \mathbb{E}_{F}^{\sigma }f+f_{F}=\mathbb{E}_{F}^{\sigma
}f+\sum_{I:\ I\supsetneqq F}\mathbf{1}_{I_{F}}\bigtriangleup _{I}^{\sigma }f.
\end{equation*}%
Then the `plugged' norm%
\begin{equation*}
\int \left( \sum_{W\in \mathbb{W}}\left( \frac{\mathrm{P}\left( W,\mathbf{1}%
_{F}\Phi _{F}f\sigma \right) }{\ell \left( W\right) }\right) ^{2}\left\vert 
\mathsf{P}_{\mathcal{C}_{\mathcal{F}}\left( F\right) \cap \mathcal{D}\left[ W%
\right] }^{\omega }\right\vert ^{2}Z\right) ^{\frac{p}{2}}d\omega ,
\end{equation*}%
can be controlled using the fact that the collection $\mathbb{W}$ is $\sigma 
$-Carleson. Thus altogether we obtain that $\mathfrak{U}_{H,p,\limfunc{lower}%
}^{\ell ^{2},\limfunc{hole}}\left( \sigma ,\omega \right) $ is comparable to
the best constant $\mathcal{E}_{p}^{\ell ^{2},\limfunc{ext}}\left( \sigma
,\omega \right) $ in the extended energy inequality,%
\begin{equation*}
\int_{\mathbb{R}}\left( \sum_{W\in \mathbb{W}}\left( \frac{\mathrm{P}\left(
W,\Phi _{F}f\sigma \right) }{\ell \left( W\right) }\right) ^{2}\left\vert 
\mathsf{P}_{\mathcal{C}_{\mathcal{F}}\left( F\right) \cap \mathcal{D}\left[ W%
\right] }^{\omega }\right\vert ^{2}Z\right) ^{\frac{p}{2}}d\omega \lesssim
\left( \mathcal{E}_{p}^{\ell ^{2},\limfunc{ext}}\left( \sigma ,\omega
\right) \right) ^{p}\left\Vert f\right\Vert _{L^{p}\left( \sigma \right)
}^{p},\ \ \ \ \ \text{for }f\geq 0.
\end{equation*}%
This completes the proof of the lemma.
\end{proof}

Lemma \ref{hole ext}, together with the discussion at the beginning of this
section, and the earlier estimates on the other forms in this paper,
finishes the proofs of Theorems \ref{main glob}, \ref{main}, \ref{half} and %
\ref{necc func ener}.

\section{Concluding remarks on the stopping form}

The methods we used above for bounding the stopping form, actually yield the
following weaker form of (\ref{like}), with a smaller $\ell ^{q}$ norm
inside, 
\begin{eqnarray}
&&\left\Vert \left\vert \left\{ \alpha _{A}\left( S\right) \left( \frac{%
\mathrm{P}\left( K,\mathbf{1}_{F\setminus S}\sigma \right) }{\ell \left(
K\right) }\right) \mathsf{P}_{S;K}^{\omega ,\Lambda _{g_{A}}}Z\left(
x\right) \ \mathbf{1}_{K}\left( x\right) \right\} _{\left( F,A,S,K\right)
\in \Omega ^{4}}\right\vert _{\ell ^{q}}\right\Vert _{L^{p}\left( \omega
\right) }^{p}  \label{weaker} \\
&\lesssim &\left( C_{p}\theta \right) ^{n\left( 2-\frac{p}{q}\right) }%
\mathfrak{X}_{\mathcal{F};p}\left( \sigma ,\omega \right) ^{p}\left\Vert
f\right\Vert _{L^{p}\left( \sigma \right) }^{p}\ ,\ \ \ \ \ \text{for all }%
2\leq q\leq p<2q.  \notag
\end{eqnarray}%
This follows easily upon applying H\"{o}lder's inequality with exponent $%
\frac{p}{q}$ in (\ref{Holder}), and helps shed light on why our method fails
to prove the $L^{p}$ conjectures for all $1<p<\infty $. Indeed, experience
in this paper and in \cite{SaWi}, suggests that we must manipulate the left
hand side of (\ref{like}) or (\ref{weaker}) in the world of the measure $%
\omega $, to obtain some gain in $\theta $ before applying a characteristic,
such as $\mathfrak{X}_{\mathcal{F};p}\left( \sigma ,\omega \right) $, to
transfer analysis to the world of the measure $\sigma $. But this prevents
us from exploiting the `orthogonality' in the function $f$ that is hidden in
the coefficents $\alpha _{A}\left( S\right) $, while obtaining a gain in $%
\theta $, something that is avoided when $p=2$ using the Orthogonality
Argument of Lacey in \cite{Lac}, see the subsection on heuristics above.

As a consequence, we are forced to work in the world of the measure $\omega $
with `one hand tied behind our back', and perform sums in the factors $\Psi
_{A}$ which produce large negative powers of $\frac{1}{\theta }$, which
ultimately accounts for the restriction to $\frac{4}{3}<p<4$.


\begin{thebibliography}{AlLuSaUr 2}
\bibitem[AlSaUr]{AlSaUr} \textsc{M. Alexis, E. Sawyer and I. Uriarte-Tuero,}%
\textit{\ A }$T1$\textit{\ theorem for general Calder\'{o}n-Zygmund
operators with doubling weights, and optimal cancellation conditions, II},
to appear in Journal of Functional Analysis, \texttt{arXiv:2111.06277}.

\bibitem[AlLuSaUr 2]{AlLuSaUr 2} \textsc{M. Alexis, J.-L. Luna-Garcia E.
Sawyer and I. Uriarte-Tuero,}\textit{\ The }$T1$\textit{\ theorem for the
Hilbert transform fails when }$p\neq 2$, \texttt{arXiv:2301.10046}.

\bibitem[BoBo]{BoBo} \textsc{R. P. Boas and S. Bochner, }\textit{On a
theorem of M. Riesz for Fourier series}, (1939), 62-73.

\bibitem[Bur1]{Bur1} \textsc{D. L. Burkholder,} \textit{Martingale transforms%
}, Ann. Math. Statist., \textbf{37}:1494--1504, 1966.

\bibitem[Bur]{Bur} \textsc{Burkholder. D. L ,} \textit{Distribution function
inequalities for martingales}, Ann. of Probability, \textbf{1} (1973), 19-42.

\bibitem[Bur2]{Bur2} \textsc{D. L. Burkholder,} \textit{Boundary value
problems and sharp inequalities for martingale transforms}, Ann. Probab., 
\textbf{12 }(3):647--702, 1984.

\bibitem[CoSa]{CoSa} \textsc{M. Cotlar and C. Sadosky,} \textit{On some }$%
L^{p}$\textit{\ versions of the Helson-Szeg\"{o} theorem}, Conference on
harmonic analysis in honor of Antoni Zygmund, Vol. \textbf{1} (1983),
306-317.

\bibitem[CrMaPe]{CrMaPe} \textsc{David V. Cruz-Uribe, Jos\'{e} Maria Martell
and Carlos P\'{e}rez,} \textit{Weights, Extrapolation and the Theory of
Rubio de Francia}, Birkh\"{a}user, Springer, Basel AG, 2011.

\bibitem[DaJo]{DaJo} \textsc{David, Guy, Journ\'{e}, Jean-Lin,} \textit{A
boundedness criterion for generalized Calder\'{o}n-Zygmund operators,} Ann.
of Math. (2) \textbf{120} (1984), 371--397, MR763911 (85k:42041).

\bibitem[Dav]{Dav} \textsc{Davis, B.}: \textit{On the integrability of the
martingale square function}, Isr. J. Math. \textbf{8} (1970), 187-190.

\bibitem[FeSt]{FeSt} \textsc{C. Fefferman and E.M. Stein}, \textit{Some
maximal inequalities}, Amer. J. Math. \textbf{93} (1971), 107-115.

\bibitem[GaRu]{GaRu} \textsc{J. Carc\'{\i}a-Cuerva and J. L. Rubio de
Francia,} \textit{Weighted Norm Inequalities and Related Topics},
Mathematics Studies \textbf{116}, North-Holland, 1985.

\bibitem[Gra]{Gra} \textsc{Loukas Grafokas,} \textit{Classical Fourier
Analysis}, Graduate Texts in Mathematics \textbf{249} (2008), 2nd ed.
Springer.

\bibitem[HeSz]{HeSz} \textsc{H. Helson and G. Szeg\"{o},} \textit{A problem
in prediction theory}, Ann. Mat. Pura Appl. \textbf{51} (1960), 107-138.

\bibitem[HuMuWh]{HuMuWh} \textsc{R. Hunt, B. Muckenhoupt} \textsc{and R. L.
Wheeden,} \textit{Weighted norm inequalities for the conjugate function and
the Hilbert transform}, Trans. Amer. Math. Soc. \textbf{176} (1973), 227-251.

\bibitem[Hyt]{Hyt} \textsc{Tuomas Hyt\"{o}nen, }\textit{The two weight
inequality for the Hilbert transform with general measures, \texttt{%
arXiv:1312.0843v2}.}

\bibitem[Hyt2]{Hyt2} \textsc{Tuomas Hyt\"{o}nen}, \textit{Martingales and
Harmonic Analysis}, Lecture Notes of a course at the University of Helsinki,
Autumn 2012.

\bibitem[HyVu]{HyVu} \textsc{Tuomas Hyt\"{o}nen and Emil Vuorinen, }\textit{%
A two weight inequality between }$L^{p}\left( \ell ^{2}\right) $\textit{\
and }$L^{p}$, \texttt{arXiv:1608.07199v2}.

\bibitem[Lac]{Lac} \textsc{Michael T. Lacey, }\textit{\ Two weight
inequality for the Hilbert transform: A real variable characterization, II},
Duke Math. J. Volume \textbf{163}, Number 15 (2014), 2821-2840.

\bibitem[Lac2]{Lac2} \textsc{Michael T. Lacey, }\textit{The two weight
inequality for the Hilbert transform: a primer,} \texttt{arXiv.\TEXTsymbol{%
\backslash}:1304.50004v4}, published in Harmonic Analysis, Partial
Differential Equations, Complex Analysis, Banach Spaces, and Operator Theory
(Volume 2) (Celebrating Cora Sadosky's life), Springer 2017.

\bibitem[LaSaUr1]{LaSaUr1} \textsc{Lacey, Michael T., Sawyer, Eric T.,
Uriarte-Tuero, Ignacio,} \textit{A characterization of two weight norm
inequalities for maximal singular integrals with one doubling measure,}
Analysis \& PDE, Vol. \textbf{5} (2012), No. 1, 1-60.

\bibitem[LaSaUr2]{LaSaUr2} \textsc{Lacey, Michael T., Sawyer, Eric T.,
Uriarte-Tuero, Ignacio,} \textit{A Two Weight Inequality for the Hilbert
transform assuming an energy hypothesis, } Journal of Functional Analysis,
Volume \textbf{263} (2012), Issue 2, 305-363.

\bibitem[LaSaShUr3]{LaSaShUr3} \textsc{Michael T. Lacey, Eric T. Sawyer,
Chun-Yen Shen, and Ignacio Uriarte-Tuero,} \textit{Two weight inequality for
the Hilbert transform: A real variable characterization I}, Duke Math. J,
Volume \textbf{163}, Number 15 (2014), 2795-2820.

\bibitem[LaWi]{LaWi} \textsc{Lacey, Michael T., Wick, Brett D.,} \textit{Two
weight inequalities for Riesz transforms: uniformly full dimension weights}, 
\texttt{arXiv:1312.6163v3}.

\bibitem[NTV4]{NTV4} \textsc{F. Nazarov, S. Treil and A. Volberg,} \textit{%
Two weight estimate for the Hilbert transform and corona decomposition for
non-doubling measures}, preprint (2004) \texttt{arXiv:1003.1596.}

\bibitem[Rie]{Rie} \textsc{M. Riesz,} \textit{"Sur les fonctions conjugu\'{e}%
es"}, Mat. Zeit, \textbf{27} (1927) 218-244.

\bibitem[Ros]{Ros} \textsc{Haskell P. Rosenthal,} \textit{On the subspaces
of }$L^{p}$\textit{\ (}$p>2)$\textit{\ spanned by sequences of independent
random variables}, Isreal J. Math. \textbf{8} (1970), 273-303.

\bibitem[Saw]{Saw} \textsc{Eric T. Sawyer,} \textit{A characterization of a
two weight norm inequality for maximal operators}, Studia Math., 75 (1982),
1--11.

\bibitem[Saw3]{Saw3} \textsc{E. Sawyer,} \textit{A characterization of two
weight norm inequalities for fractional and Poisson integrals}, Trans.
A.M.S. \textbf{308} (1988), 533-545, MR\{930072 (89d:26009)\}.

\bibitem[Saw5]{Saw5} \textsc{Sawyer, Eric T.,} \textit{Energy conditions and
twisted localizations of operators}, \texttt{arXiv:1801.03706}.

\bibitem[Saw7]{Saw7} \textsc{E. Sawyer,} \textit{A reprise of the NTV
conjecture for the Hilbert transform}, \texttt{arXiv:2302.13920v11}.

\bibitem[SaShUr6]{SaShUr6} \textsc{Sawyer, Eric T., Shen, Chun-Yen,
Uriarte-Tuero, Ignacio,} A \textit{two weight theorem for }$\alpha $\textit{%
-fractional singular integrals with an energy side condition, quasicube
testing and common point masses}, \texttt{arXiv:1505.07816v2,v3.}

\bibitem[SaShUr7]{SaShUr7} \textsc{Sawyer, Eric T., Shen, Chun-Yen,
Uriarte-Tuero, Ignacio,} A \textit{two weight theorem for }$\alpha $\textit{%
-fractional singular integrals with an energy side condition}, Revista Mat.
Iberoam. \textbf{32} (2016), no. 1, 79-174.

\bibitem[SaShUr9]{SaShUr9} \textsc{Sawyer, Eric T., Shen, Chun-Yen,
Uriarte-Tuero, Ignacio,} A \textit{two weight fractional singular integral
theorem with side conditions, energy and }$k$\textit{-energy dispersed,}
Harmonic Analysis, Partial Differential Equations, Complex Analysis, Banach
Spaces, and Operator Theory (Volume 2) (Celebrating Cora Sadosky's life),
Springer 2017 (see also \texttt{arXiv:1603.04332v2}).

\bibitem[SaShUr10]{SaShUr10} \textsc{Sawyer, Eric T., Shen, Chun-Yen,
Uriarte-Tuero, Ignacio,} \textit{A good-}$\lambda $\textit{\ lemma, two
weight }$T1$\textit{\ theorems without weak boundedness, and a two weight
accretive global }$Tb$\textit{\ theorem,} Harmonic Analysis, Partial
Differential Equations and Applications (In Honor of Richard L. Wheeden),
Birkh\"{a}user 2017 (see also \texttt{arXiv:1609.08125v2}).

\bibitem[SaShUr11]{SaShUr11} \textsc{Sawyer, Eric T., Shen, Chun-Yen,
Uriarte-Tuero, Ignacio,} \textit{A counterexample in the theory of Calder%
\'{o}n-Zygmund operators, }\texttt{arXiv:16079.06071v3v1}.

\bibitem[SaShUr12]{SaShUr12} \textsc{Sawyer, Eric T., Shen, Chun-Yen,
Uriarte-Tuero, Ignacio,} \textit{A two weight local }$Tb$\textit{\ theorem
for the Hilbert transform}, Revista Math.

\bibitem[SaWh]{SaWh} \textsc{E. Sawyer and R. L. Wheeden,} Weighted
inequalities for fractional integrals on Euclidean and homogeneous spaces, 
\textit{Amer. J. Math. }\textbf{114} (1992), 813-874.

\bibitem[SaWi]{SaWi} \textsc{Eric T. Sawyer and Brett D. Wick,} \textit{Two
weight }$L^{p}$\textit{\ inequalities for smooth Calder\'{o}n-Zygmund
operators and doubling measures}, \texttt{arXiv:2211.01920}.

\bibitem[Ste2]{Ste2} \textsc{E. M. Stein,} \textit{Harmonic Analysis:
real-variable methods, orthogonality, and oscillatory integrals},\textit{\ }%
Princeton University Press, Princeton, N. J., 1993.

\bibitem[Vol]{Vol} \textsc{A. Volberg,} \textit{Calder\'{o}n-Zygmund
capacities and operators on nonhomogeneous spaces,} CBMS .

\bibitem[Vuo]{Vuo} \textsc{Emil Vuorinen, }\textit{Two weight }$L^{p}$%
\textit{-inequalities\ for dyadic shifts and the dyadic square function},
Studia Math. \textbf{237} (1) \ (2017), 25-56.

\bibitem[Zin]{Zin} \textsc{Joel Zinn, }\textit{Comparison of martingale
difference sequences}, Probability in Banach spaces V, Lecture Notes in
Mathematics \textbf{1153} (1985), 453-457.
\end{thebibliography}
\end{document}